\documentclass[runningheads]{article}
\usepackage[margin=1.8cm]{geometry}
\usepackage{anyfontsize}

\usepackage{graphicx} 
\usepackage{amsmath}
\usepackage{amssymb}
\usepackage{amsthm}
\usepackage[short]{optidef}
\usepackage{algorithm}
\usepackage{algpseudocode}
\usepackage{float}
\usepackage{rotating}
\usepackage{enumitem}
\usepackage{hyperref}
\usepackage{cleveref}
\usepackage{enumitem}
\usepackage{authblk}

\newtheorem{theorem}{Theorem}[section]
\newtheorem{lemma}[theorem]{Lemma}
\newtheorem{corollary}[theorem]{Corollary}

\newtheorem{conjecture}[theorem]{Conjecture}
\newtheorem{definition}{Definition}
\newtheorem{observation}{Observation}
\newtheorem{remark}{Remark}

\newtheoremstyle{named}{}{}{\itshape}{}{\bfseries}{.}{.5em}{}
\theoremstyle{named}
\newtheorem*{process}{Process}

\newtheoremstyle{concentrationinequality}{}{}{\itshape}{}{\bfseries}{.}{.5em}{\thmnote{#3's }#1}
\theoremstyle{concentrationinequality}
\newtheorem{inequality}{Inequality}

\newtheoremstyle{concentrationbound}{}{}{\itshape}{}{\bfseries}{.}{.5em}{\thmnote{#3 }#1}
\theoremstyle{concentrationbound}
\newtheorem{bound}{Bound}

\newtheoremstyle{oldtheorem}{}{}{\itshape}{}{\bfseries}{.}{.5em}{#1\thmnote{ #3}}
\theoremstyle{oldtheorem}
\newtheorem*{theorem*}{Theorem}
\newtheorem*{lemma*}{Lemma}
\newtheorem*{conjecture*}{Conjecture}
\newtheorem*{observation*}{Observation}

\newtheoremstyle{case_style}{}{}{}{}{\bfseries}{}{.5em}{\thmnote{#3 }#1}
\theoremstyle{case_style}
\newtheorem*{case}{Case}
\AtBeginEnvironment{case}{
  \pushQED{\qed}
}
\AtEndEnvironment{case}{\popQED}

\newtheoremstyle{claim_style}{}{}{}{}{\bfseries}{}{.5em}{\thmnote{#3 }#1}
\theoremstyle{claim_style}
\newtheorem*{claim}{Claim}
\AtBeginEnvironment{claim}{
  \pushQED{\qed}
}
\AtEndEnvironment{claim}{\popQED}

\usepackage{tikz}
\usetikzlibrary{patterns}
\usetikzlibrary{hobby,arrows,backgrounds,calc,trees}
\usetikzlibrary {arrows.meta}

\definecolor{my_green}{HTML}{99c04c}
\definecolor{my_red}{HTML}{ff8ea2}
\definecolor{my_blue}{HTML}{6c94b0}
\definecolor{my_grey}{HTML}{bdc3c7}

\begin{document}

\title{On the Critical Window for Adaptable 2-Colorability}

\setcounter{footnote}{1} 
\author{Thomas Snow\thanks{Department of Computer Science, University of Toronto, Toronto, Canada. Supported by the Ontario Graduate Scholarship (OGS). \\Current affiliation: School of Computer Science and Electrical Engineering, University of Ottawa, Ottawa, Canada. Email: \href{mailto:tsnow042@uottawa.ca}{\texttt{tsnow042@uottawa.ca}.}}}

\maketitle

\begin{abstract}
    We determine a sharp threshold for the adaptable 2-colorability of a random graph equipped 
    with a uniformly random, not necessarily proper, red/blue coloring of the edges. 
    To accomplish this, we characterize a family
    of subgraphs along with edge colorings whose inclusion or exclusion determines adaptable 
    $2$-colorability. We further show that above the threshold, a long path with alternating 
    edge colors is formed. We use this path to prove the existence of such a subgraph in the supercritical regime. 
    We then provide and prove symmetric bounds on the critical window for $2$-adaptable colorability. 
    Particularly, we prove bounds matching that of the critical windows for 
    the giant component in the Erdős–Rényi random graph model as well as the satisfiability of a 
    random $2$-SAT instance.
    Finally, we show that below the critical window, the solution space of 
    adaptable $2$-colorings remains connected, that is one can travel from one adaptable $2$-coloring to 
    another by a sequence of $2$-colorings which differ on $\mathcal{O}(\log{n})$ many vertices.
\end{abstract}

\section{Introduction}
For a graph $G=(V,E)$ with vertex set $V$ and edge set $E$, a \textit{proper} $k$-coloring of $G$ is an assignment of colors from $\{1, \dots, k\}$ to each vertex such that no two adjacent vertices receive the same color. 
In particular, each edge forbids $k$ pairs of colors of being assigned to its endpoints, namely the pairs $(1,1), (2,2), \dots, (k,k)$. We consider a similar model in which each edge $e$ is assigned a color $c(e)$ and instead of forbidding all $k$ pairs of colors, $e$ now only forbids the pair $(c(e), c(e))$ from being assigned to its endpoints. 

More formally, for a graph $G=(V,E)$ with vertex set $V$ and edge set $E$, the \textit{adaptable chromatic number}, $\chi_a(G)$ is defined to be the smallest $k$ such that for any, not necessarily proper, edge coloring $c: E \xrightarrow[]{} [k] \coloneqq \{1, \dots, k\}$, 
there exists a vertex coloring $\sigma : V \xrightarrow[]{} [k] \coloneqq \{1, \dots, k\}$ such that for every edge $uv \in E$ either $\sigma(u) \neq c(uv)$ or $\sigma(v) \neq c(uv)$, in which case we call $\sigma$ \textit{adapted} to $c$. In other words, for every assignment of colors to the edges, one can adapt the edge coloring into a vertex coloring such that no edge has both endpoints assigned the same color as the edge; that is each edge forbids one pair of colors from being assigned to its endpoints. We call this the \textit{adversarial model of adaptable k-colorability}. In this paper, we will primarily work with a separate but closely related model where we are given a graph $G = (V,E)$ along with an edge coloring $c: E \xrightarrow[]{} [k]$ and we wish to determine whether $c$ can be adapted to a valid vertex coloring, under the same conditions in the adversarial model. If this is the case, we say that $G$ is \textit{adaptably k-colorable under c}. Particularly, we are interested in determining whether a standard Erdős–Rényi random graph equipped with a uniformly random, not necessarily proper, $k$-edge coloring $c$ is adaptably $k$-colorable under $c$.

\begin{definition} \label{rand graph prob model}
    In $\mathcal{G}(n,p,k)$, each possible edge is included independently with probability $p$ and assigned a uniformly random color from $[k]$.
\end{definition}

This is precisely the same model as first sampling a graph from $\mathcal{G}(n,p)$ and then assigning each realized edge a color uniformly and independently at random from $k$. Particularly, for each unordered pair of vertices $(u,v)$ and $i \in [k]$ we have that, for $G = (V,E)$ sampled from $\mathcal{G}(n,p,k)$,

\begin{equation*}
    \mathbb{P}[uv \in E[G] \wedge uv \gets i] = \mathbb{P}[uv \gets i \mid uv \in E(G)] \mathbb{P}[uv \in E(G)] = \frac{p}{k} \,.
\end{equation*}

In particular, this is the same as if each unordered pair of vertices $(u,v)$ receives a 2-tuple $(X_{uv}, Y_{uv})$ of random variables, where $X_{uv} = 1$ with probability $p$ and $0$ otherwise and $Y_{uv}$ receives $i$ with probability $1/k$ for all $i \in [k]$. Where $X_{uv}$ denotes the random indicator variable for the inclusion of $uv$ in $E$ and $Y_{uv}$ denotes the color assigned to the pair $(u,v)$, which if $X_{uv} = 1$ denotes the color assigned to $uv$.                                                                                                                                                                                                 
\begin{definition} \label{rand graph edge model}
    In $\mathcal{G}(n,m,k)$, a graph is chosen uniformly at random from the set of all graphs on $n$ vertices with $m$ edges colored from $[k]$.
\end{definition}

As in the $\mathcal{G}(n,p,k)$ model, this is precisely the same as first sampling a graph from $\mathcal{G}(n,m)$ and then assigning each edge a uniformly random color from $[k]$.

We say a sequence of events $A_n$ occurs \textit{with high probability (w.h.p.)} if, 
\begin{equation*}
    \lim_{n \xrightarrow[]{} \infty} \mathbb{P}[A_n] = 1 \,.
\end{equation*}

We say an event $A$ is \textit{monotone} if the following implication holds: If a graph $H$ has $A$ and $H$ is a subgraph of $G$, $H \subseteq G$, then $G$ also has $A$.

We wish to define an analog of monotone events with respect to edge colorings. 
Given a graphs $H$ and $G$ along with edge colorings $c_H : E(H) \xrightarrow[]{} \{1, \dots, k\}$ and $c_G : E(G) \xrightarrow[]{} \{1, \dots, k\}$ 
we say the graph coloring pair $(H, c_H)$ is a subgraph of $(G, c_G)$, denoted $(H, c_H) \subseteq (G, c_G)$ if there exists an instance of $H$ in $G$ in which $c_G$ agrees with $c_H$ on all edges.  
We further say an event $A$ is \textit{monotone under edge colorings} if the following implication 
holds: If a graph coloring pair $(H, c_H)$ has $A$ and $(H, c_H) \subseteq (G, c_G)$ then 
$(G, c_G)$ has $A$.

Its natural to believe that there is a form of equivalence between the two models $\mathcal{G}(n,p,k)$ and 
$\mathcal{G}(n,m,k)$ with respect to monotone properties under edge colorings, similar to the relationship 
between $\mathcal{G}(n,p)$ and $\mathcal{G}(n,m)$ under monotone properties in the classical sense. We defer the reader 
to Appendix~\ref{appendix: two models} noting that we will primarily work in the $\mathcal{G}(n,p,k)$ model.

\subsection{Related Work} \label{sec: related work}

\subsubsection*{Adaptable Coloring}

In the work of Hell and Zhu \cite{adapt-cols}, they introduced the adversarial adaptable chromatic number of graphs. In particular, they give the following characterization of graphs with $\chi_a(G) \leq 2$. First, we need to introduce a couple of definitions. 

\begin{definition} \label{def: bicycle}
    A bicycle is a graph composed of a path $P = v_1 v_2 \cdots v_s$ along with edges $v_1v_i$ and $v_jv_s$ for $i,j \in \{1, \dots, s\}$. If $i \leq j$ then we call the bicycle proper. If $i > j$ then we call the bicycle improper. Moreover, if the cycles $C_1 = v_1v_2 \cdots v_iv_1$ and $C_2 = v_j v_{j+1} \cdots v_sv_j$ are both of odd length, we call the bicycle odd.
\end{definition}

If the cycles forming the faces of a planar embedding of a $K_4$-subdivision 
(including the outer face) are all odd, we call it an \textit{odd edge-$K_4$}. 
We are now ready to state the characterization given in \cite{adapt-cols}.

\begin{theorem}[Theorem 2.1 in \cite{adapt-cols}] \label{thm: adversary adpt 2 col}
    The following statements are equivalent for a connected graph $G$,
    \begin{enumerate}[label=\alph*.]
        \item $\chi_a(G) \leq 2$.
        \item G does not have a proper odd bicycle or an odd edge-$K_4$.
        \item There is an edge $e$ such that $G - e$ is bipartite. 
    \end{enumerate}
\end{theorem}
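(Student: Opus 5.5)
I will prove the cycle of implications (c)$\Rightarrow$(a)$\Rightarrow$(b)$\Rightarrow$(c).

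\textbf{(c)$\Rightarrow$(a).} Let $e=xy$ be an edge with $G-e$ bipartite, and fix a bipartition $(A,B)$ of $G-e$. Given any edge coloring $c: E\to\{1,2\}$, first try $\sigma(A)=1$, $\sigma(B)=2$. An edge $uv$ of $G-e$ has one endpoint colored $1$ and the other $2$. Hence it cannot have both endpoints equal to $c(uv)$, whatever $c(uv)$ is. The only possible violation is $e$ itself, and only if $x,y$ lie on the same side with $\sigma(x)=\sigma(y)=c(e)$. In that case we swap to $\sigma(A)=2$, $\sigma(B)=1$. This keeps all edges of $G-e$ valid, and now $\sigma(x)=\sigma(y)=3-c(e)\neq c(e)$. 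So some $\sigma$ is adapted to $c$, and $\chi_a(G)\le 2$.

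\textbf{(a)$\Rightarrow$(b).} By contraposition, it suffices to exhibit a bad coloring for each forbidden structure $H\subseteq G$. Edges outside $H$ can be colored arbitrarily without helping, since adapting on $G$ restricts to adapting on $H$.

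\emph{Proper odd bicycle.} Parity of constraints along the path forces things. If the edges of a path alternate colors $1,2,1,\dots$, then adapting forces a propagation: whenever a vertex has the color of the next edge, the following vertex must switch. The plan is to color each odd cycle so that its chord vertex $v_1$ (respectively $v_s$) is forced to a specific color. An odd cycle with alternating colors except for one repeated pair at the attachment vertex admits adapted colorings only with that vertex avoiding a particular color. Then color the connecting path ($i\le j$ gives a genuine path of length $\ge0$ between the cycles, possibly sharing a segment) so that the forced colors at its two ends are incompatible. This requires a finite case check on parities, including the shared-segment case $i=j$ or overlap.

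\emph{Odd edge-$K_4$.} Choose colorings of the six subdivided paths so that each branch vertex's color forces constraints around each of the odd faces. Each odd face then behaves like an odd cycle, which cannot be consistently 2-colored. I would verify that some assignment of path colorings makes all $2^4$ choices of colors at the branch vertices fail. This is the step I expect to be fiddliest, so I would first reduce each subdivided path to a single ``effective'' constraint between its endpoints (equal, different, or forbidding one pair).

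\textbf{(b)$\Rightarrow$(c).} This is the main structural step. Suppose no edge $e$ makes $G-e$ bipartite. Then the odd cycles of $G$ have no common edge. Take two odd cycles $C_1, C_2$ and analyze how they meet.

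\emph{Vertex-disjoint or sharing one vertex.} Join them by a shortest path; this yields a proper odd bicycle (with $i=j$ allowed when they share a vertex).

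\emph{Sharing a path segment but no common edge globally.} If $C_1, C_2$ share edges, a third odd cycle $C_3$ avoids their common edges. Alternatively, the symmetric difference structure of $C_1\cup C_2$ (a theta graph) contains an odd cycle and an even cycle. Adding a further odd cycle that avoids the shared edges produces either a proper odd bicycle or a $K_4$-subdivision. In the latter case, parity bookkeeping on the face cycles (using that every odd cycle in the structure misses some edge) shows all four faces are odd.

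I expect this case analysis, in particular ruling out improper bicycles by rerouting to find a proper one, to be the main obstacle. I would organize it by choosing a minimal counterexample subgraph with respect to edges and using ear decompositions.
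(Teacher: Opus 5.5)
The paper gives no proof of this theorem; it is quoted from Hell and Zhu, so your proposal has to stand on its own.

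\textbf{Parts that work.} Your (c)$\Rightarrow$(a) argument is correct. Your treatment of proper odd bicycles in (a)$\Rightarrow$(b) is, in outline, the propagation argument the paper gives for Lemma~\ref{lma: non-adapt 2-col condition}(a).

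The odd edge-$K_4$ case is only planned, and its heuristic is not a reason. You say each odd face ``behaves like an odd cycle, which cannot be consistently 2-colored'', but a single odd cycle is adaptably 2-colorable by your own (c)$\Rightarrow$(a). Your reduction to effective constraints does work, though:
\begin{itemize}
\item An alternating subdivided path whose end edges have colors $\alpha,\beta$ forbids exactly $(\alpha,\beta)$ at its ends, and $\alpha=\beta$ iff the path is odd.
\item The odd/even pattern of an odd edge-$K_4$ differs from the all-odd pattern by an edge cut of $K_4$. Swapping the two colors at the branch vertices on one side of that cut therefore reduces the check to $K_4$ itself.
\item In $K_4$, a perfect matching of color $1$ together with the complementary $4$-cycle of color $2$ admits no adapted coloring.
\end{itemize}

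\textbf{The gap is in (b)$\Rightarrow$(c).} Your case split covers two odd cycles that are vertex-disjoint or share one vertex, and two odd cycles that share edges. It omits two \emph{edge-disjoint} odd cycles meeting in two or more vertices. With bicycles as in Definition~\ref{def: bicycle} (a \emph{path} plus two chords, so the cycles of a proper bicycle share at most one vertex), this case cannot be handled, because the implication is then false.

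Let $G$ be the union of the internally disjoint $x$--$y$ paths $xy$, $xay$, $xdy$, $xbcy$.
\begin{itemize}
\item (b) holds. Both $G-x$ and $G-y$ are trees, so every cycle contains both $x$ and $y$, and $G$ has no proper bicycle. Only $x$ and $y$ have degree at least $3$, so $G$ has no $K_4$-subdivision.
\item (c) fails. The triangle $xay$ and the $5$-cycle $xbcydx$ are edge-disjoint odd cycles.
\item (a) fails. Color $xy,xa,dy,bc$ with $1$ and $ay,xd,xb,cy$ with $2$. The paths $xy$, $xay$, $xdy$, $xbcy$ then forbid $(\sigma(x),\sigma(y))=(1,1),(1,2),(2,1),(2,2)$ respectively.
\end{itemize}
Your step ``adding a further odd cycle that avoids the shared edges produces either a proper odd bicycle or a $K_4$-subdivision'' fails on exactly this graph. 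The cycles $xay$ and $xdy$ share only $xy$, the odd cycle $xaycbx$ avoids it, and the three together are all of $G$.

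\textbf{What is needed.} The proposal cannot be completed for the statement as written. The forbidden family in (b) must also exclude two edge-disjoint odd cycles. One way is to let $v_1\cdots v_s$ in a proper bicycle be a trail rather than a path. The propagation argument still works on a trail, and $G$ itself is such a trail-bicycle: take $a,y,x,b,c,y,d$ with chords $ax$ and $dx$, so $i=j$ at $x$.

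With that reading the edge-disjoint case is immediate. The real content of (b)$\Rightarrow$(c) is then: if every two odd cycles share an edge but no edge lies on all of them, $G$ contains a $K_4$-subdivision with all four faces odd. Your sketch only asserts this. It is also not automatic that a single odd cycle avoids \emph{all} common edges of $C_1$ and $C_2$; you only know each such edge is missed by some odd cycle. This step needs an actual argument.
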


A consequence of Theorem~\ref{thm: adversary adpt 2 col} is that $p = \frac{1}{n}$ is a sharp threshold for the adversarial model of adaptable 2-colorability in $\mathcal{G}(n,p)$, as stated in the following Theorem. 

\begin{theorem}
    For any constant $\epsilon > 0$ the following holds w.h.p.,
    \begin{enumerate}[label=\alph*.]
        \item $\chi_a(\mathcal{G}(n,p = \frac{1 - \epsilon}{n})) \leq 2$.
        \item $\chi_a(\mathcal{G}(n,p = \frac{1 + \epsilon}{n})) > 2$.
    \end{enumerate}
\end{theorem}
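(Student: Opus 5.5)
For part (a) I would use Theorem~\ref{thm: adversary adpt 2 col} componentwise. The adaptable chromatic number of a disconnected graph is the maximum over its components, since an edge coloring restricts independently to each component and adapted vertex colorings can be combined. By the classical subcritical result for $\mathcal{G}(n,p)$ with $p=\frac{1-\epsilon}{n}$, w.h.p.\ every component is a tree or unicyclic. I would reprove this briefly by a first-moment count: the expected number of connected sets of $k$ vertices spanning at least $k+1$ edges is $O(1/n)$ summed over $k$, because the exponential factor $((1-\epsilon)e^{\epsilon})^k$ kills the tree-count growth. In a unicyclic component, deleting one cycle edge leaves a tree, which is bipartite. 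Hence condition (c) of Theorem~\ref{thm: adversary adpt 2 col} holds for every component, and $\chi_a\le 2$ w.h.p.

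For part (b), the same theorem says it suffices to find one component containing two edge-disjoint odd cycles. Then no single edge deletion can destroy both, so condition (c) fails. I would proceed in three steps.

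\textbf{Step 1 (many short odd cycles).} Set $c=1+\epsilon$. By the standard method of moments, the number $X_L$ of odd cycles of length at most $L$ converges to a Poisson variable with mean $\lambda_L=\sum_{k\le L,\,k \text{ odd}} c^k/(2k)$, and $\lambda_L\to\infty$ as $L\to\infty$. Moreover, the expected number of pairs of intersecting cycles of length at most $L$ is $O_L(1/n)$. So for any fixed $M$, choosing $L$ large gives w.h.p.\ at least $M$ vertex-disjoint odd cycles of length at most $L$.

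\textbf{Step 2 (sprinkling).} I would write $1-p=(1-p_1)(1-p_2)$ with $p_1=\frac{1+\epsilon/2}{n}$ and $p_2\approx\frac{\epsilon/2}{n}$. In $G_1\sim\mathcal{G}(n,p_1)$, w.h.p.\ there is a giant component of size at least $\theta n$ for some $\theta=\theta(\epsilon)>0$, and by Step~1 there are $M$ disjoint short odd cycles. A fixed short cycle on $k$ vertices already lies in the giant with probability bounded below. Conditioning on the cycle, each of its vertices sends a tree of size $\Theta(n)$ with probability bounded below, and I would formalize this via the standard exploration/branching process comparison. Independently of that, in the second round each cycle vertex has a $p_2$-edge to the giant with probability $1-(1-p_2)^{\theta n}\ge 1-e^{-\epsilon\theta/2}=:q>0$.

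\textbf{Step 3 (concluding).} Distinct cycles are disjoint, so these sprinkled events are independent. The number of cycles joined to the giant therefore stochastically dominates $\mathrm{Bin}(M,q)$, which is at least $2$ with probability at least $1-(M+1)(1-q)^{M-1}$. Letting $M\to\infty$ slowly, w.h.p.\ two vertex-disjoint odd cycles lie in one component, and so $\chi_a>2$.

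The main obstacle is making Steps~1 and~2 interact cleanly. The short cycles counted in $G_1$ must be disjoint from the giant's certifying structure, or at least the conditioning must not bias the sprinkled edges. I would handle this by treating the cycle vertices as deterministic after revealing $G_1$: the $p_2$-edges are fresh and independent of $G_1$, and the giant minus $O(ML)$ vertices still has size at least $\theta n/2$. With that, the bound in Step~3 goes through without needing the more delicate claim that the cycles already lie in the giant of $G_1$.
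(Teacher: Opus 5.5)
Part (a) matches the paper: Theorem~\ref{thm: adversary adpt 2 col} applied componentwise to the classical tree/unicyclic structure below $1/n$. If you do reprove that structure, the cleaner first-moment count is over minimal bicycles (proper or improper), $\sum_s n^s s^2 p^{s+1}=O(1/n)$, exactly as in the proof of Theorem~\ref{thm: a sharp threshold}(a). Counting whole components with the isolation factor $(1-p)^{k(n-k)}$ needs separate care once $k$ is of linear order.

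For part (b) your route differs from the paper's and, as far as I can check, is correct.

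\begin{itemize}
\item \textbf{The paper's route.} It takes a path of length $\Theta(n)$ (Krivelevich--Sudakov) and sprinkles $\Theta(\epsilon/n)$ further edges. W.h.p.\ each half of the path then receives a chord joining two vertices at even distance. This yields a proper odd bicycle, violating condition (b) of Theorem~\ref{thm: adversary adpt 2 col}.
\item \textbf{Your route.} You violate condition (c) directly, by attaching two vertex-disjoint short odd cycles to the giant via fresh sprinkled edges. Removing the cycle vertices from the giant's target set is exactly what makes your $M$ events independent given $G_1$. Your ingredients (Poisson short-cycle counts, existence of the giant) are more classical than a linear-length path.
\end{itemize}

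Your approach has two costs. First, for fixed $L$, Step~1 holds only with probability at least $1-\delta_L$, not w.h.p. So the final diagonal choice of $M,L\to\infty$ is essential, and the argument is non-quantitative. Second, it leans on the uncolored fact that \emph{any} connection between two odd cycles in one component suffices.

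The paper's path-plus-two-chords template is what carries over to $\mathcal{G}(n,p,2)$. There the connection must be an alternating path meeting each cycle with the right colors (Lemma~\ref{lma: non-adapt 2-col condition}(a)), and there is no giant ``alternating component'' to attach to. The template is reused, with extra care about pre-existing edges of the wrong color, via the alternating path of Lemma~\ref{lma: alt path existence sub linear} and colored chords. This includes the case $\epsilon=\epsilon(n)\to 0$ in Theorem~\ref{thm: scaling window upper bound}.

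As you note yourself, the aside in Step~2 about the cycles already lying in the giant of $G_1$ is unnecessary.
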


This follows directly from the giant component threshold.
It is well known from Bollobás \cite{bolobas-rand-graph} and Erdős and Rényi \cite{ErdosRenyiRandGraphs} that below the giant component threshold, w.h.p., all connected components are either trees or graphs with precisely one cycle and hence by Theorem~\ref{thm: adversary adpt 2 col} $\chi_a \leq 2$. 
Furthermore, above the threshold, a giant component forms in which case, one may believe that almost surely an odd bicycle is formed and thus by Theorem~\ref{thm: adversary adpt 2 col} $\chi_a > 2$.
To see this, consider the following two step process. First, for $p = \frac{1 + \epsilon}{n}$ for some $\epsilon > 0$ it is well known (see \cite{longest-path-simple} and \cite{longest-path}) that $G$ has w.h.p. a path $P$ of length $\Theta(n)$.
We now consider increasing $p$ by $\epsilon/n$ and claim that doing so introduces w.h.p. edges along $P$ which form an odd bicycle. Notice that if edges already exist between non-adjacent vertices in $P$, this can only increase our probability. Therefore, 
we may assume no such edges exist. Standard calculations easily imply that such edges 
are introduced w.h.p. upon increasing $p$.

We will utilize this approach in Section~\ref{sec: a sharp threshold} with respect to the $\mathcal{G}(n,p,2)$ 
model. However, we will need to be a bit more careful as upon increasing $p$, we wish to introduce edges of a specific 
color. If an edge exists in the original graph, it may decrease the probability of an edge of the desired 
color in the final graph. 
Therefore, we can no longer make the assumption that certain edges do not exist prior to increasing $p$.

\subsubsection*{Correspondence Coloring}

Before introducing Correspondence Coloring, we first introduce the notion of List Coloring. 
We are given a graph $G = (V, E)$ as well as a list assignment $L$. That is $L$ is a function that maps each
vertex $v$ to a list $L(v)$ of colors which are available for $v$. We now ask the following question: given $G$ and $L$,
is it possible to assign each vertex a color from its respective list, such that no two adjacent vertices are assigned the same color? 
We call such an assignment of colors an $L$-coloring of $G$. Note that an $L$-coloring of $G$ is a proper coloring of $G$ on 
the colors $\bigcup_{v \in V} L(v)$. We call $G$ $k$-choosable if an $L$-coloring exists for every list assignment
$L$ satisfying $|L(v)| \geq k$ for all $v$.
The main question regarding list colorings is the following: 
What is the smallest $k$ such that $G$ is $k$-choosable? 
For more on list coloring we defer the reader to Chapter 5.4 in \cite{Diestel}.

Correspondence coloring (or DP-coloring) is a generalization of adaptable coloring and list coloring introduced by 
Dvo\v r\'ak and Postle \cite{Correspondence-Coloring}. 
Let $G = (V,E)$ be a graph and $L$ be a list assignment. 
For every edge $uv$ we add a list of constraints to the possible colors assigned to the 
pair $u$ and $v$; that is, each edge is given a partial matching that forbids certain combinations of 
colors being assigned to $u$ and $v$. More formally, we are given a function $C$ which maps each edge $e$ to a 
partial matching between $\{u\} \times L(u)$ and $\{v\} \times L(v)$. That is, for colors $c_1$ and $c_2$ if 
$(u, c_1)$ and $(v, c_2)$ are adjacent in $C(uv)$ then when assigning vertices colors from their lists, 
we cannot assign both $u$ the color $c_1$ and $v$ the color $c_2$. As in \cite{Correspondence-Coloring},
we call an $(L,C)$-coloring an assignment $\phi$ such that $\phi(v) \in L(v)$ for all $v$ and for each edge $uv$,
$(u, \phi(u))$ and $(v, \phi(v))$ are not adjacent in $C(uv)$. It is not hard to see that
this variation of colorings generalizes both list coloring as well as adaptable colorings.

Correspondence coloring has applications in list coloring on restricted classes of graphs, as seen in \cite{Correspondence-Coloring}.

\subsubsection*{Thresholds and Critical Windows}

The study of random graphs and their properties dates back to the work of Erdős and Rényi \cite{ErdosRenyiEarlyPaper}. 
Erdős and Rényi observe that the structure of $\mathcal{G}(n,m)$ undergoes a major change at 
$m \sim \frac{1}{2}n$, which corresponds to $p \sim 1/n$ in $\mathcal{G}(n,p)$. 
In fact, for constant $\epsilon > 0$ and $p = \frac{1 - \epsilon}{n}$ w.h.p. 
the graph consists ``simple" components, components with at most one cycle, the largest of which has size $\mathcal{O}(\log{n})$. However,
when $p = \frac{1 + \epsilon}{n}$ these components merge into a single component of linear size called the \textit{giant component}. The structure 
and evolution of the giant component has been greatly studied. Bollobás \cite{bolobas-rand-graph} gives a accurate depiction
of the emergence of the giant component. 

We call $p=1/n$ the \textit{threshold} for the giant component in fact, 
this threshold is \textit{sharp}. In general, for a monotone property $\mathcal{P}$, we call 
$p = p(n)$ a \textit{threshold} for $\mathcal{P}$ if 
\begin{equation*}
\lim_{n \rightarrow \infty} \mathbb{P}[\mathcal{G}(n, p') \ has \ \mathcal{P}] = 0 \ if \ p'/p \rightarrow 0 \qquad and \qquad
\lim_{n \rightarrow \infty} \mathbb{P}[\mathcal{G}(n, p') \ has \ \mathcal{P}] = 1 \ if \ p'/p \rightarrow \infty \,.
\end{equation*}
Furthermore, we call $p = p(n)$ a \textit{sharp threshold} for $\mathcal{P}$ if for every $\epsilon > 0$,
\begin{equation*}
\lim_{n \rightarrow \infty} \mathbb{P}[\mathcal{G}(n, p') \ has \ \mathcal{P}] = 0 \ if \ p'/p \leq 1 - \epsilon \qquad and \qquad
\lim_{n \rightarrow \infty} \mathbb{P}[\mathcal{G}(n, p') \ has \ \mathcal{P}] = 1 \ if \ p'/p \geq 1 + \epsilon \,.
\end{equation*}

What if we take $\epsilon = \epsilon(n) \rightarrow 0$? With respect to the giant component,
we have the following characterization due to Bollobás \cite{bolobas-rand-graph}: for $\lambda_n \rightarrow \infty$ arbitrarily slowly the largest component of 
$\mathcal{G}(n,p= \frac{1 - \lambda_n n^{-1/3}}{n})$ is w.h.p. of size $\Theta(n^{2/3}\lambda_n^{-2}\log{\lambda_n})$ and all components are simple. 
Whereas, for $\mathcal{G}(n,p= \frac{1 + \lambda_n n^{-1/3}}{n})$ the largest component is w.h.p of size $\Theta(\lambda_n n^{2/3})$ and is no longer simple and all 
other components are simple and of size $\mathcal{O}(n^{2/3}\lambda_n^{-2}\log{\lambda_n})$. 
We call this the \textit{critical window} for the giant component. Similar results have been shown for 
other classes of problems such as random instances of $2$-SAT. Bollobás et al. \cite{2-sat-critical-window} provide the 
critical window for $2$-SAT. In particular, Bollobás et al. show that for a random instance of $2$-SAT with
$n$ variables and $m$ clauses, if the ratio $m/n \leq 1 - \lambda_n n^{-1/3}$ then the instance is w.h.p. 
satisfiable; whereas, if $m/n \geq 1 + \lambda_n n^{-1/3}$ then the instance is w.h.p. not satisfiable.
In this paper, we prove a similar phenomenon with respect to adaptable $2$-colorability.

For more on random graphs and their properties we refer the reader to the following texts \cite{IntroRandGraphsFrieze}, \
\cite{Connected_component_rand_graphs}, and Chapter 11 in \cite{TheProbabilisticMethod}.

\subsection{Results}
In Section~\ref{sec: characterizing 2-adapt col}, we prove a similar condition to that in \cite{adapt-cols} for testing whether a 
graph edge-coloring pair $(G,c)$ is adaptably 2-colorable. 
We introduce a family of subgraphs which we call \textit{alternating bicycles} 
(formally defined in Definition~\ref{def: alt bicycle}), and show
that their inclusion or exclusion loosely determines adaptable 
$2$-colorability, see Lemmas~\ref{lma: non-adapt 2-col condition} and \ref{lma: improper alternating 
bicycle}. 
Our approach begins by considering a graph edge-coloring pair $(G,c)$ that is not adaptably $2$-colorable
as well as a non-proper coloring minimizing the number of violated edges. Exploiting this minimality, we 
explore the graph around a violated edge to construct an alternating bicycle. 

We then show that w.h.p. for $\epsilon > 0$ and $p = 2(1 + \epsilon)/n$, $(G,c)$ contains a path of linear 
length in which the edges alternate colors. This is achieved by adapting the Depth First Search approach 
introduced by 
Krivelevich and Sudakov \cite{longest-path-simple} for finding long paths in the classical $\mathcal{G}(n,p)$ model.
Utilizing these structural results, in Section~\ref{sec: a sharp threshold} we 
establish a sharp threshold for adaptable $2$-colorability.

\begin{theorem} \label{thm: a sharp threshold}
    For any constant $\epsilon > 0$:
    \begin{enumerate}[label=\alph*.]
        \item $\mathcal{G}(n, p = \frac{2 - \epsilon}{n}, 2)$ is w.h.p. adaptably 2-colorable.
        \item $\mathcal{G}(n, p = \frac{2 + \epsilon}{n}, 2)$ is w.h.p. not adaptably 2-colorable.
    \end{enumerate}
\end{theorem}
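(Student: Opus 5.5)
The plan is to treat the two halves separately, using the alternating-bicycle characterization from Section~\ref{sec: characterizing 2-adapt col} (Lemmas~\ref{lma: non-adapt 2-col condition} and~\ref{lma: improper alternating bicycle}) as the bridge between colorability and subgraph containment. The heuristic is that with two edge colors the problem is a $2$-SAT instance. An edge $uv$ of color $1$ says ``$\sigma(u)=1 \Rightarrow \sigma(v)=2$'', and one of color $2$ says ``$\sigma(u)=2\Rightarrow\sigma(v)=1$''. So chains of forced implications follow paths whose edge colors alternate. From a vertex, the number of edges of the next required color is $\mathrm{Bin}(n, p/2)$, with mean $pn/2$. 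Hence the alternating exploration is a branching process with mean about $pn/2$, which is critical exactly at $p = 2/n$.

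\textbf{Part (a).} Take $p=(2-\epsilon)/n$. By Lemma~\ref{lma: non-adapt 2-col condition}, if $(G,c)$ is not adaptably $2$-colorable then it contains an alternating bicycle. So it suffices to show by a first-moment computation that w.h.p.\ no alternating bicycle exists. An alternating bicycle on $s$ vertices is a path $v_1\cdots v_s$ plus two chords, so it has $s+1$ edges. The number of labelled copies of its underlying graph is at most $s^2 n^s$, where the $s^2$ accounts for the choices of $i,j$. Once the position in the structure is fixed, the color of every edge is forced up to $O(1)$ choices. The expected count is therefore at most
\[
\sum_{s\ge 3} O(s^2)\, n^s \left(\frac{p}{2}\right)^{s+1} = \frac{O(1)}{n}\sum_{s\ge3} s^2\left(1-\frac{\epsilon}{2}\right)^{s} = O\!\left(\frac{1}{\epsilon^{3} n}\right) \to 0 ,
\]
and Markov's inequality finishes the argument. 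I must check that the definition of alternating bicycle really forces each edge color, up to $O(1)$ choices, given the structure; this is what produces the factor $(p/2)$ per edge instead of $p$.

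\textbf{Part (b).} Take $p=(2+\epsilon)/n$ and use two-round exposure. Write $G = G_1\cup G_2$ with $G_1\sim\mathcal{G}(n,p_1,2)$, $p_1=(2+\epsilon/2)/n$, and $G_2\sim\mathcal{G}(n,p_2,2)$, $p_2 \approx \epsilon/(2n)$. On pairs present in both rounds, the color is taken from round one; the union is then distributed as $\mathcal{G}(n,p,2)$ up to a negligible change in $p$. By the DFS argument adapted from Krivelevich--Sudakov, $G_1$ contains w.h.p.\ an alternating path $P$ of length $\alpha n$ for some $\alpha=\alpha(\epsilon)>0$.

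Next, take the first and last $\alpha n/3$ vertices of $P$. Among the $\Theta(n^2)$ pairs consisting of $v_1$ with an early vertex $v_i$ and of $v_s$ with a late vertex $v_j$, pick those pairs whose chord, with the appropriate color, would close the required odd cycles (the correct parity and color, so that Lemma~\ref{lma: improper alternating bicycle} certifies non-colorability). Unlike in the uncolored argument, I cannot assume these pairs are absent from $G_1$. However, $G_1$ has only $O(n)$ edges, so all but $O(n)$ of the $\Theta(n)$ candidate pairs at each end are non-edges of $G_1$. Their colors in the union are then exactly those drawn in round two. Each such pair independently receives an edge of the desired color with probability $p_2/2$, so the probability that one end gets no good chord is $(1-\epsilon/(4n))^{\Theta(n)}$.

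This bound is only a constant, so a fixed endpoint is not enough. To boost it, I would use many disjoint choices of endpoints: take sub-paths of $P$ starting at each of the first $\alpha n/10$ vertices. Alternatively, use the exposure argument with chords between the two middle-far thirds, i.e.\ pairs $(v_a,v_b)$ with $a$ in the first third and $b$ in the last third; there are $\Theta(n^2)$ of these. Two chords of the right parities and colors among them form a bicycle whose path is a sub-path of $P$. The probability that none exist is $\exp(-\Theta(n))$.

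The main obstacle I expect is this last step. I need to identify precisely which chord configurations, with which colors and parities relative to the alternating path, yield a structure that Lemma~\ref{lma: improper alternating bicycle} certifies as non-colorable. I also need to ensure that $\Theta(n^2)$ candidate pairs, not just $\Theta(n)$, are available, so that sprinkling succeeds w.h.p.; the conditioning on $P$ and on $G_1$ must not bias the round-two colors, which is exactly why the colors are taken from $G_2$ on $G_1$-non-edges.
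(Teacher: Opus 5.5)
Your part (a) is the paper's argument: a first-moment count over alternating bicycles at $(p/2)^{s+1}$ per configuration, then Markov. However, the two lemma references are swapped throughout. The implication ``not adaptably $2$-colorable $\Rightarrow$ contains an alternating bicycle'' is Lemma~\ref{lma: improper alternating bicycle}, applied to a non-colorable component because it assumes connectivity. The certificate of non-colorability you need in (b) is Lemma~\ref{lma: non-adapt 2-col condition}(a).

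The genuine gap is in (b), at exactly the step you flag, and the concrete $\Theta(n^2)$ scheme you propose there cannot work. Suppose both chords join the first third of $P$ to the last third. Then both cycles contain the whole middle third, so $P$ plus the two chords is a theta graph with pendant paths attached.
\begin{itemize}
\item For crossing chords $a<a'<b<b'$, you get the path $v_a\cdots v_{b'}$ whose end-chords land at $v_b$ and $v_{a'}$ with $b>a'$, i.e.\ an improper bicycle.
\item Nested chords, or chords sharing an endpoint, also give a theta graph.
\end{itemize}
Improper alternating bicycles are adaptably $2$-colorable by Lemma~\ref{lma: non-adapt 2-col condition}(b). In fact every theta graph has $\chi_a\le 2$ by Theorem~\ref{thm: adversary adpt 2 col}(c). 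Also, no two cycles of this graph meet in at most one vertex, so no proper bicycle can be extracted whatever the chord colors. Your other suggestion (varying the starting vertex) only treats one end, and you never formalize it. The fixed-endpoint version has only $\Theta(n)$ candidate pairs and constant failure probability, as you noticed.

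The missing idea is to place the two chords in disjoint stretches of $P$.
\begin{itemize}
\item Say $c(v_1v_2)=red$, and let $I_1$ and $I_2$ be the odd indices in the first and second halves of $P$.
\item A blue chord $v_av_b$ with $a<b$ in $I_1$ differs in color from the path edge leaving $v_a$ (red).
\item A red chord $v_cv_d$ with $c<d$ in $I_2$ differs in color from the path edge entering $v_d$ (blue).
\item Since all endpoints sit at odd positions, both cycles are odd.
\item Since $b<c$, the sub-path $v_a\cdots v_d$ with these two chords is a proper alternating odd bicycle.
\end{itemize}
Each $I_k$ offers $\Theta(n^2)$ pairs. Only $O(n)$ of them are edges of $G_1$; your observation that $G_1$ has $O(n)$ edges suffices here and is cleaner than the paper's conditioning argument. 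Hence the probability that no free pair in $I_k$ receives a sprinkled edge of the required color is $(1-\Theta(1/n))^{\Theta(n^2)}=e^{-\Theta(n)}$. This is precisely the paper's construction.
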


Using the first moment method, we show that when $p < 2/n$, $\mathcal{G}(n, p, 2)$ w.h.p. 
does not contain an alternating bicycle which by our characterization implies adaptable 
2-colorability. This argument is similar to the proof of a sharp threshold for 2-SAT by Chv\'atal and Reed in 
\cite{Mick-gets-some-2sat-thresh}. In contrast, when $p > 2/n$,  
$\mathcal{G}(n, p, 2)$ w.h.p. contains such a proper alternating odd bicycle 
whose inclusion implies the graph is not adaptably $2$-colorable. 
We first find a long path with alternating edge colors then by sprinkling a small 
fraction of additional edges, an alternating bicycle is formed along the path. 

Having established the sharp threshold, we turn to a more refined analysis of the 
subcritical and supercritical regimes. In particular, we establish the following bounds on the 
size of the scaling/critical window.

\begin{theorem} \label{thm: scaling window lower bound}
    For $\lambda_n \xrightarrow[]{} \infty$ arbitrarily slowly, $\mathcal{G}(n, p = 2(1 - \lambda_n n^{-1/3})/n, 2)$ is w.h.p. adaptably 2-colorable.
\end{theorem}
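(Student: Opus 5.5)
The plan is a first moment argument over the obstructions from Section~\ref{sec: characterizing 2-adapt col}. By Lemmas~\ref{lma: non-adapt 2-col condition} and \ref{lma: improper alternating bicycle}, if $(G,c)$ is not adaptably $2$-colorable then it contains an alternating bicycle, with the edge colors along the path and the two chords constrained by the alternation rule. So it suffices to show that the expected number of alternating bicycles in $\mathcal{G}(n,p,2)$ with $p = 2(1-\lambda_n n^{-1/3})/n$ tends to $0$. I will parametrize an alternating bicycle on $s$ vertices by an ordered path $v_1\cdots v_s$ and the chord endpoints $i,j$. Such a bicycle has $s+1$ edges, and each edge carries a prescribed color, which happens with probability $p/2$. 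The number of choices is at most $n^s s^2$. Hence the expected count is at most
\begin{equation*}
\sum_{s\ge 3} n^s s^2 \left(\frac{p}{2}\right)^{s+1} = \frac{1}{n}\sum_{s \geq 3} s^2 (1-\lambda_n n^{-1/3})^{s+1}.
\end{equation*}
This is $O\!\left(\frac{1}{n}\cdot \frac{1}{(\lambda_n n^{-1/3})^{3}}\right)=O(\lambda_n^{-3})\to 0$. If the colorings of the path are determined by the alternation rule only up to $O(1)$ choices, this just multiplies the bound by a constant.

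The one point needing care is whether the characterization yields alternating bicycles of bounded combinatorial "excess". The count above works because a bicycle has exactly one more edge than vertices. If the obstruction family also includes the improper bicycles and odd edge-$K_4$-type configurations, I will handle them the same way. Any such structure with $s$ vertices and $s+1$ edges has at most $\mathrm{poly}(s)$ labelings beyond the vertex sequence; for a theta/$K_4$ shape I expect $O(s^2)$ or $O(s^3)$. A $\mathrm{poly}(s)$ factor of degree $d$ gives $\frac1n (\lambda_n n^{-1/3})^{-(d+1)}$. This vanishes only when $d+1\le 3$, that is $d\le 2$.

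This is the main obstacle. Bicycles with two free chord positions give exactly $s^2$, which is borderline but fine. A structure with three free branch parameters would give $s^3$ and the bound $n^{1/3}\lambda_n^{-4}$, which fails. So I must use the specific form in Lemma~\ref{lma: improper alternating bicycle}: the witness is a path plus two chords from its endpoints, so only $i$ and $j$ are free. If the lemma instead produced structures with more freedom, I would reduce them to subgraphs that are alternating bicycles with fewer parameters. Alternatively, I would split into short structures ($s\le n^{1/3}\lambda_n^{-1/2}$), where the crude bound suffices, and long ones, where the factor $(1-\lambda_n n^{-1/3})^s$ decays like $e^{-\sqrt{\lambda_n}}$ and kills extra polynomial factors.

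Finally, I will check that ignoring the conditioning between the color events and the edge events is harmless. Distinct pairs are independent in $\mathcal{G}(n,p,2)$, and each prescribed colored edge appears with probability exactly $p/2$. So the union bound over labeled copies is valid, and Markov's inequality completes the proof.
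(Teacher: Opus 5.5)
Your proposal is correct and is essentially the paper's proof: a first-moment bound over alternating bicycles with $2s^2$ colored labelings per vertex sequence, giving $\frac{2}{n}\sum_s s^2(1-\lambda_n n^{-1/3})^{s+1} = O(\lambda_n^{-3})$, followed by Markov's inequality and Lemma~\ref{lma: improper alternating bicycle} applied to a non-colorable component. Your closed-form estimate $\sum_s s^2x^s\le 2/(1-x)^3$ is a slightly cleaner substitute for the paper's unimodal-sum and Gamma-integral computation. The hedging about structures with more free parameters is unnecessary, because the lemma gives exactly a path plus two endpoint chords; also, your short/long splitting fallback would not rescue an $s^3$ count, since the short range already contains $s\approx n^{1/3}/\lambda_n$, where that sum is concentrated.
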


\begin{theorem} \label{thm: scaling window upper bound}
    For $\lambda_n \xrightarrow[]{} \infty$ arbitrarily slowly, $\mathcal{G}(n, p = 2(1 + \lambda_n n^{-1/5})/n, 2)$ is w.h.p. not adaptably 2-colorable.
\end{theorem}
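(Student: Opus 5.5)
Following the strategy used for Theorem~\ref{thm: a sharp threshold}(b), the plan is a two-round (sprinkling) exposure. Write $p = p_1 + p_2 - p_1p_2$ with $p_1 = 2(1+\tfrac12\lambda_n n^{-1/5})/n$ and $p_2 \approx \lambda_n n^{-1/5}/n$, so that $G \sim \mathcal{G}(n,p,2)$ is the union of $G_1\sim\mathcal{G}(n,p_1,2)$ and an independent sprinkle $G_2\sim\mathcal{G}(n,p_2,2)$. Where two copies both contain a pair, a coloring rule is needed; I would take the color from $G_2$, so that each sprinkled pair receives an independent uniform color. The goal is to show two things. First, w.h.p.\ $G_1$ contains a path $P$ with alternating edge colors and length $L$ of order $\delta^{2}n$ for some power $\delta = \Theta(\lambda_n n^{-1/5})$. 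Second, w.h.p.\ the sprinkle places on $P$ the two chords of a proper alternating odd bicycle. By Lemma~\ref{lma: non-adapt 2-col condition}, such a bicycle certifies non-adaptable 2-colorability.

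\textbf{Step 1 (long alternating path near criticality).} I would rerun the alternating-color DFS of Krivelevich--Sudakov type used for the linear path, now with $\epsilon=\epsilon(n)=\tfrac12\lambda_n n^{-1/5}$. At each step the DFS queries only pairs whose color is the one the alternation requires, so each query succeeds with probability $p_1/2=(1+\epsilon)/n$. The process is therefore a supercritical DFS with effective edge probability $(1+\epsilon)/n$. The standard argument says: if the stack never exceeds length $\ell$, then the unvisited set $U$ and the finished set $S$ have no required-color edges between them. One then shows via Chernoff that, after $N\approx \epsilon n$ queries have been processed with about $(1+\epsilon)N/n$ successes, this forces $|S\cup\text{stack}|$ to be too large unless the stack is long. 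The Chernoff deviations are of order $\sqrt{N\cdot N/n}$, and they must be dominated by the drift $\epsilon N^2/n$. I expect the optimal choice to give $L=\Theta(\epsilon^{2}n)$ (any $\epsilon^{c}n$ with small $c$ would do) with failure probability $e^{-\Omega(\epsilon^3 n)}$. This tends to $0$ because $\epsilon^3 n=\lambda_n^3 n^{2/5}\to\infty$. This is the main obstacle: the DFS bookkeeping has to be redone with $\epsilon\to0$, and one must make sure the alternation constraint does not lose a constant factor in the effective degree. There is a further complication that the colors of the endpoint edges must match at the junction with the stack top.

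\textbf{Step 2 (sprinkling the bicycle).} Fix $P=v_1\cdots v_L$ from $G_1$, which is independent of $G_2$. Let $A$ be the first $L/4$ vertices and $B$ the last $L/4$. I need a chord $v_1 v_i$ with $i \le L/4$ that closes an odd cycle whose coloring is alternating-compatible, meaning the chord's color is forced by the alternation and parity. Symmetrically I need a chord $v_jv_L$ with $j\ge 3L/4$. Since $i\le j$ is automatic, the bicycle is proper. A single endpoint only offers $\Theta(L)$ candidate chords, each succeeding with probability $\Theta(p_2)$. This gives $\Theta(Lp_2)=\Theta(\epsilon^3)\to0$ expected chords, which is too few. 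I would therefore use flexible endpoints instead: replace $v_1$ by any $v_a$ with $a\le L/8$ and take the subpath starting at $v_a$. The number of valid pairs $(a,i)$ is then $\Theta(L^2)$, and the expected number of good chords is $\Theta(L^2 p_2)=\Theta(\epsilon^4 n^2\cdot \epsilon/n)=\Theta(\epsilon^5 n)=\Theta(\lambda_n^5)\to\infty$. This is exactly where the exponent $n^{-1/5}$ comes from, which makes me fairly confident this is the intended route.

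\textbf{Step 3 (concentration).} The candidate chords are disjoint pairs with independent colors, so the number of good chords in the left block is $\mathrm{Bin}(\Theta(L^2),\Theta(p_2))$. By Chernoff it is positive w.h.p., and likewise for the right block. Take chords $v_av_i$ with $a<i\le L/4$ and $v_jv_b$ with $3L/4\le j<b$, and trim the path to $v_a\cdots v_b$. This yields a proper alternating odd bicycle. By Lemma~\ref{lma: non-adapt 2-col condition}, $G$ is not adaptably 2-colorable w.h.p., and monotonicity under edge colorings handles the color-overwrite convention in the coupling.
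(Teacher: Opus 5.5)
Your argument is essentially the paper's. The paper also starts from $p_1 = 2(1+\tfrac12\lambda_n n^{-1/5})/n$, obtains an alternating path of length $\lambda_n^2 n^{3/5}/20$, and sprinkles with probability $\sim \lambda_n n^{-6/5}$. It then looks for a blue chord among the odd-indexed vertices of the first half and a red chord among those of the second half. That is exactly your ``flexible endpoints'' count of $\Theta(L^2)$ same-parity pairs, giving $\mathbb{P}[\text{no chord}] = e^{-\Theta(\lambda_n^5)}$.

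Step 1 needs no new work. Lemma~\ref{lma: alt path existence sub linear} already applies, since $\tfrac12\lambda_n n^{-1/5} \gg n^{-1/3}\log^{1/3} n$. The alternation costs no constant factor, because each query succeeds with probability exactly $p_1/2$ by Observation~\ref{ob: long path vars considered}. Your parameter sketch there is also off: the Krivelevich--Sudakov count runs for $N_0=\epsilon n^2/2$ queries, not $\epsilon n$.

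The one genuine difference is the coupling, and it contains a small gap.
\begin{itemize}
\item The paper sprinkles only onto non-edges of $G_1$. It therefore has to bound the number of pre-existing edges inside $I_1$ and $I_2$, using Observation~\ref{ob: alt path prob change} and Lemma~\ref{lma: prob change} to control what the DFS exposed, and discard those pairs.
\item Your independent union, with $G_2$'s color overriding on overlaps, does give exactly $\mathcal{G}(n,p,2)$. It also makes the chord events independent of $G_1$ and of $P$, so that bookkeeping disappears. This is a real simplification.
\item The cost is that an edge of $P$ which also lies in $G_2$ receives a fresh color, which can break the alternation. Monotonicity under edge colorings does not cover this: recoloring an edge is not adding one, and $(P,c_{G_1})$ need no longer be a subgraph of $(G,c_G)$.
\item The fix is one line. $P$ is determined by $G_1$, which is independent of $G_2$. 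By a union bound, the probability that some edge of $P$ lies in $G_2$ is at most $L p_2 = O(\lambda_n^3 n^{-3/5}) = o(1)$, since $\lambda_n$ grows arbitrarily slowly.
\end{itemize}
With that line added, your proof is complete.
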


We prove Theorems~\ref{thm: scaling window lower bound} and 
\ref{thm: scaling window upper bound} by refining the techniques used in 
Theorem~\ref{thm: a sharp threshold}. That is we consider 
$\epsilon = \epsilon(n) \xrightarrow[]{} 0$ and how fast it can converge.

In Section~\ref{sec: supercritical improvement} we prove a symmetric upper bound to that in 
Theorem~\ref{thm: scaling window lower bound}. Thus matching the critical windows of the giant component and $2$-SAT. 
That is we prove the following.

\begin{theorem} \label{thm: scaling window upper bound 2}
    For $\lambda_n \xrightarrow[]{} \infty$ arbitrarily slowly with $\lambda_n \ll n^{1/3}$, $\mathcal{G}(n, p = 2(1 + \lambda_n n^{-1/3})/n, 2)$ is w.h.p. not adaptably 2-colorable.
\end{theorem}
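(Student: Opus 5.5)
We reduce adaptable 2-colorability of $(G,c)$ to a $2$-SAT-like structure. Give each vertex $v$ a Boolean variable $x_v$ meaning $\sigma(v)=1$. An edge $uv$ of color $1$ forbids $x_u=x_v=1$, which is the clause $(\bar x_u\vee\bar x_v)$. An edge of color $2$ forbids $x_u=x_v=0$, which is the clause $(x_u\vee x_v)$. In the implication digraph, a red edge $uv$ gives the arcs $x_u\to\bar x_v$ and $x_v\to\bar x_u$, and a blue edge gives $\bar x_u\to x_v$ and $\bar x_v\to x_u$. A directed path of implications must therefore alternate literal signs, so it corresponds exactly to an alternating path in $(G,c)$. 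This is consistent with the role of alternating bicycles in Lemmas~\ref{lma: non-adapt 2-col condition} and \ref{lma: improper alternating bicycle}. By those lemmas it suffices to produce, w.h.p., a proper alternating odd bicycle in $\mathcal{G}(n,p,2)$ with $p=2(1+\lambda_n n^{-1/3})/n$.

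\textbf{Step 1: alternating exploration is a critical branching process.} Start from a vertex and a prescribed color $a$ for the next edge. Each step reveals the edges of color $a$ to unexplored vertices, a $\mathrm{Bin}(n',p/2)$ number with mean about $1+\lambda_n n^{-1/3}$, and then flips $a$. This is the BFS exploration of $\mathcal{G}(n,p/2)$ on the auxiliary "state" digraph with states $(v,a)$. We lift the proof of the supercritical giant-component window (Bollobás; or the Nachmias--Peres martingale exploration) to this exploration, tracking vertex pairs $(v,1),(v,2)$ and the dependence between them through the shared vertex. The target is the alternating analogue of the giant: w.h.p. there is a set $S$ of $\Theta(\lambda_n n^{2/3})$ states reachable from a given state, and this set carries excess. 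Concretely, the number of edges among explored vertices that close alternating cycles is $\Theta(\lambda_n^3)\to\infty$, by the standard computation that the giant at $\epsilon=\lambda_n n^{-1/3}$ has excess of order $\epsilon^3 n=\lambda_n^3$.

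\textbf{Step 2: extracting an odd bicycle.} Fix the alternating exploration tree $T$ of the large component. Each surplus edge of the correct color, joining two explored states, closes an alternating cycle. We need two such cycles joined by an alternating path in the "proper" configuration of Definition~\ref{def: alt bicycle} (which I take to be, in 2-SAT terms, $x\to\bar x$ and $\bar x\to x$ both derivable). By symmetry of the red/blue and literal structure, each surplus edge independently has constant probability of closing a "contradictory" cycle, one from $(v,a)$ to $(v,\bar a)$. We expect $\Theta(\lambda_n^3)$ surplus edges among $\Theta(\lambda_n n^{2/3})$ states. Since the surplus edges land roughly uniformly on pairs of explored states, and the ancestry structure in $T$ is essentially random, w.h.p. two of them produce the required contradictory implications $x_v\leadsto\bar x_v\leadsto x_v$ for a common $v$. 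That is a proper alternating odd bicycle.

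The main obstacle is Step 2, together with its dependence issue. Showing $\Theta(\lambda_n^3)$ excess is routine once Step 1 holds, but converting excess into the specific proper bicycle needs the right parity and color and the right ordering ($i\le j$). The cleanest route I would try first mirrors Bollobás et al.'s $2$-SAT window proof. Fix a literal $x_v$ in the giant of both $v$ and $\bar v$ explorations, with out-sets of size $\Theta(\lambda_n n^{2/3})$ each. Then expose fresh edges using a tiny sprinkle $p'=\delta\lambda_n n^{-4/3}$ (so that the $p$ shift is negligible). A second-moment argument shows an edge lands between the out-set of $x_v$ and the in-set of $\bar x_v$, and likewise in the reverse direction, with the expected count of such edges $\Theta((\lambda_n n^{2/3})^2\cdot \lambda_n n^{-4/3})=\Theta(\lambda_n^3)\to\infty$. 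The sprinkling's non-monotonicity in colors, noted in the introduction, is handled because adding colored edges only adds clauses, so non-colorability is monotone under colored supergraphs.
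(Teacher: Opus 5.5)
Your monotone 2-SAT encoding is correct: a red edge $uv$ is the clause $(\bar x_u\vee\bar x_v)$, a blue edge is $(x_u\vee x_v)$, and implication paths are exactly alternating paths. The final sprinkle in your last paragraph is also sound. If some variable $v$ has $|\mathrm{out}(x_v)|,|\mathrm{out}(\bar x_v)|=\Theta(\lambda_n n^{2/3})$, then, since $\mathrm{in}(\bar x_v)=\overline{\mathrm{out}(x_v)}$, a fresh sprinkle of density $\delta\lambda_n n^{-4/3}$ puts a correctly coloured edge on some same-sign pair inside each out-set with probability $1-e^{-\Theta(\delta\lambda_n^3)}$.

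The genuine gap is that such a $v$ is exactly a \emph{large} hourglass in the sense of Definition~\ref{def: hourglass}, with both components of order $\lambda_n n^{2/3}$. Proving that one exists w.h.p.\ is the whole difficulty, and you simply posit it (``Fix a literal $x_v$ in the giant of both $v$ and $\bar v$ explorations''). Step~1 does not provide it. At best it controls one literal at a time, e.g.\ by comparing a sub-exploration with a component of $\mathcal{G}(n,p/2)$, and then only with probability $\Theta(\lambda_n n^{-1/3})$; a fixed state does \emph{not} w.h.p.\ reach $\Theta(\lambda_n n^{2/3})$ states. What you need is the joint event that $x_v$ and $\bar x_v$ are both large for a common $v$. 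Those two explorations share the vertex set and are coupled through the one-colour-per-pair constraint. You would also need a concentration argument over $v$ that copes with the strong correlations among large out-sets. ``Lifting'' the Bollob\'as/Nachmias--Peres analysis does none of this, since the out-set of a literal is not a component: a vertex can be entered with either parity.

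Step~2's claims are likewise unsupported: that surplus edges ``independently'' close contradictory cycles with constant probability, and that ancestry in $T$ is ``essentially random''. Converting excess into $x_v\leadsto\bar x_v\leadsto x_v$ for a single $v$ is precisely what must be shown, as you note yourself.

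The paper sidesteps the supercritical exploration entirely.
\begin{itemize}
\item At $p_1=2(1-(M/2-1)\lambda_n n^{-1/3})/n$, each alternating search is distributed exactly as a component of $\mathcal{G}(n',p/2)$ (Lemma~\ref{lma: G tilde distribution}). The Trimming Alteration removes the bias at path vertices, and a reserve set keeps the runs independent.
\item This gives w.h.p.\ $\Theta(\lambda_n^3)$ vertex-disjoint \emph{small} hourglasses whose components have size at least $n^{2/3}/\lambda_n^2$ (Lemma~\ref{lma: hourglass}).
\item Only zeros have been revealed between distinct hourglasses (Lemma~\ref{lma: info between hourglasses}). So one sprinkle of density $\sim M\lambda_n n^{-4/3}$ makes the coloured auxiliary graph on these hourglasses dominate $\mathcal{G}(N,M/(9\lambda_n^3),2)$, with $N=\Theta(\lambda_n^3)$ and $NM/(9\lambda_n^3)>2$.
\item Theorem~\ref{thm: a sharp threshold}(b), via Corollary~\ref{cor: disjoint cycle bicycle}, then yields a proper alternating odd bicycle of hourglasses, which lifts to $G$.
\end{itemize}
No single large hourglass is ever needed: the merging you would have to do by hand is outsourced to the constant-$\epsilon$ threshold.

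A minor point: monotonicity of non-colourability alone does not resolve the colour issue in sprinkling, because $\mathcal{G}(n,p,2)$ allows only one colour per pair. You must also bound the pre-existing edges on the candidate pairs, as the paper does. Alternatively, with an independent union, you can observe that doubly coloured pairs are w.h.p.\ absent.
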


We introduce a family of subgraphs 
which act as ``seeds" for forming alternating odd bicycles.
We call these subgraphs \textit{hourglasses}. A similar family of subgraphs, were introduced 
by Bollobás et al. \cite{2-sat-critical-window} in the directed graph model. 
We adapt their constructions to the model $\mathcal{G}(n,p,2)$.
However, their techniques do not directly carry over to this model due to the dependence on the edge 
colors. Their work serves as inspiration and some of the techniques used follow 
from \cite{2-sat-critical-window}. Our approach builds upon their work and develops techniques 
to address this dependence.

In particular, we will show that
in the subcritical regime, $p = \frac{2(1 - \lambda_n n^{-1/3})}{n}$, w.h.p. many 
vertex disjoint hourglasses exist, which we identify through a systematic search procedure. 
As we increase $p$ to the 
supercritical regime, $p = \frac{2(1 + \lambda_n n^{-1/3})}{n}$, w.h.p. among the hourglasses 
and the newly added edges, an alternating odd bicycle is produced. 
To accomplish this, we will treat the hourglasses as vertices in an auxiliary  
graph with edges determined by those introduced upon increasing $p$. We proceed to show 
this auxiliary graph is distributed as $\mathcal{G}(N,p',2)$ for some $p' > 2/N$, allowing us to apply
Theorem~\ref{thm: a sharp threshold} to obtain an alternating bicycle which we then map back to the 
original graph. 
This differs from the $2$-SAT case in which Bollobás et al. show that 
the many hourglasses merge into a large hourglass then through another 
round of sprinkling, produce a certificate for unsatisfiability. 
Applying Theorem~\ref{thm: a sharp threshold} to the auxiliary graph simplifies the proof and removes the need 
for the additional round of sprinkling.

Finally in Section~\ref{sec: connectivity of sol space}, we consider the geometry of the solution space. 
Utilizing similar techniques developed in
Section~\ref{sec: characterizing 2-adapt col}, we show that below the critical window, 
the solution space forms a single connected component. 
Namely, for $p < 2/n$ the space of adapted colorings for $\mathcal{G}(n,p,2)$ 
remains connected right up to the lower bound of the critical window stated in 
Theorem~\ref{thm: scaling window lower bound}. Precisely, for any two vertex colorings 
$\sigma$ and $\tau$ adapted to $c$, there exists a sequence, or path, of vertex colorings 
adapted to $c$ that starts at $\sigma$ and ends at $\tau$ such that adjacent colorings in the 
path agree on almost all vertices. 
Precisely we prove the following Theorem.

\begin{theorem} \label{theorem: connected sol space}
	Let $\epsilon > 0$ be a constant. For $(G,c)$ sampled from $\mathcal{G}(n,p = \frac{2-\epsilon}{n},2)$ w.h.p., 
    any two vertex colorings $\sigma$ and $\tau$ adapted to $c$ there exists w.h.p. a sequence of colorings adapted to $c$, $\sigma = \sigma_1, \sigma_2, \sigma_3, \cdots, \sigma_l = \tau$ such that for all $i < l$: 
	\begin{equation*}
		\left| \{ v\in V \mid \sigma_i(v) \neq \sigma_{i+1}(v) \} \right| = \mathcal{O}(\log{n})
	\end{equation*}
\end{theorem}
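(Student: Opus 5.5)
We will prove the statement by reducing it to a structural fact about $(G,c)$ and then checking that fact in the random model. The structural fact is this: if every ``constraint-connected'' piece of the instance has at most $\mathcal{O}(\log n)$ vertices and admits only forced or locally flippable choices, then any two adapted colorings can be joined by flipping one piece at a time.

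Concretely, encode the instance as a 2-SAT formula. For $k=2$, let $x_v$ be true iff $\sigma(v)=1$. An edge $uv$ of color $1$ gives the clause $(\neg x_u \vee \neg x_v)$, and an edge of color $2$ gives $(x_u \vee x_v)$. The implication digraph $D$ then has an arc $x_u \to \neg x_v$ from every color-1 edge and an arc $\neg x_u \to x_v$ from every color-2 edge. A directed path in $D$ corresponds to a path in $G$ whose edge colors alternate. This is exactly the alternating-path structure underlying the alternating bicycles of Section~\ref{sec: characterizing 2-adapt col}.

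\textbf{Step 1 (random structure).} Show that for $p=(2-\epsilon)/n$, w.h.p. every alternating path in $(G,c)$ has length $\mathcal{O}(\log n)$, and every vertex's alternating out-reach in $D$ has size $\mathcal{O}(\log n)$. This is a first-moment and branching-process computation. From a vertex with a prescribed incoming color, the expected number of continuing edges of the opposite color is $(n-1)p/2\approx 1-\epsilon/2<1$. Hence the alternating exploration is a subcritical Galton--Watson process with Poisson$(1-\epsilon/2)$ offspring, whose total progeny exceeds $C\log n$ with probability $n^{-2}$ for large $C$. A union bound over the $2n$ literals then gives the claim. The same first-moment computation used for Theorem~\ref{thm: a sharp threshold}(a) additionally rules out alternating bicycles, and even any two cycles within one small alternating closure. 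So w.h.p. each out-reach set $R(\ell)$ of a literal $\ell$ in $D$ has size $\mathcal{O}(\log n)$.

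\textbf{Step 2 (local moves).} Take adapted $\sigma\neq\tau$ and pick a vertex $v$ with $\sigma(v)\neq\tau(v)$. Let $\ell$ be the literal true in $\tau$ but false in $\sigma$. Define $\sigma'$ by setting every literal in $R(\ell)$ to true and leaving all other variables as in $\sigma$. This is consistent because $\tau$ satisfies every literal in $R(\ell)$ (implications preserve truth under $\tau$), so $R(\ell)$ contains no complementary pair. To check that $\sigma'$ is adapted, consider a clause $(a\vee b)$:
\begin{itemize}
\item if $\neg a\in R(\ell)$ was newly made false, then $b\in R(\ell)$ by closure, so $b$ is true;
\item otherwise the clause's status is unchanged from $\sigma$, hence still satisfied.
\end{itemize}
Moreover $\sigma'$ agrees with $\tau$ on $R(\ell)$, so the number of vertices where the current coloring and $\tau$ differ strictly decreases. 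Each step changes at most $|R(\ell)|=\mathcal{O}(\log n)$ vertices. Iterating at most $n$ times yields the required sequence $\sigma=\sigma_1,\dots,\sigma_l=\tau$.

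\textbf{Main obstacle.} With this closure argument the combinatorial part is clean. The bulk of the work is Step 1: making the exploration of alternating reach rigorous despite dependencies, since the parity and color of the entry edge matter and exposed edges interact. I would do this by exposing pairs $(X_{uv},Y_{uv})$ only as the search proceeds and dominating the process by a binomial branching process. Step 1 uses only the first-moment and branching bounds; the absence of alternating bicycles is not needed for the closure argument itself.
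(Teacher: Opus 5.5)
Your proof is correct, and it differs from the paper's mainly in how the local move is justified. The move itself is essentially the paper's. In the relevant case, where $\ell$ is true in $\tau$, \textit{Intermediate\_Coloring}$(\rho,v)$ flips exactly the literals of your closure $R(\ell)$ that are false under $\rho$: once a literal of $R(\ell)$ is true under an adapted coloring, everything it implies is true as well.

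What differs is the justification. The paper argues through layers of flipped vertices and needs $v$ to be non-frozen:
\begin{itemize}
\item Corollary~\ref{cor: frozen char} gives disjoint layers (Lemma~\ref{lma: Disjoint S});
\item a minimal-counterexample induction shows that only vertices on which $\rho$ and $\tau$ disagree are flipped;
\item two containment lemmas put the flipped set inside the output of the vertex-level \textit{Uncolored\_Process}, whose size is distributed exactly as the component of $v$ in $\mathcal{G}(n,p/2)$.
\end{itemize}
Your implication-closure argument gets validity and progress in a few lines, since consistency of $R(\ell)$ comes for free from $\tau$. It bypasses the frozen-vertex/unicycle characterization altogether. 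Your union bound over the $2n$ literals also gives a single high-probability event that serves all pairs $\sigma,\tau$ at once. The per-$(\rho,v)$ statement of Lemma~\ref{lma: sub linear change} leaves that step implicit.

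The one step to state more carefully is Step 1. A directed path in $D$ is an alternating \emph{walk}, and each present edge gives two arcs of $D$, an implication and its contrapositive. If $R(\ell)$ contains both $x_w$ and $\neg x_w$, the exploration can traverse both arcs of the same edge, and the second traversal carries no fresh randomness. So the Galton--Watson domination you sketch does not by itself cover literals whose closure contains a complementary pair. Such literals do occur with probability bounded away from zero, for example the wrong color at a frozen vertex.

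You do not need them: Step 2 only uses literals that are true in some adapted coloring, and their closures are complement-free. Run the exploration of $R(\ell)$ but stop it as soon as a complementary pair appears. Before that moment, every query is either already known to be $0$ or succeeds with conditional probability at most $\frac{p/2}{1-p/2}$ (Lemma~\ref{lma: prob change}). Hence, on the event that $R(\ell)$ is complement-free, $|R(\ell)|$ is dominated by the total progeny of a $\mathcal{B}in(n,\frac{p/2}{1-p/2})$ branching process, and your union bound goes through.

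This restriction is exactly the counterpart of the paper's non-frozen hypothesis. In fact, on that event the vertex set of $R(\ell)$ coincides with the output of \textit{Uncolored\_Process}$(v,\sigma(v))$, so you could simply invoke Lemma~\ref{lma: uncolored process vs connected component}. The first-moment remarks about path lengths and bicycles can be dropped.
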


Given an initial coloring $\rho$ and a target coloring $\tau$ we form an intermediate coloring 
by choosing a vertex on which they disagree and swap the color assigned to it in $\rho$.
Naturally, this can produce violations. We continue to swap the color of vertices in $\rho$ 
to correct these violations until we achieve a valid coloring adapted to $c$.
We show this procedure is contained within a graph branching process run on 
$\mathcal{G}(n,p/2)$ in the subcritical regime where w.h.p. all connected components are of size 
$\mathcal{O}(\log{n})$ (see Chapter 11 in \cite{TheProbabilisticMethod}).

For more on this notion of a clustering and the evolution of the solution space we refer the reader to \cite{sol-space-geo}, \cite{sol-space-rs}, and \cite{alg-barriers-phases}.
As we will see in Section~\ref{sec: connectivity of sol space}, the distance between colorings 
in the sequence defined in Theorem~\ref{theorem: connected sol space} is dominated by the distribution
of the size of connected components in $\mathcal{G}(n,p/2)$. 
This gives us an accurate depiction of the evolution of the solution space. 
Particularly, for constant
$\epsilon > 0$ and $p = (2 - \epsilon)/n$ we need only allow $\mathcal{O}(\log{n})$ 
distance between colorings in the sequence.

\section{Characterizing Two-Adaptable Colorability} \label{sec: characterizing 2-adapt col}

In this section, we prove a similar statement to Theorem~\ref{thm: adversary adpt 2 col} proved in \cite{adapt-cols}. In the following, we shall assume $G = (V, E)$ is a graph and $c: E \xrightarrow[]{} \{red, blue\}$ is an edge coloring and we denote the graph edge coloring pair by $(G, c)$. 
The following definitions and notation are with respect to the edge coloring $c$; however, when the edge coloring is obvious, we drop the ``under $c$". 

First, for $A \subseteq V$ we let $N(A)$ denote the \textit{neighborhood} of $A$, 
that is all vertices in $V \setminus A$ incident to a vertex in $A$. 
We also let $\delta(A)$ denote the \textit{cut} induced by $A$, that is all edges with precisely one endpoint in $A$.
Note that if $A$ is a singleton ${u}$, for ease of notation we let $N(u) \coloneqq N(\{u\})$ and $\delta(u) \coloneqq \delta(\{u\})$. We will also require analogs of $N(\cdot)$ with respect to specific edge colors and vertex colors, under a specified vertex coloring. Particularly, we will let $N_\gamma(A)$ denote the vertices in $V \setminus A$ which are incident to a vertex in $A$ by an edge of color $\gamma$. Moreover, for a coloring of the vertices $\sigma : V \xrightarrow[]{} \{red, blue\}$ we let $N_\gamma^\sigma (A)$ denote the vertices in $V \setminus A$ which are colored $\gamma$ under $\sigma$ and are incident to a vertex in $A$ by an edge of color $\gamma$. Formally we have, 

\begin{definition}
    Given a graph $G$ along with edge coloring $c: E \xrightarrow[]{} \{red, blue\}$ and vertex coloring $\sigma: V \xrightarrow[]{} \{red, blue\}$, for $\gamma \in \{red,blue\}$, and $A \subseteq V$ let us define the following notation,
    \begin{itemize}
        \item $N_\gamma(A) \coloneqq \{v \in V \setminus A \mid \exists u \in A \ s.t. \ uv \in E, c(uv) = \gamma\}$
        \item $N^\sigma_\gamma(A) \coloneqq \{v \in N_\gamma(A) \mid \sigma(v) = \gamma\}$
    \end{itemize}
\end{definition}

As above, when $A$ is a singleton, we drop $\{\cdot\}$ for ease of notation.

\begin{definition}[Alternating Path]
    A path $P = v_1 v_2 v_3 \cdots v_l$ is called alternating if for all $2 \leq i \leq l - 1$, $c(v_{i-1}v_i) \neq c(v_iv_{i+1})$. 
\end{definition}

We are now ready to define the following.

\begin{definition} \label{def: alt bicycle}    
    If a bicycle (see Definition~\ref{def: bicycle}) can be decomposed into an alternating path $P$ of the form $v_1 v_2 v_3 \cdots v_s$ and two edges $v_1v_i$ and $v_sv_j$ with $c(v_1 v_i) \neq c(v_1v_2)$ and $c(v_sv_j) \neq c(v_{s-1}v_s)$ then we call the bicycle alternating under $c$.
\end{definition}

\begin{figure}[h]
    \centering
        \begin{tikzpicture}[node distance={10mm}, main/.style = {draw, circle, fill=black}] 
            \begin{scope}[every node/.style={circle, thin, draw, minimum size=1mm}]
                \node[main] (v1) at (0,0) {};
                \node[main, draw=none, fill=none] (v1_name) at (0,-0.4) {$v_1$}; 

                \node[main] (v2) at (1.5,0) {};
                \node[main, draw=none, fill=none] (v2') at (2.25,0) {$\cdots$};
                \node[main, draw=none, fill=none] (C1) at (2.25,0.6) {$C_1$};
                \node[main] (v3) at (3,0) {};
                \node[main] (v4) at (4.5,0) {};
                \node[main, draw=none, fill=none] (vi_name) at (4.5,-0.4) {$v_i$}; 
                \node[main] (v5) at (6,0) {};
                \node[main, draw=none, fill=none] (v5') at (6.75,0) {$\cdots$};
                \node[main, draw=none, fill=none] (P) at (6.75,0.4) {$P$};
                \node[main] (v6) at (7.5,0) {};
                \node[main] (v7) at (9,0) {};
                \node[main, draw=none, fill=none] (vi_name) at (9,-0.4) {$v_j$}; 
                \node[main] (v8) at (10.5,0) {};
                \node[main, draw=none, fill=none] (v8') at (11.25,0) {$\cdots$};
                \node[main, draw=none, fill=none] (C2) at (11.25,0.6) {$C_2$};
                \node[main] (v9) at (12,0) {};
                \node[main] (v10) at (13.5,0) {};
                \node[main, draw=none, fill=none] (vs_name) at (13.5,-0.4) {$v_s$}; 

                \draw[red] (v1) -- (v2);
                \draw[blue, very thick] (v2) -- (v2');
                \draw[red] (v2') -- (v3);
                \draw[blue, very thick] (v3) -- (v4);

                \draw[red] (v4) -- (v5);
                \draw[blue, very thick] (v5) -- (v5');
                \draw[red] (v5') -- (v6);
                \draw[blue, very thick] (v6) -- (v7);
                
                \draw[red] (v8) -- (v7);
                \draw[blue, very thick] (v8') -- (v8);
                \draw[red] (v9) -- (v8');
                \draw[blue, very thick] (v10) -- (v9);

                \draw[blue, very thick] (v1) to[out=60,in=120] (v4);
                \draw[red] (v7) to[out=60,in=120] (v10);
                
            \end{scope}
    
        \end{tikzpicture}
        \caption{An example of a proper alternating odd bicycle with $s \equiv 1 \mod{2}$.}
        \label{fig: alternating odd proper bicycle}
    \end{figure}
    
    \begin{figure}[h]
    \centering
        \begin{tikzpicture}[node distance={10mm}, main/.style = {draw, circle, fill=black}] 
            \begin{scope}[every node/.style={circle, thin, draw, minimum size=1mm}]
                \node[main, pattern=north east lines, pattern color=blue] (v1) at (0,0) {};
                \node[main, draw=none, fill=none] (v1_name) at (0,-0.4) {$v_1$}; 

                \node[main, fill = red] (v2) at (1.5,0) {};
                \node[main, draw=none, fill=none] (v2') at (2.25,0) {$\cdots$};
                \node[main, draw=none, fill=none] (C1) at (4.5,1) {$C_1$};
                \node[main, fill = red] (v3) at (3,0) {};
                \node[main, pattern=north east lines, pattern color=blue] (v4) at (4.5,0) {};
                \node[main, draw=none, fill=none] (vi_name) at (4.5,-0.4) {$v_i$}; 
                \node[main, fill = red] (v5) at (6,0) {};
                \node[main, draw=none, fill=none] (v5') at (6.75,0) {$\cdots$};
                \node[main, draw=none, fill=none] (P) at (6.75,0.4) {$P$};
                \node[main, pattern=north east lines, pattern color=blue] (v6) at (7.5,0) {};
                \node[main, fill=red] (v7) at (9,0) {};
                \node[main, draw=none, fill=none] (vi_name) at (9,-0.4) {$v_j$}; 
                \node[main, pattern=north east lines, pattern color=blue] (v8) at (10.5,0) {};
                \node[main, draw=none, fill=none] (v8') at (11.25,0) {$\cdots$};
                \node[main, draw=none, fill=none] (C2) at (9,-1) {$C_2$};
                \node[main, pattern=north east lines, pattern color=blue] (v9) at (12,0) {};
                \node[main, fill = red] (v10) at (13.5,0) {};
                \node[main, draw=none, fill=none] (vs_name) at (13.5,-0.4) {$v_s$}; 

                \draw[red] (v1) -- (v2);
                \draw[blue, very thick] (v2) -- (v2');
                \draw[red] (v2') -- (v3);
                \draw[blue, very thick] (v3) -- (v4);

                \draw[red] (v4) -- (v5);
                \draw[blue, very thick] (v5) -- (v5');
                \draw[red] (v5') -- (v6);
                \draw[blue, very thick] (v6) -- (v7);
                
                \draw[red] (v8) -- (v7);
                \draw[blue, very thick] (v8') -- (v8);
                \draw[red] (v9) -- (v8');
                \draw[blue, very thick] (v10) -- (v9);

                \draw[blue, very thick] (v1) to[out=40,in=140] (v7);
                \draw[red] (v4) to[out=-40,in=-140] (v10);
                
            \end{scope}
    
        \end{tikzpicture}
        \caption{An example of a improper alternating odd bicycle along with a 
        valid vertex coloring adapted to the edge coloring. We start by coloring $v_1$ blue, which forces
        the color of all vertices but $v_{i+1}, \cdots, v_{j-1}$. The vertex coloring on the 
        path $P$ from $v_i$ to $v_j$ is not alternating as $v_i$ is colored $blue$ and $v_j$ is colored $red$,
        we have some freedom on how we can color the vertices $v_{i+1}, \dots, v_{j-1}$. }
        \label{fig: alternating non-proper bicycle}
    \end{figure}

We further note that it is not hard to see that a sufficient and necessary condition for an alternating proper 
bicycle to be odd is that for $i,j$ as in Definition~\ref{def: alt bicycle} we have $i \equiv 1 \mod{2}$ and 
$j \equiv s \mod{2}$. We are now ready for the following lemma, which provides a sufficient condition for 
$(G,c)$ being not adaptably $2$-colorable.

\begin{lemma} \label{lma: non-adapt 2-col condition}
	\mbox{}
	\begin{enumerate}[label=\alph*.]
    	\item Every proper alternating odd bicycle is not adaptably 2-colorable. 
		\item Every improper alternating bicycle is adaptably 2-colorable.
	\end{enumerate}
\end{lemma}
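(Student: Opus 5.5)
The plan is to reduce both parts to a single structural description of the vertex colorings of an alternating path, and then read off the effect of the two chords $v_1v_i$ and $v_sv_j$ in terms of parities. Throughout I write $e_k = v_kv_{k+1}$ and use that consecutive $e_k$ have different colors.

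\textbf{Step 1 (path structure).} Take a vertex coloring $\sigma$ of $P$ that violates no path edge. Say $v_k$ is of \emph{type A} if $\sigma(v_k)=c(e_k)$, and of \emph{type B} if $\sigma(v_k)=c(e_{k-1})$. For $1<k<s$ exactly one of these holds, since $c(e_{k-1})\neq c(e_k)$. At the endpoints, $v_1$ is of type A or, by convention, type B (meaning $\sigma(v_1)\ne c(e_1)$); likewise $v_s$ is of type B or, by convention, type A (meaning $\sigma(v_s)\neq c(e_{s-1})$).

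\begin{itemize}
\item \emph{Forward forcing.} If $v_k$ is of type A, then $e_k$ forces $\sigma(v_{k+1})\neq c(e_k)$, i.e.\ $\sigma(v_{k+1})=c(e_{k+1})$. So $v_{k+1}$ is of type A, and type A propagates to the end of the path.
\item \emph{No violation at a B-to-A switch.} A type B vertex followed by a type A vertex creates no violation: if $v_k$ is of type B then $\sigma(v_k)=c(e_{k-1})\neq c(e_k)$, so $e_k$ is satisfied.
\end{itemize}

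Hence the path-valid colorings are exactly those given by a switch index $m\in\{0,\dots,s\}$: vertices $v_1,\dots,v_m$ are of type B and $v_{m+1},\dots,v_s$ are of type A. Every such choice is valid.

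\textbf{Step 2 (the chords).} By definition $c(v_1v_i)\neq c(e_1)$ and $c(v_sv_j)\neq c(e_{s-1})$.

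\begin{itemize}
\item \emph{Chord $v_1v_i$.} If $m=0$, then $v_1$ is of type A, so $\sigma(v_1)=c(e_1)\neq c(v_1v_i)$ and the chord is satisfied. If $m\ge 1$, then $\sigma(v_1)=c(v_1v_i)$, and the chord is violated iff $\sigma(v_i)=c(v_1v_i)$.
\item \emph{Chord $v_sv_j$.} Symmetrically, it is satisfied if $m=s$. If $m<s$, then $\sigma(v_s)=c(v_sv_j)$, and the chord is violated iff $\sigma(v_j)=c(v_sv_j)$.
\end{itemize}

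Since colors along $P$ alternate, $c(e_{i-1})$ equals $c(e_1)$ iff $i$ is even. Hence:
\begin{itemize}
\item if $i$ is odd, then $c(v_1v_i)=c(e_{i-1})$, so $v_1v_i$ is violated (for $m\ge1$) exactly when $v_i$ is of type B;
\item if $i$ is even, it is violated exactly when $v_i$ is of type A.
\end{itemize}
Similarly, $v_sv_j$ (for $m<s$) is violated exactly when $v_j$ is of type A if $j\equiv s \pmod 2$, and exactly when $v_j$ is of type B otherwise. I will double-check these parity computations, since they are the only place where the oddness hypothesis enters.

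\textbf{Part a.} Recall that a proper alternating bicycle is odd iff $i$ is odd and $j\equiv s\pmod 2$. Under this hypothesis, $v_1v_i$ is satisfied iff $m=0$ or $v_i$ is of type A, i.e.\ iff $m<i$. Likewise $v_sv_j$ is satisfied iff $m=s$ or $v_j$ is of type B, i.e.\ iff $m\ge j$. So a valid coloring needs $j\le m<i$, which is impossible when $i\le j$.

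\textbf{Part b.} Here $i>j$, and there is no parity hypothesis, so the four parity cases must be handled.
\begin{itemize}
\item Try $m=s$ (all type B). Only $v_1v_i$ can fail, and it fails iff ($i$ odd and $v_i$ of type B), i.e.\ iff $i$ is odd.
\item Try $m=0$ (all type A). Only $v_sv_j$ can fail, and it fails iff $j\equiv s\pmod 2$.
\item If both fail, then $i$ is odd and $j\equiv s\pmod 2$, and we are in the situation of part a. There the admissible switch indices are exactly $j\le m<i$, which exist because $j<i$; take for instance $m=j$.
\end{itemize}
This gives an adapted coloring in every case, which is the coloring illustrated in the improper-bicycle figure, where the freedom lies between $v_j$ and $v_i$.

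\textbf{Main obstacle.} I expect the delicate step to be Step 1. Besides the two bullet points, one must handle the endpoint conventions for $v_1$ and $v_s$ carefully, and cover the degenerate cases $j=1$ and $i=s$.
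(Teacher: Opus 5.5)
Your proof is correct, and it differs from the paper's mainly in part (b).

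\textbf{The paper's route.} The paper proves (a) by a local forcing argument. Take $c(v_1v_2)$ red. A blue $v_i$ would force $v_1$ red, and propagating along $v_1\cdots v_i$ (with $i$ odd) would then make $v_i$ red; hence $v_i$ is red. This forces all of $v_i,\dots,v_s$, and since $j\ge i$ and $j\equiv s \pmod 2$, the chord $v_sv_j$ is violated. For (b) the paper does not work in the alternating setting at all. It invokes Hell and Zhu's adversarial characterization: an improper bicycle has $\chi_a\le 2$, so every edge coloring of it can be adapted.

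\textbf{Your route.} You first classify all colorings of an alternating path that violate no path edge: they are exactly the switch-index colorings with types $B^mA^{s-m}$. Each chord then becomes a one-sided condition on $m$; in the odd-parity case these are $m<i$ and $m\ge j$. Part (a) is then the emptiness of $[j,i)$ when $i\le j$. Part (b) is an explicit choice $m\in\{s,0,j\}$ according to the parities of $i$ and $s-j$.

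\textbf{Comparison.} Your (a) is the paper's forcing argument organized globally. Your (b) is self-contained and constructive, and your approach treats both parts uniformly. It also makes visible exactly where properness and oddness enter. As a bonus, it shows that proper alternating bicycles which are not odd are adaptably 2-colorable. The paper's route is shorter and gives a stronger conclusion for (b): adaptability under arbitrary, not just alternating, edge colorings. However, it depends on an external theorem. It also leaves implicit why an improper bicycle contains no proper odd bicycle or odd edge-$K_4$. The reason is that an improper bicycle is a theta graph, in which any two cycles share an edge and only two vertices have degree three.
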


We defer the proof of Lemma~\ref{lma: non-adapt 2-col condition} to 
Appendix~\ref{appendix: adapt 2-col proofs}; however, we note that this is shown in \cite{adapt-cols} 
in the context of adversarial adaptable colorings.

We now provide a specific family $\mathcal{H}$ of graph edge coloring pairs whose exclusion as a subgraph guarantees adaptable $2$-colorability. First, we introduce the notion of an alternating unicycle which can be viewed as ``half" of an alternating bicycle. 

\begin{definition} \label{def: alt unicycle}
    An alternating unicycle consists of an alternating path $P = v_1 \cdots v_s$ along with an edge 
    $v_jv_s$ with $c(v_jv_s) \neq c(v_{s-1}v_s)$ for some $j \in \{1, \dots, s-2\}$. Furthermore, 
    if $j \equiv s \mod{2}$ (that is the cycle $C = v_j v_{j+1} \cdots v_s v_j$ is odd) then we call the 
    alternating unicycle odd. We call the path $v_1 \cdots v_j$ the handle of the alternating unicycle.
\end{definition}

Note that by Lemma~\ref{lma: non-adapt 2-col condition}(b), the converse of the 
following Lemma does not hold. However, the lemma is sufficient for proving 
Theorems~\ref{thm: a sharp threshold} and \ref{thm: scaling window lower bound}.

\begin{lemma} \label{lma: improper alternating bicycle}
    If $G$ is connected and $(G,c)$ is not adaptably 2-colorable, then $(G,c)$ contains an alternating bicycle. 
\end{lemma}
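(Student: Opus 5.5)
We will prove the contrapositive in constructive form, following the outline in the Results section: take $(G,c)$ connected and not adaptably $2$-colorable, and choose a vertex coloring $\sigma: V \to \{red, blue\}$ minimizing the number of violated edges, i.e.\ edges $uv$ with $\sigma(u)=\sigma(v)=c(uv)$. Since $(G,c)$ is not adaptably $2$-colorable, some edge $e_0=xy$ is violated; say $c(xy)=\sigma(x)=\sigma(y)=red$. The idea is to grow, from each endpoint of $e_0$, a structure of forced recolorings. Minimality gives a local rigidity. Flipping $x$ to blue must not decrease the number of violations, so $x$ has at least one blue edge to a blue-colored neighbour, i.e.\ $N^\sigma_{blue}(x)\neq\emptyset$. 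Flipping such a neighbour $w$ to red must in turn create a violation through a red edge to a red neighbour, and so on. In this way alternating paths starting at $x$ arise naturally, with edges alternating blue, red, blue, \dots, and with the vertex colors along the path equal to the edge colors of the edge that follows. The same holds starting from $y$.

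Concretely, for the endpoint $x$ I would define the set $R_x$ of vertices reachable from $x$ by an alternating path $x=u_0u_1\cdots u_t$ whose first edge is blue and in which $\sigma(u_k)=c(u_{k-1}u_k)$ for each $k\ge 1$. Call these $\sigma$-forced alternating paths. The key claim is that flipping the colors of all vertices on such a maximal path would strictly reduce the number of violations unless the path "closes up", i.e.\ unless the last vertex $u_t$ has an edge of the appropriate color back to an earlier vertex of the path (or to the other side). Along the path every edge $u_{k-1}u_k$ is satisfied after simultaneous flipping, since consecutive vertices swap colors. Only edges leaving the path matter. Maximality of the path kills edges to new vertices, and the violated edge $xy$ becomes satisfied once $x$ is flipped. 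Hence, if no closing edge exists, flipping the path gives a better coloring. This contradicts minimality.

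So from $x$ we obtain an alternating unicycle (Definition~\ref{def: alt unicycle}) whose handle starts with the edge $yx$ read in reverse. The closing chord has color different from the last path edge, because the obstruction to flipping is exactly a chord $u_tu_j$ of color $\sigma(u_t)$ whose other endpoint also carries that color. We likewise obtain one from $y$. We then glue the two unicycles through the edge $xy$. If the two forced paths are vertex-disjoint, the concatenation (cycle from $x$'s side, path through $xy$, cycle from $y$'s side) is an alternating path with closing chords at both ends, hence an alternating bicycle. The alternation at $xy$ holds because $xy$ is red while the first edges on each side are blue.

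The main obstacle is the case where the two forced paths intersect, or where a chord from one side lands on the other side's path. Here I would choose the paths to be shortest (or take the first intersection point along the $y$-path) and then reroute. If the $y$-path hits a vertex $z$ of the $x$-structure, we truncate at the first such $z$ and check colors. Since both paths are $\sigma$-forced, the color of the edge entering $z$ is determined by $\sigma(z)$, so parity or alternation either matches, giving a closing chord and thus a (possibly improper) bicycle, or it mismatches. In the mismatch case, flipping the combined structure again reduces violations, contradicting minimality. Getting this case analysis right, and ensuring the resulting decomposition matches Definition~\ref{def: alt bicycle} with the chord colors differing from the end edges, is where I expect most of the work. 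Connectivity of $G$ is used only to guarantee that all of this happens within one component.
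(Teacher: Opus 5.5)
Your overall plan matches the paper's: take a violation-minimal $\sigma$, grow forced alternating structures from both ends of a violated edge $xy$, get an odd unicycle on each side, and glue them through $xy$. The gap is in the step that is supposed to produce the unicycle, which is false as stated. Maximality of a single $\sigma$-forced path $x=u_0u_1\cdots u_t$ only rules out extensions at $u_t$. An intermediate vertex $u_k$ may have a second forced neighbour $z$ off the path, with $c(u_kz)=\sigma(z)\neq\sigma(u_k)$, and $u_kz$ becomes violated when you flip the path.

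Concretely, join the triangles $xa_1a_2$ and $yb_1b_2$ by a red edge $xy$, with $xa_1,xa_2,yb_1,yb_2$ blue and $a_1a_2,b_1b_2$ red. This is a proper alternating odd bicycle. Colouring $x,y$ red and $a_1,a_2,b_1,b_2$ blue leaves only $xy$ violated, so this $\sigma$ is minimal. The maximal forced paths from $x$ are $xa_1$ and $xa_2$. Neither has a chord back to itself or to $y$'s side, yet flipping $\{x,a_1\}$ merely trades the violation $xy$ for $xa_2$. So your dichotomy fails for every maximal path: the obstruction $a_1a_2$ joins two different branches, and the defect of the odd cycle sits at the branch point $x$.

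The repair is what the paper does with its layers $S_0=\{u\},S_1,S_2,\dots$ (your $x$ is its $u$): flip everything reached so far, not one path. Every forced path enters a vertex $z$ through an edge of colour $\sigma(z)$, so each explored vertex is extended in the single colour $\overline{\sigma}(z)$. Minimality then forces, before the search dies out, one of two events (the paper's two revisit conditions):
\begin{itemize}
\item an edge $zw$ inside the explored set with $\sigma(z)=\sigma(w)\neq c(zw)$, or
\item an edge to $y$ of colour $\sigma(y)$.
\end{itemize}
Turning this into a unicycle whose handle avoids its cycle also needs coherent paths to $z$ and $w$. The paper uses BFS parent pointers, so the two paths share a prefix $P_1$ (the handle) and split at a vertex $a$ where both edges of the cycle $C_1$ have colour $\overline{\sigma}(a)$.

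Your gluing idea is the right one: stop the $y$-side exploration at its first contact with the $x$-side unicycle. The ``mismatch'' case you worry about never arises. Run the same layered search from $y$ on the original $\sigma$, and stop the first time it reaches $P_1\cup C_1$ (other than at $y$), through an edge $wz$ with $w$ already explored from $y$. Then $c(wz)=\overline{\sigma}(w)$ differs from the colour $\sigma(w)$ of the edge entering $w$, and Definition~\ref{def: alt bicycle} imposes no condition at $z$. So delete an edge $f_a=aa_0$ of $C_1$ and follow $C_1-f_a$ from $a_0$ to $a$, then $P_1$ back to $x$, then $xy$ and the $y$-side path to $w$. With the chords $f_a$ and $wz$ this is an alternating bicycle (possibly improper, which the lemma allows; adjust in the obvious way when $y\in C_1$). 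If the $y$-side search instead revisits itself first, it yields a second unicycle disjoint from $P_1\cup C_1$ except possibly at $y$, and you glue through $xy$; when $y\in C_1$, the edge $xy$ itself serves as the first chord. Minimality enters this phase only to guarantee that the $y$-side search cannot die out, and that again requires flipping whole layers rather than a single path.
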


\begin{proof}
    Given a candidate vertex coloring $\sigma : V \xrightarrow[]{} \{red, blue\}$, call an edge $e = uv$ violated under $\sigma$ if $\sigma(u) = \sigma(v) = c(uv)$. Let $\sigma : V \xrightarrow[]{} \{red, blue\}$ be a candidate vertex coloring with a minimum number of violated edges. So as $G$ is not adaptably 2-colorable, $\sigma$ must contain at least one violated edge, let $e = uv$ be such an edge. Without loss of generality, assume $c(e) = \sigma(u) = \sigma(v) = red$.
    
    We consider a two-phase process, which uses the minimality of $\sigma$ to search for an alternating bicycle. The two phases are near identical but with a slight alteration in the second phase. Each phase can be viewed as performing a Breadth First Search (BFS) using the minimality of $\sigma$ to ``explore" the graph around $uv$. The difference in the two phases will be determined by the ``starting vertex" of the process as well as the termination conditions. 
    
    We start by describing the first phase. The purpose of the first phase is to find an alternating odd unicycle. This will serve as ``half" of our alternating bicycle. That is we find an alternating path, with an extra edge of the appropriate color forming a cycle. 
 
    \textbf{Phase 1:} Start at $u$ and swap the color of $u$ from $red$ to $blue$. Call this new vertex coloring $\sigma_1$. Minimality of $\sigma$ implies that doing so must introduce at least one violated edge under $\sigma_1$, as $e = uv$ is no longer violated. So consider all such violated edges, that is the edges incident to $u$ which are colored $blue$ and whose other endpoint is colored blue under $\sigma_1$; this set of vertices is precisely $N_{blue}^{\sigma_1}(u)$ which we call $S_1$. Next, swap the color of every vertex in $S_1$ from $blue$ to $red$; call this new vertex coloring $\sigma_2$. Doing so must introduce violations, again by minimality; namely, edges incident to a vertex in $S_1$ which are colored $red$ and whose other endpoint is also colored $red$ under $\sigma_2$. These are precisely the vertices in $\cup_{x \in S_1} N_{red}^{\sigma_2}(x)$ which we call $S_2$. 
    
    We get a recursive definition for $S_i$. Given a ``layer" $S_i$ along with a vertex coloring $\sigma_i$, which is constant over $S_i$, 
    we construct $S_{i+1}$ and $\sigma_{i+1}$ as follows: we first swap the color of every vertex in $S_i$ to get the vertex coloring $\sigma_{i+1}$. We then consider the violations induced by the swap. As $\sigma_i$ is constant across $S_i$, so is $\sigma_{i+1}$. Let $\gamma$ denote the color that $\sigma_{i+1}$ assigns each vertex in $S_i$. Thus, the violations induced by the swap are the edges incident to a vertex in $S_i$ of color $\gamma$ whose other endpoint is also colored $\gamma$ under $\sigma_{i+1}$. This vertex set is precisely $S_{i+1} \coloneqq \cup_{x \in S_i} N_{\gamma}^{\sigma_{i + 1}}(x)$.

    It's worth noting that each vertex in each layer $S_i$ is reachable from $u$ by an alternating path beginning with $blue$. We will explicitly construct such paths after providing the formal definition of the process.
    Formally, we have the following process:
    
    \begin{process} \label{proc: bicycle search phase 1}
        \mbox{}
        \begin{enumerate}
            \item Initialize $S_0 \coloneqq \{u\}$ and $\sigma_0 = \sigma$.
            \item For $i \geq 1$: 
                \begin{enumerate}[label=\alph*., ref=\theenumi\alph*]
                    \item $\sigma_i = \sigma_{i-1}$ everywhere except for on $S_{i-1}$. For all $x \in S_{i-1}$, $\sigma_i(x) \neq \sigma_{i-1}(x)$, that is swap the color assigned to each vertex in $S_{i-1}$.
                    \item $S_i \coloneqq \cup_{x \in S_{i-1}} N_{\sigma_{i}(x)}^{\sigma_{i}}(x)$, that is $S_i$ denotes the endpoints of the violated edges induced by swapping the colors of $S_{i-1}$.
                \end{enumerate}
        \end{enumerate}   
    \end{process}
    
    Furthermore, for each vertex $x$ reached by the process, we define an alternating $u,x-$path 
    $P_x$ produced by the process at the first iteration in which we visit $x$. 
    This can be done recursively. That is, let $P_u = u$, the trivial path. 
    Now given $S_i$ and a $u,v$-path $P_v$ for all $v \in S_i$ we wish to construct $P_x$ 
    for all $x \in S_{i+1}$ which have not previously been revealed by the process.
    By construction for each vertex $x \in S_{i + 1}$ there exists a $v \in S_i$ 
    such that $x \in N_{\sigma_{i}(v)}^{\sigma_i}(v)$,
    that is $x$ is incident to $v$ and $c(vx) = \sigma_i(v)$.
    Note the choice of $v \in S_i$ is not necessarily unique; in the case when multiple 
    choices exists choose one arbitrarily. Define the $u,x$-path $P_x$ to be $P_v x$, 
    the path $P_v$ with $x$ added to the end, which by the structure of the process ensures 
    $P_x$ is alternating. 
    
    Constructing $P_x$ this way ensures that for $i$ minimal such that $x \in S_i$, $P_x$ is \\
    $u = x_0 x_1 x_2 \cdots x_{i-1}x_i = x$ where $x_j \in S_j$ for each $j$ and $x_j \in N_{\sigma_j(x_{j-1})}^{\sigma_j}(x_{j-1})$.

    At iteration $i \geq 1$, we say we \textit{revisit} a node $z \in S_i$ if $z \in S_{j}$ 
    for some $j < i$, that is $z$ was already ``revealed" by the process, see \ref{revisit cond 1}.
    Also, if $z = v \in S_i$, we also call $z$ revisited, see \ref{revisit cond 3}. We include this case,
    as it produces a cycle with the required alternating structure.
    \begin{enumerate}[label=Revisit Condition \arabic*), wide=0pt, align=left]
        \item \label{revisit cond 1} For $z \in S_i$ if $z \in \bigcup_{j=0}^{i-1} S_j $ we say $z$ is revisited.
        \item \label{revisit cond 3} If $v \in S_i$ we say $v$ is revisited.
    \end{enumerate}

    Let $t$ be the first iteration in which we revisit a vertex. 
    Let $y \in S_t$ be such a revisited vertex. 
    Therefore, there exists a $x \in S_{t-1}$ such that $y \in N_{\sigma_t(x)}^{\sigma_t}(x)$. 
    Let $f_1 = xy$. Different possibilities correspond to the dashed edges in Figure~\ref{fig: 2-adapt-col process pic}. 
    Note that $x$ may also be a revisited vertex, in such a case $x$ would be in $S_t$ as well, 
    as in the dashed edge between two vertices in $S_3$ in Figure~\ref{fig: 2-adapt-col process pic}.

    \begin{figure}[h]
    \centering
        \begin{tikzpicture}[node distance={15mm}, main/.style = {draw, circle}]
            \begin{scope}[every node/.style={circle, thick, draw, minimum size=3mm}]
                \node[main, pattern=north east lines, pattern color=blue] (u) at (0,0) {}; 
                \node[draw = none] at (u) [name=fake_u,outer sep=5pt,inner sep=5pt]{};
                \node[draw = none] at (1,0) {$S_0$};
                \node[main, draw=none, fill=none] (u_name) at (-0.7,0) {$u$}; 
                \node[main, fill=red] (v) at (0,2) {};
                \node[draw = none] at (v) [name=fake_v,outer sep=5pt,inner sep=5pt]{};
                \node[main, draw=none, fill=none] (v_name) at (-0.7,2) {$v$}; 
                 \node[main, draw=none, fill=none] (y_1) at (-0.5,2.5) {$y$}; 

                \node[main, fill=red] (s1_2) at (0,-2) {}; 
                \node[draw = none] at (s1_2) [name=fake_s1_2,outer sep=5pt,inner sep=5pt]{};
                \node[main, fill=red] (s1_1) at (-2,-2) {}; 
                \node[draw = none] at (s1_1) [name=fake_s1_1,outer sep=5pt,inner sep=5pt]{};
                \node[main, fill=red] (s1_3) at (2,-2) {}; 
                \node[main, draw=none, fill=none] (u_name) at (2.3,-1.8) {$y$}; 
                \node[draw = none] at (s1_3) [name=fake_s1_3,outer sep=5pt,inner sep=5pt]{};
                \node[draw = none] at (s1_3) [name=fake_s1_3_2,outer sep=7pt,inner sep=7pt]{};
                \node[draw = none] at (3, -2) {$S_1$};
                
                \node[main, pattern=north east lines, pattern color=blue] (s2_1) at (-2.5,-4) {}; 
                \node[draw = none] at (s2_1) [name=fake_s2_1,outer sep=5pt,inner sep=5pt]{};
                \node[main, pattern=north east lines, pattern color=blue] (s2_2) at (-1.5,-4) {}; 
                \node[draw = none] at (s2_2) [name=fake_s2_2,outer sep=5pt,inner sep=5pt]{};
                \node[main, pattern=north east lines, pattern color=blue] (s2_3) at (-0.5,-4) {};
                \node[draw = none] at (s2_3) [name=fake_s2_3,outer sep=5pt,inner sep=5pt]{};
                \node[main, pattern=north east lines, pattern color=blue] (s2_4) at (0.5,-4) {};   
                \node[draw = none] at (s2_4) [name=fake_s2_4,outer sep=5pt,inner sep=5pt]{};
                \node[main, pattern=north east lines, pattern color=blue] (s2_5) at (1.5,-4) {};    
                \node[draw = none] at (s2_5) [name=fake_s2_5,outer sep=5pt,inner sep=5pt]{};
                \node[main, pattern=north east lines, pattern color=blue] (s2_6) at (2.5,-4) {};    
                \node[draw = none] at (s2_6) [name=fake_s2_6,outer sep=5pt,inner sep=5pt]{};
                \node[draw = none] at (3.5, -4) {$S_2$};

                \node[main, fill=red] (s3_1) at (-2.5,-6) {}; 
                \node[main, draw=none, fill=none] (x_1) at (-3,-6) {$x$}; 
                \node[draw = none] at (s3_1) [name=fake_s3_1,outer sep=5pt,inner sep=5pt]{};
                \node[main, fill=red] (s3_2) at (-1.5,-6) {}; 
                \node[main, draw=none, fill=none] (y_2) at (-1.8,-6.2) {$y$}; 
                \node[draw = none] at (s3_2) [name=fake_s3_2,outer sep=5pt,inner sep=5pt]{};
                \node[draw = none] at (-0.5,-6) [name=fake_s3_3,outer sep=5pt,inner sep=5pt]{};
                \node[main, fill=red] (s3_4) at (0.5,-6) {};   
                 \node[main, draw=none, fill=none] (x_2) at (0.8, -6.2) {$x$}; 
                \node[draw = none] at (s3_4) [name=fake_s3_4,outer sep=5pt,inner sep=5pt]{};
                \node[main, fill=red] (s3_5) at (1.5,-6) {};    
                \node[draw = none] at (s3_5) [name=fake_s3_5,outer sep=5pt,inner sep=5pt]{};
                \node[main, fill=red] (s3_6) at (2.5,-6) {};    
                \node[main, draw=none, fill=none] (u_name) at (2.8,-6.2) {$x$}; 
                \node[draw = none] at (s3_6) [name=fake_s3_6,outer sep=5pt,inner sep=5pt]{};
                \node[draw = none] at (3.5, -6) {$S_3$};

                \node[main, fill=red] (s4_1) at (-2.5,-8) {}; 
                \node[draw = none] at (s4_1) [name=fake_s4_1,outer sep=5pt,inner sep=5pt]{};
                \node[main, fill=red] (s4_2) at (-1.5,-8) {}; 
                \node[draw = none] at (s4_2) [name=fake_s4_2,outer sep=5pt,inner sep=5pt]{};
                \node[main, fill=red] (s4_3) at (-0.5,-8) {};
                \node[draw = none] at (s4_3) [name=fake_s4_3,outer sep=5pt,inner sep=5pt]{};
                \node[main, fill=red] (s4_4) at (0.5,-8) {};   
                \node[draw = none] at (s4_4) [name=fake_s4_4,outer sep=5pt,inner sep=5pt]{};
                \node[main, fill=red] (s4_5) at (1.5,-8) {};    
                \node[draw = none] at (s4_5) [name=fake_s4_5,outer sep=5pt,inner sep=5pt]{};
                \node[main, fill=red] (s4_6) at (2.5,-8) {};    
                \node[draw = none] at (s4_6) [name=fake_s4_6,outer sep=5pt,inner sep=5pt]{};
                \node[draw = none] at (3.5, -8) {$S_4$};

                \draw[red] (u) -- node[draw = none, midway, left, pos=0.5, text=black] {e} (v);

                \draw[blue, very thick] (u) --  (s1_1);
                \draw[blue, very thick] (u) --  (s1_2);
                \draw[blue, very thick] (u) --  (s1_3);

                \draw[red] (s1_1) --  (s2_1);
                \draw[red] (s1_1) --  (s2_2);
                \draw[red] (s1_2) --  (s2_3);
                \draw[red] (s1_2) --  (s2_4);
                \draw[red] (s1_3) --  (s2_5);
                \draw[red] (s1_3) --  (s2_6);

                \draw[red] (s1_1) -- (s2_3);
                \draw[red] (s1_2) -- (s2_1);

                \draw[blue, very thick] (s2_1) -- (s3_1);
                \draw[blue, very thick] (s2_2) -- (s3_1);
                \draw[blue, very thick] (s2_2) -- (s3_2);
                \draw[blue, very thick] (s2_4) -- (s3_2);
                \draw[blue, very thick] (s2_4) -- (s3_4);
                \draw[blue, very thick] (s2_6) -- (s3_5);
                \draw[blue, very thick] (s2_6) -- (s3_6);

                \draw[red] (s3_1) -- (s4_1);
                \draw[red] (s3_2) -- (s4_2);
                \draw[red] (s3_2) -- (s4_3);
                \draw[red] (s3_4) -- (s4_2);
                \draw[red] (s3_4) -- (s4_3);
                \draw[red] (s3_4) -- (s4_4);
                \draw[red] (s3_6) -- (s4_5);
                \draw[red] (s3_6) -- (s4_6);

                \draw[red, dashed] (s3_2) -- node [draw = none, midway, below, yshift = 5] {$f_1^*$} (s3_4);
                \draw[red, dashed] (s3_1) to[out=140,in=200] node [draw = none, midway, left] {$f_1$} (v);
                \draw[red, dashed] (s3_4) to[out=60,in=220] node [draw = none, midway, left, yshift = 20, xshift = 6] {$f_1$} (s1_3);
                \draw[red, dashed] (s3_6) to[out=60,in=340] node [draw = none, midway, right, yshift = 15, xshift = -5] {$f_1$} (s1_3);

                \draw[blue,fill=my_grey,opacity=0.2](fake_u.west) 
                to[closed,curve through={
                (fake_u.north) ..
                (fake_u.east) ..
                }] (fake_u.west);

                \draw[blue,fill=my_grey,opacity=0.2](fake_s1_1.west) 
                to[closed,curve through={
                (fake_s1_1.north) ..
                (fake_s1_2.north) ..
                (fake_s1_3.north) ..
                (fake_s1_3.east) ..
                (fake_s1_3.south) ..
                (fake_s1_2.south) ..
                (fake_s1_1.south) ..
                }] (fake_s1_1.west);

                \draw[blue,fill=my_grey,opacity=0.2](fake_s2_1.west) 
                to[closed,curve through={
                (fake_s2_1.north) ..
                (fake_s2_2.north) ..
                (fake_s2_3.north) ..
                (fake_s2_4.north) ..
                (fake_s2_5.north) ..
                (fake_s2_6.north) ..
                (fake_s2_6.east) ..
                (fake_s2_6.south) ..
                (fake_s2_5.south) ..
                (fake_s2_4.south) ..
                (fake_s2_3.south) ..
                (fake_s2_2.south) ..
                (fake_s2_1.south) ..
                }] (fake_s2_1.west);

                \draw[blue,fill=my_grey,opacity=0.2](fake_s3_1.west) 
                to[closed,curve through={
                (fake_s3_1.north) ..
                (fake_s3_2.north) ..
                (fake_s3_3.north) ..
                (fake_s3_4.north) ..
                (fake_s3_5.north) ..
                (fake_s3_6.north) ..
                (fake_s3_6.east) ..
                (fake_s3_6.south) ..
                (fake_s3_5.south) ..
                (fake_s3_4.south) ..
                (fake_s3_3.south) ..
                (fake_s3_2.south) ..
                (fake_s3_1.south) ..
                }] (fake_s3_1.west);

                \draw[blue, dashed ,fill=my_grey,opacity=0.2](fake_s4_1.west) 
                to[closed,curve through={
                (fake_s4_1.north) ..
                (fake_s3_2.west) ..
                (fake_s3_2.north) ..
                (fake_s3_3.north) ..
                (fake_s3_4.north) ..
                (fake_s3_4.east) ..
                (fake_s4_5.north) ..
                (fake_s4_6.north) ..
                (fake_s4_6.east) ..
                (fake_s4_6.south) ..
                (fake_s4_5.south) ..
                (fake_s4_4.south) ..
                (fake_s4_3.south) ..
                (fake_s4_2.south) ..
                (fake_s4_1.south) ..
                }] (fake_s4_1.west);

                \draw[blue, dashed ,fill=my_grey,opacity=0.2](fake_s1_3_2.west) 
                to[closed,curve through={
                (fake_s1_3_2.north) ..
                (fake_s1_3_2.east) ..
                (fake_s1_3_2.south) ..
                }] (fake_s1_3_2.west);

                \draw[blue, dashed ,fill=my_grey,opacity=0.2](fake_v.west) 
                to[closed,curve through={
                (fake_v.north) ..
                (fake_v.east) ..
                (fake_v.south) ..
                }] (fake_v.west);
    
            \end{scope}
    
        \end{tikzpicture}
        \caption{An example where the first occurrence of a revisited vertex occurred at iteration $t = 4$. 
        The set of vertices enclosed by a dashed bubble form $S_4$. 
        Furthermore vertices labelled $y$ are \textit{revisited} during iteration $t = 4$ and 
        vertices labelled $x$ denote their respective neighbours in $S_3$. 
        These are the dashed edges which we label these edges with $f_1$ (or $f_1^*$).
        Note that the endpoints in the dashed edge labeled $f_1^*$ can be swapped, 
        that is both $x$ and $y$ in this case are \textit{revisited}.
        }
        \label{fig: 2-adapt-col process pic}
    \end{figure}
    
    It's not hard to see that $f_1$ forms an alternating odd unicycle consisting of a cycle and a (potentially empty) path from $u$ to the cycle, as seen in Figure~\ref{fig: 2-adapt-col first phase}. 
    We will prove this by considering two cases based on whether $f_1$ corresponds to \ref{revisit cond 1} or \ref{revisit cond 3}.

    \begin{case}{$y \neq v$.} 
    
    Let $a \in V(P_x \cap P_y)$ be the last vertex in which the two paths intersect. That is, for the $a,x$-path $P_{a,x} \subseteq P_x$ and the $a,y$-path $P_{a,y} \subseteq P_y$ we have $V(P_{a,x}) \cap V(P_{a,y}) = \{a\}$. Now, let $C_1$ be the cycle formed by $P_{a,x}$, $P_{a,y}$, and $f_1 = xy$ and let $P_1$ be the $u,a$-path $P_1 \subseteq P_x$. Thus, the two edges in $C_1$ incident to $a$ have the same color, and the remaining edges in $C_1$ are alternating as seen in Figure~\ref{fig: 2-adapt-col first phase}. Thus we have an alternating unicycle, as seen in Figure~\ref{fig: 2-adapt-col first phase}.

    \end{case}

    \begin{case}{$y = v$.} 
        
        Let $C_1$ be the cycle formed by $P_x$, $e = uv$, and $f_1 = xv$ and let $P_1 = \{u\}$ for completeness. So, let $a = v$ and hence the edges $f_1, e \in \delta(a)\cap C_1$ are both colored red and the remaining edges in $C_1$ are alternating. Thus we have an alternating unicycle as seen in Figure~\ref{fig: 2-adapt-col first phase} where $a = v$.
    
    \end{case}

    \begin{figure}[h]
    \centering
        \begin{tikzpicture}[node distance={10mm}, main/.style = {draw, circle}] 
            \begin{scope}[every node/.style={circle, thick, draw, minimum size=3mm}]
                \node[main, fill=red] (u) at (0,0) {};
                \node[main, draw=none] (u_0) at (0,-0.4) {u}; 
                
                \node[main, fill=red] (v) at (-2,0) {};
                \node[main, draw=none] (v_0) at (-2,-0.4) {v}; 

                \node[main, pattern=north east lines, pattern color=blue] (x1) at (2,0) {}; 
                \node[main, draw = none] (P) at (3, 0.4) {$P_1$};
                \node[main, fill=red] (x2) at (4,0) {}; 
                \node[main, draw = none] (x23) at (5,0) {$\cdots$};
                \node[main, pattern=north east lines, pattern color=blue] (x3) at (6,0) {}; 
                \node[main, draw = none] (x3') at (6,-0.4) {a};
                \node[main, fill=red] (x4) at (8,0) {}; 
                \node[main, draw = none] (x45) at (9,0) {$\cdots$};
                \node[main, draw = none] (C) at (9,0.9) {$C_1$};
                \node[main, fill=red] (x5) at (10,0) {}; 
                \node[main, pattern=north east lines, pattern color=blue] (x6) at (12,0) {}; 
                \node[main, draw = none] (x6') at (12,-0.4) {};

                \draw[red] (v) -- node [draw = none, midway, above] {$e$} (u);
                \draw[blue, very thick] (u) --  (x1);
                \draw[red] (x1) --  (x2);
                \draw[blue, very thick] (x2) --  (x23);
                \draw[blue, very thick] (x23) --  (x3);
                \draw[red] (x3) --  (x4);
                \draw[blue, very thick] (x4) --  (x45);
                \draw[red] (x45) --  (x5);
                \draw[blue, very thick] (x5) --  (x6);

                \draw[red] (x6) to[out=110,in=70] node [draw = none, midway, above] {} (x3);
            \end{scope}
    
        \end{tikzpicture}
        \caption{The path $P_1$ and cycle $C_1$ produced by the first run of the process, along with the original vertex coloring $\sigma$. Note that $P_1$ may be empty, and $e = uv$ may be contained in $C_1$, that is either $a = u$ or $a = v$.}
        \label{fig: 2-adapt-col first phase}
    \end{figure}

    \textbf{Phase 2:} We now consider restarting the process, this time starting at $v$ instead of $u$; 
    that is, $S_0 = \{v\}$ and \ref{revisit cond 3} changes accordingly. However, this time we 
    terminate the process when we either:
    \begin{enumerate}[label=Termination Condition \arabic*), wide=0pt, align=left]
        \item \label{phase 2 term cond 2} Visit a vertex in $P_1 \cup C_1 \setminus \{v\}$, or
        \item \label{phase 2 term cond 1} Revisit a vertex (in this new process)
    \end{enumerate}

    Let $z$ be a vertex which caused termination, that is either $z$ is revisited, 
    or $z \in P_1 \cup C_1 \setminus \{v\}$. Therefore, there exists $f_2 = wz$ with $w$ being in the 
    previous layer. Note that the choice of $z$ may not be unique: that is there may be multiple causes for termination.
    
    It's worth noting that the events \ref{phase 2 term cond 1} and \ref{phase 2 term cond 2} are not disjoint:   
    if $z = u$, then $z \in P_1 \cup C_1$ and $z$ is a revisited vertex. 
    We first consider \ref{phase 2 term cond 2},
    then we consider \ref{phase 2 term cond 1} assuming \ref{phase 2 term cond 2} does not 
    hold.

    \begin{case}{\ref{phase 2 term cond 2} was a cause of termination.} 
        Therefore assume, $z \in P_1 \cup C_1$ and $w \notin P_1 \cup C_1$. 
        Let $P_w$ be an alternating path from $v$ to $w$ as constructed by the process. Recall the definition of 
        $a \in C_1$ from the first phase and let $f_a = aa_0$ be an edge of $C_1$. 
        So, $P = P_1 \cup C_1 \cup P_w \setminus \{f_a\}$ is an alternating path from $a_0$ to $w$ 
        beginning with the color which is not $c(f_a)$ and ending with the color which is not $c(f_2)$. Thus, by Definition~\ref{def: alt bicycle}, $P \cup \{f_a, f_2\}$ forms an improper alternating bicycle as required. 
    \end{case}

    \begin{figure}[h]
    \centering
        \begin{tikzpicture}[node distance={10mm}, main/.style = {draw, circle}] 
            \begin{scope}[every node/.style={circle, thick, draw, minimum size=3mm}]
                \node[main, fill=red] (u) at (0,0) {};
                \node[main, draw=none] (u_0) at (0,-0.4) {u}; 
                
                \node[main, pattern=north east lines, pattern color=blue] (v) at (-2,0) {};
                \node[main, draw=none] (v_0) at (-2,-0.4) {v}; 

                \node[main, pattern=north east lines, pattern color=blue] (x1) at (2,0) {}; 
                \node[main, draw = none] (P) at (3, 0.4) {$P_1$};
                \node[main, draw = none] (x12) at (3,0) {$\cdots$};
                \node[main, fill=red] (x2) at (4,0) {}; 
                \node[main, draw = none] (x2') at (4,0.4) {$z$};
                \node[main, draw = none] (x23) at (5,0) {$\cdots$};
                \node[main, pattern=north east lines, pattern color=blue] (x3) at (6,0) {}; 
                \node[main, draw = none] (x3'') at (5.8,0.4) {$z$};
                \node[main, draw = none] (x3') at (6,-0.4) {a};
                \node[main, fill=red] (x4) at (8,0) {}; 
                \node[main, draw = none] (x4') at (8,-0.4) {$a_0$};
                \node[main, draw = none] (x45) at (9,0) {$\cdots$};
                \node[main, draw = none] (C) at (9,0.9) {$C_1$};
                \node[main, fill=red] (x5) at (10,0) {}; 
                \node[main, draw = none] (x5') at (10,0.4) {$z$};
                \node[main, pattern=north east lines, pattern color=blue] (x6) at (12,0) {}; 
                \node[main, draw = none] (x6') at (12,-0.4) {};

                \node[main, fill = red] (y1) at (-1,-1.5) {};
                \node[main, draw = none] (y12) at (0,-1.5) {$\cdots$};
                \node[main, draw = none] (P2) at (0,-1.9) {$P_2$};
                \node[main, pattern=north east lines, pattern color=blue] (y2) at (1,-1.5) {};
                \node[main, fill = red] (y3) at (3,-1.5) {};
                \node[main, draw = none] (z) at (3,-1.1) {};
                \node[main, draw = none] (y34) at (4,-1.5) {$\cdots$};
                \node[main, pattern=north east lines, pattern color=blue] (y4) at (5,-1.5) {};
                \node[main, fill = red] (y5) at (7,-1.5) {};
                \node[main, draw = none] (w) at (7.4,-1.7) {w};

                \draw[red] (v) --  (u);
                \draw[blue, very thick] (u) --  (x1);
                \draw[red] (x1) --  (x12);
                \draw[red] (x12) --  (x2);
                \draw[blue, very thick] (x2) --  (x23);
                \draw[blue, very thick] (x23) --  (x3);
                \draw[red] (x3) --  node [draw = none, midway, above] {$f_a$} (x4);
                \draw[blue, very thick] (x4) --  (x45);
                \draw[red] (x45) --  (x5);
                \draw[blue, very thick] (x5) --  (x6);

                \draw[blue, very thick] (v) -- (y1);
                \draw[red] (y1) -- (y12);
                \draw[red] (y12) -- (y2);
                \draw[blue, very thick] (y2) -- (y3);
                \draw[red] (y3) -- (y34);
                \draw[red] (y34) -- (y4);
                \draw[blue, very thick] (y4) -- (y5);

                \draw[red, dashed] (y5) -- node [draw = none, midway, left] {$f_2$} (x2);

                \draw[red, dashed] (y5) -- node [draw = none, midway, right] {$f_2$} (x5);
                
                \draw[red, dashed] (y5) -- node [draw = none, midway, right] {$f_2$} (x3);

                \draw[red] (x6) to[out=110,in=70] node [draw = none, midway, above] {} (x3);
            \end{scope}
    
        \end{tikzpicture}
        \caption{The process of Phase 2, assuming \ref{phase 2 term cond 2} was the cause of termination. Particularly, each dashed edge labeled $f_2$ corresponds to a possible cause of termination. The vertex coloring shown denotes that at the moment of termination.}
        \label{fig: 2-adapt-col second phase}
    \end{figure}
    
    We now consider the case where \ref{phase 2 term cond 1} was the cause of termination and \ref{phase 2 term cond 2} did not occur.
    
    \begin{case}{\ref{phase 2 term cond 2} was not a cause of termination.}
    	Therefore $z$ must satisfy \ref{phase 2 term cond 1}. Following the same proof as in Phase 1, we get an alternating path $P_2$ along with a cycle $C_2$ as in Figure~\ref{fig: 2-adapt-col first phase}, but starting from $v$. Moreover, as \ref{phase 2 term cond 2} did not occur, 
        it follows that $P_2 \cup C_2$ is edge disjoint from $P_1 \cup C_1$; infact, they are vertex-disjoint except potentially at $v$. 
        Therefore, upon joining them with the edge $uv$, they form an alternating bicycle. 
    \end{case}
    
\end{proof}

Recall, the condition for adaptable 2-colorability in Lemma~\ref{lma: improper alternating bicycle} is not sufficient, that is the converse does not hold. In particular, all improper alternating bicycles are adaptably $2$-colorable.

We conjecture the following sufficient and necessary condition similar to Theorem~\ref{thm: adversary adpt 2 col} in the adversarial model.

\begin{conjecture} \label{conjecture: iff cond}
    There exists a unique edge coloring (up to relabeling) of an odd edge-$K_4$ such that $(G,c)$ is adaptably 2-colorable if and only if $(G,c)$ does not contain a proper alternating odd bicycle or an odd edge-$K_4$ with the specified edge coloring.
\end{conjecture}

Consider the case when Phase 2 terminates due to \ref{phase 2 term cond 2} and we 
achieve an improper alternating bicycle. 
Thus, by Lemma~\ref{lma: non-adapt 2-col condition} (b) we have that the improper 
bicycle is adaptably $2$-colorable, let $\phi$ be such a vertex coloring. 
Consider the original vertex coloring $\sigma$
and swap the coloring on the improper bicycle to $\phi$. By minimality of $\sigma$ this
must introduce some new violations. We believe that continuing this process, will produce 
the required subgraph as in Conjecture~\ref{conjecture: iff cond}.

We also note the following consequence of Lemma~\ref{lma: improper alternating bicycle} regarding \textit{frozen} vertices. 
In $(G,c)$ we call a vertex $u \in V$ \textit{frozen} if $\sigma(u)$ is the same color in every adaptable $2$-coloring $\sigma$ of $(G,c)$.

We get the following corollary,

\begin{corollary} \label{cor: frozen char}
    Given a graph edge coloring pair $(G,c)$, a vertex $u$ is frozen if and only if 
    it is a vertex in the handle of an alternating odd 
    unicycle as in Definition~\ref{def: alt unicycle}.
\end{corollary}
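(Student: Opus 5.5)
We prove the two directions separately. Throughout we take $(G,c)$ to be adaptably $2$-colorable, since otherwise there are no adapted colorings to compare and the statement is not meaningful. By an alternating odd unicycle with handle containing $u$ we mean, after relabeling, one in which $u=v_1$. If $u$ lies further along a handle, we simply truncate the handle to start at $u$.

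\textbf{($\Leftarrow$) Forcing.} Let $P=v_1\cdots v_s$ together with the chord $v_jv_s$ be an alternating odd unicycle, and let $\sigma$ be any adapted coloring. Put $\gamma=c(v_jv_{j+1})$ and write $\bar\gamma$ for the other color.

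\begin{enumerate}[label=(\roman*)]
\item The cycle $v_j\cdots v_sv_j$ is odd, so $s-j$ is even. Since $P$ alternates, $c(v_{s-1}v_s)=\bar\gamma$, and the chord condition then gives $c(v_jv_s)=\gamma$.
\item Suppose $\sigma(v_j)=\gamma$. The edge $v_jv_{j+1}$ has color $\gamma$, so $\sigma(v_{j+1})=\bar\gamma$.
\item Inductively, $\sigma(v_{j+k})$ equals $c(v_{j+k}v_{j+k+1})$, which is the color opposite to $c(v_{j+k-1}v_{j+k})$. So each vertex is forced onto the color of its next edge. This continues until $\sigma(v_s)\neq c(v_{s-1}v_s)=\bar\gamma$, i.e.\ $\sigma(v_s)=\gamma$.
\item Now both endpoints of the chord $v_jv_s$ have color $\gamma=c(v_jv_s)$, a violation. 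Hence $\sigma(v_j)=\bar\gamma$ in every adapted coloring.
\item Walk back down the handle. Because $P$ alternates, $c(v_{j-1}v_j)=\bar\gamma=\sigma(v_j)$, so $\sigma(v_{j-1})=\gamma$ is forced. In the same way every $v_i$ with $i\le j$ receives a color determined only by $c$.
\end{enumerate}

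So all handle vertices, in particular $u$, are frozen.

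\textbf{($\Rightarrow$) Frozen implies a unicycle.} Fix an adapted $\sigma$ with, say, $\sigma(u)=red$. Run Process~\ref{proc: bicycle search phase 1} from $S_0=\{u\}$ exactly as in the proof of Lemma~\ref{lma: improper alternating bicycle}, but with Revisit Condition~1 only, since there is no violated edge $uv$ here. The key claim is as follows.

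\emph{If no vertex is ever revisited, then the process dies out: the reachable vertex set is finite and each vertex is visited once, so some layer $S_t$ is empty. The resulting coloring $\sigma_t$ is adapted and has $\sigma_t(u)=blue$.}

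To check this, note that after swapping $S_{i-1}$, the only possible violated edges are those with an endpoint in $S_{i-1}$. There are two kinds.
\begin{itemize}
\item Edges to previously unswapped vertices. These lead exactly to $S_i$, which is then swapped, and swapping fixes them.
\item Edges to vertices in earlier layers or inside $S_{i-1}$. Any such violation would put that vertex into $S_i$ as a revisit, which we have assumed does not happen.
\end{itemize}
Edges not touching swapped sets keep their $\sigma$-status and are fine. Since $u$ is frozen, such a $\sigma_t$ cannot exist, so a revisit must occur. Let $t$ be the first revisit, with revisiting edge $f_1=xy$, $x\in S_{t-1}$.

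\textbf{Building the unicycle.} Exactly as in the case $y\neq v$ of Lemma~\ref{lma: improper alternating bicycle}, take the last common vertex $a$ of $P_x$ and $P_y$. This gives the path $P_1$ from $u$ to $a$ and the cycle $C_1$ through $a$ and $f_1$. Reading $C_1$ from $a$ along $P_{a,y}$, then $f_1$, then back along $P_{x,a}$ yields an alternating path with a closing chord of the correct color. Prefixing $P_1$, we get an alternating unicycle whose handle begins at $u$.

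\textbf{Oddness (main obstacle).} It remains to show $C_1$ is odd, and this needs a parity bookkeeping argument. Vertices first reached in layer $S_i$ receive, at their swap, a color determined by the parity of $i$. A violated edge $xy$ at time $t$ requires $\sigma_t(x)=\sigma_t(y)=c(xy)$. Comparing the current color of $y$ with that of $x$ should give a relation between $i_x$ and $i_y$ such that $|C_1|=(i_x-i_a)+(i_y-i_a)+1$ is odd. If instead a parity mismatch seemed possible, I would argue that the edge $xy$ would then not be violated, contradicting the choice of $f_1$. This parity check, and the case where $x$ itself is revisited (the edge $f_1^*$ in Figure~\ref{fig: 2-adapt-col process pic}), are where I expect the real care to be needed.
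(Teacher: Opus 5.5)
Your overall route is the paper's. The $(\Leftarrow)$ direction is the forcing argument from the proof of Lemma~\ref{lma: non-adapt 2-col condition}(a), and you carry it out correctly. The $(\Rightarrow)$ direction reruns Phase~1 from $u$. You also supply two points the paper's sketch leaves implicit. First, you check that a revisit-free run ends in an adapted coloring with $u$ recolored, so frozenness really does force a revisit. Second, you restrict to adaptably $2$-colorable $(G,c)$; without this, every vertex is vacuously frozen and $(\Rightarrow)$ fails.

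The gap is the step you deferred. You never prove that $C_1$ is odd. Your sentence that reading $C_1$ from $a$ gives an alternating path with a correctly colored chord is also unsupported. It needs the two edges of $C_1$ at $a$ to share a color, and the colors to alternate at $x$ and at $y$, and this rests on the same parity fact. Without it, $(\Rightarrow)$ only yields some cycle through $f_1$, not an alternating odd unicycle.

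The fix is short. Put $o(i)=red$ for even $i$ and $o(i)=blue$ for odd $i$, and let $i_z$ be the layer of $z$. Since nothing is revisited before iteration $t$, induction on $i<t$ gives two facts.

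\begin{enumerate}
\item A vertex $w$ joins $S_i$ only if its still-unchanged color $\sigma(w)$ equals its parent's post-swap color $\overline{o(i-1)}=o(i)$. So $\sigma(w)=o(i)$, and the tree edge into $w$ has color $o(i)$.
\item Every $z\in S_0\cup\dots\cup S_{t-1}$ has been swapped exactly once by time $t$, so $\sigma_t(z)=\overline{o(i_z)}$.
\end{enumerate}

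The violation of $f_1=xy$ forces $\sigma_t(x)=\sigma_t(y)$. Hence $i_y\equiv i_x \pmod 2$ and $c(f_1)=o(i_x+1)=o(i_y+1)$. Consequently:

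\begin{enumerate}
\item $|C_1|=(i_x-i_a)+(i_y-i_a)+1$ is odd.
\item Both edges of $C_1$ at $a$ have color $o(i_a+1)$. When $a=y$, these are the tree edge from $y$ toward $x$ and $f_1$.
\item If $a\neq u$, the edge of $P_1$ entering $a$ has the other color $o(i_a)$.
\item At $x$, and at $y$ when $y\neq a$, the tree edge (color $o(i_x)$, respectively $o(i_y)$) differs from $f_1$.
\end{enumerate}

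This handles the $f_1^*$ case ($i_x=i_y=t-1$) and the case $y=u$ uniformly, and completes your argument.
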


\begin{proof}{(sketch)}

If a vertex $u$ is frozen, say $\sigma(u) = red$ for all adaptable $2$-colorings $\sigma$ of $(G,c)$, then swapping the 
color of $u$ from $red$ to $blue$ must introduce some violations. This allows us to 
follow the same arguments from Phase 1 in Lemma~\ref{lma: improper alternating bicycle} to show that
there exists an alternating odd unicycle.

If a vertex is in an such a path $P_{i,j}$, then it is easy to see that vertex is forced to be a certain color. The argument is in the proof of Lemma~\ref{lma: non-adapt 2-col condition}. 
\end{proof}

\section{Probability Preliminaries}

\subsection{Notation}

We make note on some notation which we adopt. First, when dealing with asymptotic equivalence, 
we say two functions $f,g: \mathbb{R} \xrightarrow[]{} \mathbb{R}$ are 
\textit{asymptotically equivalent}, denoted $f \sim g$ if 
$\lim_{n \xrightarrow[]{} \infty} \frac{f(n)}{g(n)} = 1$. Secondly, given a probability distribution $\mathcal{D}$, $d \sim \mathcal{D}$ 
denotes that $d$ is sampled from $\mathcal{D}$. Differentiating between these two uses will be clear from context.

\subsection{Inequalities and Concentration Bounds}

Before proceeding, let us introduce some concentration bounds and Inequalities which we shall 
use throughout the paper, all of which are standard within the literature. We begin with Markov's Inequality.

\begin{inequality}[Markov]{(Theorem 2.17 in \cite{Connected_component_rand_graphs})} \label{Markov's Inequality}
    Let $X$ be a non-negative random variable with finite expectation. Then,
    \begin{equation*}
        \mathbb{P}[X \geq a] \leq \frac{\mathbb{E}[X]}{a} \,.
    \end{equation*}
\end{inequality}

As standard when dealing with random graphs we will also require 
concentration bounds on the binomial distribution. 
We denote the binomial distribution with $n$ trials and
success probability $p$ by $\mathcal{B}in(n,p)$. We will make use of the following Binomial Chernoff Bound.

\begin{bound}[Binomial Chernoff]{(Chapter 5 in \cite{GCProbMethod})} \label{Chernoff Bound}
    For $0 \leq t \leq np$:
    \begin{equation*}
        \mathbb{P}[\left| \mathcal{B}in(n,p) - np \right| > t] < 2e^{-t^2/3np} \,.
    \end{equation*}
\end{bound}

\subsection{Random Variables}

Throughout this paper, we shall make use of the following random variables. 
For vertices $u$ and $v$ as well as color $col \in \{red, blue\}$ define the indicator variable
$X_{uv}^{col}$ on a graph edge-coloring pair $(G,c)$ as follows: 
\begin{equation*}
    X_{uv}^{col} = \begin{cases} 
      1 & uv \in E \ \wedge \ c(uv) = col \\
      0 & otherwise 
   \end{cases} \,.
\end{equation*}
So, when considering $\mathcal{G}(n,p,2)$,
\begin{equation*}
    \mathbb{P}[X^{red}_{uv} = 1] = \mathbb{P}[X^{blue}_{uv} = 1] = p/2 \,.
\end{equation*}
However, the moment we reveal say $X^{red}_{uv}$ the distribution of $X^{blue}_{uv}$ changes. 
In particular, say we reveal $X^{red}_{uv}$ and it equates to zero; that is, we reveal that there is 
no $red$ edge between $u$ and $v$. What happens to the probability that a $blue$ edge exists? Or equivalently, 
what happens to the probability that an edge exists? We note these events are identical; however, 
we state them seperately to distinguish between the two use cases.  
This is captured in the following lemma. 

\begin{lemma} \label{lma: prob change}
    Let $c_1, c_2 \in \{red,blue\}$ with $c_1 \neq c_2$. If $X^{c_2}_{xy}$ was revealed and equals 0 then the following hold:
    \begin{itemize}
        \item the probability that $X^{c_1}_{xy} = 1$ given $X^{c_2}_{xy} = 0$ is $\frac{p/2}{1-p/2} > p/2$.
        \item the probability $xy \in E$ given $X^{c_2}_{xy} = 0$ is $\frac{p/2}{1-p/2} < p$.
    \end{itemize}
\end{lemma}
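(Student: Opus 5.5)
The plan is a direct conditional-probability computation in the $\mathcal{G}(n,p,2)$ model. I use the description from Definition~\ref{rand graph prob model}: the pair $xy$ carries independent variables $X_{xy}\sim$ Bernoulli$(p)$ and a uniform color $Y_{xy}$. The indicators are then $X^{c}_{xy}=\mathbf{1}[X_{xy}=1\wedge Y_{xy}=c]$. Since distinct pairs are independent, revealing $X^{c_2}_{xy}$ affects only the pair $xy$. So everything reduces to computing the joint law of $(X^{red}_{xy},X^{blue}_{xy})$ for a single pair.

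First, I note that the joint law has three possible outcomes. With probability $p/2$ we have $X^{c_1}_{xy}=1,X^{c_2}_{xy}=0$. With probability $p/2$ we have $X^{c_1}_{xy}=0,X^{c_2}_{xy}=1$. With probability $1-p$ both are $0$. The case where both equal $1$ is impossible, because an edge carries exactly one color. Hence $\mathbb{P}[X^{c_2}_{xy}=0]=1-p/2$. The event $\{X^{c_1}_{xy}=1\}$ is contained in $\{X^{c_2}_{xy}=0\}$, so
\begin{equation*}
\mathbb{P}[X^{c_1}_{xy}=1 \mid X^{c_2}_{xy}=0]=\frac{p/2}{1-p/2}.
\end{equation*}

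Second, given $X^{c_2}_{xy}=0$, the edge $xy$ is present exactly when $X^{c_1}_{xy}=1$. The two conditional probabilities are therefore equal, as the paper remarks.

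Third, I check the two inequalities. For $0<p<1$ we have $1-p/2<1$, which gives $\frac{p/2}{1-p/2}>p/2$. Also $\frac{p/2}{1-p/2}<p$ is equivalent to $1/2<1-p/2$, that is, $p<1$, which holds in our regime $p=\Theta(1/n)$.

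There is no real obstacle here. The only points needing care are the observation that both colored indicators cannot equal $1$ simultaneously, and stating the assumption $0<p<1$ that the strict inequalities require.
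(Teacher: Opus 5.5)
Your proposal is correct and follows essentially the same route as the paper. The paper also conditions on $X^{c_2}_{xy}=0$, notes that the only remaining outcomes are ``no edge'' and ``an edge of color $c_1$'' (so the two events coincide), and computes $\frac{p/2}{p/2+(1-p)}=\frac{p/2}{1-p/2}$. Your explicit observation that the strict inequalities need $0<p<1$ is a small addition the paper leaves implicit.
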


\begin{proof}
    Without loss of generality, let us assume $c_1 = red$ and $c_2 = blue$.
    Since all that has been revealed is $X^{blue}_{xy} = 0$, the only information which we gain
    is that there is not a blue edge between $x$ and $y$. So the set of possible outcomes becomes, 
    no edge exists or a red edge exists. Therefore, given $X^{c_2}_{xy} = 0$ the events, 
    $X^{c_1}_{xy} = 1$ and $xy \in E$ are precisely the same. In fact by considering the truncated probability space we get,
    \begin{equation*}
        \mathbb{P}[X^{red}_{xy} = 1 \ given \ X^{blue}_{xy} = 0] = \frac{p/2}{p/2 + (1 - p)} = \frac{p/2}{1-p/2} > p/2
    \end{equation*}
    Similarly,
    \begin{equation*}
        \mathbb{P}[xy \in E \ given \ X^{blue}_{xy} = 0] = \frac{p/2}{p/2 + (1 - p)} = \frac{p/2}{1-p/2} < p
    \end{equation*}
\end{proof}

\subsection{The Sprinkling Method} \label{sec: sprinkling method}

We will make use of the ``sprinkling" method, a standard technique in the study of random graphs. 
We consider the case where $k = 2$ for ease of notation; however, this technique is not unique to this case. 
First, consider $\mathcal{G}(n,m,2)$ as the technique is most natural in this model. 
Say we wish to sample a graph from $\mathcal{G}(n,m = m_1 + m_2,2)$. We can do this as follows: first sample
a graph $G_1$ from $\mathcal{G}(n, m_1, 2)$ then "sprinkle" $m_2$ additional edges; i.e. not in $G_1$. Note the term "sprinkle" is used 
as usually $m_2 \ll m_1$. The resulting graph is distributed precisely as $\mathcal{G}(n, m, 2)$.

We now consider an analogous technique to sample from $\mathcal{G}(n,p,2)$ where $p = \Theta(n^{-1})$ as follows:
First we sample
$(G_1, c_1) \sim \mathcal{G}(n,p_1,2)$ for some edge probability $p_1 < p$. We increase 
the probability to $p$ as follows: For every edge which is not in $G_1$, we add it with probability $p_2$, where $p_2$ is chosen so the resulting probability of an edge existing in the graph is $p$. We then color these "sprinkled" edges $red$ or $blue$ uniformly at random. Therefore, $p_2$ is the solution to $(1 - p_1)(1 - p_2) = 1 - p$ and thus, $p_2 = \frac{p-p_1}{1 - p_1} \sim p - p_1$ if $p_1 = \mathcal{O}(n^{-1})$. 
The distribution of the resulting graph is precisely the distribution of 
$\mathcal{G}(n,p,2)$.

For more on the method of ``sprinkling" we refer the reader to Chapter 11.9 in \cite{TheProbabilisticMethod}.

\section{Long Alternating Paths} \label{sec: long alternating path}

In this section we prove that for $\epsilon = \epsilon(n) > 0$ the random graph $\mathcal{G}(n,p = (2 + \epsilon)/n,2)$ contains w.h.p. an alternating path of length $\Theta(\epsilon^2n)$.  Particularly, the goal of this section is to prove the following lemma. 

\begin{lemma} \label{lma: alt path existence sub linear}
    Let $\epsilon = \epsilon(n)> 0$ be such that $\epsilon \gg n^{-1/3} \log^{1/3}n$, then w.h.p. $\mathcal{G}(n,p= 2(1 + \epsilon)/n, 2)$ contains an alternating path of length at least $\epsilon^2 n / 5$.
\end{lemma}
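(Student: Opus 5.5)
We adapt the Depth First Search argument of Krivelevich and Sudakov \cite{longest-path-simple} to alternating paths. The obstacle to a direct reduction is that alternation depends on edge colors, so we work in an auxiliary directed structure: each vertex $v$ gets two states, $(v,red)$ and $(v,blue)$, where $(v,\gamma)$ means ``the last edge of the current path into $v$ had color $\gamma$, so the next edge must have the other color $\bar\gamma$''. The exploration queries, for the current endpoint $(v,\gamma)$, only the indicators $X^{\bar\gamma}_{vw}$ for unexplored $w$. Each such query succeeds with probability $p/2 = (1+\epsilon)/n$, which is exactly the critical regime $1/n$ of ordinary $\mathcal{G}(n,p)$. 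This explains the threshold $2/n$.

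\textbf{The modified DFS.} We maintain a partition of $V$ into $S$ (fully explored), $U$ (a stack whose vertices, in order, form an alternating path), and $T$ (untouched). While $U$ is nonempty, with top vertex $v$ entered via color $\gamma$, we query $X^{\bar\gamma}_{vw}$ for $w\in T$ one at a time. On the first success we push $w$ onto $U$, recording $\bar\gamma$. If all queries fail, we pop $v$ into $S$. When $U$ is empty, we move some vertex of $T$ to $U$ with an arbitrary entry color.

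The key invariant is that $U$ is always an alternating path. Each pair $\{v,w\}$ is queried at most once in a given color, because once a pair is queried either $w$ enters $U$ or $v$ leaves for $S$ and is never active again. Hence we may feed the process a sequence of i.i.d.\ Bernoulli$(p/2)$ variables $\bar X_1,\bar X_2,\dots$, which is legitimate because each queried event $X^{\bar\gamma}_{vw}=1$ is fresh: no pair is queried in both colors. Lemma~\ref{lma: prob change} is relevant only if a pair were queried in both colors. To be safe, we use only that each query succeeds with probability at least $p/2$ conditionally on the history, and couple with i.i.d.\ Bernoulli$(p/2)$ variables so that every success in the coupled sequence is a success in the process.

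\textbf{Counting.} We run the process for $N_0=\epsilon n^2/2$ queries (roughly) and use the standard bookkeeping. Every success moves a vertex out of $T$, so at time $t$ we have $|S\cup U|\ge \sum_{i\le t}\bar X_i$. Every pair $(s,w)$ with $s\in S$ and $w\in T$ has been queried. Suppose, for contradiction, that $|U|<\epsilon^2n/5$ throughout. Consider the moment $t$ at which $|S|=n/2-\Theta(\epsilon n)$, choosing constants as in \cite{longest-path-simple}. By Chernoff (Bound~\ref{Chernoff Bound}), w.h.p.\ $\sum_{i\le t}\bar X_i\approx t(1+\epsilon)/n$. The condition $\epsilon\gg n^{-1/3}\log^{1/3}n$ makes the deviation $\sqrt{tp}\approx\sqrt{\epsilon n}$ negligible against the $\epsilon^2n$ scale, so this is exactly where the hypothesis is used. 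We then have two cases. If $|S|<n/2$ at time $N_0$, then $|T|$ is large and successes have pushed $|S|+|U|$ above $n/2$, forcing $|U|\ge\epsilon^2n/5$. Otherwise, at the first moment $|S|=n/2$, the queried $S\times T$ pairs number $|S||T|\le t$, which forces $|T|$ to be small and hence $|U|=n-|S|-|T|$ to be large. With $\epsilon n^2/2$ queries the expected number of successes is $\epsilon n/2$, and the quadratic $(n/2)(n/2-\epsilon n/\ldots)$ computation gives $|U|\ge\epsilon^2 n/5$.

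\textbf{Main obstacle.} The main obstacle is matching the constants so that the KS inequalities, stated for $p=(1+\epsilon)/n$, yield exactly $\epsilon^2n/5$. Krivelevich–Sudakov obtain $\epsilon^2n/5$ precisely for $\mathcal{G}(n,(1+\epsilon)/n)$ with their choice $N_0=\epsilon n^2/2$, and our success probability per query is $(1+\epsilon)/n$, so their computation transfers verbatim once the coupling is justified. The fiddly part is justifying that coupling, namely that conditioning on revealed failures only increases success probabilities (Lemma~\ref{lma: prob change}) and never decreases them.
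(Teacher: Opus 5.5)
Your setup is the same as the paper's. The DFS gives each stack vertex $u$ the color $\psi(u)$ needed to extend the path, and only $X^{\psi(u)}_{uv}$ with $v\in T$ is ever queried. So the queries are i.i.d.\ Bernoulli$((1+\epsilon)/n)$, and the Krivelevich--Sudakov bookkeeping with $N_0=\epsilon n^2/2$ applies.

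\textbf{Where the counting fails.} Your counting paragraph, as written, does not prove the bound. In your first case the $N_0$ queries produce only about $\epsilon n/2$ successes, so $|S\cup U|$ is pushed to roughly $\epsilon n/2$, not above $n/2$. Also, $(n/2)(n/2-\cdots)$ is not the relevant quadratic. The missing step is the second-order comparison. Suppose at time $N_0$ we have $T\neq\emptyset$ and $|U|<\epsilon^2n/5$. Then w.h.p.\ $|S|\ge \epsilon n/2+3\epsilon^2n/10-l$ with $l=(\epsilon n\log n)^{1/2}$. Since every pair in $S\times T$ has been queried and failed,
\[
N_0\;\ge\;|S|\,|T|\;\ge\;|S|\Bigl(n-|S|-\tfrac{\epsilon^2 n}{5}\Bigr)\;\ge\;\tfrac{\epsilon n^2}{2}+\tfrac{\epsilon^2n^2}{20}-O(\epsilon^3n^2)-O(nl).
\]
The right-hand side exceeds $N_0$ exactly because $\epsilon^2 n\gg l$, i.e.\ $\epsilon\gg n^{-1/3}\log^{1/3}n$. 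This is also why any constant below $1/4$ could replace $1/5$.

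With this step inserted, your split at $|S|=n/2$ works and is slightly cleaner than the paper's split at $n/3$. The map $s\mapsto s(n-s-\epsilon^2n/5)$ is concave, so for small $\epsilon$ the last inequality holds on the whole range $|S|<n/2$. Your second case needs no upper tail at all: at the first moment $|S|=n/2$ you get $|T|\le\epsilon n$, hence $|U|\ge n/2-\epsilon n$.

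\textbf{The coupling.} Your closing worry is misplaced, and your hedge is inconsistent with ``verbatim''. Lemma~\ref{lma: prob change} plays no role here. A pair is only ever scanned from the endpoint that left $T$ first, in that endpoint's fixed color $\psi$. As the paper notes, a vertex that returns to the top of the stack resumes its scan where it stopped. So no indicator is revealed twice, and each query succeeds with probability exactly $p/2$ given the history.

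Your stated reason for this is slightly off: after a failed query $v$ need not move to $S$; it stays on the stack and may push another vertex.

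The one-sided coupling (``at least $p/2$'') would not support the paper's argument. Its first step, showing $|S|<n/3$ at time $N_0$, uses the upper bound $|U|\le 1+\sum_\gamma X_\gamma$ together with the upper binomial tail. The one-sided coupling suffices only for your $n/2$ variant, which uses the lower tail alone.
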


Krivelevich and Sudakov in \cite{longest-path-simple} prove that in the standard model $\mathcal{G}(n,p)$ there exists a path of length $\Theta(\epsilon^2 n)$ when $p = (1 + \epsilon)/n$. Particularly they prove the following,

\begin{theorem} [Theorem 1 in \cite{longest-path-simple}]
 	For $\epsilon > 0$ then w.h.p. $\mathcal{G}(n,p)$ contains a path of length at least $\frac{\epsilon^2 n}{5}$.
\end{theorem}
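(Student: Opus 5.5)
The plan is to follow the depth first search argument of Krivelevich and Sudakov \cite{longest-path-simple}, in the form we will later adapt to alternating paths. We may assume $\epsilon$ is small, since the property is monotone and for larger $\epsilon$ we can pass to a subgraph. Here $p=(1+\epsilon)/n$.

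\emph{The DFS process.} Run DFS on $\mathcal{G}(n,p)$ while maintaining three disjoint sets: $S$ (processed vertices), $U$ (the current stack) and $T$ (untouched vertices). The key invariant is that the vertices of $U$, in stack order, always span a path. At each step, if $U\neq\emptyset$ we query a pair $(x,y)$ with $x$ the top of the stack and $y\in T$.
\begin{itemize}
\item If the edge is present, move $y$ from $T$ to $U$.
\item If no queries remain from $x$ into $T$, move $x$ to $S$.
\item If $U=\emptyset$, move an arbitrary vertex of $T$ into $U$.
\end{itemize}
Each pair is queried at most once, so the answers form a sequence $X_1,X_2,\dots$ of i.i.d.\ $\mathrm{Bernoulli}(p)$ variables. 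Two facts drive the argument:
\begin{itemize}
\item every positive answer moves one vertex out of $T$;
\item when a vertex enters $S$, all of its pairs to the current $T$ have been queried negatively. Hence at any time every pair between $S$ and $T$ has been queried, and the number of queries so far is at least $|S||T|$.
\end{itemize}
Since $U$ spans a path, it suffices to show that w.h.p.\ $|U|\ge \epsilon^2 n/5$ at some moment.

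\emph{Concentration step.} Set $N_0=\epsilon n^2/2$. Then $\sum_{i\le N_0}X_i\sim\mathcal{B}in(N_0,p)$ has mean $(1+\epsilon)\epsilon n/2$. By Bound~\ref{Chernoff Bound}, w.h.p.\ it deviates from the mean by at most $n^{2/3}$, which is $o(\epsilon^2 n)$ in the relevant range of $\epsilon$. We also check that the process has not terminated before $N_0$ queries: termination forces $|S|=n$, and then all $\binom{n}{2}$-type $S$–$T$ queries would have had to occur, far more than $N_0$ for small $\epsilon$.

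\emph{Contradiction argument.} Suppose $|U|<\epsilon^2 n/5$ throughout the first $N_0$ queries. We derive two contradictions.
\begin{enumerate}
\item \emph{$|S|<n/3$ at time $N_0$.} Otherwise, look at the moment $|S|$ first reached $n/3$. At that moment $|T|\ge n-n/3-\epsilon^2n/5\ge n/2$, so at least $n^2/6>N_0$ queries were already made, which is impossible.
\item \emph{$|S|\,|T|$ is too large at time $N_0$.} The number of vertices that have left $T$ is $|S\cup U|$, which is at least the number of positive answers, at least $(1+\epsilon)\epsilon n/2-n^{2/3}$. Hence
\[
|S|\ \ge\ \frac{\epsilon n}{2}+\frac{\epsilon^2 n}{2}-\frac{\epsilon^2 n}{5}-o(\epsilon^2 n),
\qquad
|T|\ \ge\ n-\frac{\epsilon n}{2}-O(\epsilon^2 n),
\]
where the bound on $|T|$ uses $|S\cup U|\le$ positives plus a negligible number of restarts. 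Multiplying gives
\[
|S||T|\ \ge\ \frac{\epsilon n^2}{2}+\Big(\tfrac{3}{10}-\tfrac14\Big)\epsilon^2 n^2-o(\epsilon^2 n^2)\ >\ N_0,
\]
contradicting that only $N_0$ queries were made.
\end{enumerate}
Hence $|U|\ge\epsilon^2 n/5$ at some point, which gives the required path.

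\emph{Main obstacle.} The delicate step is the final inequality $|S||T|>N_0$. The $\epsilon^2 n^2$ terms must be tracked exactly: the gain $\epsilon^2 n/2$ from the excess of positive answers must beat the loss $\epsilon n/2\cdot\epsilon n/2$ in $|T|$ together with the $\epsilon^2 n/5$ removed by $U$. This is exactly where the constant $1/5$ comes from. We also need the Chernoff error $n^{2/3}$ and the number of restarts (when $U$ empties) to be $o(\epsilon^2 n)$. This is the same bookkeeping that, in the coloured version of Lemma~\ref{lma: alt path existence sub linear}, forces the hypothesis $\epsilon\gg n^{-1/3}\log^{1/3}n$.
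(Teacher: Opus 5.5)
Your plan is the Krivelevich--Sudakov DFS argument, which is also what the paper carries out for alternating paths in Appendix~\ref{appendix: long alternating paths}. Your first step ($|S|<n/3$ after $N_0$ queries) is fine.

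The gap is in the second step, in the lower bound $|T|\ge n-\frac{\epsilon n}{2}-O(\epsilon^2 n)$. You get it from $|S\cup U|\le(\text{positive answers})+(\text{restarts})$ together with the claim that the number $R$ of restarts is negligible. Nothing in your argument controls $R$. The only bound the process itself gives is $R\le |S|+1$, which is of order $\epsilon n$. Since $|S|\approx\frac{\epsilon n}{2}$, the term $R$ enters $|S||T|$ roughly as $-R\cdot\frac{\epsilon n}{2}$. You would therefore need $R<\epsilon n/10$ to keep the margin $\frac{\epsilon^2 n^2}{20}$. Showing that few components are started before the search enters a large one needs extra input on the component structure of $\mathcal{G}(n,p)$, namely that a fresh root lies in a large component with probability $\Theta(\epsilon)$. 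That is exactly what this DFS argument is meant to avoid. As written, $|S||T|>N_0$ is not established.

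The repair, used in the paper and in \cite{longest-path-simple}, needs no upper bound on $|S\cup U|$ at all:
\begin{itemize}
\item By your first step, $|S|<n/3$.
\item You have $|S||T|=|S|(n-|S|-|U|)\ge g(|S|)$, where $g(s)=s\bigl(n-s-\frac{\epsilon^2 n}{5}\bigr)$.
\item The function $g$ is increasing on $[0,n/3]$.
\item Hence $|S||T|\ge g\bigl(\frac{\epsilon n}{2}+\frac{3\epsilon^2 n}{10}-n^{2/3}\bigr)=\frac{\epsilon n^2}{2}+\frac{\epsilon^2 n^2}{20}-O(\epsilon^3 n^2)-O(n^{5/3})>N_0$.
\end{itemize}
Restarts only enlarge $|S|$, which only helps here.

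Two smaller points.
\begin{itemize}
\item Your reduction to small $\epsilon$ via monotonicity runs the wrong way: passing to a subgraph with $\epsilon'<\epsilon$ only yields length $\epsilon'^2 n/5$. The statement is really about small constant $\epsilon$, as in \cite{longest-path-simple}, and it is false for $\epsilon\ge\sqrt5$.
\item Non-termination before $N_0$ queries should come from your first step: finishing would force $|S|$ past $n/3$ while $|T|\ge n/2$. It should not come from the $S$--$T$ queries at termination, where $T=\emptyset$.
\end{itemize}
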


To accomplish this, Krivelevich and Sudakov utilize a standard Depth First Search (DFS) 
run on the random input $\mathcal{G}(n,p)$, they proceed to utilize standard Chernoff Bounds 
for Binomial random variables to show that w.h.p., DFS finds such a long path. 

To study $\mathcal{G}(n,p,2)$, we follow their approach adapted to alternating paths.  
If we have an alternating path $P$ with an end $u$ and wish to extend the 
alternating path from $u$, then naturally we can only consider edges of a certain color 
incident to $u$. The probability an edge both exists 
and gets assigned the ``correct" color is precisely $p/2 = (1+\epsilon)/n$. This is precisely the branching probability in the supercritical regime in $\mathcal{G}(n,p)$. So its natural to think their approach will carry over to the setting 
of $\mathcal{G}(n,p,2)$. 

Let us now provide the DFS algorithm, which will produce an alternating path. 
As in \cite{longest-path-simple}, let $S$ denote the set of vertices which have been fully 
explored, let $T$ denote the set of unvisited vertices. 
We will use a stack $U$ to track the vertices of our current candidate for such a path. 
Furthermore, for $u \in U$ we will let $\psi(u)$ denote the edge color required to extend the alternating path at $u$.
Therefore, when searching for potential ways to augment the path, we check $T \cap N_{\psi(u)}(u)$. 
We defer the formal definition of the process to Appendix~\ref{appendix: long alternating paths}; however, we state the following 
observations regarding the information revealed during the formulation of the alternating path.

\begin{observation} \label{ob: long path vars considered}
    For each pair of vertices $u, v \in V$ at most one of $X_{uv}^{red}$ and $X_{uv}^{blue}$ is ever revealed. 
\end{observation}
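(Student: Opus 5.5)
The plan is to read the observation off the structure of the alternating DFS described above; its formal version is in Appendix~\ref{appendix: long alternating paths}. The key point is that the process only queries a pair $\{u,v\}$ in one situation. The top of the stack is some $u \in U$, the vertex $v$ lies in $T$, and we test whether $v \in N_{\psi(u)}(u)$. That is, the only random variable revealed in this query is $X^{\psi(u)}_{uv}$. So it suffices to show that each unordered pair $\{u,v\}$ is queried at most once over the whole run. Then exactly one colour indicator is revealed for that pair, and at most one of $X^{red}_{uv}, X^{blue}_{uv}$ is ever revealed.

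To show each pair is queried at most once, I would use the monotonicity of the partition $(S, U, T)$, as in Krivelevich and Sudakov. Vertices only move $T \to U \to S$: they leave $T$ when pushed onto the stack and move from $U$ to $S$ when popped. They never return. Consider a query of $\{u,v\}$ with $u$ on top of $U$ and $v \in T$. There are two outcomes.
\begin{enumerate}
\item The answer is $1$. Then $v$ is pushed onto $U$, so afterwards neither endpoint lies in $T$.
\item The answer is $0$. Then $v$ stays in $T$, and $u$ remains in $U$ until it is moved to $S$.
\end{enumerate}

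A later query of the same pair would need one endpoint on top of $U$ and the other in $T$. In the first case this is impossible, since neither endpoint is in $T$ any more. In the second case, $u$ never returns to $T$, so the only possibility is the same orientation again: $u$ on top and $v$ in $T$.

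The main obstacle is excluding this repeated query, where $u$ returns to the top of the stack after deeper vertices are popped and re-scans its $T$-neighbours. The cleanest fix, which I would build into the formal definition of the process, is that each $u \in U$ keeps a record of the vertices of $T$ it has already tested. Equivalently, the DFS scans the candidates of $u$ in a fixed order with a persistent pointer, exactly as in the standard DFS of \cite{longest-path-simple}. When $u$ regains the top of the stack, it continues from its pointer instead of restarting. With this convention, every pair is examined at most once in total, and in only one colour, namely $\psi(u)$ for the endpoint $u$ that was in $U$ at the time.

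I would also remark that $\psi(u)$ is fixed from the moment $u$ enters $U$, since it is determined by the colour of the edge used to reach $u$. So there is no ambiguity about which colour indicator is revealed, and the observation follows.
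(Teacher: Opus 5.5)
Your proof is correct and follows the paper's reasoning: queries only occur between the top $u$ of $U$ and a vertex of $T$, in the colour $\psi(u)$ that is fixed when $u$ enters $U$. Your use of the monotone flow $T \to U \to S$ to rule out the opposite orientation makes explicit a step the paper leaves implicit.

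The ``main obstacle'' you flag is not needed for this observation. A repeated query after $u$ regains the top of the stack reveals the same variable $X^{\psi(u)}_{uv}$ again, so the statement already holds. The persistent pointer (which the paper also mentions, calling it ``not vital'') only buys the stronger fact that no variable is revealed twice. That stronger fact is what the later treatment of the revealed $X_\gamma$ as independent Bernoulli$(p/2)$ variables actually uses.
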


\begin{observation} \label{ob: alt path prob change}
    For two non-adjacent vertices $x$ and $y$ in $U$, with $x$ before $y$, the following hold: 
    \begin{enumerate}[label=\alph*.]
        \item If $X_{xy}^{\psi(x)}$ was revealed, then it must equate to $0$. 
        \item For $col \neq \psi(x)$, $X^{col}_{xy}$ was not revealed. 
    \end{enumerate}
\end{observation}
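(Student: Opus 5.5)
The plan is to check the claim directly against the rules of the DFS procedure of Appendix~\ref{appendix: long alternating paths}. I will use the following invariants of that procedure, modelled on \cite{longest-path-simple}.
\begin{enumerate}[label=(\roman*)]
\item A variable $X^{col}_{ab}$ is revealed only when one endpoint, say $a$, is the top of the stack $U$ and the other endpoint $b$ lies in $T$. In that case the colour queried is exactly $col=\psi(a)$.
\item The value $\psi(a)$ is fixed when $a$ is pushed and never changes afterwards.
\item A vertex leaves $T$ exactly when it is pushed, and it never returns to $T$.
\item If a query from the top vertex $a$ returns $1$, then $b$ is pushed immediately on top of $a$.
\item A vertex leaves $U$ only by being popped into $S$, and it never re-enters $U$.
\end{enumerate}

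\textbf{Part (b).} Let $x,y\in U$ be non-adjacent in $U$, with $x$ pushed before $y$. Consider which endpoint could have queried the pair $xy$.
\begin{itemize}[label=$\cdot$]
\item Could $y$ have queried $x$? By (i), $y$ can query $x$ only while $y$ is the top of $U$ and $x\in T$. But $y$ is pushed only after $x$ has been pushed, and by (iii) $x$ has left $T$ for good at that point. So $y$ never queries $x$.
\item The only remaining possibility is that $x$ queries $y$. By (i) and (ii), every such query reveals $X^{\psi(x)}_{xy}$ and never any other colour.
\end{itemize}
Hence $X^{col}_{xy}$ is never revealed for $col\neq\psi(x)$. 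This is consistent with Observation~\ref{ob: long path vars considered}.

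\textbf{Part (a).} Suppose $X^{\psi(x)}_{xy}$ was revealed and equals $1$. By (iv), $y$ is pushed immediately on top of $x$, so at that moment $y$ is the successor of $x$ in $U$. The stack is first-in-last-out, so the entries between $x$ and $y$ can only change if $y$ is popped. By (v), once $y$ is popped it is never in $U$ again. So as long as $y$ remains in $U$, it stays directly above $x$, and $x,y$ are adjacent in $U$. This contradicts the hypothesis, so the revealed value must be $0$.

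\textbf{Main obstacle.} The argument is bookkeeping, and the only delicate point is confirming that the formal process really satisfies (i)--(v). In particular I need to check two things against the pseudocode in Appendix~\ref{appendix: long alternating paths}:
\begin{itemize}[label=$\cdot$]
\item a query returning $1$ always causes an immediate push;
\item queries are only ever made from the current top of the stack toward $T$.
\end{itemize}
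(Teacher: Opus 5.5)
Your proposal is correct and follows essentially the same argument as the paper. Part (b) holds because every query is made from the current top of $U$ towards $T$ using that vertex's colour $\psi(\cdot)$, and part (a) holds because a positive answer would have pushed $y$ directly on top of $x$, making them adjacent in $U$. Your write-up is slightly more explicit than the paper's, which justifies (b) only by citing an earlier observation and does not spell out why $y$ must stay directly above $x$ while it remains in $U$.
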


We defer the proof of Lemma~\ref{lma: alt path existence sub linear} and 
Observations~\ref{ob: long path vars considered} and \ref{ob: alt path prob change} to Appendix~\ref{appendix: long alternating paths}.

\section{A Sharp Threshold and Initial Bounds on the Critical Window} \label{sec: a sharp threshold}

In this section we prove Theorem~\ref{thm: a sharp threshold}. 

\begin{theorem*} [\ref{thm: a sharp threshold}]
    For any constant $\epsilon > 0$:
    \begin{enumerate}[label=\alph*.]
        \item $\mathcal{G}(n, p = \frac{2 - \epsilon}{n}, 2)$ is w.h.p. adaptably 2-colorable.
        \item $\mathcal{G}(n, p = \frac{2 + \epsilon}{n}, 2)$ is w.h.p. not adaptably 2-colorable.
    \end{enumerate}
\end{theorem*}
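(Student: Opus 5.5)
For part (a) we argue by the first moment method, via Lemma~\ref{lma: improper alternating bicycle}. If $(G,c)$ is not adaptably 2-colorable, then some connected component is not. By Lemma~\ref{lma: improper alternating bicycle}, that component contains an alternating bicycle. Hence it suffices to show that w.h.p.\ $\mathcal{G}(n,p=(2-\epsilon)/n,2)$ contains no alternating bicycle.

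An alternating bicycle on $s$ vertices is determined by three pieces of data:
\begin{itemize}
\item an ordered sequence $v_1,\dots,v_s$, giving at most $n^s$ choices;
\item the indices $i,j$, giving at most $s^2$ choices;
\item the color of the first path edge, giving $2$ choices.
\end{itemize}
Once these are fixed, the colors of all $s+1$ edges are forced: the path alternates, and the two chords take the colors opposite to their adjacent path edges. In the degenerate cases where a chord coincides with a path edge, or the two chords coincide, the structure is a unicycle rather than a bicycle, so we restrict to genuine bicycles with $s+1$ distinct edges. Each such edge is present with its prescribed color independently with probability $p/2=(1-\epsilon/2)/n$. The expected number of alternating bicycles is therefore at most
\[
\sum_{s\ge 2} 2 s^2 n^s \Big(\frac{1-\epsilon/2}{n}\Big)^{s+1} \le \frac{2}{n}\sum_{s\ge2} s^2 (1-\epsilon/2)^{s} = O\!\Big(\frac{1}{\epsilon^3 n}\Big)=o(1).
\]
By Markov's Inequality, w.h.p.\ no alternating bicycle exists, which proves (a). This is the analogue of the Chv\'atal--Reed argument. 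The only care needed is checking that the edges are distinct and that the count of labelings is polynomial in $s$.

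For part (b) we use sprinkling (Section~\ref{sec: sprinkling method}). Write $p=(2+\epsilon)/n$ and choose $p_1=(2+\epsilon/2)/n = 2(1+\epsilon/4)/n$. Then take $p_2\sim(\epsilon/2)/n$. By Lemma~\ref{lma: alt path existence sub linear}, with constant $\epsilon/4$, $(G_1,c_1)\sim\mathcal{G}(n,p_1,2)$ w.h.p.\ contains an alternating path $P=v_1\cdots v_L$ with $L\ge \delta n$, where $\delta=\epsilon^2/80$.

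Restrict attention to the first $L/3$ vertices of $P$ and the last $L/3$ vertices of $P$. We seek a sprinkled chord $v_1v_i$ with $i$ odd, $3\le i\le L/3$, and $c(v_1v_i)\ne c(v_1v_2)$. Rather than insisting on $v_1$ itself, we allow the bicycle to start at any vertex $v_a$ among the first few vertices of $P$ and to use a chord $v_av_i$ with $i\equiv a+... $ chosen so that the cycle is odd and the chord color is opposite to $c(v_av_{a+1})$. Symmetrically, we seek a chord $v_jv_b$ at the far end. Any such pair yields a proper alternating odd bicycle (the subpath $v_a\cdots v_b$ together with the two chords). By Lemma~\ref{lma: non-adapt 2-col condition}(a), this bicycle is not adaptably 2-colorable. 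Non-2-colorability of a subgraph implies non-2-colorability of $(G,c)$, since the property is monotone under edge colorings.

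The main obstacle is dependence on the first round. A pair $v_av_i$ may already be an edge of $G_1$, possibly with the wrong color, and we cannot simply assume it is absent. We handle this by counting disjoint candidate pairs. Take $\Theta(n)$ choices of $a$ from the first $L/6$ vertices of $P$, and for each $a$ take $\Theta(n)$ admissible targets $i$ of the correct parity. This gives $\Theta(n^2)$ candidate pairs.

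Each candidate pair gets the desired chord in the final graph $G$ with probability at least $\frac{p_2}{2}\cdot\mathbb{P}[\text{pair not in }G_1]$ plus a possibly favorable $G_1$ contribution. Note, however, that the DFS has conditioned on certain non-path pairs. Observation~\ref{ob: alt path prob change} shows that for non-adjacent path vertices $x$ before $y$, at most $X^{\psi(x)}_{xy}=0$ was revealed, and nothing about the other color. By Lemma~\ref{lma: prob change}, the pair $v_av_i$ is then blue or red with probability at least $p/2$ for the unrevealed color, and the pair is still absent from $G_1$ with probability $1-O(1/n)$.

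We choose orientation so the needed chord color is the unrevealed one, or at least occurs with probability $\ge \frac{p_2}{2}(1-O(1/n))$ from the sprinkle. The sprinkled events across distinct pairs are independent. Hence the probability that no good chord appears at the front is at most
\[
\big(1-\Omega(\epsilon/n)\big)^{\Theta(\delta^2 n^2)} = e^{-\Omega(n)}.
\]
The same bound holds at the back, and the front and back chords are on disjoint pairs. A union bound then gives (b).
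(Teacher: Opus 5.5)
Your proposal is correct and follows essentially the paper's route: part (a) is the same first-moment count of alternating bicycles combined with Lemma~\ref{lma: improper alternating bicycle}, and part (b) is the same sprinkling along the long alternating path of Lemma~\ref{lma: alt path existence sub linear}. The one difference is in how pre-existing edges are handled: you bound each candidate pair directly (absent from $G_1$ with probability $1-O(1/n)$ given the DFS history, independently across pairs), where the paper instead uses a Chernoff bound on the number of pairs already present among odd-indexed vertices.

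Two fixes are needed. First, complete the unfinished congruence as $i\equiv a \pmod 2$. Second, at the back end the required chord color is exactly $\psi(v_j)=c(v_jv_{j+1})$, which is the color that may have been revealed absent, so there you must rely on the sprinkle rather than on choosing an orientation.
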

 
Furthermore, using the same techniques we achieve the following bounds on the critical window.

\begin{theorem*} [\ref{thm: scaling window lower bound}]
    For $\lambda_n \xrightarrow[]{} \infty$ arbitrarily slowly, $\mathcal{G}(n, p = 2(1 - \lambda_n n^{-1/3})/n, 2)$ is w.h.p. adaptably 2-colorable.
\end{theorem*}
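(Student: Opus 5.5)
We prove Theorem~\ref{thm: scaling window lower bound} with the first moment method, applied to alternating bicycles. By Lemma~\ref{lma: improper alternating bicycle}, if $(G,c)$ is not adaptably $2$-colorable then some connected component contains an alternating bicycle. It therefore suffices to show that the expected number of alternating bicycles in $\mathcal{G}(n,p,2)$ tends to $0$ when $p = 2(1-\lambda_n n^{-1/3})/n$. Write $q = p/2 = (1-\mu)/n$ with $\mu = \lambda_n n^{-1/3}$.

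\textbf{Counting.} An alternating bicycle is specified by:
\begin{itemize}
\item an ordered sequence of distinct vertices $v_1,\dots,v_s$ ($s \geq 3$);
\item indices $i,j$;
\item the color of $v_1v_2$.
\end{itemize}
The color of $v_1v_2$ determines the color of every edge of the alternating path $P$, and hence the forced colors of the chords $v_1v_i$ and $v_jv_s$. The bicycle has $s+1$ distinct edges, each carrying a prescribed color, so it appears with probability exactly $q^{s+1}$. This holds since the edges are distinct; degenerate coincidences, such as $v_1v_i$ equal to $v_jv_s$, can be handled separately or absorbed into the count. Summing over the choices gives
\[
\mathbb{E}[\#\text{alternating bicycles}] \le \sum_{s\ge 3} 2 s^2 n^s q^{s+1} = \frac{2}{n}\sum_{s \ge 3} s^2 (1-\mu)^{s} ,
\]
using $n^s q^{s+1} = (1-\mu)^{s+1}/n \le (1-\mu)^s/n$. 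The sum is $O(\mu^{-3})$, so the expectation is $O\!\left(\frac{1}{n\mu^3}\right) = O(\lambda_n^{-3}) \to 0$. Markov's inequality (Inequality~\ref{Markov's Inequality}) then shows that w.h.p.\ there is no alternating bicycle. Applying Lemma~\ref{lma: improper alternating bicycle} to each component of $G$ gives an adapted coloring of each component, and these combine into an adapted coloring of $G$. Part (a) of Theorem~\ref{thm: a sharp threshold} is the same computation with $\mu = \epsilon/2$ constant, giving expectation $O(1/n)$.

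\textbf{Main obstacle.} The step needing care is making the bound $q^{s+1}$ legitimate, that is, checking that the structure really consists of $s+1$ distinct vertex pairs with prescribed colors. Two issues arise:
\begin{itemize}
\item \emph{Short cycles.} When $i=2$ or $j = s-1$ the chord coincides with a path edge. Definition~\ref{def: bicycle} allows these indices, but the chord's color must differ from that of the path edge it duplicates, which is impossible. Such configurations therefore contribute nothing, and we restrict the count to $3 \le i$ and $j \le s-2$.
\item \emph{Overcounting.} A single bicycle may be counted by several tuples, which only helps an upper bound.
\end{itemize}
Once this is settled, the only analytic input is the estimate $\sum_s s^2(1-\mu)^s = \Theta(\mu^{-3})$. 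This estimate is exactly why the window width $n^{-1/3}$ appears: the number of bicycle shapes of length $s$ grows like $s^2$, matching the giant component and $2$-SAT computations.
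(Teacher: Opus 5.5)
Your proposal is correct and is essentially the paper's proof: the same first-moment count $2s^2n^s(p/2)^{s+1}$ over alternating bicycles combined with Lemma~\ref{lma: improper alternating bicycle}, except that you bound $\sum_s s^2(1-\mu)^s\le (1-\mu)(2-\mu)/\mu^3$ directly from the power series rather than via the paper's unimodal-sum and $\Gamma(3)$ integral comparison (both give $O(\lambda_n^{-3})$), and you make the componentwise application of the lemma explicit. One small correction: the coincident-chord case $j=1$, $i=s$ is a single fully alternating even cycle, whose expected number diverges at this density, so it cannot be ``absorbed'' into the sum; it must simply be excluded, which is legitimate because the lemma's construction always produces two distinct non-path edges.
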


\begin{theorem*} [\ref{thm: scaling window upper bound}]
    For $\lambda_n \xrightarrow[]{} \infty$ arbitrarily slowly, $\mathcal{G}(n, p = 2(1 + \lambda_n n^{-1/5})/n, 2)$ is w.h.p. not adaptably 2-colorable.
\end{theorem*}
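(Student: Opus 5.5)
The statement displayed last is Theorem~\ref{thm: scaling window upper bound}: for $p = 2(1+\lambda_n n^{-1/5})/n$ the graph is w.h.p.\ not adaptably 2-colorable. I would prove it with the two-round sprinkling scheme used for Theorem~\ref{thm: a sharp threshold}(b), now tracking how small $\epsilon$ may be. Set $\epsilon = \lambda_n n^{-1/5}$ and write $p = p_1 + p_2$ (up to the usual $(1-p_1)(1-p_2) = 1-p$ correction), with $p_1 = 2(1+\epsilon/2)/n$ and $p_2 \sim \epsilon/n$. Since $\epsilon/2 \gg n^{-1/3}\log^{1/3} n$, Lemma~\ref{lma: alt path existence sub linear} applies to $(G_1,c_1) \sim \mathcal{G}(n,p_1,2)$. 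It gives w.h.p.\ an alternating path $P = v_1\cdots v_L$ with $L \geq \epsilon^2 n/20 = \lambda_n^2 n^{3/5}/20$. By Lemma~\ref{lma: non-adapt 2-col condition}(a), it then suffices to show that sprinkling (together with any existing $G_1$-edges) creates w.h.p.\ a proper alternating odd bicycle on $P$. Non-colorability is monotone under edge colorings (a supergraph with the same colors on the bicycle still contains it), so this is enough.

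\textbf{Step 1: the two chords.} Take the first block $A = \{v_1,\dots,v_{L/4}\}$ and the last block $B = \{v_{3L/4},\dots,v_L\}$. I want a chord $v_av_i$ with $v_a,v_i \in A$ and $a<i$, with $i-a$ even, and whose color differs from $c(v_av_{a+1})$. The path truncated to start at $v_a$ then has its first cycle closed properly and oddly, since the parity condition $i \equiv a \pmod 2$ matches the criterion $i\equiv 1 \pmod 2$ stated after Definition~\ref{def: alt bicycle}. Symmetrically, I want a chord inside $B$ of correct color and parity. Because $A$ and $B$ are disjoint and ordered, the bicycle formed from the subpath between the two chord endpoints is automatically proper ($i \leq j$).

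The number of candidate pairs in $A$ is $\Theta(L^2)$. Each pair not an edge of $G_1$ receives a sprinkled edge of the required color independently with probability $p_2/2 \sim \epsilon/(2n)$. The expected count is therefore $\Theta(L^2\epsilon/n) = \Theta(\epsilon^5 n) = \Theta(\lambda_n^5) \to \infty$, which is exactly why the exponent is $1/5$. Since the indicators are independent across distinct pairs, the count is at least a Poisson-like binomial variable, and it is positive w.h.p.\ by Chernoff (Bound~\ref{Chernoff Bound}) or a second-moment computation. The same holds for $B$.

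\textbf{Main obstacle: conditioning from the DFS.} The path $P$ came out of the DFS, which revealed information about pairs inside $P$, so sprinkled edges on such pairs do not have their naive distribution. As the paper warns, an existing $G_1$-edge of the wrong color blocks sprinkling a correct-color edge there. I would handle this with Observations~\ref{ob: long path vars considered} and~\ref{ob: alt path prob change}. For non-adjacent $x$ before $y$ on $P$, only $X^{\psi(x)}_{xy}$ may have been revealed, and then it is $0$; the other color is unrevealed. The useful chord color for the left chord at $v_a$ is the one different from $c(v_av_{a+1}) = \psi(v_a)$, so that color was never revealed. By Lemma~\ref{lma: prob change}, the conditional probability that $G_1$ already has it is $\frac{p_1/2}{1-p_1/2} \geq p_1/2$. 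Either way, each pair independently has probability at least about $p/2 - o(1/n)$ of hosting a correct-color edge in the final graph, conditional on the DFS history.

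For the right chord I would reverse the path's orientation and apply the same observation from the later endpoint. Concretely, I would choose the required color at $v_j$ relative to the edge $v_{s-1}v_s$, and check whether it coincides with $\psi$ of the earlier vertex. If it does not, I would restrict to pairs whose parities make the required color the unrevealed one, which loses only a constant factor. Positive correlations are avoided because distinct pairs are independent given the history. The same argument with a linear-length path and constant $\epsilon$ recovers Theorem~\ref{thm: a sharp threshold}(b).
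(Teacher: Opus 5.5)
Your overall scheme is the paper's: sprinkle from $2(1+\epsilon/2)/n$ up to $2(1+\epsilon)/n$, take the alternating path of length $\epsilon^2 n/20$ from Lemma~\ref{lma: alt path existence sub linear}, and find one chord of prescribed color and parity near each end, with expected count $\Theta(L^2\epsilon/n)=\Theta(\lambda_n^5)$. The step that fails is your handling of the right chord in the conditioning paragraph.

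Observation~\ref{ob: alt path prob change} is oriented. For $x$ before $y$ on the stack, the only variable that may have been revealed is $X^{\psi(x)}_{xy}$, with $\psi$ taken at the \emph{earlier} vertex. So there is no reversed version you can apply from the later endpoint. Take a right chord $v_jv_b$ with $j<b$, where $v_b$ is the far end of the bicycle's path. The required color is the one differing from $c(v_{b-1}v_b)=\psi(v_{b-1})$, i.e.\ $\psi(v_b)$. Oddness of the cycle forces $j\equiv b \pmod 2$, and since $\psi$ alternates along the stack, $\psi(v_j)=\psi(v_b)$. So the required color is always exactly the one the DFS may have queried and found absent. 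Pairs of the other parity close even cycles, so the restriction you propose is empty, not a constant-factor loss. In particular, your claim that every pair carries the right color in the final graph with conditional probability $\gtrsim p/2$ is false for these pairs. When $X^{\psi(v_j)}_{v_jv_b}=0$ was revealed, that probability is only $\frac{1-p_1}{1-p_1/2}\cdot\frac{p_2}{2}\approx p_2/2$.

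This is repairable, because your Step~1 count only uses sprinkled edges anyway. The repair is what the paper does, and it treats both chords the same way. Whatever was revealed about a pair on $P$ is at most ``no edge of one color''. By Lemma~\ref{lma: prob change}, this does not raise the probability that the pair is already an edge of $G_1$. So, given the DFS history, each pair is free with probability at least $1-p_1$, independently over pairs. A free pair then receives a sprinkled edge of the required color with probability $\sim p_2/2$, regardless of which color was revealed. Hence $\mathbb{P}[\text{no right chord}]\le \exp(-\Theta(L^2p_2))=\exp(-\Theta(\lambda_n^5))$, and likewise on the left.

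The paper phrases this as dominating the number of pre-existing edges among the odd-indexed vertices of each half by a binomial, then sprinkling only on the remaining free pairs. Your ``unrevealed color'' refinement for the left chord is correct but not needed.
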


We defer the proof of Theorem~\ref{thm: scaling window lower bound} and Theorem~\ref{thm: scaling window upper bound} to 
Appendix~\ref{appendix: Critical Window Bounds}. Note that both proofs follow precisely the proofs of Theorem~\ref{thm: a sharp threshold} (a) and 
Theorem~\ref{thm: a sharp threshold} (b) respectively. 
Furthermore, the technique used in Theorem~\ref{thm: a sharp threshold} (b) cannot be pushed past 
$p = 2(1 + \lambda_n n^{-1/5})/n$. That is this method cannot achieve $p = 2(1 + \lambda_n n^{-1/3})/n$, which would be symmetric to   
Theorem~\ref{thm: scaling window lower bound}.

\subsection{The Subcritical Regime}

We first consider the subcritical regime.

\begin{proof}[Proof of Theorem~\ref{thm: a sharp threshold} (a)]
    Let $X$ denote the number of alternating bicycles in $\mathcal{G}(n,p,2)$ for $p = 2(1 - \epsilon)/n$. Consider a fixed sequence of distinct vertices $S = (v_1, v_2, \dots, v_s)$ and let $X_S = 1$ if $S$ forms an alternating bicycle and zero otherwise. Thus, $X = \sum_{S} X_S$.
    Note that, given the color of any edge in an alternating bicycle, the colors of the remaining edges become forced. Thus, there are two choices for the coloring and $(s-2)^2$ choices for the edges $v_1v_i$ and $v_jv_s$. Therefore,
    \begin{align*}
        \mathbb{E}[X_S] = 2(s-2)^2 \left(\frac{p}{2}\right)^{s+1} = (s-2)^2p^{s+1}2^{-s} \,.
    \end{align*}
    So, as $\mathbb{E}[X_S]$ depends only on the number of vertices in $S$,
    \begin{equation*}
        \mathbb{E}[X] \leq \sum_{s = 1}^n n^s s^2 p^{s+1} 2^{-s} = \frac{2(1 - \epsilon)}{n} \sum_{s=1}^n s^2 \left( \frac{2(1 - \epsilon)}{2} \right)^s = \frac{2(1 - \epsilon)}{n} \sum_{s=1}^n s^2 \left( 1 - \epsilon \right)^s \,.
    \end{equation*}
    By properties of power series we have that $\sum_{s=1}^\infty s^2 \left( 1 - \epsilon \right)^s$ converges if $1 - \epsilon < 1$. Thus, as $\epsilon > 0$ we have,
    \begin{equation*}
        \mathbb{E}[X] \leq \frac{2(1 - \epsilon)}{n} \sum_{s=1}^n s^2 \left( 1 - \epsilon \right)^s = \frac{2(1 - \epsilon)}{n} \mathcal{O}(1) = \mathcal{O}\left(\frac{1}{n}\right) = o(1) \,.
    \end{equation*}
    Thus, by Markov's Inequality, for $p < 2(1 - \epsilon)/n$ w.h.p. $G(n,p,2)$ does not contain an alternating bicycle and thus by the contrapositive of Lemma~\ref{lma: improper alternating bicycle} we have that w.h.p. $G(n,p,2)$ is adaptably 2-colorable. 
\end{proof}

\subsection{The Supercritical Regime} \label{sec: supercrit regime}

The proof of of Theorem~\ref{thm: a sharp threshold} (b) relies on the method of 
``deferred decisions" as well as the ``sprinkling" method. 
We reveal the graph in a very structured manner while searching for a long 
alternating path, making sure that we keep track of what information was revealed 
as well; particularly, between vertices in the long alternating path.
This is handled by Observation~\ref{ob: alt path prob change} regarding the search procedure in 
Section~\ref{sec: long alternating path}. We then ``sprinkle" edges and show that among the ``sprinkled" 
edges, an alternating odd bicycle is formed along the alternating path. 

In $\mathcal{G}(n,p)$,
we would be able to assume there are no edges between such vertices before 
``sprinkling" the extra edges, as this only decreases the probability of success.
However, as we require edges of a specific color, we can no longer make this 
assumption. To remedy this, we will show that the number of edges between such pairs of 
vertices is small before ``sprinkling" the extra edges. To do this, we need to be careful regarding 
what has been exposed during the construction of the long alternating path. 

\begin{proof}[Proof of Theorem~\ref{thm: a sharp threshold} (b)]
    We utilize a ``sprinkling" argument. We first consider $G_1 \sim \mathcal{G}(n, p = \frac{2(1 + \epsilon/2)}{n}, 2)$. 
    By Lemma~\ref{lma: alt path existence sub linear} w.h.p. there exists an alternating path of length 
    $\alpha n$ for $\alpha = (\epsilon/2)^2 / 5$. 
    Let $P = v_1, \dots, v_{\alpha n}$ be such an alternating path. We will increase $p$ to $2(1 + \epsilon)/n$; that is,
    we sprinkle edges with probability $\sim \epsilon / n$. We will show w.h.p. these sprinkled edges form an alternating odd bicycle along $P$.
    
    To simplify calculations significantly, we only consider the odd indexed vertices of $P$. Noting that 
    asymptotically this only results in a constant factor loss. 
    We partition the set of odd indices into two sets $I_1$ and $I_2$ of roughly equal size,
    \begin{equation*}
        I_1 = \left\{i \mid i \ odd \wedge i \leq \frac{\alpha n}{2}\right\} \qquad and \qquad I_2 = \left\{i \mid i \ odd \wedge i > \frac{\alpha n}{2}\right\} \,.
    \end{equation*}
    So, $|I_1| \sim |I_2| \sim \alpha n / 4$.

    Assume without loss of generality that the first edge $v_1 v_2$ in $P$ is colored $red$. Therefore, 
    if upon increasing $p$ we introduce a $blue$ edge between vertices in $I_1$ and a $red$ edge 
    between vertices in $I_2$ then it's not hard to see an odd alternating bicycle is formed. 

    Before we consider increasing $p$, let us consider the number of edges in $G_1$ between vertices 
    in $I_1$ and $I_2$. We first note that by Observation~\ref{ob: alt path prob change} and Lemma~\ref{lma: prob change}, the information
    revealed during the construction of $P$ does not increase the probability of an edge existing between two vertices in $I_1$ or $I_2$. 
    Therefore, if $B_1$ denotes the number of existing edges between vertices in $I_1$ and $B_2$ denotes the number of existing edges between vertices in $I_2$, 
    it follows that 
    $B_1$ and $B_2$ are stochastically dominated by a binomial distribution with ${\alpha n / 4 \choose 2}$ 
    trials and 
    probability $\frac{2(1+\epsilon/2)}{n}$. Therefore for $i \in \{1,2\}$, 
    $\mathbb{E}[B_i] \leq {\alpha n / 4 \choose 2} \frac{2(1+\epsilon/2)}{n} \sim n \alpha^2 (1 + \epsilon/2)/16$.
    So by the \hyperref[Chernoff Bound]{Binomial Chernoff Bound},
    \begin{equation*}
        \mathbb{P}[B_i > \mathbb{E}[B_i] + n^{2/3}] \leq 2e^{\frac{-16n^{4/3}}{3 n \alpha^2 (1 + \epsilon/2)}} = 2e^{-\Theta(n^{1/3})} = o(1) \,.
    \end{equation*}
    Particularly, w.h.p. $B_i = \mathcal{O}(n)$. 
    Let $\widetilde{I}_1$ denote the pairs of vertices in $I_1$ for which there does not already exist an edge. 
    Similarly, let $\widetilde{I}_2$ denote the pairs of vertices in $I_2$ for which there does not already exist an edge
    Therefore, w.h.p. for $i \in \{1,2\}$:
    \begin{equation*}
        |\widetilde{I}_i| = {|I_i| \choose 2} - B = {|I_i| \choose 2} (1 - o(1)) \,.
    \end{equation*}
    We now consider sprinkling edges with probability $\sim \frac{\epsilon}{n}$. 
    Let $X_1$ denote the number of $blue$ edges between pairs of vertices in $\widetilde{I}_1$ which are introduced among the sprinkled edges. Similarly, 
    let $X_2$ denote the number of $red$ edges between pairs of vertices in $\widetilde{I}_2$ which are introduced among the sprinkled edges. 
    Therefore, $X_i$ follows a binomial distribution with $|\widetilde{I}_i|$ trials and probability 
    $\sim \frac{\epsilon}{2n}$. It suffices to show that w.h.p. $X_1 \geq 1$ and $X_2 \geq 1$,
    \begin{equation*}
        \mathbb{P}[X_i = 0] \sim \left(1 - \frac{\epsilon}{2n}\right)^{{|I_i| \choose 2} (1 - o(1))}
        \sim e^{-\frac{\epsilon}{2n}\frac{\alpha^2 n^2}{32}(1-o(1))} = \Theta(e^{-n}) = o(1) \,.
    \end{equation*}
    So, w.h.p. $X_1 \geq 1$ and $X_2 \geq 1$ and thus an alternating odd bicycle is formed.
\end{proof}

Before proceeding, we note the following corollary of 
Theorem~\ref{thm: a sharp threshold} (b) regarding the structure of 
the proper alternating odd bicycle produced. 

\begin{corollary} \label{cor: disjoint cycle bicycle}
    $\mathcal{G}(n,p = \frac{2 + \epsilon}{n}, 2)$ w.h.p. contains a proper alternating odd bicycle with vertex disjoint cycles.
\end{corollary}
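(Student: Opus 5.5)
The plan is to reread the proof of Theorem~\ref{thm: a sharp threshold}(b) and observe that the bicycle it produces already has vertex-disjoint cycles. The only change is to record which sprinkled edges were used. Recall the setup: $P = v_1 \cdots v_{\alpha n}$ is the alternating path from Lemma~\ref{lma: alt path existence sub linear}, obtained in $G_1 \sim \mathcal{G}(n, 2(1+\epsilon/2)/n, 2)$. The odd indices are split into $I_1 = \{i \text{ odd}, i \le \alpha n/2\}$ and $I_2 = \{i \text{ odd}, i > \alpha n/2\}$. The bicycle then consists of a \emph{blue} sprinkled edge $v_a v_b$ with $a<b$ in $I_1$, a \emph{red} sprinkled edge $v_c v_d$ with $c<d$ in $I_2$, and the subpath of $P$ between them.

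First I will write the bicycle in the form of Definition~\ref{def: bicycle}. Take the alternating path $Q = v_a v_{a+1} \cdots v_d$ and the two chords $v_a v_b$ and $v_c v_d$. Since $b \le \alpha n/2 < c$, we have $b < c$, so the bicycle is proper. Its two cycles are
\begin{equation*}
C_1 = v_a v_{a+1}\cdots v_b v_a, \qquad C_2 = v_c v_{c+1}\cdots v_d v_c,
\end{equation*}
with $V(C_1)\subseteq\{v_i: i\le \alpha n/2\}$ and $V(C_2)\subseteq\{v_i : i>\alpha n/2\}$. These vertex sets are disjoint, so the cycles are vertex-disjoint.

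Next I will recheck the parity and colouring conditions, for completeness. Both $a$ and $b$ are odd, so $b-a$ is even and $C_1$ has odd length $b-a+1$; likewise for $C_2$. Assume, as in the proof, that $v_1v_2$ is red. Because $a$ is odd, the edge $v_a v_{a+1}$ is red, and the chord at $v_a$ is blue, so they differ as Definition~\ref{def: alt bicycle} requires. Because $d$ is odd, the edge $v_{d-1}v_d$ has the colour of $v_{d-1}v_{d-2}\cdots$, that is, of an edge $v_{2k}v_{2k+1}$, which is blue; the chord $v_cv_d$ is red, so these differ as well. Hence the structure is a proper alternating odd bicycle, and the proof of Theorem~\ref{thm: a sharp threshold}(b) shows that such $X_1, X_2 \ge 1$ exist w.h.p. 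A cosmetic adjustment is needed to match the path's starting point: the bicycle's path starts at $v_a$ rather than $v_1$, so I reindex it from $v_a$.

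The main obstacle is therefore not probabilistic, since all the probability estimates are inherited unchanged. The point needing care is the parity of the last path edge when $d$ is odd. If it fails under the other colour convention, I would swap which colour is required in $I_1$ versus $I_2$, which changes nothing in the probability bounds because both colours have sprinkling probability $\sim \epsilon/(2n)$.
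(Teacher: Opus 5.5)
Your proposal is correct and matches the paper's argument: the paper simply reads this corollary off the proof of Theorem~\ref{thm: a sharp threshold}(b), where the blue chord joins two odd indices in the first half $I_1$ and the red chord joins two odd indices in the second half $I_2$, so the two odd cycles lie on disjoint segments of $P$ and the bicycle is proper. Your colour check is also correct (the first path edge $v_av_{a+1}$ is red against the blue chord, and the last path edge $v_{d-1}v_d$ is blue against the red chord), so the fallback in your final paragraph is not needed.
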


\section{Improving our Supercritical bound on the Critical Window} \label{sec: supercritical improvement}

The purpose of this section is address Theorem~\ref{thm: scaling window upper bound 2}, which we restate:

\begin{theorem*}[\ref{thm: scaling window upper bound 2}]
    For $\lambda_n \xrightarrow[]{} \infty$ arbitrarily slowly, $\mathcal{G}(n, p = 2(1 + \lambda_n n^{-1/3})/n, 2)$ is w.h.p. not adaptably 2-colorable.
\end{theorem*}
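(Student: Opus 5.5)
We follow the outline announced in the introduction: build ``seeds'' in a subcritical graph, then sprinkle and recognise the sprinkled structure as a supercritical copy of $\mathcal{G}(N,p',2)$, so that Theorem~\ref{thm: a sharp threshold}(b) applies. Write $p_1 = 2(1-\lambda_n n^{-1/3})/n$ and $p = 2(1+\lambda_n n^{-1/3})/n$. Sample $(G_1,c_1)\sim\mathcal{G}(n,p_1,2)$ and sprinkle with $p_2 \sim 4\lambda_n n^{-1/3}/n$ as in Section~\ref{sec: sprinkling method}. The result is distributed as $\mathcal{G}(n,p,2)$. Non-adaptable $2$-colorability is monotone under edge colorings: adding edges only adds constraints. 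So it suffices to find a proper alternating odd bicycle in $G_1\cup G_2$ and invoke Lemma~\ref{lma: non-adapt 2-col condition}(a).

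\textbf{Step 1: hourglasses in $G_1$.} An hourglass is an alternating tree-like structure grown from a center vertex $w$ by a BFS along alternating paths, as in Process~\ref{proc: bicycle search phase 1}. It is explored in both parity directions, giving two ``ends'': a set $A^{red}(w)$ of vertices reachable from $w$ by alternating paths whose next required edge colour is red, and a set $A^{blue}(w)$ defined the same way for blue. Each end should have size about $K=\Theta(n^{2/3}\lambda_n^{-2})$ or somewhat less. The search runs on edges of probability $p_1/2=(1-\lambda_n n^{-1/3})/n$ per required colour, so it is a near-critical branching process. A near-critical branching process reaches size $K$ with probability $\Theta(\lambda_n n^{-1/3})$, up to logarithmic factors in the choice of $K$. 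We run the searches sequentially on fresh vertices and discard failures, with revealed information controlled as in Observations~\ref{ob: long path vars considered}--\ref{ob: alt path prob change} and Lemma~\ref{lma: prob change}. This yields w.h.p. $N$ vertex-disjoint hourglasses, with $N$ of order $n^{1/3}\lambda_n^{\,c}$ for a suitable power. Each end has size $\ge K$, and every vertex of an end is joined to the center by an alternating path of the correct parity and final colour. Concentration follows from Chernoff bounds over the independent trials, as in the paper's use of Bound~\ref{Chernoff Bound}.

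\textbf{Step 2: auxiliary graph.} Make each hourglass a vertex of an auxiliary graph $H$ on $N$ vertices. Put an edge of colour $\gamma$ between hourglasses $h,h'$ when some sprinkled edge of colour $\gamma$ joins an end of $h$ to an end of $h'$, with the ends chosen so that the alternating paths concatenate through the sprinkled edge. Each of the $\Theta(K^2)$ relevant pairs receives such an edge independently with probability about $p_2/2$. Hence $h,h'$ are joined in colour $\gamma$ with probability $p'/2 \approx K^2 p_2/2$. We choose $K$ and $N$ so that $N p' \ge 2+\epsilon$ for a constant $\epsilon>0$; this is where the $n^{-1/3}$ scaling is tight, since $NK^2\lambda_n n^{-4/3}$ must be $\Omega(1)$. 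The pairs involved are disjoint for different hourglass pairs, so the edge indicators of $H$ are independent. Then $H$ stochastically dominates $\mathcal{G}(N,p',2)$, and Theorem~\ref{thm: a sharp threshold}(b), in the form of Corollary~\ref{cor: disjoint cycle bicycle}, gives a proper alternating odd bicycle in $H$ w.h.p. as $N\to\infty$.

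\textbf{Step 3: lifting back.} Replace each auxiliary edge by its sprinkled edge, and each hourglass traversed by the internal alternating path through its center joining the two attachment points. An alternating bicycle in $H$ then becomes an alternating closed structure in $G$ with the same parity pattern, namely a proper alternating odd bicycle. The parity is arranged by always entering and leaving through ends of opposite required colours.

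\textbf{Main obstacle.} The hardest step is the colour dependence in Step 2, and making the lift in Step 3 well defined. Two paths inside one hourglass may intersect, and $G_1$ may already contain edges between ends whose presence changes the conditional sprinkle probabilities. There are also ambiguities about which end-pair yields which colour and parity. I would handle these in three ways. First, restrict to hourglasses that are trees, which holds w.h.p. below criticality. Second, count pre-existing edges between ends and show there are few, as in the proof of Theorem~\ref{thm: a sharp threshold}(b), which gives $|\widetilde{I}_i|=(1-o(1))\binom{|I_i|}{2}$. Third, use Lemma~\ref{lma: prob change} to show that the conditioning lowers edge probabilities by at most a $1-o(1)$ factor. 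That factor is absorbed by the constant slack in $Np'\ge 2+\epsilon$.
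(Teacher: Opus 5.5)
You have the paper's architecture: subcritical hourglasses, sprinkling, an auxiliary graph dominating $\mathcal{G}(N,p',2)$, Corollary~\ref{cor: disjoint cycle bicycle}, then lifting. Your treatment of pre-existing edges and of independence is also the paper's. But Step~1 overclaims, and it does so exactly where the critical window lives.

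\textbf{Step 1 overclaims.} You cannot have $N=n^{1/3}\lambda_n^{c}$ vertex-disjoint hourglasses with ends of size $K=\Theta(n^{2/3}\lambda_n^{-2})$. If you did, run your own Steps~2--3 with a sprinkle of only $p_2\approx\lambda_n n^{-4/3}$. That leaves you at $2(1-\tfrac12\lambda_n n^{-1/3})/n$, still below the window, yet it would give $Np'\approx NK^2p_2\approx n^{1/3}\lambda_n^{c-3}\to\infty$. Hence you would get a proper alternating odd bicycle, contradicting Theorem~\ref{thm: scaling window lower bound}.

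The same argument caps $N$ at $O(\lambda_n^3)$, which is what Lemma~\ref{lma: hourglass} delivers. So $Np'$ is a bounded constant, not something you may choose. Concretely, build at subcriticality $\lambda' n^{-1/3}$ and sprinkle $\sigma n^{-1/3}$. Then $N=\Theta(\lambda'^3)$ and $K^2=\Theta(n^{4/3}\lambda'^{-4})$, so $Np'=\Theta(\sigma/\lambda')$ with constants from the hourglass lemma. With your $\lambda'=\lambda_n$, $\sigma=2\lambda_n$, nothing forces this above $2$.

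You must exploit that $\lambda_n$ is arbitrary. For example, build at $\lambda'=\lambda_n/C$ for a large constant $C$, so the ratio is about $C+1$. This is what the paper's constant $M$ is for. Note that $K$ shrinks with $\lambda'$, so it is the ratio $\sigma/\lambda'$, not $\sigma$ alone, that must be large.

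\textbf{Missing construction of the second end.} Even $\Theta(\lambda_n^3)$ hourglasses need an idea absent from your sketch. Growing both ends from a single vertex succeeds with probability $\Theta(\lambda_n^2 n^{-2/3})$ at expected cost $\Theta(n^{1/3}\lambda_n^{-1})$. That is $\Theta(n\lambda_n^{-3})$ vertices per hourglass. This far exceeds the $O(\lambda_n n^{2/3})$ vertices you can discard before the residual subcriticality, roughly $\lambda_n n^{-1/3}+m/n$, is ruined. The paper instead does the following:
\begin{enumerate}
\item grow one large tree;
\item place the centre $u$ at the midpoint of a long path to a uniformly random vertex, so the subtree beyond $u$ is one side for free;
\item run trimmed searches from the first $n^{1/3}/\lambda_n$ vertices of that path, each an independent $\Theta(\lambda_n n^{-1/3})$ chance at the other side.
\end{enumerate}

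\textbf{Colour bookkeeping breaks the lift.} You index ends by the \emph{next required} colour and colour an auxiliary edge by the colour of its sprinkled edge. So alternation in $H$ at $h$ means entering at some $x\in A^{red}(h)$ and leaving at some $y\in A^{blue}(h)$. But whether the tree paths $P_{w,x}$ and $P_{w,y}$ leave the centre $w$ along differently coloured edges depends on the parities of their lengths, not on the required colours at $x$ and $y$. When the first edges agree, the lifted walk is not alternating at $w$, and the two paths may share a prefix.

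The paper instead splits each hourglass by the colour of the \emph{first} edge out of the centre, giving the sides $R_u,B_u$ of Definition~\ref{def: hourglass}. It colours auxiliary edges by the side used ($R$--$R$ red, $B$--$B$ blue). It also requires the sprinkled edge $u'v'$ to have colour $\psi_{H_u}(u')=\psi_{H_v}(v')$. Then alternation in the auxiliary graph means crossing from one side to the other at every centre. Because $\psi$ alternates with depth inside a side, guaranteeing $\Theta(K^2)$ admissible pairs needs the $(c_r,c_b)$-domination pigeonhole, which your proposal does not supply.

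Finally, at the two degree-three hourglasses both cycle edges enter the same side. The odd cycle in $G$ closes at the branch point $z'$ where the two tree paths diverge. Both cycle edges there have colour $\psi(z')$, and the edge toward the centre has the other colour. That, rather than a ``same parity pattern'', is why the lift is a proper alternating odd bicycle.
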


That is, we consider matching the $\epsilon$ term given in Theorem~\ref{thm: scaling window lower bound}, consequently matching the size of the 
critical windows for the giant component in $\mathcal{G}(n,p)$ and for $2$-SAT.

This work follows closely the work of Bollobás et al. \cite{2-sat-critical-window} and many of the following statements are direct adaptations of their work. 
Bollobás et al. \cite{2-sat-critical-window}
use the standard method of representing instances of $2$-SAT as directed graphs (see Section 2 in \cite{2-sat-critical-window}). 
They proceed to show that satisfiability of a $2$-SAT formula is equivalent to determining whether the directed
graph representation contains a specific type of cycle, which they call a \textit{contradictory cycle} (see Lemma 2.1 in \cite{2-sat-critical-window}).

The main challenge with adaptable colorability in comparison to the directed graph representation of a $2$-SAT instance (see. \cite{2-sat-critical-window}) is the lack of independence of certain properties. For instance, consider performing a search for all vertices reachable from a given vertex $u$ by an alternating path beginning with the color $red$ and compare this to searching for all vertices reachable by a directed path in a directed graph. Say we know that there exists an alternating path from $u$ to some vertex $v$, and we wish to extend this path. However, this depends on our previous choices; namely, the color of the edge incident to $v$ in this path. Whereas, if we consider a directed path from $x$ to $y$ to extend this path, we simply look at all the edges ``leaving" $y$, which is independent of our previous choices. The main technical contribution in this section is handling this dependence.

Before proceeding we provide an outline of the proof. We begin in the subcritical regime, 
$p_1 = \frac{2(1- \lambda_n n^{-1/3})}{n}$. We will show that in the subcritical regime there exists a 
lot of ``small" subgraphs which serve as ``seeds" or ``building blocks" for alternating bicycles. 
We then increase our probability by $p_2 = M\lambda_n / n^{4/3}$, for some large constant $M$, pushing us into the supercritical 
regime. We will show that by doing so, among a large subset of the small subgraphs, a proper odd alternating bicycle is formed w.h.p.. Thus, by 
Lemma~\ref{lma: non-adapt 2-col condition}, 
$\mathcal{G}(n,p = \frac{2(1 + \lambda_n n^{-1/3})}{n},2)$ is not adaptably $2$-colorable.

In the following subsection, we consider a search process in the subcritical regime, 
i.e. $p = \frac{2(1 - \lambda_n n^{-1/3})}{n}$, which will be a key component in constructing the ``building blocks".

\subsection{A Search Process}

In this subsection we shall assume we are in the subcritical regime, $p = \frac{2(1 - \lambda_n n^{-1/3})}{n}$.

Given a vertex $u$ and a color $\gamma \in \{red, blue\}$ we consider a Breath First Search 
process that explores $\mathcal{G}(n, p = \frac{2(1 - \lambda_n n^{-1/3})}{n}, 2)$ for vertices 
which are reachable from $u$ by an alternating path beginning with the color $\gamma$. 
First, recall the following random variable for each potential edge, i.e. each unordered pair of vertices $x, y$ and color $\kappa \in \{red, blue\}$, 
\[ X_{xy}^\kappa = \begin{cases} 
      1 & xy \in E \ and \  c(xy) = \kappa \\
      0 & otherwise
    \end{cases} \,.
\]
When revealing the random variables $X_{xy}^\kappa$ we are simply revealing whether there exists an edge of color $\kappa$ 
between the vertices $x$ and $y$. In particular, in the case that $X_{xy}^\kappa = 0$ we reveal no other information
other than there is not an edge of color $\kappa$.

We now define the following process on $\mathcal{G}(n,p,2)$ with vertex set $V$ and an arbitrary ordering 
$\tau$ of $V$.
\begin{process}{\textbf{\textit{Search\_Neighborhood}}(u, $\gamma$)} \label{process: construct G tilde}
    \begin{enumerate}
        \item Initialize: $F \coloneqq \{u\}$, $\widetilde{V} \coloneqq \{u\}$, $\widetilde{E} \coloneqq \emptyset$. 
        \item Define a function $\psi_u: \widetilde{V} \xrightarrow{} \{red, blue\}$ with $\psi_u(u) = \gamma$, which assigns a 
        color to each vertex in $x \in \widetilde{V}$ that can be used to extend the alternating path from $u$ to $x$.
        \item Pick minimal $v \in F$ according to $\tau$ (until $F = \emptyset$). \label{G tilde: step 2}
        \begin{enumerate}[label=\alph*., ref=\theenumi\alph*]
            \item For all $x \in V \setminus \widetilde{V}$ (in order according to $\tau$): \label{G tilde: exploratory step}
                \begin{itemize}
                        \item Reveal $X^{\psi_u(v)}_{vx}$.
                            \begin{itemize}
                                \item If $X^{\psi_u(v)}_{vx} = 1$: add $x$ to $\widetilde{V}$ and $F$, $vx$ to $\widetilde{E}$, set $\psi_u(x)$ to the color which is not $\psi_u(v)$.
                            \end{itemize}
                \end{itemize}
            \item Remove $v$ from $F$. \label{G tilde: step 4}
            \item For all pairs $(w,z)$ where either: i) both $w$ and $z$ were just added to $F$ with $z < w$, or ii) $w$ was just added to $F$ and $z \in F$ but not newly added: \label{G tilde: step 5}
                \begin{itemize}
                        \item Reveal $X^{\psi_u(f)}_{zw}$.
                            \begin{itemize}
                                \item If $X^{\psi_u(f)}_{zw} = 1$: add $zw$ to $\widetilde{E}$.
                            \end{itemize}
                \end{itemize}
        \end{enumerate}
    \end{enumerate}
\end{process}   

We call step~\ref{G tilde: exploratory step} the \textit{exploratory} 
step and step~\ref{G tilde: step 5} the \textit{backtracking} step. The purpose of the 
backtracking step is to allow us to compare the distribution of the final graph 
obtained by \hyperref[process: construct G tilde]{\textit{Search\_Neighborhood}} when run on $\mathcal{G}(n,p,2)$
to the distribution of a connected component of $\mathcal{G}(n,p/2)$, as we will see in 
Lemma~\ref{lma: G tilde distribution}. 

\begin{definition}
    Let $\widetilde{G}_{\gamma, n,p}(u)$ denote the distribution of the graph obtained by 
    running \\ \hyperref[process: construct G tilde]{\textit{Search\_Neighborhood}}($u, \gamma$) on 
    $\mathcal{G}(n,p,2)$. When the parameters $n$ and $p$ are clear from context we denote 
    $\widetilde{G}_{\gamma, n,p}(u)$ as $\widetilde{G}_\gamma(u)$.
\end{definition}

First, we make the following observation which holds at the end of 
\hyperref[process: construct G tilde]{\textit{Search\_Neighborhood}}.

\begin{observation} \label{ob: alt search process}
    Let $\widetilde{V}$ denote the vertex set of $\widetilde{G}_\gamma(u)$ and $\psi_u : \widetilde{V} \rightarrow \{red, blue\}$ 
    be the function defined in \hyperref[process: construct G tilde]{\textit{Search\_Neighborhood}}(u, $\gamma$). 
    \begin{enumerate}[label=\alph*.]
        \item For $x \in \widetilde{V}$, $y \notin \widetilde{V}$, and $col \neq \psi_u(x)$ the random variable $X^{col}_{xy}$ was not revealed.
        \item For $x \in \widetilde{V}$ and $y \notin \widetilde{V}$ the random variable $X^{\psi_u(x)}_{xy}$ was revealed and equals 0. 
        \item For $x, y \in \widetilde{V}$ precisely one of $X_{xy}^{red}$ or $X_{xy}^{blue}$ was revealed.
        \item For $x, y \notin \widetilde{V}$ neither $X_{xy}^{red}$ or $X_{xy}^{blue}$ was revealed.
    \end{enumerate}
\end{observation}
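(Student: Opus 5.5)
The plan is to verify each of the four items directly from the definition of \hyperref[process: construct G tilde]{\textit{Search\_Neighborhood}}. We list every random variable the process reveals; it only ever reveals variables in two places. The first is the exploratory step~\ref{G tilde: exploratory step}, which reveals $X^{\psi_u(v)}_{vx}$ for the current $v \in F$ and each $x \in V \setminus \widetilde{V}$. The second is the backtracking step~\ref{G tilde: step 5}, which reveals one colored variable for pairs $(w,z)$ with both endpoints already in $\widetilde{V}$. The key bookkeeping facts are: every vertex ever added to $\widetilde{V}$ is added to $F$ and processed exactly once before the process terminates at $F=\emptyset$; and $\psi_u(x)$ is assigned once, at the moment $x$ enters $\widetilde{V}$, and is never changed.

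For (a) and (b), fix $x \in \widetilde{V}$ and $y \notin \widetilde{V}$ at termination. Backtracking only touches pairs inside $\widetilde{V}$, so any variable revealed on $xy$ comes from an exploratory step. Such a step either has $v = x$ and reveals $X^{\psi_u(x)}_{xy}$, or has $v = y$. The case $v=y$ is impossible, since $y$ was never in $F$.

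When $x$ itself is processed, $y \in V\setminus\widetilde{V}$, because $\widetilde{V}$ only grows and $y$ is never in it. So $X^{\psi_u(x)}_{xy}$ is revealed. If it equaled $1$, then $y$ would have been added to $\widetilde{V}$, a contradiction; hence it equals $0$, which gives (b). No other colored variable on $xy$ is ever revealed, which gives (a).

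For (d), if neither endpoint is ever in $\widetilde{V}$, then neither is ever the current vertex $v$, and neither lies in a backtracking pair. So nothing is revealed on that pair.

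The only delicate item is (c): for $x,y \in \widetilde{V}$ we must show exactly one variable is revealed, with no double revelation and no pair skipped. Order the two vertices by the time they entered $\widetilde{V}$, and split into cases.

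\emph{Case 1: one is a parent of the other.} Say $y$ was added while processing $v=x$. Then $X^{\psi_u(x)}_{xy}$ was revealed (and equals $1$) in the exploratory step. After this, $y$ is in $\widetilde{V}$, so later exploratory steps skip the pair. The backtracking rule (i)/(ii) pairs newly added $w$ with other new vertices or with vertices still in $F$; $x$ is removed from $F$ in step~\ref{G tilde: step 4} before backtracking, so the pair $(x,y)$ is not revisited.

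\emph{Case 2: otherwise.} At the moment the later vertex, say $w=y$, is added, the vertex $x$ was either added in the same round, giving rule (i) with the tie-break $z<w$ ensuring one reveal, or $x$ was already in $F$ and not yet processed, giving rule (ii). The remaining possibility is that $x$ was already processed. But then, when $x$ was processed, $y \notin \widetilde{V}$, so the pair was explored from $x$ with outcome $0$. This falls into neither backtracking rule, and it is still exactly one reveal.

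So in every case exactly one of $X^{red}_{xy}, X^{blue}_{xy}$ is revealed. The main obstacle is this case analysis of timing in (c), in particular confirming that rule (ii) excludes the just-removed vertex $v$ and already-processed vertices, so no pair is counted twice.
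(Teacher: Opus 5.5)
Your proposal is correct. The paper states this observation without any proof, treating it as immediate from the definition of \textit{Search\_Neighborhood}, and your accounting of exploratory versus backtracking reveals is exactly the verification it leaves implicit; this includes the timing case split for (c) and the fact that $v$ leaves $F$ before the backtracking step. The one point worth making explicit is that before the later vertex $y$ enters $\widetilde{V}$, the only possible reveal on $xy$ is from $x$'s own exploratory step, and this is what rules out a double reveal in your Cases 1 and 2.
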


Let us now consider the underlying distribution of $\widetilde{G}_{\gamma, n, p}(u)$. 

\begin{lemma} \label{lma: G tilde distribution}
    Given $u \in V$ and $\gamma \in \{red, blue\}$, the uncolored projection of 
    $\widetilde{G}_\gamma(u) \coloneqq \widetilde{G}_{\gamma, n, p}(u)$ has the same distribution as 
    $C_{n, p/2}(u)$, the connected component of $u$ in $\mathcal{G}(n,p/2)$.
\end{lemma}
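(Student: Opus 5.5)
The plan is a coupling argument via the principle of deferred decisions. Every query made by \hyperref[process: construct G tilde]{\textit{Search\_Neighborhood}}$(u,\gamma)$ has the form ``reveal $X^{\kappa}_{xy}$'' for a single colour $\kappa$. By Observation~\ref{ob: alt search process}(c),(d) and the way the process is written, no pair $\{x,y\}$ is ever queried twice, and no pair is queried after a variable for it in the other colour was revealed. Hence, when $X^{\kappa}_{xy}$ is queried, nothing about the pair $\{x,y\}$ has yet been exposed. Since the pairs are independent in $\mathcal{G}(n,p,2)$, conditional on the whole history the query returns $1$ with probability exactly $\mathbb{P}[uv\in E \wedge c(uv)=\kappa]=p/2$. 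This is the situation of Lemma~\ref{lma: prob change} before any conditioning on $\{x,y\}$, so no $\frac{p/2}{1-p/2}$ correction appears. Consequently the sequence of query answers is a sequence of i.i.d.\ $\mathrm{Bernoulli}(p/2)$ variables. The choice of which pair to query next is a deterministic function of the previous answers, through $\tau$ and the current $F,\widetilde V$. The colour $\psi_u(\cdot)$ used only selects which variable is read, and does not affect the success probability.

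Next I compare this with the standard BFS exploration of the component $C_{n,p/2}(u)$ in $\mathcal{G}(n,p/2)$, driven by the same order $\tau$. That exploration pops the $\tau$-minimal active vertex $v$ and queries each pair $vx$ with $x$ unvisited, in $\tau$-order. To reveal the full induced edge set of the component, it also queries every pair of vertices in the component not yet queried. Coupling answer-for-answer, the exploratory step~\ref{G tilde: exploratory step} matches the BFS neighbour queries exactly. The set $\widetilde V$ therefore evolves identically and equals the vertex set of $C_{n,p/2}(u)$, since the process stops exactly when $F=\emptyset$, i.e.\ when no $\mathrm{Bernoulli}(p/2)$ query from an active vertex to an unvisited one succeeds.

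The remaining task is the edges inside $\widetilde V$. Here I would verify that every unordered pair $\{x,y\}\subseteq\widetilde V$ is queried exactly once during the process, either in the exploratory step or in the backtracking step~\ref{G tilde: step 5}. This is Observation~\ref{ob: alt search process}(c), and I would check it by a case analysis on when $x$ and $y$ entered $F$. If one of them, say $x$, was processed while $y$ was still unvisited, the pair is covered by the exploratory step. Otherwise both were in $F$ simultaneously: either they were added in the same round (case i) or one was added while the other was already present (case ii). In both cases the pair is covered by backtracking. Given this, the uncolored edge set of $\widetilde G_\gamma(u)$ consists of $\widetilde V$ together with an independent $\mathrm{Bernoulli}(p/2)$ indicator on each internal pair. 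The only dependence is the conditioning needed for connectivity, namely that the answers were $0$ on queries from processed vertices to vertices outside $\widetilde V$. This is exactly the law of $C_{n,p/2}(u)$ with its induced edges.

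I expect the main obstacle to be this bookkeeping. I must confirm that the backtracking step really reaches every internal pair and never repeats one. I also must confirm that the colour $\psi_u(f)$ chosen for a backtracking query is always a colour for which neither variable of that pair has been touched. Once every internal pair is shown to be queried exactly once with success probability $p/2$, the equality of distributions follows directly from the coupling.
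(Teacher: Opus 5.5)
Your proposal is correct and follows the same route as the paper. Both couple \textit{Search\_Neighborhood}$(u,\gamma)$ query-by-query with a BFS-with-backtracking exploration of $\mathcal{G}(n,p/2)$, which is the paper's \textit{Find\_Component}. Both use Observation~\ref{ob: alt search process}(c) to see that every revealed variable concerns a previously untouched pair, so it is $\mathrm{Bernoulli}(p/2)$ whichever colour is read. Your case analysis, showing that the exploratory and backtracking steps together query each internal pair of $\widetilde V$ exactly once, spells out what the paper leaves as ``not hard to see.''
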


\begin{proof}
    Observation~\ref{ob: alt search process} (c) implies that while running \hyperref[process: construct G tilde]{\textit{Search\_Neighborhood}}(u, $\gamma$),
    $\mathbb{P}[X^{red}_{xy}] = \mathbb{P}[X^{blue}_{xy}] = p/2$ for each random variable considered. This implies
    a natural coupling between \\ \hyperref[process: construct G tilde]{\textit{Search\_Neighborhood}}(u, $\gamma$) when run on $\mathcal{G}(n,p,2)$
    and the following process which we run in parallel but on $\mathcal{G}(n,p/2)$.

    \begin{process}{\textbf{\textit{Find\_Component}}(u)} \label{process: find component}
        \mbox{}
        \begin{enumerate}
        \item Initialize: $F \coloneqq \{u\}$, $\widetilde{V} \coloneqq \{u\}$, $\widetilde{E} \coloneqq \emptyset$. 
        \item Pick minimal $v \in F$ according to $\tau$ (until $F = \emptyset$).
        \begin{enumerate}[label=\alph*., ref=\theenumi\alph*]
            \item For all $x \in V \setminus \widetilde{V}$ (in order according to $\tau$):
                \begin{itemize}
                        \item Reveal whether $vx$ is an edge in $\mathcal{G}(n,p/2)$.
                            \begin{itemize}
                                \item If Yes: add $x$ to $\widetilde{V}$ and $F$, $vx$ to $\widetilde{E}$.
                            \end{itemize}
                \end{itemize}
            \item Remove $v$ from $F$.
            \item For all pairs $(w,f)$ where either: i) both $w$ and $f$ were just added to $F$ with $f < w$, or ii) $w$ was just added to $F$ and $f \in F$ but not newly added:
                \begin{itemize}
                        \item Reveal whether $fw$ is an edge in $\mathcal{G}(n,p/2)$. 
                            \begin{itemize}
                                \item If Yes: add $fw$ to $\widetilde{E}$.
                            \end{itemize}
                \end{itemize}
        \end{enumerate}
    \end{enumerate}
    \end{process}

    In fact, this is precisely Breadth First Search but with an extra backtracking step, 
    which ensures each pair of vertices in the component is checked for an edge. 
    Therefore, it is not hard to see that \hyperref[process: find component]{\textit{Find\_Component}}
    explores $\mathcal{G}(n,p/2)$ for the connected component of $u$. 
    Thus implying that $|V(\widetilde{G}_{\gamma, n, p})|$ follows the same distribution as $|C_{n,p/2}(u)|$,
    the size of the connected component containing $u$ in $\mathcal{G}(n,p/2)$. 
    Therefore, the backtracking step implies that $V(\widetilde{G}_{\gamma, n, p})$ follows precisely the same distribution
    as $C_{n,p/2}(u)$. 
\end{proof}

We get the following corollary of Lemma~\ref{lma: G tilde distribution}. 
It is well know that in the subcritical regime of the giant component, i.e. $p = \frac{1 - \epsilon}{n}$ 
for $\epsilon \gg n^{-1/3}$, that the  
expected size of a connected component scales as $1/\epsilon$ 
(see Chapter 11 of \cite{TheProbabilisticMethod} or Chapter 5 of 
\cite{Connected_component_rand_graphs}). Specifically, applying Lemma~\ref{lma: G tilde distribution} we have:

\begin{corollary} \label{cor: expected component size}
    Let $p = \frac{2(1 - \lambda_n n^{-1/3})}{n}$. For $x \in V$ and $\gamma \in \{red, blue\}$ we have,
    \begin{equation*}
        \mathbb{E}[|V(\widetilde{G}_{\gamma, n, p}(x))|] = \frac{n^{1/3}}{\lambda_n}(1 + o(1)) =  \frac{2}{p\lambda_n n^{2/3}} (1 + o(1)) \,.
    \end{equation*}
\end{corollary}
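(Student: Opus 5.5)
The plan is to reduce Corollary~\ref{cor: expected component size} entirely to Lemma~\ref{lma: G tilde distribution} and then compute the expected component size in $\mathcal{G}(n,q)$ with $q = p/2 = (1-\lambda_n n^{-1/3})/n$. By Lemma~\ref{lma: G tilde distribution}, $|V(\widetilde{G}_{\gamma,n,p}(x))|$ has the same law as $|C_{n,q}(x)|$. It is therefore enough to show
\begin{equation*}
\mathbb{E}|C_{n,q}(x)| = \frac{1}{\epsilon}(1+o(1)), \qquad \epsilon := \lambda_n n^{-1/3}.
\end{equation*}
The second expression in the statement is then just algebra: $\frac{2}{p\lambda_n n^{2/3}} = \frac{n^{1/3}}{\lambda_n(1-\epsilon)}$, and this equals $\frac{n^{1/3}}{\lambda_n}(1+o(1))$ because $\epsilon \to 0$. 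Here we assume, as the paper implicitly does, that $\lambda_n \ll n^{1/3}$.

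For the upper bound, I would dominate the BFS exploration of $C_{n,q}(x)$ by a Galton--Watson process with offspring distribution $\mathcal{B}in(n-1,q)$. Its mean is $\mu = (n-1)q < 1-\epsilon$, so the expected total progeny is $\sum_{k\ge 0}\mu^k = 1/(1-\mu) \le 1/\epsilon$.

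For the lower bound I see two options. The first is to write $\mathbb{E}|C(x)| = \sum_y \mathbb{P}[y \in C(x)]$ and lower bound it by counting paths from $x$ via inclusion--exclusion. The cleaner option is to couple from below: as long as fewer than $k$ vertices have been explored, each active vertex has at least $\mathcal{B}in(n-k,q)$ new neighbours. So the component stochastically dominates a Galton--Watson tree with mean $(n-k)q$, truncated at total size $k$. Taking $k = \epsilon^{-1}\omega$ with $\omega\to\infty$ slowly, we get $(n-k)q = 1-\epsilon - O(k/n)$, and $k/n = \omega n^{-2/3}/\lambda_n = o(\epsilon)$ precisely because $\epsilon \gg n^{-1/3}$. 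The expected truncated progeny $\mathbb{E}[\min(T,k)]$ of this tree is $\frac{1}{\epsilon}(1-o(1))$, since the progeny $T$ of a subcritical tree with deficit $\epsilon$ has an exponential tail on the scale $\epsilon^{-2}$, and so the mass beyond $k$ contributes only $o(1/\epsilon)$.

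The main obstacle is justifying that truncation error, namely showing
\begin{equation*}
\mathbb{E}[T\mathbf{1}_{T>k}] = o(1/\epsilon) \quad\text{for } k=\omega/\epsilon.
\end{equation*}
Because the paper treats this as well known, I would simply cite the standard result that in $\mathcal{G}(n,(1-\epsilon)/n)$ with $n^{-1/3}\ll\epsilon\ll 1$ the susceptibility satisfies $\mathbb{E}|C(x)|\sim 1/\epsilon$ (Chapter 11 of \cite{TheProbabilisticMethod}, Chapter 5 of \cite{Connected_component_rand_graphs}), and then transfer it through Lemma~\ref{lma: G tilde distribution}. Only if a self-contained argument were required would I carry out the tail estimate, using the Poisson/binomial total-progeny (Borel-type) formula $\mathbb{P}[T=m]\approx m^{-3/2}e^{-m\epsilon^2/2}/\sqrt{2\pi}$, which yields $\mathbb{E}[T\mathbf{1}_{T>k}] = O(\epsilon^{-1}e^{-\Omega(\omega\epsilon)}) + \ldots$, or more simply bounding it by a Chernoff argument applied to the random-walk representation of the exploration.
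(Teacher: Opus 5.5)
Your primary route is exactly the paper's argument: pass to $|C_{n,p/2}(x)|$ via Lemma~\ref{lma: G tilde distribution}, invoke the standard fact $\mathbb{E}|C(x)|\sim 1/\epsilon$ in $\mathcal{G}(n,(1-\epsilon)/n)$ for $n^{-1/3}\ll\epsilon\ll 1$, and convert using $\frac{2}{p\lambda_n n^{2/3}}=\frac{n^{1/3}}{\lambda_n(1-\epsilon)}$. Your Galton--Watson upper bound $1/(1-(n-1)q)\le 1/\epsilon$ is also correct.

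The self-contained lower bound you sketch would not work as written, because $k=\omega/\epsilon$ is the wrong truncation scale.

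\textbf{Why it fails.} For $m\ll\epsilon^{-2}$ the progeny tail is essentially the critical one, $\mathbb{P}[T\ge m]\approx\sqrt{2/\pi}\,m^{-1/2}$. Hence $\mathbb{E}[\min(T,k)]=\sum_{m\le k}\mathbb{P}[T\ge m]=\Theta(\sqrt{k})=\Theta(\sqrt{\omega/\epsilon})=o(1/\epsilon)$. The mean $1/\epsilon$ is carried by trees of size $\Theta(\epsilon^{-2})$, not $\Theta(\epsilon^{-1})$. Your own estimate $O(\epsilon^{-1}e^{-\Omega(\omega\epsilon)})$ already signals this: since $\omega\epsilon\to 0$, it only gives $O(1/\epsilon)$, not $o(1/\epsilon)$. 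Relatedly, $k/n=o(\epsilon)$ for $k=\omega/\epsilon$ would hold even for $\epsilon\gg n^{-1/2}$, so the hypothesis $\epsilon\gg n^{-1/3}$ is never really used.

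\textbf{The repair.} Take $k=\omega\epsilon^{-2}$ with $1\ll\omega\ll\lambda_n^3$.
Then $k/n=(\omega/\lambda_n^3)\,\epsilon=o(\epsilon)$; this is exactly where $\epsilon^3 n=\lambda_n^3\to\infty$ enters.
So the coupled tree has offspring mean $1-\epsilon'$ with $\epsilon\le\epsilon'=\epsilon(1+o(1))$, giving $\mathbb{E}[T]=1/\epsilon'$.
Its variance satisfies $\mathrm{Var}(T)=\sigma^2/\epsilon'^3\le\epsilon^{-3}$, since $\sigma^2\le(n-k)q<1$.
Hence $\mathbb{E}[T\mathbf{1}_{T>k}]\le\mathbb{E}[T^2]/k=O(1/(\omega\epsilon))=o(1/\epsilon)$.

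A bare Chernoff tail $\mathbb{P}[T>m]\le e^{-cm\epsilon^2}$ would not suffice even at this scale. It yields $\mathbb{E}[(T-k)^+]\lesssim\epsilon^{-2}e^{-c\omega}$, which is $o(1/\epsilon)$ only if $\omega\gtrsim\log(1/\epsilon)$. That is incompatible with $\omega\ll\lambda_n^3$ when $\lambda_n$ grows slowly. The second-moment bound, or a local-limit estimate that keeps the $m^{-1/2}$ factor, avoids this.
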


We now wish to determine the probability that $|V(\widetilde{G}_\gamma(x))| = k$ and that $\widetilde{G}_\gamma(x)$ is a tree. Define the following,
\begin{equation*}
    R_{n,p}(x,k) \coloneqq \mathbb{P}[|V(\widetilde{G}_{\gamma, n,p}(x))| = k \ \wedge \ \widetilde{G}_{\gamma, n, p}(x) \ is \ a \ tree] \,.
\end{equation*}

Before preceding, we make the following observation which follows from Lemma~\ref{lma: G tilde distribution}.

\begin{observation} \label{ob: uniform tree}
    Given $\widetilde{G}_\gamma(x)$ is a tree, it is uniformly distributed among all trees on $|V(\widetilde{G}_\gamma(x))|$ vertices.
\end{observation}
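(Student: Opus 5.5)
Observation~\ref{ob: uniform tree} follows from Lemma~\ref{lma: G tilde distribution} together with the symmetry of $\mathcal{G}(n,p/2)$. By that lemma, the uncolored projection of $\widetilde{G}_\gamma(x)$ has exactly the distribution of $C_{n,p/2}(x)$, the component of $x$ in $\mathcal{G}(n,p/2)$. It therefore suffices to show the following: conditioned on $C_{n,p/2}(x)$ being a tree on $k$ vertices, its edge set is uniform over the $k^{k-2}$ labeled trees on its vertex set. More precisely, conditioned also on the vertex set $W \ni x$ with $|W| = k$, every spanning tree of $W$ should be equally likely.

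Fix $W$ with $|W|=k$ and a tree $T$ on $W$. Write $q = p/2$. The event that $C_{n,q}(x)$ has vertex set $W$ and edge set exactly $E(T)$ is the intersection of three independent events:
\begin{itemize}
\item every edge of $T$ is present, with probability $q^{k-1}$;
\item every other pair inside $W$ is absent, with probability $(1-q)^{\binom{k}{2}-(k-1)}$;
\item every pair between $W$ and $V\setminus W$ is absent, with probability $(1-q)^{k(n-k)}$.
\end{itemize}
The product depends only on $k$ and $n$, not on the shape of $T$ or on $W$. Summing over the trees on $W$ and conditioning on the event that the component is a tree on $W$ gives the uniform distribution $1/k^{k-2}$ for each $T$. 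Averaging over $W$ of a fixed size then gives the statement as phrased, namely uniformity among trees on $|V(\widetilde{G}_\gamma(x))|$ vertices, with the labels being the actual revealed vertex set.

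The main point needing care is that Lemma~\ref{lma: G tilde distribution} really transfers the joint law of the edge set, not just the law of the vertex count. The backtracking step~\ref{G tilde: step 5} is what makes this work. Every pair inside $\widetilde{V}$ is queried exactly once, through exactly one colored variable, each with marginal probability $q$, by Observation~\ref{ob: alt search process}(c). Every pair between $\widetilde{V}$ and its complement is queried once and found absent, by Observation~\ref{ob: alt search process}(b). Hence the natural coupling with \textit{Find\_Component} matches the full uncolored graph $\widetilde{G}_\gamma(x)$ to $C_{n,q}(x)$ exactly.

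Once this coupling of the full edge set is justified, as asserted in the proof of the lemma, the observation reduces to the product-form computation above. The coloring plays no role, since we only project to the uncolored graph.
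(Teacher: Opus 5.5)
Your proposal is correct and follows the paper's route: the paper also transfers to $C_{n,p/2}(x)$ via Lemma~\ref{lma: G tilde distribution}, and then notes that for fixed numbers of vertices $k$ and edges $l$ this component is uniform over all connected graphs with those parameters. Your product computation $q^{k-1}(1-q)^{\binom{k}{2}-k+1+k(n-k)}$ spells out the justification of that fact in the case $l=k-1$; the same product reappears in the paper's Lemma~\ref{lma: R(k) equals}.
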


This follows as for a fixed number of edges $l$ and vertices $k$, $C_{n, p/2}(x)$ is uniform over all
connected graphs with $k$ vertices and $l$ edges. 

We now provide an expression for $R_{n,p}(k)$, the proof of which we defer to 
Appendix~\ref{appendix: R(k)}. The calculations to derive Lemma~\ref{lma: R(k) expression} 
are standard and contain only small deviations from those found in \cite{bolobas-rand-graph} 
and Chapter 11.10 in \cite{TheProbabilisticMethod}.

\begin{lemma} \label{lma: R(k) expression}
    If $\lambda_n \xrightarrow[]{} \infty$, $\lambda_n \ll n^{1/3}$, and $k/n^{2/3} \xrightarrow[]{} 0$ then for $p = \frac{2(1 - \lambda_n n^{-1/3})}{n}$:
    \begin{equation*}
        R_{n,p}(k) = \frac{2}{pn\sqrt{2\pi}k^{3/2}} e^{-\frac{k}{2}(\lambda_n n^{-1/3})^2\left(1 \pm o(1)\right)}
    \end{equation*}
\end{lemma}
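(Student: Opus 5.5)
By Lemma~\ref{lma: G tilde distribution}, the uncolored projection of $\widetilde{G}_{\gamma,n,p}(x)$ has the law of $C_{n,p/2}(x)$, the component of $x$ in $\mathcal{G}(n,q)$ with $q = p/2 = (1-\lambda_n n^{-1/3})/n$. So $R_{n,p}(k)$ is exactly the probability that this component is a tree on $k$ vertices, and we can work entirely in $\mathcal{G}(n,q)$.

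The first step is an exact expression. Choose the other $k-1$ vertices of the component in $\binom{n-1}{k-1}$ ways, and choose one of $k^{k-2}$ labelled trees on them (Cayley's formula). Require its $k-1$ edges to be present, the remaining $\binom{k}{2}-(k-1)$ internal pairs to be absent, and the $k(n-k)$ pairs leaving the set to be absent. This gives
\begin{equation*}
R_{n,p}(k) = \binom{n-1}{k-1} k^{k-2} q^{k-1} (1-q)^{\binom{k}{2}-(k-1)+k(n-k)} .
\end{equation*}

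The second step is the asymptotic evaluation, term by term. Write $\epsilon = \lambda_n n^{-1/3}$, so $nq = 1-\epsilon$.
\begin{itemize}
\item Write $\binom{n-1}{k-1} = \frac{k}{n}\binom{n}{k}$. Since $k = o(n^{2/3})$, $\binom{n}{k} = \frac{n^k}{k!}e^{-k^2/(2n) - k^3/(6n^2)+O(k/n)}$. Apply Stirling, $k! \sim \sqrt{2\pi k}\,(k/e)^k$.
\item The factors $n^k$, $k^{k-2}\cdot k$, $q^{k-1}$ and $e^k$ combine to $\frac{1}{nq}\cdot\frac{1}{\sqrt{2\pi}k^{3/2}}(nq)^k e^{k}$. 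Here $\frac{1}{nq} = \frac{2}{pn}$, which is the prefactor in the statement.
\item The exponent of $(1-q)$ is $kn - k^2/2 + O(k)$. Using $\log(1-q) = -q - O(q^2)$, this factor equals $\exp(-kqn + k^2q/2 + o(1)) = \exp(-k(1-\epsilon) + k^2/(2n) + o(1))$. The $+k^2/(2n)$ cancels the $-k^2/(2n)$ from the binomial coefficient.
\item Collecting the $k$-dependent exponentials leaves $(1-\epsilon)^k e^{k\epsilon} e^{-k^3/(6n^2)}$. Expanding, $k\log(1-\epsilon)+k\epsilon = -k\epsilon^2/2 - O(k\epsilon^3)$, which is $-\frac{k}{2}\epsilon^2(1+O(\epsilon)) = -\frac{k}{2}\epsilon^2(1\pm o(1))$ because $\epsilon\to0$.
\end{itemize}

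The main obstacle is the error terms, which must either be $o(1)$ additively or be absorbed into the $(1\pm o(1))$ multiplying $k\epsilon^2/2$ in the exponent. The cross term $k^2\epsilon/n$ from $(1-q)^{-k^2/2}$ is harmless: it is at most $k\epsilon^2$ times $k/(n\epsilon) = o(1)$ when $k \lesssim n^{1/3}/\lambda_n$, and is $o(1)$ outright otherwise. The delicate term is $k^3/(6n^2)$, which is $o(1)$ only if $k = o(n^{2/3})$. I would check it against $k\epsilon^2$: the ratio is $k^2/(n^2\epsilon^2) = (k/n^{2/3})^2/\lambda_n^2 \to 0$ under the hypotheses $k/n^{2/3}\to 0$ and $\lambda_n\to\infty$. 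So it too can be absorbed into the $1\pm o(1)$. The remaining $O(k/n)$ and $O(kq^2 n)$ terms are $o(1)$ and go into the same factor, which completes the formula.
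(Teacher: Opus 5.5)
Your proposal is correct and follows essentially the paper's route: the same Cayley-based exact formula (Lemma~\ref{lma: R(k) equals}), then Stirling and the logarithmic expansions of Lemma~\ref{lma: R(k) expression 2}, with your term-by-term comparison against $k\epsilon^2$ supplying the absorption step that the paper only calls a direct corollary. One small slip: the cross term $k^2\epsilon/n$ is not $o(1)$ outright for $k$ near $n^{2/3}$ (e.g.\ $k = n^{2/3}\lambda_n^{-1/4}$ gives $\lambda_n^{1/2}$), but you do not need the case split, since its ratio to $k\epsilon^2$ is $k/(\lambda_n n^{2/3}) \to 0$ for every admissible $k$.
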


We will often run \hyperref[process: construct G tilde]{\textit{Search\_Neighborhood}}
on a subset of the vertices of size $n' = n - \lambda_n n^{2/3}$. The following corollary of Lemma~\ref{lma: R(k) expression} will be useful:

\begin{corollary} \label{cor: G tilde tree}
    There exists constant $\phi > 0$ such that for $x \in V$, $\gamma \in \{red, blue\}$, $\lambda_n \ll n^{1/3}$, and $p = \frac{2(1 - \lambda_n n^{-1/3})}{n}$ and $n' = n - \lambda_nn^{2/3}$ the probability that $\widetilde{G}_{\gamma, n', p}(x)$ is a tree of size between $2n^{2/3}/\lambda_n^2$ and $4n^{2/3}/\lambda_n^2$ is $(\phi + o(1))\frac{\lambda_n}{n^{1/3}}$.
\end{corollary}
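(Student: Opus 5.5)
The plan is to reduce Corollary~\ref{cor: G tilde tree} to Lemma~\ref{lma: R(k) expression}, applied on the smaller vertex set of size $n'$, and then sum the resulting expression over the stated range of $k$, approximating the sum by an integral. The probability in question is exactly $\sum_{k=a}^{2a} R_{n',p}(k)$, where $a = 2n^{2/3}/\lambda_n^2$.

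\textbf{Step 1: reparametrize $p$ relative to $n'$.} I will write $p = \frac{2(1-\lambda'_n n'^{-1/3})}{n'}$ and solve for $\lambda'_n$. Since $n'p/2 = (1-\lambda_n n^{-2/3}\cdot n^{1/3}\cdot n^{-1/3}\lambda_n^{-1}\lambda_n)\ldots$ simplifies to
\begin{equation*}
\frac{n'p}{2} = \left(1 - \lambda_n n^{-1/3}\right)\left(1-\lambda_n n^{-1/3}\right) = 1 - 2\lambda_n n^{-1/3} + \lambda_n^2 n^{-2/3},
\end{equation*}
and $n'^{1/3} = n^{1/3}(1+o(1))$, we get $\lambda'_n = 2\lambda_n(1+o(1))$. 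In particular $\lambda'_n \to \infty$, and $\lambda'_n \ll n'^{1/3}$ because $\lambda_n \ll n^{1/3}$. Moreover every $k$ in the range satisfies $k = \Theta(n^{2/3}/\lambda_n^2)$, so $k/n'^{2/3} \to 0$. Hence the hypotheses of Lemma~\ref{lma: R(k) expression} hold with $(n',\lambda'_n)$ in place of $(n,\lambda_n)$.

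\textbf{Step 2: evaluate $R_{n',p}(k)$ on the range.} Using $pn' = 2(1+o(1))$ and $(\lambda'_n n'^{-1/3})^2 = 4\lambda_n^2 n^{-2/3}(1+o(1))$, Lemma~\ref{lma: R(k) expression} gives
\begin{equation*}
R_{n',p}(k) = \frac{1+o(1)}{\sqrt{2\pi}\,k^{3/2}}\, e^{-2k\lambda_n^2 n^{-2/3}(1\pm o(1))}.
\end{equation*}
Write $k = x n^{2/3}/\lambda_n^2$ with $x \in [2,4]$. The exponent is then $-2x(1\pm o(1))$, which is $\Theta(1)$. Therefore the multiplicative error $e^{\Theta(1)\cdot o(1)}$ is $1+o(1)$, uniformly over the range.

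\textbf{Step 3: sum over $k$.} The summand varies by a factor $1+o(1)$ between consecutive $k$, since the step in $x$ is $\lambda_n^2/n^{2/3} \to 0$. So the sum is $(1+o(1))$ times the corresponding integral:
\begin{equation*}
\sum_{k} R_{n',p}(k) = (1+o(1))\,\frac{n^{2/3}}{\lambda_n^2}\int_2^4 \frac{1}{\sqrt{2\pi}}\left(\frac{x n^{2/3}}{\lambda_n^2}\right)^{-3/2} e^{-2x}\,dx = (1+o(1))\,\frac{\lambda_n}{n^{1/3}}\,\phi,
\end{equation*}
where $\phi = \frac{1}{\sqrt{2\pi}}\int_2^4 x^{-3/2}e^{-2x}\,dx > 0$ is an absolute constant.

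\textbf{Main obstacle.} The delicate point is the uniformity of the $o(1)$ terms in Lemma~\ref{lma: R(k) expression} over $k$ in the window. I would check that the proof in Appendix~\ref{appendix: R(k)} gives error terms depending only on $k/n^{2/3}$ and $\lambda_n$, not on $k$ individually; this should hold since those are Stirling-type estimates. Given that, the error inside the exponent is multiplied by a bounded quantity, so it yields only a $1+o(1)$ factor. The second thing to track is the factor $2$ in $\lambda'_n \approx 2\lambda_n$: it changes only the value of $\phi$, not its positivity.
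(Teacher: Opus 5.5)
Your proposal is correct and follows the same route as the paper. You rewrite $p$ relative to $n'$ with $\lambda_n' = 2\lambda_n(1+o(1))$, apply Lemma~\ref{lma: R(k) expression} on $n'$ vertices, and sum $R_{n',p}(k)$ over the window $k=\Theta(n^{2/3}/\lambda_n^2)$. Your Riemann-sum step is slightly sharper than the paper's argument. The paper only shows each term is $\Theta(\lambda_n^3/n)$, which gives the order $\Theta(\lambda_n/n^{1/3})$ but no limiting constant, whereas you identify $\phi = \frac{1}{\sqrt{2\pi}}\int_2^4 x^{-3/2}e^{-2x}\,dx$. The garbled opening line of your Step 1 should simply read $n'p/2 = (1-\lambda_n n^{-1/3})^2$.
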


\begin{proof}
    Choose $\lambda_n'$ such that the following holds,
    \begin{equation*}
        \frac{2(1 - \lambda_nn^{-1/3})}{n} = p =  \frac{2(1 - \lambda_n'(n')^{-1/3})}{n'} \,,
    \end{equation*}
    straightforward calculations show $\lambda_n' = \lambda_n(2 - o(1))$.
    From Lemma~\ref{lma: R(k) expression} we have,
    \begin{align*}
        \sum_{k = 2n^{2/3}/\lambda_n^2}^{4n^{2/3}/\lambda_n^2} R_{n',p}(k) &= \frac{2}{pn'\sqrt{2\pi}} \sum_{k = 2n^{2/3}/\lambda_n^2}^{4n^{2/3}/\lambda_n^2} \frac{1}{k^{3/2}}e^{-\frac{k}{2} (\lambda_n' (n')^{-1/3})^2 (1 \pm o(1))} \\
        &= \frac{\mathcal{O}(1)}{1 - \lambda_n'(n')^{-1/3}} \frac{\lambda_n^3}{n} 2\frac{n^{2/3}}{\lambda_n^2} \\
        &= (\phi + o(1))\frac{\lambda_n}{n^{1/3}} \,, \qquad \text{for some constant $\phi > 0$} \,.
    \end{align*}
    The second equality follows as $k = \Theta(n^{2/3}/\lambda_n^2)$ for each term in the sum. 
\end{proof}

\subsection{Hourglasses}

We now define a subgraph that will serve as a ``building block" for finding alternating bicycles. 
This subgraph is precisely an edge colored variant of the hourglass used in \cite{2-sat-critical-window}.

\begin{definition} \label{def: hourglass}
    An Hourglass is defined as a triple $(u,R,B)$ where $R, B, \{u\}$ are all pair-wise disjoint and for all $x \in R$, $x$ is reachable from $u$ by an alternating path $P_1 = u v_1 v_2 \cdots v_k x$ where $c(uv_1) = red$ and $V(P_1) \subseteq R \cup \{u\}$. Similarly, for all $y \in B$, $y$ is reachable from $u$ by an alternating path $P_2 = u w_1 w_2 \cdots w_l y$ where $c(uw_1) = blue$ and $V(P_2) \subseteq B \cup \{u\}$. 
    Furthermore, we call $u$ the center of the hourglass and the sets $R$ and $B$ the components. 
\end{definition}

We shall show that for $p = 2(1 - \lambda_n n^{-1/3})/n$, there exists w.h.p. a 
lot of ``small" hourglasses. These will serve as ``building blocks" for alternating bicycles.

\begin{lemma} \label{lma: hourglass}
    Suppose $\lambda_n \xrightarrow[]{} \infty$ arbitrarily slowly. There exists a constant $\psi > 0$ such that w.h.p. 
    $\mathcal{G}(n, \frac{2(1 - \lambda_nn^{-1/3})}{n}, 2)$ 
    has at least $\psi \lambda_n^3$ vertex disjoint hourglasses with 
    respective sets $R$ and $B$ of size at least $n^{2/3} / \lambda_n^2$.
\end{lemma}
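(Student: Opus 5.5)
We find the hourglasses greedily, one at a time, and show the greedy procedure succeeds often enough. Fix $p = \frac{2(1-\lambda_n n^{-1/3})}{n}$ and process the vertices in the order $\tau$.

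\textbf{One attempt.} Maintain a set $W$ of ``used'' vertices, initially empty, and stop once $|W| > \lambda_n n^{2/3}$. While this has not happened, pick the next vertex $u \notin W$. Run \hyperref[process: construct G tilde]{\textit{Search\_Neighborhood}}$(u, red)$ on $V \setminus W$, producing $R \cup \{u\}$. Then run \hyperref[process: construct G tilde]{\textit{Search\_Neighborhood}}$(u, blue)$ on $(V \setminus W) \setminus R$, producing $B \cup \{u\}$. Call the attempt a \emph{success} if both explorations return trees with between $2n^{2/3}/\lambda_n^2$ and $4n^{2/3}/\lambda_n^2$ vertices. A successful attempt is an hourglass $(u,R,B)$ by construction: every vertex of $R$ is reached by an alternating path that starts red and stays inside $R\cup\{u\}$, and similarly for $B$. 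Whether or not the attempt succeeds, add all explored vertices to $W$, so that later hourglasses are vertex disjoint from earlier ones.

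\textbf{Probability of success.} The key point is Observation~\ref{ob: alt search process}: after an exploration, pairs with both endpoints outside the explored set are unrevealed. The red exploration from $u$ reveals only $X^{red}_{uy}$ for $y$ outside it, so $X^{blue}_{uy}$ stays fresh. For other explored $x \in R$ and unexplored $y$, information was revealed only about $x$, and those vertices are excluded from the second run. Hence the blue exploration on $(V\setminus W)\setminus R$ sees fresh variables. While $|W| + |R| \le \lambda_n n^{2/3} + 4n^{2/3}/\lambda_n^2$, both runs take place on at least $n' = n - (1+o(1))\lambda_n n^{2/3}$ vertices. This is slightly more than the $n'$ of Corollary~\ref{cor: G tilde tree}, which is harmless since the same calculation applies with $\lambda'_n = \Theta(\lambda_n)$. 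Conditionally on the history, each run is therefore a tree of the right size with probability at least $(\phi+o(1))\lambda_n n^{-1/3}$. So each attempt succeeds with probability at least $q = (\phi^2/2)\lambda_n^2 n^{-2/3}$, independently of the past.

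\textbf{Counting attempts.} By Corollary~\ref{cor: expected component size} and Lemma~\ref{lma: G tilde distribution}, each attempt consumes in expectation $O(n^{1/3}/\lambda_n)$ vertices, since the sizes are dominated by component sizes in a subcritical $\mathcal{G}(n',p/2)$. We run $T = c\lambda_n^{2} n^{1/3}$ attempts for a small constant $c$. The total consumption is then $O(c\lambda_n n^{2/3})$ in expectation, so by Markov it is at most $\lambda_n n^{2/3}$ with probability $1-O(c)$. To obtain a w.h.p. statement, we either take $c = c_n \to 0$ slowly, or bound the variance of the consumption, which is a sum of i.i.d.-dominated component sizes with second moment $O(n/\lambda_n^3)$ per attempt. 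Chebyshev then gives the total $\le T\cdot O(n^{1/3}/\lambda_n)(1+o(1))$ w.h.p.; this second route is the one I would try first. The number of successes stochastically dominates $\mathrm{Bin}(T,q)$, which has mean $c\phi^2\lambda_n^4/2$. By Chernoff (Bound~\ref{Chernoff Bound}) it is at least half its mean w.h.p., giving at least $\psi\lambda_n^3$ hourglasses (indeed more) with $|R|,|B| \ge n^{2/3}/\lambda_n^2$.

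\textbf{Main obstacle.} The main difficulty is justifying that the second, blue exploration and all later attempts are conditionally distributed as in Lemma~\ref{lma: G tilde distribution} despite the revealed information. The critical issue is the asymmetry described in Lemma~\ref{lma: prob change}. Revealed zeros of one color bias the other color, so we must make sure that no pair examined later had its other color revealed. We handle this by deleting all explored vertices and using Observation~\ref{ob: alt search process}(a),(b),(d).
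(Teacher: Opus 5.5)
There is a genuine gap in your counting step. Each attempt needs two independent rare events: the red search from $u$ and the blue search from $u$ must both return large trees. So $q=\Theta(\lambda_n^2 n^{-2/3})$, while an attempt still costs $\Theta(n^{1/3}/\lambda_n)$ vertices in expectation. With $T=c\lambda_n^2 n^{1/3}$ attempts, the mean of $\mathrm{Bin}(T,q)$ is $Tq=\tfrac{c\phi^2}{2}\lambda_n^4 n^{-1/3}$, not $\tfrac{c\phi^2}{2}\lambda_n^4$; you dropped a factor $n^{-1/3}$. For $\lambda_n\to\infty$ slowly this is $o(1)$, so w.h.p.\ your procedure finds no hourglass at all. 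Indeed $Tq\ge\psi\lambda_n^3$ would require $\lambda_n=\Omega(n^{1/3})$, outside the allowed range.

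You cannot fix this by enlarging the reserve. If you delete $m=K\lambda_n n^{2/3}$ vertices, the residual graph has effective subcriticality about $(K+1)\lambda_n n^{-1/3}$. By Lemma~\ref{lma: R(k) expression}, the probability of a tree of size $\Theta(n^{2/3}/\lambda_n^2)$ then picks up a factor $e^{-\Theta(K^2)}$, which overwhelms the $O(K^2)$ gain in the number of attempts. Put differently, your scheme spends $\Theta(n/\lambda_n^3)$ vertices per hourglass. A budget of $\Theta(\lambda_n n^{2/3})$ vertices only allows $\Theta(n^{2/3}/\lambda_n^2)$ per hourglass if you want $\Theta(\lambda_n^3)$ of them.

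The missing idea is to pay for only one rare event per hourglass. The paper runs a single search from $v$, which is a large tree $T$ with probability $\Theta(\lambda_n n^{-1/3})$. Given its size, $T$ is a uniformly random tree, so a random $w$ gives a path $P_{v,w}$ of length at least $\sqrt{2|T|}=\Theta(n^{1/3}/\lambda_n)$ with probability about $e^{-1}$. Taking $u$ to be the midpoint of this path, the part of $T$ hanging beyond $u$ gives one component for free; with probability at least $1/2$ it holds half of $T$. Each of the first $n^{1/3}/\lambda_n$ vertices $x$ of $P_{v,u}$ then offers a fresh search in colour $\gamma_x$, each a large tree with probability $\Theta(\lambda_n n^{-1/3})$. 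So the second component appears with constant probability $1-e^{-\phi}$. These extra searches run only after the first rare event, so the expected cost per run stays $\Theta(n^{1/3}/\lambda_n)$ while the success probability is $\Theta(\lambda_n n^{-1/3})$. That ratio is exactly what yields $\Theta(\lambda_n^3)$ hourglasses from a reserve of $\lambda_n n^{2/3}$ vertices.

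A secondary point: your claim that $X^{blue}_{uy}$ ``stays fresh'' after the red search from $u$ is wrong. That search revealed $X^{red}_{uy}=0$ for every unexplored $y$, so by Lemma~\ref{lma: prob change} $X^{blue}_{uy}$ is then $1$ with probability $\frac{p/2}{1-p/2}$. This violates the condition you state in your last paragraph, and it is what the paper's Trimming Alteration handles. That bias is $O(1/n)$ per pair and easy to repair, but the yield problem above is fatal.
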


\subsubsection{Proof of Lemma~\ref{lma: hourglass}}

This proof is a direct adaptation of the work by Bollobás et al. \cite{2-sat-critical-window} where they prove the existence of a similar structure but in the directed graph model.

The proof relies heavily on the method of ``deferred decisions". 
We reveal the graph in a very structured manner while searching for hourglasses, 
making sure that we keep track of what information was revealed and when as well as 
which vertices were considered in each step.

First, recall Lemma~\ref{lma: prob change} regarding the random variables $X^{red}_{xy}$ and $X^{blue}_{xy}$. 

\begin{lemma*}[\ref{lma: prob change}] 
    Let $c_1, c_2 \in \{red,blue\}$ with $c_1 \neq c_2$. If $X^{c_2}_{xy}$ was revealed and equals 0 then the following hold:
    \begin{itemize}
        \item the probability that $X^{c_1}_{xy} = 1$ given $X^{c_2}_{xy} = 0$ is $\frac{p/2}{1-p/2} > p/2$.
        \item the probability $xy \in E$ given $X^{c_2}_{xy} = 0$ is $\frac{p/2}{1-p/2} < p$.
    \end{itemize}
\end{lemma*}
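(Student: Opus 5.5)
The proof is a direct conditional probability computation on the three-outcome space of the single pair $xy$. By symmetry I take $c_1 = red$ and $c_2 = blue$. In $\mathcal{G}(n,p,2)$ the pair $xy$ has exactly three mutually exclusive outcomes:
\begin{itemize}
    \item no edge, with probability $1-p$;
    \item a red edge, with probability $p/2$;
    \item a blue edge, with probability $p/2$.
\end{itemize}
The random variables $X^{red}_{xy}$ and $X^{blue}_{xy}$ are functions of this outcome alone. Distinct pairs are independent, so any other revealed information (about other pairs) does not affect the conditional law of this pair, and it suffices to condition only on $X^{blue}_{xy}=0$.

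Conditioning on $X^{blue}_{xy} = 0$ removes the blue outcome and leaves a truncated space of total mass $1 - p/2$. In this space the events $\{X^{red}_{xy}=1\}$ and $\{xy \in E\}$ coincide, since an existing edge must be red. Both therefore have probability
\begin{equation*}
    \frac{p/2}{(1-p) + p/2} = \frac{p/2}{1-p/2} \,.
\end{equation*}

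It remains to check the two inequalities, assuming $0 < p < 1$ (which holds since $p = \Theta(1/n)$).
\begin{itemize}
    \item Since $1 - p/2 < 1$, we get $\frac{p/2}{1-p/2} > p/2$.
    \item The inequality $\frac{p/2}{1-p/2} < p$ is equivalent to $1/2 < 1 - p/2$, that is, to $p < 1$.
\end{itemize}

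There is no real obstacle. The only point needing care is that knowing the pair is not blue is genuinely different from knowing nothing: it raises the chance of a red edge but lowers the chance of any edge.
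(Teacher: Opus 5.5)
Your proof is correct and follows the paper's argument: both pass to the truncated space after removing the blue outcome, observe that $\{X^{red}_{xy}=1\}$ and $\{xy \in E\}$ coincide there, and compute $\frac{p/2}{(1-p)+p/2}$. The only additions are that you check the two inequalities explicitly (they amount to $0<p<1$) and note independence across pairs, whereas the paper simply asserts the inequalities.
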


\begin{proof}[Proof of Lemma~\ref{lma: hourglass}]

   Before proceeding we provide a roadmap of the proof. First, we shall design a search process 
   \hyperref[process: hourglass]{Hourglass\_Process} which attempts to find an hourglass of the 
   desired size by starting at a single vertex. As one would expect the probability this succeeds is 
   quite small; in fact it goes to zero with $n$.
   So, we construct \hyperref[process: batched hourglass]{Batched\_Hourglass\_Process}
   which runs \hyperref[process: hourglass]{Hourglass\_Process} many times to boost the probability of success 
   to a constant. Each run of \hyperref[process: hourglass]{Hourglass\_Process} will 
   be on a subset of the vertices which were not used during previous runs. As a result, each run of \hyperref[process: hourglass]{Hourglass\_Process}
   will be independent.

   To accomplish this, we set aside a subset $S$ of $\lambda_n n^{2/3}$ 
   vertices from which we will replenish the used vertices. 
   We begin by only searching on $V \setminus S$. 
   At certain points, we will discard some previously used vertices and replace them 
   with vertices from $S$. Let $n' = n - \lambda_n n^{2/3}$.

   At a high level we search for a single hourglass as follows: 
   We pick an unused vertex $v$ and specify a color, say $red$. 
   We search for the set of vertices reachable from $v$ by alternating 
   paths beginning with $red$. From the subgraph obtained by this search, we obtain a long alternating path.
   The middle vertex $u$ of the path will serve as the center of our hourglass, and all vertices 
   which reach $v$ by first going through $u$ will form a component of the hourglass, say $B_u$
   (see the set of vertices enclosed by a blue dashed circle in Figure~\ref{fig: hourglass process}). 
   We then observe that each vertex in the path $P_{v,u}$ from $v$ to $u$ is reachable from $u$ by 
   an alternating path beginning with $red$. Thus, we can form the other component of the hourglass, $R_u$ 
   by searching for vertices which are reachable from each 
   vertex of $P_{v,u}$ by an alternating path beginning with the appropriate color.  
   
   \begin{figure}[h]
    \centering
        \begin{tikzpicture}[scale = 0.9,node distance={15mm}, main/.style = {draw, circle, thick, fill=black} ]
                \node[main, scale = 0.75] (u) at (0,0) {}; 
                \node[main, draw=none, fill=none] (u_name) at (0,-0.3) {$v$}; 
                \node[draw = none] at (u) [name=fake_u,outer sep=5pt,inner sep=5pt]{};
                
                \node[main, scale =0.75] (w) at (14.25, 0) {};
                \node[main, draw=none, fill=none] (w_name) at (14.25,-0.3) {$w$}; 
                \node[draw = none] at (w) [name=fake_w,outer sep=5pt,inner sep=5pt]{};

                \node[main, scale = 0.75] (v) at (7.5,0) {};
                \node[draw = none] at (v) [name=fake_v,outer sep=5pt,inner sep=5pt]{};
                \node[main, draw=none, fill=none] (v_name) at (7.5,-0.3) {$u$};

                \draw (8,0) ellipse (8.5cm and 2.3cm);

                \draw[blue, dashed, very thick] (11.85,0) ellipse (4.2cm and 1.8cm);
                

                \node[main, scale = 0.5] (u_1) at (2, 0) {};
                \node[draw = none] at (u_1) [name=fake_x_1,outer sep=5pt,inner sep=5pt]{};
                \node[main, draw=none, fill=none] (x_1) at (2,-0.3) {$x_1$}; 

                \node[main, scale = 0.5] (u_2) at (4,0) {};
                \node[draw = none] at (u_2) [name=fake_x_2,outer sep=5pt,inner sep=5pt]{};
                \node[main, draw=none, fill=none] (x_2) at (4,-0.3) {$x_2$}; 
                
                \node[main, draw=none, fill=none] (u_dots) at (5.25, 0) {$\cdots$};
                \node[main, draw=none, fill=none] (Puv) at (5.25,0.4) {$P_{v,u}$}; 
                
                \node[main, scale = 0.5] (u_3) at (6.5, 0) {};

                \draw[red] (u)  --  (u_1);
                \draw[blue, very thick] (u_1) -- (u_2);
                 
                \draw[red] (u_2) -- (u_dots);
                \draw[blue, very thick] (u_dots) -- (u_3);
        		\draw[red] (u_3) -- (v);

                \node[main, scale = 0.5] (v_1) at (8.75, 0) {};
                \node[main, scale = 0.5] (v_2) at (10, 0) {};
                \node[main, scale = 0.5] (v_3) at (11.25, 0) {};
                \node[main, draw=none, fill=none] (v_4) at (12.125, 0) {$\cdots$};
                \node[main, scale = 0.5] (v_5) at (13, 0) {};
                
                \draw[blue, very thick] (v) -- (v_1);
                \draw[red] (v_1) -- (v_2);
                \draw[blue, very thick] (v_2) -- (v_3);
                \draw[red] (v_3) -- (v_4);
                \draw[blue, very thick] (v_4) -- (v_5);
                \draw[red] (v_5) -- (w);

                
                \node[main, scale = 0.25] (b_1) at (8.75, 1) {};
                \node[main, draw = none, fill = none, rotate=17] (b_1_1) at (9.5, 1.25) {$\cdots$};
                \node[main, draw = none, fill = none, rotate=-17] (b_1_2) at (9.5, 0.75) {$\cdots$};
                
                \node[main, scale = 0.25] (b_2) at (8.75, -1) {};
                \node[main, draw = none, fill = none, rotate=-17] (b_2_1) at (9.5, -1.25) {$\cdots$};
                \node[main, draw = none, fill = none, rotate=17] (b_2_2) at (9.5, -0.75) {$\cdots$};
                
                \node[main, draw = none, fill = none, rotate=38] (b_3) at (11, 0.75) {$\cdots$};
                \node[main, draw = none, fill = none, rotate=-38] (b_4) at (12.25, -0.75) {$\cdots$};

                \node[main, draw = none, fill = none, rotate=38] (b_5) at (14, 0.75) {$\cdots$};
                \node[main, draw = none, fill = none, rotate=-38] (b_6) at (14, -0.75) {$\cdots$};

                \node[main, draw = none, fill = none, rotate=38] (b_w_1) at (15.25, 0.75) {$\cdots$};
                \node[main, draw = none, fill = none, rotate=-38] (b_w_2) at (15.25, -0.75) {$\cdots$};

                \draw[blue, very thick] (v) -- (b_1);
                \draw[red] (b_1) -- (b_1_1);
                \draw[red] (b_1) -- (b_1_2);
                
                \draw[blue, very thick] (v) -- (b_2);
                \draw[red] (b_2) -- (b_2_1);
                \draw[red] (b_2) -- (b_2_2);
                
                \draw[blue, very thick] (v_2) -- (b_3);
                \draw[red] (v_3) -- (b_4);

                \draw[red] (v_5) -- (b_5);
                \draw[red] (v_5) -- (b_6);

                \draw[blue, very thick] (w) -- (b_w_1);
                \draw[blue, very thick] (w) -- (b_w_2);


                \node[main, draw = none, fill = none] (x_0_1) at (-0.9, 3) {};
                \node[main, draw = none, fill = none] (x_0_2) at (0.9, 3) {};

                \draw[blue, very thick] (u) -- (x_0_1);
                \draw[blue, very thick] (u) -- (x_0_2);

                \draw[blue, very thick] (0,3) ellipse (0.9cm and 0.5cm);
                \node[main, draw = none, fill = none] (x_red_graph) at (0, 3) {$\widetilde{G}_{blue}(x_0)$};


                \node[main, draw = none, fill = none] (x_1_1) at (1.1, 3) {};
                \node[main, draw = none, fill = none] (x_1_2) at (2.9, 3) {};

                \draw[red] (u_1) -- (x_1_1);
                \draw[red] (u_1) -- (x_1_2);

                \draw[red] (2,3) ellipse (0.9cm and 0.5cm);
                \node[main, draw = none, fill = none] (x_red_graph) at (2, 3) {$\widetilde{G}_{red}(x_1)$};
                

                \node[main, draw = none, fill = none] (x_2_1) at (3.1, 3) {};
                \node[main, draw = none, fill = none] (x_2_2) at (4.9, 3) {};

                \draw[blue, very thick] (u_2) -- (x_2_1); 
                \draw[blue, very thick] (u_2) -- (x_2_2);

                \draw[blue, very thick] (4,3) ellipse (0.9cm and 0.5cm);
                \node[main, draw = none, fill = none] (x_blue_graph) at (4, 3) {$\widetilde{G}_{blue}(x_2)$};
                
                \node[main, draw = none, fill = none] (G_tilde_dots) at (5.4,3) {$\cdots$};
    
        \end{tikzpicture}
        \caption{A depiction of an iteration of \hyperref[process: hourglass]{Hourglass\_Process} up to Step~\ref{Hourglass proc: step 3d}, 
        assuming all previous steps succeeded. The set of vertices enclosed by a blue dashed oval denotes
        candidate set $B_v$ for the hourglass, that is a set of vertices reachable from $v$ by an alternating path
        beginning with $blue$. Furthermore, for each $x_i$ along the $v,u$ path $P_{v,u}$, 
        the vertex set of $\widetilde{G}_{col}(x_i)$ is reachable from $u$ 
        by an alternating path beginning with $red$, by first taking the path from $u$ to $x_i$. }
        \label{fig: hourglass process}
    \end{figure}
   
   We now formally define \hyperref[process: hourglass]{Hourglass\_Process}. 
   We will make use of a ``trimmed" version of 
   \hyperref[process: construct G tilde]{\textit{Search\_Neighborhood}}, which we define formally later. 
   We use ``large" and ``long" as placeholders for concrete values which will be provided in \ref{large tree def} and \ref{long path in tree def} later in the analysis. 

   \begin{process}{\textbf{\textit{Hourglass\_Process}}(V', S)} \label{process: hourglass}
    \begin{itemize}
        \item If at any point $S$ becomes empty, report \textit{fail}.
    \end{itemize}
    \begin{enumerate}
        \item \label{Hourglass proc: step 1} Pick a vertex $v \in V'$. 
        \item \label{Hourglass proc: step 2} Run \hyperref[process: construct G tilde]{\textit{Search\_Neighborhood}($v, red$)} on the 
        vertex set $V'$. Let $F_{red}(v) = (\widetilde{V}_v, \widetilde{E}_v)$ be the resulting graph.
        \item \label{Hourglass proc: step remove used from v} Remove the vertices in $F_{red}(v)$ from $V'$ and replace them with vertices from $S$.
        \item \label{Hourglass proc: step check large tree} Check: If $|V(F_{red}(v))|$ is a ``large" tree: continue; Otherwise, report \textit{fail}.
        \item \label{Hourglass proc: sample w} Sample a vertex $w$ from $V(F_{red}(v))$ uniformly at random.
        \item Check: If the $v,w$-path $P_{v,w}$ is ``long": continue; Otherwise, report \textit{fail}.
        \item Let $u$ denote the middle vertex in $P_{v,w}$.
        \item \label{hourglass proc: step B_u formed} Check: If at least half of the vertices in $F_{red}(v)$ connect to $v$ through $u$: continue; Otherwise, report \textit{fail}.
        \item \label{Hourglass proc: step 3d} For $x$ in the $v,u$-path $P_{v,u}$:
            \begin{enumerate}[label=\alph*., ref=\theenumi\alph*]
                \item \label{Hourglass proc: step 3a} Run \hyperref[proc: trimmed search]{\textit{Trimmed\_Search\_Neighborhood}}($x, \gamma_x$) on $V'$ for the appropriate color $\gamma_x$. Let $F_{\gamma_x}(x) = (\widetilde{V}_x, \widetilde{E}_x)$ be the resulting graph.
                \item \label{Hourglass proc: remove used path search} Remove the vertices in $F_{\gamma_x}(x)$ from $V'$ and replace them with vertices from $S$.
                \item \label{Hourglass proc: step success} Check: If $F_{\gamma_x}(x)$ is a ``large" tree: report \textit{success}; Otherwise, continue.
            \end{enumerate}
        \item Report \textit{fail}.
        \end{enumerate}
   \end{process}

   We first note that, if at any point \hyperref[process: hourglass]{Hourglass\_Process} 
   outputs \textit{fail} or \textit{success} we immediately abort the process without considering 
   future steps. 

   As one might expect, the probability \hyperref[process: hourglass]{Hourglass\_Process} finds an hourglass
   is $o(1)$. However, we will show:
   \begin{equation}
        \mathbb{P}[\hyperref[process: hourglass]{Hourglass\_Process} \ succeeds] \geq (1 + o(1))\frac{\phi(1 - e^{-\phi})}{2e}\frac{\lambda_n}{n^{1/3}} \,.
   \end{equation}
   We will run \hyperref[process: hourglass]{Hourglass\_Process} many times.
   We make the following observations regarding each run:
   
    \begin{observation} \label{ob: hourglass process}
        \mbox{}
        \begin{enumerate}[label=\alph*.]
            \item \label{hourglass process: ob 1} Before each search step (i.e. step~\ref{Hourglass proc: step 2} or \ref{Hourglass proc: step 3a}), for all $a, b \in V'$ neither $X^{red}_{ab}$ or $X^{blue}_{ab}$ have been revealed.
            \item \label{hourglass process: ob 2} In step~\ref{Hourglass proc: step 3d}, for $x \in P_{v,u}$ and $v' \in V'$ the following hold before step~\ref{Hourglass proc: step 3a}:
                \begin{enumerate}[label=\roman*.]
                    \item $X^{\gamma_{x}}_{xv'}$ has not been revealed.
                    \item For $col_x \neq \gamma_x$ if $X^{col_x}_{xv'}$ has been revealed it equates to $0$.
                \end{enumerate}
        \end{enumerate}
    \end{observation}

    These observations follow directly from Observation~\ref{ob: alt search process} 
    regarding the search process \\
    \hyperref[process: construct G tilde]{\textit{Search\_Neighborhood}} as 
    well as the replenishing steps~\ref{Hourglass proc: step remove used from v} and \ref{Hourglass proc: remove used path search}.
    
    Observation~\ref{ob: hourglass process} \ref{hourglass process: ob 1} implies that $F_{red}(v)$ is sampled directly from $\widetilde{G}_{red, n', p}(v)$. 

    We wish to use Corollary~\ref{cor: G tilde tree} and Corollary~\ref{cor: expected component size} 
    on each $F_{\gamma_x}(x)$ considered
    in step~\ref{Hourglass proc: step 3d} to upper bound the expected size and lower bound the probability $F_{\gamma_x}(x)$ is a ``large" tree.
    However, during the search procedure in step~\ref{Hourglass proc: step 2}, 
    we may have revealed some information regarding the neighborhood of $x$ in $V'$. Particularly, 
    for $y \in V'$, we may have revealed that there is no edge of color $col_x \neq \gamma_{x}$ 
    between $x$ and $y$.
    
    For instance, consider $x_2$ in Figure~\ref{fig: hourglass process}.
    We have $\gamma_{x_2} = blue$ and $col_x = red$. 
    During the search in step~\ref{Hourglass proc: step 2}
    we may have revealed $X^{red}_{x_2y}$ for some $y \in V'$. 
    It must 
    be the case that $X^{red}_{x_2y} = 0$, as $y$ was not added to $F_{red}(v)$. 
    I.e. there is no red edge between $x_2$ and $y$.

    By Lemma~\ref{lma: prob change} this only increases the probability of an edge 
    of color $\gamma_{x_i}$, i.e. the edge we want to find. Specifically, this exposure implies that for $x \in P_{v,u}$ and $y \in V'$ the probability of an edge of color $\gamma_x$ between $x$ and $y$ is now either $p/2$ or $\frac{p/2}{1 - p/2}$.

    Consider $x \in P_{v,u}$. In step~\ref{Hourglass proc: step 3a} it would be natural to 
    form $F_{\gamma_x}(x)$ by running \\ \hyperref[process: construct G tilde]{\textit{Search\_Neighborhood}}($x, \gamma_x$). However, when searching for neighbours of $x$ 
    we might query some $X^{\gamma_x}_{xy}$ which is $1$ with probability $\frac{p/2}{1 - p/2}$ rather than $p/2$.
    As a result, the graph formed from \hyperref[process: construct G tilde]{\textit{Search\_Neighborhood}}($x, \gamma_x$) is not distributed as $\widetilde{G}_{\gamma_x, n', p}(x)$.
    To shift the distribution back to $\widetilde{G}_{\gamma_x, n', p}(x)$ we introduce the following 
    modification:

    \smallskip
    \textbf{Trimming Alteration($x, \gamma_x$):} \label{proc: trimming alteration} \\
    \mbox{}
    \indent Let $col \neq \gamma_x$. When considering the potential edge $xy$: 
    \begin{itemize}
        \item If $X^{col}_e$ was already revealed:
            \begin{itemize}
                \item With probability $p/2$, don't query $X^{\gamma_x}_e$ and skip the edge $e$.
                \item Otherwise, query $X^{\gamma_x}_e$.
            \end{itemize}
        \item If $X^{col}_e$ has not already been revealed, query $X^{\gamma_x}_e$.
    \end{itemize}
    
    We call this a \textit{trimming} alteration, as we are effectively trimming off branches in the 
    graph branching process, i.e. there may be some edges $xy$ of color $\gamma_x$ which are not included in $F_{\gamma_x}(x)$. We define the following process:
    
    \begin{process} {\textbf{\textit{Trimmed\_Search\_Neighborhood}}$(x, \gamma_x)$} \label{proc: trimmed search} 
    	\mbox{}
	\begin{itemize}
		\item Run \hyperref[process: construct G tilde]{\textit{Search\_Neighborhood}}($x, \gamma_x$) with the \hyperref[proc: trimming alteration]{Trimming Alteration}.
	\end{itemize}
    \end{process}

    By Observation~\ref{ob: hourglass process} \ref{hourglass process: ob 1}, 
    the only edges with information 
    revealed have $x$ as an endpoint. So we need only apply the Trimming Alteration to edges with $x$ 
    as an endpoint.

    Therefore, the probability each considered edge is added to $\widetilde{E}_x$ is now $p/2$. 
    Thus, the resulting distribution of the $F_{\gamma_x}(x)$ produced by \hyperref[proc: trimmed search]{\textit{Trimmed\_Search\_Neighborhood}}$(x, \gamma_x)$
    is precisely $\widetilde{G}_{\gamma_x, n', p}(x)$.

    To recap:
    \begin{itemize}
        \item $F_{red}(v)$ is sampled directly from $\widetilde{G}_{red, n', p}(v)$.
        \item For $x \in P_{v,u}$, $F_{\gamma_x}(x)$ is sampled directly from $\widetilde{G}_{\gamma_x, n', p}(x)$.
    \end{itemize}

    We now determine the probability \hyperref[process: hourglass]{Hourglass\_Process}
    returns \textit{success} as well as the expected number of vertices used. 
    
    We start by analyzing the probability steps~\ref{Hourglass proc: step check large tree} - \ref{hourglass proc: step B_u formed} succeed.
    
    In step~\ref{Hourglass proc: step 2} we construct $F_{red}(v)$. 
    By Lemma~\ref{lma: G tilde distribution}, the uncolored projection of $F_{red}(v)$ is distributed as $C_{n',p/2}(v)$, the connected component of $v$ in $\mathcal{G}(n', p/2)$. We define $\lambda_n'$ such that,
    \begin{equation*}
        \frac{2(1 - \lambda_n n^{-1/3})}{n} = p = \frac{2(1 - \lambda_{n}' (n')^{-1/3})}{n'} \,.
    \end{equation*}
    A simple calculation shows $\lambda_{n'} = \lambda_n(2 - o(1))$. 
    
    Consider $T = F_{red}(v)$. By Corollary~\ref{cor: G tilde tree}, there exists a constant $\phi > 0$ such that with probability $(\phi + o(1))\lambda_n/n^{1/3}$, $T$ is a tree of size between $2n^{2/3}/ \lambda_n^2$ and $4n^{2/3} / \lambda_n^2$; in this case call $T$ a \textit{large} tree. 
    \begin{equation} \label{large tree def}
    	\text{Call a tree T \textit{large} if } |T| \in [2n^{2/3} / \lambda_n^2, 4 n^{2/3} / \lambda_n^2] \,.
    \end{equation}
    Furthermore, if $T$ is a tree Observation~\ref{ob: uniform tree} implies $T$ is a uniformly random tree on $|T|$ vertices.
    
    In step~\ref{Hourglass proc: sample w} we pick a random vertex $w \neq u$ in $T$. Let $P_{v,w}$ denote the path from $v$ to $w$ in $T$. 
    By properties of uniformly random trees (see \cite{Unif-rand-trees} and 
    \cite{2-sat-critical-window}), with probability $(1 - o(1))e^{-1}$ $P_{v,w}$ has length at least $\sqrt{2|T|}$; if this is the case call $P_{v,w}$ \textit{long}. 
    \begin{equation} \label{long path in tree def}
    	\text{Call a path } P \text{ in a tree } T \text{ \textit{long} if the length of } P \text{ is at least } \sqrt{2|T|} \,.
    \end{equation}
    Let $u$ denote the middle vertex in $P_{v,w}$; in case of a tie, choose the vertex closer to $v$. 
    Let $P_{v,u} \subset P_{v,w}$ be the path from $v$ to $u$ and let $P_{u,w} \subset P_{v,w}$ be the 
    path from $u$ to $w$. As $T$ is a tree, for each vertex $z \in T$ it follows that the path from $z$ to $u$ either intersects $P_{u,w} - u$, $P_{v,u} - u$, or neither (i.e. it connects directly to $u$).
    As $T$ is a uniformly random spanning tree, $v$ can also be viewed as a uniformly random vertex in $T$. By symmetry,
    with probability at least $1/2$, at least half of the vertices in $T$ will be connected to $u$ 
    via $P_{u,w} - u$ or directly to $u$, call this set $A_u$.
     
    Call $u$ \textit{promising} if $P_{v,u}$ is \textit{long} and $|A_u| \geq \frac{1}{2}|T| \geq n^{2/3}/\lambda_n^2$. Thus,    \begin{equation} \label{eq: u promising}
        \mathbb{P}[F_{red}(v) \ is \ a \ large \ tree \ \bigwedge  u \ is \ promising ] \geq (1 + o(1)) \frac{\phi}{2e} \frac{\lambda_n}{n^{1/3}} \,.
    \end{equation}
    In the event that $u$ is promising, assume without loss of generality that $red$ 
    is the color of the edge incident to $u$ in $P_{v,u}$, noting 
    that the parity of the position of $u$ in $P_{v,w}$ has no influence on the analysis.
    
    We now analyze the probability step~\ref{Hourglass proc: step 3d} succeeds.

    We consider the first $\frac{n^{1/3}}{\lambda_n}$ vertices in $P_{v,u}$ in order 
    beginning with the vertex $v$, call this set $\Lambda$. By construction $P_{v,u}$ is an 
    alternating path. For each vertex $x \in \Lambda$, 
    let $\gamma_x$ be the color of the edge incident to $x$ in $P_{v,u}$ which is 
    closest to $v$; in the case that $x = v$ let $\gamma_x = blue$. 
    
    In step~\ref{Hourglass proc: step 3a} of \hyperref[process: hourglass]{Hourglass\_Process}, 
    we construct $F_{\gamma_x}(x)$. Note that, by steps~\ref{Hourglass proc: step remove used from v} and 
    \ref{Hourglass proc: remove used path search}, when constructing $F_{\gamma_x}(x)$ 
    the search is restricted to $n'$ unused vertices. \\
    \hyperref[proc: trimmed search]{\textit{Trimmed\_Search\_Neighborhood}}$(x, \gamma_x)$ produces $F_{\gamma_x}(x)$ with distribution $\widetilde{G}_{\gamma_x, n', p}(x)$. So, as described before (\ref{large tree def}) (i.e. by Corollary~\ref{cor: G tilde tree}), $F_{\gamma_x}(x)$ is a \textit{large} tree with probability at least $(\phi + o(1)) \lambda_n/n^{1/3}$. 
    Thus, the probability that $F_{\gamma_x}(x)$ is a large tree for at least one $x \in \Lambda$ is at least,
    \begin{equation} \label{eq: graph on path good}
        1 - \left(1 - (\phi + o(1)) \lambda_n/n^{1/3}\right)^{n^{1/3} / \lambda_n} = 1 - e^{-\phi} + o(1) \,.
    \end{equation} 
    
    If we find that $F_{\gamma_x}(x)$ is a \textit{large} tree, we stop our search. We have now found an hourglass with center vertex $u$ and components $R_u \coloneqq F_{\gamma_x}(x) \cup P_{x,u}$ and $B_u = A_u$, as seen in Figure~\ref{fig: hourglass process final hourglass}. Both $R_u$ and $B_u$ have size at least $n^{2/3}/\lambda_n^2$. 
    
    \begin{figure}[h]
    \centering
        \begin{tikzpicture}[node distance={15mm}, main/.style = {draw, circle, thick, fill=black} ]
                
                \node[main, scale =0.75] (w) at (14.25, 0) {};
                \node[main, draw=none, fill=none] (w_name) at (14.25,-0.3) {$w$}; 
                \node[draw = none] at (w) [name=fake_w,outer sep=5pt,inner sep=5pt]{};

                \node[main, scale = 0.75] (v) at (7.5,0) {};
                \node[draw = none] at (v) [name=fake_v,outer sep=5pt,inner sep=5pt]{};
                \node[main, draw=none, fill=none] (v_name) at (7.5,-0.3) {$u$};

                \draw[blue, dashed, very thick] (12,0) ellipse (4cm and 2.5cm);
                \node[main, draw=none, fill=none] (Bu) at (12.125, 1.5) {$B_u$}; 
                
                \draw[red, rotate around={45:(4.5,1.5)}] (4.5,1.5) ellipse (2cm and 3cm);
                \node[main, draw=none, fill=none] (Ru) at (6,1.8) {$R_u$}; 
                

                \node[main, scale = 0.5] (u_2) at (4,0) {};
                \node[draw = none] at (u_2) [name=fake_x_2,outer sep=5pt,inner sep=5pt]{};
                \node[main, draw=none, fill=none] (x_2) at (4,-0.3) {$x_i$}; 
                
                \node[main, draw=none, fill=none] (u_dots) at (5.25, 0) {$\cdots$};
                \node[main, draw=none, fill=none] (Puv) at (5.25,0.4) {$P_{x_i,u}$}; 
                
                \node[main, scale = 0.5] (u_3) at (6.5, 0) {};

                \draw[red] (u_2) -- (u_dots);
                \draw[blue, very thick] (u_dots) -- (u_3);
        		\draw[red] (u_3) -- (v);

                \node[main, scale = 0.5] (v_1) at (8.75, 0) {};
                \node[main, scale = 0.5] (v_2) at (10, 0) {};
                \node[main, scale = 0.5] (v_3) at (11.25, 0) {};
                \node[main, draw=none, fill=none] (v_4) at (12.125, 0) {$\cdots$};
                \node[main, scale = 0.5] (v_5) at (13, 0) {};
                
                \draw[blue, very thick] (v) -- (v_1);
                \draw[red] (v_1) -- (v_2);
                \draw[blue, very thick] (v_2) -- (v_3);
                \draw[red] (v_3) -- (v_4);
                \draw[blue, very thick] (v_4) -- (v_5);
                \draw[red] (v_5) -- (w);

                
                \node[main, scale = 0.25] (b_1) at (8.75, 1) {};
                \node[main, draw = none, fill = none, rotate=17] (b_1_1) at (9.5, 1.25) {$\cdots$};
                \node[main, draw = none, fill = none, rotate=-17] (b_1_2) at (9.5, 0.75) {$\cdots$};
                
                \node[main, scale = 0.25] (b_2) at (8.75, -1) {};
                \node[main, draw = none, fill = none, rotate=-17] (b_2_1) at (9.5, -1.25) {$\cdots$};
                \node[main, draw = none, fill = none, rotate=17] (b_2_2) at (9.5, -0.75) {$\cdots$};
                
                \node[main, draw = none, fill = none, rotate=38] (b_3) at (11, 0.75) {$\cdots$};
                \node[main, draw = none, fill = none, rotate=-38] (b_4) at (12.25, -0.75) {$\cdots$};

                \node[main, draw = none, fill = none, rotate=38] (b_5) at (14, 0.75) {$\cdots$};
                \node[main, draw = none, fill = none, rotate=-38] (b_6) at (14, -0.75) {$\cdots$};

                \node[main, draw = none, fill = none, rotate=38] (b_w_1) at (15.25, 0.75) {$\cdots$};
                \node[main, draw = none, fill = none, rotate=-38] (b_w_2) at (15.25, -0.75) {$\cdots$};

                \draw[blue, very thick] (v) -- (b_1);
                \draw[red] (b_1) -- (b_1_1);
                \draw[red] (b_1) -- (b_1_2);
                
                \draw[blue, very thick] (v) -- (b_2);
                \draw[red] (b_2) -- (b_2_1);
                \draw[red] (b_2) -- (b_2_2);
                
                \draw[blue, very thick] (v_2) -- (b_3);
                \draw[red] (v_3) -- (b_4);

                \draw[red] (v_5) -- (b_5);
                \draw[red] (v_5) -- (b_6);

                \draw[blue, very thick] (w) -- (b_w_1);
                \draw[blue, very thick] (w) -- (b_w_2);


                \node[main, draw = none, fill = none] (x_2_1) at (3.1, 3) {};
                \node[main, draw = none, fill = none] (x_2_2) at (4.9, 3) {};

                \draw[blue, very thick] (u_2) -- (x_2_1); 
                \draw[blue, very thick] (u_2) -- (x_2_2);

                \draw[blue, very thick] (4,3) ellipse (0.9cm and 0.5cm);
                \node[main, draw = none, fill = none] (x_blue_graph) at (4, 3) {$F_{blue}(x_i)$};
    
        \end{tikzpicture}
        \caption{The hourglass produced by \hyperref[process: hourglass]{Hourglass\_Process} assuming it returns \textit{success} on $x_i$ in step~\ref{Hourglass proc: step success}.
        The vertex $u$ is the center of the hourglass. The set enclosed by a blue dashed oval forms 
        the component $B_u$ and the set enclosed by the red oval forms the component $R_u$. 
        Furthermore, the path $x_i,u$-path denoted $P_{x_i,u}$ is the corresponding subpath of $P_{v,u}$.}
        \label{fig: hourglass process final hourglass}
    \end{figure}

    By (\ref{eq: u promising}) and (\ref{eq: graph on path good}) we have,
    \begin{equation} \label{eq: hourglass process success prob}
        \mathbb{P}[\hyperref[process: hourglass]{Hourglass\_Process} \ succeeds] \geq (1 + o(1))\frac{\phi(1 - e^{-\phi})}{2e}\frac{\lambda_n}{n^{1/3}} \,.
    \end{equation}
    Let us now consider the expected number of vertices used during a run of \textit{Hourglass\_Process}~\ref{process: hourglass}. 

    For each $F_{\cdot}(\cdot)$ we have by 
    Corollary~\ref{cor: expected component size} that,
    \begin{equation*}
        \mathbb{E}[|V(F_{\cdot}(\cdot))|] = \frac{2}{pn'|\lambda_n'|(n')^{-1/3}}(1 + o(1)) = \frac{(n')^{1/3}}{\lambda_n'}(1 + o(1)) = \frac{n^{1/3}}{2 \lambda_n}(1 + o(1)) \,.
    \end{equation*}

    As we construct $F_{red}(v)$ with probability one and construct $n^{1/3} / \lambda_n$ 
    many other such $F_{\cdot}(\cdot)$ with probability 
    $(1 + o(1)) \frac{\phi}{2e} \frac{\lambda_n}{n^{1/3}}$, the expected number of 
    vertices used in \hyperref[process: hourglass]{Hourglass\_Process} is at most,

    \begin{equation} \label{eq: HP number of vertices used}
        \frac{n^{1/3}}{2 \lambda_n} (1 + o(1)) + \frac{n^{1/3}}{\lambda_n} \frac{n^{1/3}}{2 \lambda_n} \frac{\phi}{2e} \frac{\lambda_n}{n^{1/3}} (1 + o(1)) = \frac{n^{1/3}}{\lambda_n} \left[1 + \frac{\phi}{2e}\right] (1 + o(1)) = \Theta\left(\frac{n^{1/3}}{\lambda_n}\right) \,.
    \end{equation}

   We now define \hyperref[process: batched hourglass]{Batched\_Hourglass\_Process}, 
   which runs \hyperref[process: hourglass]{Hourglass\_Process} 
   multiple times to increase the probability of finding an hourglass of the desired size to 
   at least $1/2$ while directly controlling the total 
   number of vertices used.
   
   We will ensure that the number of vertices used in total is at most 
   the size of $S$. Thus Observation~\ref{ob: hourglass process} \ref{hourglass process: ob 1} implies
   each run is independent, as before each run, no information was revealed between vertices in $V'$. In what follows $\kappa$ and $\rho$ are positive constants to be determined later. 

   \begin{process}{\textbf{\textit{Batched\_Hourglass\_Process}}$(V', S)$} \label{process: batched hourglass}
   	\mbox{}
	\begin{itemize}
		\item If at any point more than $\rho n^{2/3}  / \lambda_n^2$ vertices have been used (cummulative): report \textit{fail}
	\end{itemize}
        \begin{enumerate}
            \item Run \hyperref[process: hourglass]{Hourglass\_Process} $\kappa n^{1/3}/\lambda_n$ times:
                \begin{enumerate}[label=\alph*., ref=\theenumi\alph*]
                    \item If \hyperref[process: hourglass]{Hourglass\_Process} returned \textit{success}: return \textit{success}
                \end{enumerate}
            \item If all runs of \hyperref[process: hourglass]{Hourglass\_Process} returned \textit{fail}: return \textit{fail}
        \end{enumerate}
   \end{process}
   
   As \hyperref[process: batched hourglass]{Batched\_Hourglass\_Process} terminates if more than $\rho n^{2/3}  / \lambda_n^2$ many vertices were used, we can always replenish $V'$. Moreover, each run of \hyperref[process: batched hourglass]{Batched\_Hourglass\_Process}$(V', S)$ will be on an unused set of vertices of size $n'$.

   We will prove that \hyperref[process: batched hourglass]{Batched\_Hourglass\_Process} finds a desired hourglass with probability at least $1/2$.
   
    Let $\kappa = \frac{4e}{\phi(1 - e^{-\phi})}$. If we run \hyperref[process: hourglass]{Hourglass\_Process}  $\kappa n^{1/3} / \lambda_n$ times, then by (\ref{eq: hourglass process success prob}) the probability that all runs \textit{fail} is at most,
    \begin{equation*}
        \left( 1 - (1 + o(1)) \frac{\phi(1 - e^{-\phi})}{2e} \frac{\lambda_n}{n^{1/3}} \right)^{\frac{4e}{\phi(1-e^{-\phi})} \frac{n^{1/3}}{\lambda_n}} = e^{-2(1 + o(1))} = e^{-2} + o(1) \,.
    \end{equation*}
    By (\ref{eq: HP number of vertices used}), the expected number of vertices used in $\kappa n^{1/3} / \lambda_n$ many runs of  \hyperref[process: hourglass]{Hourglass\_Process}  is at most $\Theta(n^{2/3} / \lambda_n^2)$.

    Therefore, by Markov's inequality there exists a constant $\rho > 0$ such that with probability at least $\frac{2}{3}$ we use less than $\rho n^{2/3} / \lambda_n^2$ vertices.
    Hence, by the union bound we have,
    \begin{equation*}
        \mathbb{P}[Batched\_Hourglass\_Process \ succeeds] \geq 1 - e^{-2} - \frac{1}{3} - o(1) > \frac{1}{2}, \qquad  \text{for large enough } n\,.
    \end{equation*}

    Recall from the statement of Lemma~\ref{lma: hourglass}, we wish to 
    find $\Theta(\lambda_n^3)$ vertex disjoint hourglasses. To do this, we run \hyperref[process: batched hourglass]{Batched\_Hourglass\_Process}
    $\lambda_n^3 / \rho$ times. We start with an arbitrary set $S$ of $\lambda_n n^{2/3}$ 
    vertices and $V' = V \setminus S$. Each run of \hyperref[process: batched hourglass]{Batched\_Hourglass\_Process}
    will replace the used vertices in $V'$ with vertices in $S$ 
    (this is handled implicitly by \hyperref[process: hourglass]{Hourglass\_Process}).
    Thus, each run is independent. This is possible since the total number of vertices used 
    is at most $\lambda_n^3 / \rho \cdot \rho n^{2/3} / \lambda_n^2 = \lambda_n n^{2/3}$. 
    As $|S| = \lambda_n n^{2/3}$, we never run out of ``reserve" vertices.

    Let $X$ denotes the number of hourglasses found during the $\lambda_n^3 / \rho$ runs 
    of \\ \hyperref[process: batched hourglass]{Batched\_Hourglass\_Process}. So, $X$ 
    stochastically dominates the binomial distribution \\
    $\mathcal{BIN}\left( \lambda_n^3 / \rho, 1/2 \right)$.
    Therefore by \hyperref[Chernoff Bound]{Binomial Chernoff Bounds}, w.h.p. 
    $X \geq \frac{1}{4 \rho} \lambda_n^3 = \Theta(\lambda_n^3)$ as required.
\end{proof}

\subsubsection{Information Revealed}

\begin{definition} \label{def: small hourglass}
    Call a hourglass produced by \hyperref[process: hourglass]{Hourglass\_Process} in Lemma~\ref{lma: hourglass} ``small". 
\end{definition}

Let $H = (u, R_u, B_u)$ be a small hourglass. It follows from 
steps~\ref{Hourglass proc: step check large tree} 
and \ref{Hourglass proc: step success} of \hyperref[process: hourglass]{Hourglass\_Process} that $H$ is a tree; note this does not mean the subgraph induced by $V(H)$ 
in the underlying graph is a tree. So, for each vertex $x \in H$, there is a unique alternating path $P_{u, x} \subseteq H$ 
from the center vertex $u$ to $x$. We define $\psi_{H} : V(H) \setminus \{u\} \rightarrow \{red, blue\}$, 
such that $\psi_H(x)$ denotes the color required for an edge to extend the alternating path $P_{u,x}$. 

We now specify what has been exposed about edges between two small hourglasses in the following lemma,

\begin{lemma} \label{lma: info between hourglasses}
    Let $H_u$ and $H_w$ be hourglasses constructed by seperate runs of \\
    \hyperref[process: batched hourglass]{Batched\_Hourglass\_Process}.
    Let $z \in H_u$ and $y \in H_w$.
    If either of $X^{red}_{zy}$ or $X^{blue}_{zy}$ has been revealed it equates to $0$.
\end{lemma}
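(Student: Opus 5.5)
We prove the statement by tracking, through the whole sequence of runs of \hyperref[process: batched hourglass]{Batched\_Hourglass\_Process}, exactly which indicator variables $X^{col}_{zy}$ can be queried, and with what outcome. Suppose without loss of generality that $H_u$ was produced in an earlier run than $H_w$. All revelations happen in one of three ways: an exploratory step of \hyperref[process: construct G tilde]{\textit{Search\_Neighborhood}} (possibly trimmed), a backtracking step, or nothing else, since the replenishing steps reveal no edge information. We handle each source of a query of $X^{col}_{zy}$ with $z \in H_u$ and $y \in H_w$ in turn.

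\textbf{Step 1: no revelation while both endpoints lie in one search tree.} Every vertex of $H_u$ lies in some search tree $F_{\cdot}(\cdot)$ built during the run that produced $H_u$. Once a tree $F$ is built, steps~\ref{Hourglass proc: step remove used from v} and \ref{Hourglass proc: remove used path search} remove $V(F)$ from $V'$. Later exploratory steps only query pairs $(v,x)$ with $x \in V' \setminus \widetilde V$, and their starting vertex $x$ is either fresh or (in step~\ref{Hourglass proc: step 3a}) a vertex on $P_{v,u}$ from the same run. Backtracking steps only query pairs in which both vertices are newly added to the current $F$, hence both are in the current $V'$.

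\textbf{Step 2: queries before $y$ is used.} Consider any query of a pair $\{z,y\}$ after $z$ was removed from $V'$. Then $z$ is not a searchable target, so the query must come from a search whose active vertex is $z$ (a path vertex searched in step~\ref{Hourglass proc: step 3a} within $H_u$'s run). That can only occur within $H_u$'s own run, which precedes $H_w$'s run. If it occurs while $y$ is still an unused vertex of $V'$, then by Observation~\ref{ob: alt search process}(b) and the fact that $y$ did not join $F_{\gamma_z}(z)$, we get $X^{\gamma_z}_{zy}=0$. Otherwise $y$ had not yet been drawn from $S$ and nothing was queried. The symmetric situation, with $z$ still unused when $y$ is explored, is impossible, because $H_u$'s run is earlier.

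\textbf{Step 3: queries before $z$ is removed.} Also consider queries made before $z$ was removed, during $H_u$'s run, where $z$ is the new vertex and $y$ lies in another tree of that run or is unused. Here $y$ lies outside the final tree containing $z$, so a query with value $1$ would have put $y$ into that tree. Hence any revealed value is $0$, again by Observation~\ref{ob: alt search process}(b).

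\textbf{Main obstacle.} The hardest step is Step 3, specifically the backtracking steps, which can reveal a $1$ (an edge $zw$ added to $\widetilde E$). We must argue that backtracking only pairs two vertices of the same current tree. Since $H_u$ and $H_w$ come from different batched runs, their vertices never lie in a common tree. I would verify this carefully against step~\ref{G tilde: step 5}.

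The trimming alteration only skips queries and never reveals anything new, so it does not affect the argument. Combining the three cases, every revealed $X^{red}_{zy}$ or $X^{blue}_{zy}$ equals $0$.
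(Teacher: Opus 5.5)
Your proof is correct and follows essentially the same route as the paper's. In both, with $H_u$ built first, every revealed variable on $\{z,y\}$ comes from $H_u$'s construction, with $z$ in the current search tree and $y$ a target that was never absorbed, so it equals $0$. You organize this by whether $z$ has already left $V'$, while the paper goes search by search: first $F_{red}(v)$, then the trimmed search from $x_i$, treating $z=x_i$ color by color. The backtracking concern you flag is immediate, since step~\ref{G tilde: step 5} only pairs vertices of the current frontier $F\subseteq\widetilde{V}$; your statement that both must be ``newly added'' is slightly too narrow, but the conclusion that both lie in the current tree still holds.
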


\begin{proof}
    Assume without loss of generality that $H_u$ was constructed before $H_w$. 
This implies that before constructing $H_w$, $V(H_u)$ was discarded. 
Therefore, the only point in which the random variables associated with the
vertex pair $z, y$ are potentially revealed is during the construction of $H_u$. 

First consider the search \hyperref[process: construct G tilde]{\textit{Search\_Neighborhood}($v, red$)} performed in 
step~\ref{Hourglass proc: step 2} of \hyperref[process: hourglass]{Hourglass\_Process}. Let $F_{red}(v)$ be the resulting graph.
As $y \notin F_{red}(v)$, it follows by Observation~\ref{ob: alt search process} regarding \\
\hyperref[process: construct G tilde]{\textit{Search\_Neighborhood}}
that if either of $X^{red}_{zy}$ or $X^{blue}_{zy}$ has been revealed it equates to $0$.

So, it remains to consider the potential exposure during the searches performed by \\
\hyperref[proc: trimmed search]{\textit{Trimmed\_Search\_Neighborhood}} in 
step~\ref{Hourglass proc: step 3a} of \hyperref[process: hourglass]{Hourglass\_Process}.
Let $x_i$ denote the vertex which caused \hyperref[process: hourglass]{Hourglass\_Process}($V', S$) to report \textit{success}
in step~\ref{Hourglass proc: step 3d}. 
Let $F_{\gamma_{x_i}}(x_i)$ be the resulting graph and let $P_{x_i, u}$ be the path as in 
Figure~\ref{fig: hourglass process final hourglass}.

Note that the runs of \hyperref[proc: trimmed search]{\textit{Trimmed\_Search\_Neighborhood}}
performed on vertices before $x_i$ on the path $P_{v,u}$ were discarded and not included in $H_u$. 
Moreover, as the search in step~\ref{Hourglass proc: step 3d} stopped after $x_i$, 
no new information was exposed regarding vertices after $x_i$ on the path $P_{v,u}$. 
Therefore, the only new information revealed is regarding vertices in 
$F_{\gamma_{x_i}}(x_i)$.

The case where $z \neq x_i$ follows a similar argument as with the construction of 
$F_{red}(v)$. By Observation~\ref{ob: alt search process}, if either of $X^{red}_{zy}$ or 
$X^{blue}_{zy}$ has been revealed it 
equates to $0$.

It remains to consider the case where $z = x_i$, let $col \neq \gamma_{x_i}$. 
First we consider exposure regarding $X^{col}_{zy}$. It follows by 
Observation~\ref{ob: hourglass process}, that if $X^{col}_{x_iy}$ was revealed during 
the construction of $F_{red}(v)$, it equates to 0.
Furthermore, by Observation~\ref{ob: alt search process}, $X^{col}_{x_iy}$ is not 
revealed during the construction of $F_{\gamma_{x_i}}(x_i)$. 

We now consider the exposure regarding $X^{\gamma_{x_i}}_{x_iy}$. Regarding the construction of 
$F_{red}(v)$, it follows by Observation~\ref{ob: hourglass process} that $X^{\gamma_{x_i}}_{x_iy}$ was not revealed. 
During the construction of $F_{\gamma_{x_i}}(x_i)$ it follows by Observation~\ref{ob: alt search process} that
if $X^{\gamma_{x_i}}_{x_iy}$ was revealed it equates to $0$. Note that during the 
\hyperref[proc: trimming alteration]{Trimming Alteration}, it may have been decided to not reveal
$X^{\gamma_{x_i}}_{x_iy}$.
\end{proof}

\subsection{An Alternating Bicycle is Formed}
 
In this subsection we prove Theorem~\ref{thm: scaling window upper bound 2} which we restate below. 

\begin{theorem*}[\ref{thm: scaling window upper bound 2}]
    For $\lambda_n \xrightarrow[]{} \infty$ arbitrarily slowly with $\lambda_n \ll n^{1/3}$, $\mathcal{G}(n, p = 2(1 + \lambda_n n^{-1/3})/n, 2)$ is w.h.p. adaptably 2-colorable.
\end{theorem*}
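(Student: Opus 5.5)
The natural route to the statement as worded is through the contrapositive of Lemma~\ref{lma: improper alternating bicycle}. If no component of $(G,c)\sim\mathcal{G}(n,p,2)$ with $p = 2(1+\lambda_n n^{-1/3})/n$ contains an alternating bicycle, then every component, and hence $G$, is adaptably $2$-colorable. So I would try to show that w.h.p.\ no alternating bicycle exists, copying the first-moment computation in the proof of Theorem~\ref{thm: a sharp threshold}(a). A bicycle on $s$ ordered vertices has two admissible colorings and at most $(s-2)^2$ chord choices, so
\begin{equation*}
\mathbb{E}[X] \le \frac{p}{1}\sum_{s\ge 1} s^2\left(1+\lambda_n n^{-1/3}\right)^s .
\end{equation*}
I would split this sum at $s_0 = n^{1/3}/\lambda_n$. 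For $s\le s_0$ the factor $(1+\lambda_n n^{-1/3})^s$ is $O(1)$, so this part contributes $O(s_0^3/n) = O(\lambda_n^{-3}) = o(1)$.

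Everything then rests on the long bicycles, $s > s_0$, and this is the step I expect to be the main obstacle. My first attempt would be to replace the crude count $n^s$ by a count of alternating paths that an exploration process can actually reach. I would bound the number of alternating paths of length $s$ from a fixed vertex by the size of the component in the search of Lemma~\ref{lma: G tilde distribution}, which is distributed as a component of $\mathcal{G}(n,p/2)$. I would then apply the tail bound of Lemma~\ref{lma: R(k) expression} to those component sizes.

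However, at $p/2 = (1+\lambda_n n^{-1/3})/n$ the graph $\mathcal{G}(n,p/2)$ is supercritical, with a component of size $\Theta(\lambda_n n^{2/3})$. Lemma~\ref{lma: alt path existence sub linear} gives an alternating path of length $\Theta(\lambda_n^2 n^{1/3})$ once $\lambda_n \gg \log^{1/3} n$. Along such a path the sprinkling calculation in the proof of Theorem~\ref{thm: a sharp threshold}(b) produces chords of the required colors. So this truncation step cannot succeed in the range covered by Theorem~\ref{thm: scaling window upper bound}.

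Concretely, for $\lambda_n \gg n^{2/15}$ the hypotheses here imply those of Theorem~\ref{thm: scaling window upper bound}, which asserts that the graph is w.h.p.\ \emph{not} adaptably $2$-colorable. That directly contradicts the statement as written. I would therefore expect the plan to break at the long-bicycle step, and to conclude that the displayed statement cannot hold in the stated generality. It reads as a typographical slip for the opposite conclusion of Theorem~\ref{thm: scaling window upper bound 2} in Section~\ref{sec: results}.
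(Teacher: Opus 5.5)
Your diagnosis is correct: the conclusion printed here is a slip. In the introduction, and in its first restatement in Section~\ref{sec: supercritical improvement}, Theorem~\ref{thm: scaling window upper bound 2} says the graph is w.h.p.\ \emph{not} adaptably $2$-colorable. That is what the paper proves, by exhibiting a proper alternating odd bicycle and applying Lemma~\ref{lma: non-adapt 2-col condition}(a). Your refutation of the printed version is valid. Writing $\lambda_n n^{-1/3}=\lambda_n' n^{-1/5}$ gives $\lambda_n'=\lambda_n n^{-2/15}\to\infty$ whenever $n^{2/15}\ll\lambda_n\ll n^{1/3}$, and Theorem~\ref{thm: scaling window upper bound} then contradicts the printed statement. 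That theorem's appendix restatement carries the same slip, but its introduction statement and its proof both give non-colorability. Your first-moment bookkeeping is also right: bicycles with $s\le n^{1/3}/\lambda_n$ contribute $O(\lambda_n^{-3})$, and the bound is useless beyond that.

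What the proposal lacks is any argument for the corrected statement. The range you reach, $\lambda_n\gg n^{2/15}$, is exactly what Theorem~\ref{thm: scaling window upper bound} already covers. The real content is $\lambda_n=O(n^{2/15})$ (say $\lambda_n=\log n$), and there a single long path fails. Contrary to how your proposal reads, sprinkling along the path of Lemma~\ref{lma: alt path existence sub linear} does not yield chords merely because $\lambda_n\gg\log^{1/3}n$. That path has length $\Theta(\lambda_n^2n^{1/3})$, so it spans $\Theta(\lambda_n^4n^{2/3})$ pairs. At sprinkling density $\Theta(\lambda_n n^{-4/3})$ the expected number of correctly colored chords is $\Theta(\lambda_n^5n^{-2/3})$, which tends to infinity only when $\lambda_n\gg n^{2/15}$; the paper notes this limitation in Section~\ref{sec: a sharp threshold}.

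The missing idea is the paper's renormalization through hourglasses:
\begin{itemize}
\item Start at the subcritical density $2(1-(M/2-1)\lambda_n n^{-1/3})/n$. Lemma~\ref{lma: hourglass} gives $\Omega(\lambda_n^3)$ vertex-disjoint hourglasses whose components have size of order at least $n^{2/3}/\lambda_n^2$.
\item Keep a $(c_r,c_b)$-dominated quarter of them. Any two then have $\Omega(n^{4/3}/\lambda_n^4)$ color-compatible pairs between their red components, and likewise between their blue ones. By Lemmas~\ref{lma: info between hourglasses} and~\ref{lma: prob change}, almost all of these pairs are still free.
\item Sprinkling at density $\sim M\lambda_n n^{-4/3}$ joins two hourglasses by a red (respectively blue) auxiliary edge independently with probability at least $M/(9\lambda_n^3)$.
\item So the auxiliary graph on $N=\Theta(\lambda_n^3)$ hourglasses contains a copy of $\mathcal{G}(N,M/(9\lambda_n^3),2)$, which is supercritical by a constant factor once $M$ is large. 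Corollary~\ref{cor: disjoint cycle bicycle} then gives a proper alternating odd bicycle there.
\item Routing through the hourglass centers, and branching inside the appropriate component at the two degree-three hourglasses, lifts this bicycle to one in $G$.
\end{itemize}
Replacing one long path by many independent clusters of size $n^{2/3}/\lambda_n^2$, which act as vertices of a supercritical auxiliary graph, is the ingredient your proposal is missing.
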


To prove Theorem~\ref{thm: scaling window upper bound 2} we utilize the ``sprinkling" method describing in 
Section~\ref{sec: sprinkling method}. Particularly, in the subcritical regime, the small hourglasses in 
$\mathcal{G}(n,2(1 - \lambda_nn^{-1/3})/n,2)$ from Lemma~\ref{lma: hourglass} will serve as 
"building blocks" for forming a proper alternating odd bicycle in 
$\mathcal{G}(n,2(1 + \lambda_nn^{-1/3})/n,2)$ after ``sprinkling" the additional edges. 
In fact, we will view these small hourglasses as vertices in an auxiliary graph. We will add $red$ and 
$blue$ edges to this graph as follows: if among the ``sprinkled" edges we introduce an alternating path 
between the centre vertices of two hourglasses beginning and ending with the same color, we add an edge of 
that color between the respective vertices in the auxiliary graph. We will show that this reduces the 
problem to finding an alternating bicycle in the auxiliary graph. Furthermore, we couple the distribution of 
the auxiliary graph to $\mathcal{G}(n,p,2)$ for $p > 2/n$, allowing us to use our results regarding the 
supercritical regime from Section~\ref{sec: supercrit regime}; particularly, 
Corollary~\ref{cor: disjoint cycle bicycle}.

We note that this approach differs from that in \cite{2-sat-critical-window} and their method does not naturally cary over to the case of adaptable colorings, due to the differences between the edge color model and the directed graph model highlighted in the beginning of Section~\ref{sec: supercritical improvement}. However, their approach did serve as inspiration. 

\begin{proof}[Proof of Theorem~\ref{thm: scaling window upper bound 2}]
    We start with $G_1 = \mathcal{G}(n, p = \frac{2(1 - (M/2 - 1)\lambda_n n^{-1/3})}{n}, 2)$ 
    for some large enough constant $M$ to be determined shortly. 
    By Lemma~\ref{lma: hourglass}, there exists a constant $\phi > 0$ such that w.h.p. $G_1$ has at 
    least $\phi (M/2 - 1)^3 \lambda_n^3$ vertex disjoint hourglasses with 
    respective components of size at least $n^{2/3}/\lambda_n^2$; 
    denote the set of them by $\mathcal{H}$. Choose $M$ to be large enough such that $M(M/2 - 1)^3 > 72 / \phi$,
    the reason behind this will be made clear later in the proof. 
    We then consider increasing $p$ 
    to $\frac{2(1 + \lambda_n n^{-1/3})}{n}$. We do this by ``sprinkling" edges with 
    probability $\sim M\lambda_n/n^{4/3}$. We claim that upon 
    "sprinkling" the edges, a subset of the hourglasses in $\mathcal{H}$ and ``sprinkled" 
    edges will form a proper alternating odd bicycle.

   For each hourglass $H_u = (u, R_u, B_u) \in \mathcal{H}$, we say 
   $H_u$ is $(c_r, c_b)$-dominated for $c_r, c_b \in \{red, blue\}$ if 
   at least half of the vertices in $R_u$ are assigned $c_r$ under $\psi_{H_u}$ and similarly, at least
   half of the vertices $B_u$ are assigned $c_b$ under $\psi_{H_u}$. That is,
   \begin{equation*}
   	|\psi_{H_u}^{-1}(c_r) \cap R_u| \geq \frac{1}{2} |R_u| \qquad and \qquad  |\psi_{H_u}^{-1}(c_b) \cap B_u| \geq \frac{1}{2} |B_u| \,.
   \end{equation*}
   Note that every hourglass is $(c_r, c_b)$-dominated for some $c_r, c_b \in \{red, blue\}$;
   however, this choice of colors may not be unique, in this case choose $c_r$ or $c_b$ to be $red$. 
   We now partition $\mathcal{H}$ as follows: for $c_r, c_b \in \{red, blue\}$ let
    \begin{equation*}
    	\mathcal{H}_{(c_r, c_b)} \coloneqq \{H \in \mathcal{H} \mid H  \ is \ (c_r, c_b)-dominated\} \,.
    \end{equation*}

   By the Pigeonhole Principle, there exists $(c_r, c_b) \in \{red, blue\}$ such that $|\mathcal{H}_{(c_r, c_b)}| \geq \frac{1}{4} |\mathcal{H}| \geq \frac{1}{4}\phi (M/2 - 1) \lambda_n^3$. 
   
	We will now prove that upon ``sprinkling" the edges, an alternating bicycle is formed 
    among the hourglasses and these sprinkled edges. To do so, we construct a graph $F$ 
    with vertex set $\mathcal{H}_{(c_r, c_b)}$ and edge coloring $c_F : E(F) \rightarrow \{red, blue\}$. 
    For hourglasses $H_u, H_v \in \mathcal{H}_{(c_r,c_b)}$, we connect $H_u$ to $H_v$ in 
    $F$ by a $red$ edge if one of the ``sprinkled" edges $e = u'v'$ has $u' \in R_u$, 
    $v' \in R_v$ and $\psi_{H_u}(u') = \psi_{H_v}(v') = c(e)$; i.e. $e$ has the color required to extend both the alternating path from $u$ to $u'$ in $H_u$ and the alternating path from $v$ to $v'$ in $H_v$. Denote this indicator 
    variable for this event by $E_{uv}^{red}$. Similarly, we connect $H_u$ to $H_v$ by a 
    $blue$ edge if one of the ``sprinkled" edges $e = u'v'$ has $u' \in B_u$ and 
    $v' \in B_v$ such that $\psi_{H_u}(u') = \psi_{H_v}(v') = c(e)$. Denote this event by $E_{uv}^{blue}$. 
    This is seen in Figure~\ref{Hourglass: colored graph}.
    
    We proceed by defining the sets $A^{red}_{uv}$ and $A^{blue}_{uv}$. The set $A^{red}_{uv}$ denotes the pairs of vertices which can produce an alternating path from $u$ to $v$ beginning and ending with $red$. Similarly, the set $A^{blue}_{uv}$ denotes the pairs of vertices which can produce an alternating path from $u$ to $v$ beginning and ending with $blue$. Formally, 
    \begin{equation*}
        A_{uv}^{red} \coloneqq \left\{(u', v') \in R_u \times R_v \mid \psi_{H_u}(u') = \psi_{H_v}(v')\right\} \,,
    \end{equation*}
    and 
    \begin{equation*}
    	A_{uv}^{blue} \coloneqq \left\{(u', v') \in B_u \times B_v \mid \psi_{H_u}(u') = \psi_{H_v}(v')\right\} \,.
    \end{equation*}
    By construction of $\mathcal{H}_{(c_r,c_b)}$, $|A_{uv}^{red}| \geq \frac{1}{4}|R_u||R_v| \geq \frac{n^{4/3}}{4 \lambda_n^4}$ and $|A_{uv}^{blue}| \geq \frac{1}{4}|B_u||B_v| \geq \frac{n^{4/3}}{4 \lambda_n^4}$.

    \begin{figure}[h]
    \centering
        \begin{tikzpicture}[node distance={15mm}, main/.style = {draw, circle, thick, fill=black} ]
            

            \node[main, scale = 0.75] (u) at (0, 0) {};
            \node[main, draw=none, fill=none] (u_name) at (0,-0.4) {$u$}; 

            \draw[red] (-2,0) ellipse (1cm and 2cm);
            \node[main, draw=none, fill=none] (Ru) at (-3,1.8) {$R_u$}; 

            \draw[blue, very thick] (2,0) ellipse (1cm and 2cm);
            \node[main, draw=none, fill=none] (Bu) at (1,1.8) {$B_u$}; 

            \node[main, draw=none, fill=none] (B_u_up) at (1.74,2) {}; 
            \node[main, draw=none, fill=none] (B_u_down) at (1.74,-2) {}; 

            \node[main, scale = 0.5] (u_r) at (-2,-1.2) {};
            \node[main, scale = 0.5] (u_r_temp) at (-1.6,-0.4) {};

            \draw[red] (u) -- (u_r_temp);
            \draw[blue, very thick] (u_r_temp) -- (u_r);

            \node[main, scale = 0.5] (u_b) at (2,1.2) {};
            \node[main, scale = 0.5] (u_b_temp) at (1.6,0.4) {};

            \draw[blue, very thick] (u) -- (u_b_temp);
            \draw[red] (u_b_temp) -- (u_b);


            \node[main, scale = 0.75] (v) at (7, 0) {};
            \node[main, draw=none, fill=none] (v_name) at (7,-0.4) {$v$}; 

            \draw[red] (5,0) ellipse (1cm and 2cm);
            \node[main, draw=none, fill=none] (Ru) at (6,1.8) {$R_v$}; 

            \draw[blue, very thick] (9,0) ellipse (1cm and 2cm);
            \node[main, draw=none, fill=none] (Bu) at (10,1.8) {$B_v$}; 

            \node[main, draw=none, fill=none] (R_v_up) at (5.26,2) {}; 
            \node[main, draw=none, fill=none] (R_v_down) at (5.26,-2) {};

            \node[main, scale = 0.5] (v_b_2) at (9,1.2) {};
            \node[main, scale = 0.5] (v_b_2_temp) at (8.6,0.4) {};

            \draw[blue, very thick] (v) -- (v_b_2_temp);
            \draw[red] (v_b_2_temp) -- (v_b_2);

            \node[main, scale = 0.5] (v_r_2) at (5,-1.2) {};
            \node[main, scale = 0.5] (v_r_2_temp) at (5.6,-0.4) {};

            \draw[red] (v) -- (v_r_2_temp);
            \draw[blue, very thick] (v_r_2_temp) -- (v_r_2);


            \draw[red, dashed] (u_r) to[out=-30,in=-150] (v_r_2);

           \draw[blue, very thick, dashed] (v_b_2) to[out=-210,in=-330]  (u_b);
                
        \end{tikzpicture}
        \caption{
        Ways in which an edge can be introduced between $H_u$ and $H_v$ upon ``sprinkling" the edges.
        The ``sprinkled" edges correspond to the dashed edges. If a ``sprinkled" edge is introduced between $R_u$ and $R_v$, then a $red$ edge is introduced between $H_u$ and $H_v$ in $F$, this corresponds to the $red$ dashed edge. Similarly, if a "sprinkled" edge is introduced between $B_u$ and $B_v$, then a $blue$ edge is introduced between $H_u$ and $H_v$ in $F$, this corresponds to the $blue$ dashed edge.}
        \label{Hourglass: colored graph}
    \end{figure}

Let $H_u, H_v \in \mathcal{H}_{(c_r,c_b)}$ and let $A \coloneqq A_{uv}^{col}$, for some $col \in \{red, blue\}$. 

From above we know that $|A| \geq \frac{n^{4/3}}{4 \lambda_n^4}$. We shall assume
this holds with equality, by simply considering a subset of the pairs. 
We want to determine the probability that among the ``sprinkled" edges, we introduce an edge of the right color between a pair of vertices in $A$, and hence an edge in $F$. Note that, such edges may already exist; however, due to the color requirement, this may decrease the probability of introducing such an edge. 
Despite this, we will show that upon 
``sprinkling" edges with probability $\sim M\lambda_n/n^{4/3}$ the probability we 
introduce an edge 
$H_u H_v$ in $F$ of color $col$ is at least $\frac{M}{8\lambda_n^3}$.

First, we need to consider the pairs of vertices in $A$ for which there exists an 
edge in $G_1$; denote this set by $A_{exists}$.
It follows by Lemma~\ref{lma: info between hourglasses} and Lemma~\ref{lma: prob change} that 
$|A_{exists}|$ is stochastically dominated by the binomial distribution with 
probability $p = \frac{2(1 - \lambda_nn^{-1/3})}{n}$ and $\frac{n^{4/3}}{4\lambda_n^2}$ 
trials. 

    So, $\mathbb{E}[|A_{exists}|] \leq \frac{n^{4/3}}{4\lambda_n^2} \frac{2(1 - \lambda_nn^{-1/3})}{n} = \frac{ n^{1/3}}{2\lambda_n^4} (1 - o(1))$. 
    Thus, by the \hyperref[Chernoff Bound]{Binomial Chernoff Bound}, we have
    \begin{equation} \label{eq: hourglass free pairs chernoff}
        \mathbb{P}\left[ |A_{exists}| - \frac{ n^{1/3}}{2\lambda_n^4} (1 - o(1)) > \frac{n^{1/6}}{\lambda_n}\right] \leq 2e^{-\frac{2 \lambda_n^2}{3} (1 + o(1))} = o(1) \,.
    \end{equation}
    Let $A_{free} \coloneqq A \setminus A_{exists}$ be the set of such pairs of vertices which are ``free". By (\ref{eq: hourglass free pairs chernoff}) and as $|A| = \frac{n^{4/3}}{4\lambda_n^4}$ we have w.h.p. that,
    \begin{equation} \label{eq: size of Y}
        |A_{free}| \geq \frac{ n^{4/3}}{4\lambda_n^4} (1 - o(1)) - \frac{ n^{1/3}}{2\lambda_n^4} (1 - o(1)) - \frac{n^{1/6}}{\lambda_n} = \frac{n^{4/3}}{4 \lambda_n^4}(1 - o (1)) \,,
    \end{equation}
    as $\lambda_n^3 / n^{7/6} \xrightarrow[]{} 0$ since $\lambda_n \ll n^{1/3}$. 
    
    We have shown 
    $|A_{exists}| > \frac{ n^{1/3}}{2\lambda_n^4} (1 - o(1)) - \frac{n^{1/6}}{\lambda_n}$ 
    with probability exponentially small in $\lambda_n$. As there are 
    $\Theta\left({\lambda_n^3 \choose 2}\right) = \Theta(\lambda_n^6)$ 
    pairs of hourglasses in $\mathcal{H}_{(c_r,c_b)}$, it follows by the union bound that in $G_1$ w.h.p. (\ref{eq: size of Y}) holds 
    for every pair of hourglasses in $\mathcal{H}_{(c_r, c_b)}$. Therefore, in what follows we shall assume before sprinkling (\ref{eq: size of Y}) holds for every pair of hourglasses. So $E^{col}_{uv}$ holds if at least one of the ``sprinkled" edges joins a pair in $A_{free}$ and is of the right color. Thus,
    \begin{equation} \label{eq: edge prob in F}
        \mathbb{P}[E^{col}_{uv}] 
        \geq 1 - \left( 1 - \frac{M\lambda_n}{2n^{4/3}} \right)^{ \frac{n^{4/3}}{4 \lambda_n^4}(1 - o (1))}
        = \frac{M}{8\lambda_n^3}(1 + o(1)) > \frac{M}{9\lambda_n^3}\,.
    \end{equation}
    
    We recall the construction of $F$. The vertex set of $F$ consists of the hourglasses in $\mathcal{H}_{(c_r, c_b)}$. Let $N = \frac{1}{4} \phi (M/2 - 1)^3 \lambda_n^3$ denote the number of vertices in $F$. For each pair of hourglasses $H_u, H_v \in \mathcal{H}_{(c_r, c_b)}$ we introduce a $red$ edge between $H_u$ and $H_v$ in $F$ if $E^{red}_{uv}$ occurs; similarly, we introduce a $blue$ edge between $H_u$ and $H_v$ in $F$ if $E^{blue}_{uv}$ occurs.  It's worth noting that $F$ may contain parallel edges of different colors; particularly, the event of a $red$ edge between $H_u$ and $H_v$ is independent with the event of a $blue$ edge between $H_u$ and $H_v$.
    Thus, we have shown that each colored edge in $F$ is included independently with probability at least $\frac{M}{9\lambda_n^3}$. Furthermore, a simple coupling argument shows that we can take the probability of each edge to be precisely $\frac{M}{9\lambda_n^3}$, as this can only reduce the probability of $F$ containing a specified subgraph. 
    
    Due to the independence of $red$ and $blue$ edges occurring in $F$, its not hard to see that the probability that $F$ contains a subgraph is stochastically dominated the probability $\mathcal{G}(N, \frac{M}{9\lambda_n^{3}}, 2)$ contains such a subgraph, this is handled in the following claim.
    
    \begin{claim}
    	One can remove edges from $F$ so that the resulting graph is distributed precisely as $\mathcal{G}(N, \frac{M}{9\lambda_n^{3}}, 2)$. \\
	
	Consider the following process on $F$, 
    for each unordered pair of vertices $H_u,H_v$ in $F$ define the following random variable,
    \begin{equation} \label{eq: Y F tilde def}
        Y_{uv} = \begin{cases} 
            1 & with \ probability \ 1/2 \\
            0 & otherwise 
        \end{cases} \,.
    \end{equation} 
    Intuitively, the random variable
    $Y_{uv}$ determines which color to be considered between the vertices $H_u, H_v \in F$.

    Formally, we construct the graph $\widetilde{F}$ as follows: for each pair of vertices 
    $H_u,H_v$ with $Y_{uv} = 1$ and $E^{red}_{uv} = 1$ add a $red$ $H_uH_v$ edge in $\widetilde{F}$. Similarly, 
    for each pair of vertices $H_u, H_v$ with $Y_{uv} = 0$ and $E^{blue}_{uv} = 1$ 
    add a $blue$ $H_uH_v$ edge in $\widetilde{F}$. 

    Clearly, $\widetilde{F}$ no longer contains parallel edges. We claim that $\widetilde{F}$ is sampled
    from $\mathcal{G}(N,\frac{M}{9\lambda_n^3}, 2)$. Observe,
    \begin{equation} \label{eq: red edge in F tilde}
        \mathbb{P}[red \ edge \ between \ H_u \ and \ H_v \ in \ \widetilde{F}] = \mathbb{P}[Y_{uv} = 1 \wedge E^{red}_{uv} = 1] = \frac{1}{2} \frac{M}{9\lambda_n^3} \,.
    \end{equation}
    Similarly,
    \begin{equation} \label{eq: blue edge in F tilde}
        \mathbb{P}[blue \ edge \ between \ H_u \ and \ H_v \ in \ \widetilde{F}] = \mathbb{P}[Y_{uv} = 0 \wedge E^{blue}_{uv} = 1] = \frac{1}{2} \frac{M}{9\lambda_n^3} \,.
    \end{equation}
    It's not hard to see from (\ref{eq: red edge in F tilde}) and (\ref{eq: blue edge in F tilde}) 
    that edges exist in $\widetilde{F}$ with probability $\frac{M}{9\lambda_n^3}$ and by 
    (\ref{eq: Y F tilde def}) are colored $red$ or $blue$ uniformly at random. 
    Thus, $\widetilde{F} \sim \mathcal{G}(N, \frac{M}{8\lambda_n^3}, 2)$ as required.

    \end{claim}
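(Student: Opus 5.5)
We prove the Claim by thinning $F$ with independent fair coins and checking that the result has exactly the law of $\mathcal{G}(N,q,2)$, where $q \coloneqq \frac{M}{9\lambda_n^3}$.

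\textbf{Setup.} First we pass to a version of $F$ in which, for every unordered pair $\{H_u,H_v\}$ of hourglasses in $\mathcal{H}_{(c_r,c_b)}$, the indicators $E^{red}_{uv}$ and $E^{blue}_{uv}$ are independent Bernoulli variables of probability exactly $q$. We also need these indicators to be independent across different pairs. This is justified as follows.

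\begin{itemize}
\item Each indicator is a function only of the sprinkled edges between pairs in the relevant set $A_{free}$.
\item For a fixed pair of hourglasses, the sets $A^{red}_{uv}\subseteq R_u\times R_v$ and $A^{blue}_{uv}\subseteq B_u\times B_v$ involve disjoint vertex pairs, since the hourglasses are vertex disjoint and $R_u\cap B_u=\emptyset$.
\item Different pairs of hourglasses likewise involve disjoint sets of vertex pairs.
\item Sprinkled edges on distinct vertex pairs are independent, both in existence and in colour.
\end{itemize}

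Hence the family $\{E^{col}_{uv}\}$ is mutually independent, conditional on $G_1$ satisfying (\ref{eq: size of Y}) for every pair. By (\ref{eq: edge prob in F}) each indicator has success probability at least $q$. We therefore couple each one, independently, to a Bernoulli$(q)$ variable that is $\le$ the original indicator, by accepting each success with probability $q/\mathbb{P}[E^{col}_{uv}]$. This only removes edges, as the Claim permits.

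\textbf{Thinning.} Introduce fresh independent fair bits $Y_{uv}$, one per pair, as in (\ref{eq: Y F tilde def}). Form $\widetilde F$ as follows:
\begin{itemize}
\item keep the red copy of $H_uH_v$ if $Y_{uv}=1$ and $E^{red}_{uv}=1$;
\item keep the blue copy if $Y_{uv}=0$ and $E^{blue}_{uv}=1$;
\item discard everything else.
\end{itemize}
Then $\widetilde F\subseteq F$, and $\widetilde F$ is a simple graph because at most one colour survives per pair.

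\textbf{Verification.} For a single pair, the outcome in $\widetilde F$ takes three values:
\begin{itemize}
\item red edge, with probability $\tfrac12 q$;
\item blue edge, with probability $\tfrac12 q$;
\item no edge, with probability $\tfrac12(1-q)+\tfrac12(1-q)=1-q$.
\end{itemize}
This is exactly the per-pair law in Definition~\ref{rand graph prob model} with $k=2$: an edge is present with probability $q$ and, given presence, its colour is uniform. The outcome for a pair is a function of $(Y_{uv},E^{red}_{uv},E^{blue}_{uv})$, and these triples are independent across pairs. Hence $\widetilde F\sim\mathcal{G}(N,q,2)$.

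\textbf{Main obstacle.} The delicate point is the independence across pairs and between colours, which requires that the conditioning on the subcritical graph $G_1$ does not correlate distinct $E^{col}_{uv}$. We handle it by first fixing $G_1$ together with all information revealed while building the hourglasses, using Lemma~\ref{lma: info between hourglasses}. The sprinkling is independent of all of this, so conditional on $G_1$ the indicators depend on disjoint sets of independent sprinkled pairs. Only the marginal lower bound (\ref{eq: edge prob in F}) uses $G_1$, and it holds uniformly over all pairs w.h.p.

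(Note: the final line of the Claim as typed states $\frac{M}{8\lambda_n^3}$; the correct parameter obtained here is $q=\frac{M}{9\lambda_n^3}$.)
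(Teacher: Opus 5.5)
Your proposal is correct and follows the paper's own argument. It uses the same fair-coin thinning by $Y_{uv}$, giving a red edge, a blue edge or no edge with probabilities $q/2$, $q/2$, $1-q$, preceded by the coupling down to probability exactly $q$ that the paper mentions just before the Claim. Your explicit justification of independence fills in what the paper only asserts: conditionally on $G_1$, distinct hourglass pairs involve disjoint vertex pairs, and so do $R_u\times R_v$ versus $B_u\times B_v$. You are also right that the final parameter should read $\frac{M}{9\lambda_n^3}$ rather than $\frac{M}{8\lambda_n^3}$.
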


    Let $G$ denote the graph obtained from $G_1$ by ``sprinkling" the edges. 
    As $M(M/2 - 1)^3 > 72 / \phi$ and $N = \frac{1}{4} \phi (M/2 - 1)^3 \lambda_n^3$ it follows that 
    $\frac{M}{9\lambda_n^3} > \frac{2}{N}$. Thus by Corollary~\ref{cor: disjoint cycle bicycle}, w.h.p. $F$ contains a 
    proper alternating odd bicycle with disjoint cycles. 
    
    Let $P = H_1, H_2, \dots, H_s$ along with $H_1 H_i$ and $H_jH_s$ for $i < j$ 
    denote a proper alternating odd bicycle in $F$, as seen in Figure~\ref{fig: alternating bicycle in F}. 
    We will show that among the ``sprinkled" edges as well as the hourglasses 
    $H_1, \dots, H_s$ there exists a proper alternating odd bicycle in $G$.
    This can be seen in Figure~\ref{Hourglass: bicycle}.
    
    Recall, edges in $F$ correspond to alternating paths between the centres of the hourglasses starting and ending with the color of the edge. However, as the degree of $H_i$ and $H_j$ is three in the alternating bicycle, the alternating paths corresponding to the edges $H_1H_i$ and $H_{i-1}H_i$ may not be edge disjoint, similarly with respect to $H_j$.
    
    \begin{figure}[h]
    \centering
        \begin{tikzpicture}[node distance={10mm}, main/.style = {draw, circle, fill=black}] 
            \begin{scope}[every node/.style={circle, thin, draw, minimum size=1mm}]
                \node[main,scale = 0.5] (v1) at (0,0) {};
                \node[main, draw=none, fill=none] (v1_name) at (0,-0.4) {$H_1$}; 

                \node[main,scale = 0.5] (v2) at (1.5,0) {};
                \node[main, draw=none, fill=none] (v2') at (2.25,0) {$\cdots$};
                \node[main, draw=none, fill=none] (C1) at (2.25,0.6) {$C_1$};
                \node[main,scale = 0.5] (v3) at (3,0) {};
                \node[main,scale = 0.5] (v4) at (4.5,0) {};
                \node[main, draw=none, fill=none] (vi_name) at (4.5,-0.4) {$H_i$}; 
                \node[main,scale = 0.5] (v5) at (6,0) {};
                \node[main, draw=none, fill=none] (v5') at (6.75,0) {$\cdots$};
                \node[main, draw=none, fill=none] (P) at (6.75,0.4) {$P$};
                \node[main,scale = 0.5] (v6) at (7.5,0) {};
                \node[main,scale = 0.5] (v7) at (9,0) {};
                \node[main, draw=none, fill=none] (vi_name) at (9,-0.4) {$H_j$}; 
                \node[main,scale = 0.5] (v8) at (10.5,0) {};
                \node[main, draw=none, fill=none] (v8') at (11.25,0) {$\cdots$};
                \node[main, draw=none, fill=none] (C2) at (11.25,0.6) {$C_2$};
                \node[main,scale = 0.5] (v9) at (12,0) {};
                \node[main,scale = 0.5] (v10) at (13.5,0) {};
                \node[main, draw=none, fill=none] (vs_name) at (13.5,-0.4) {$H_s$}; 

                \draw[red] (v1) -- (v2);
                \draw[blue, very thick] (v2) -- (v2');
                \draw[red] (v2') -- (v3);
                \draw[blue, very thick] (v3) -- (v4);

                \draw[red] (v4) -- (v5);
                \draw[blue, very thick] (v5) -- (v5');
                \draw[red] (v5') -- (v6);
                \draw[blue, very thick] (v6) -- (v7);
                
                \draw[red] (v8) -- (v7);
                \draw[blue, very thick] (v8') -- (v8);
                \draw[red] (v9) -- (v8');
                \draw[blue, very thick] (v10) -- (v9);

                \draw[blue, very thick] (v1) to[out=60,in=120] (v4);
                \draw[red] (v7) to[out=60,in=120] (v10);
                
            \end{scope}
    
        \end{tikzpicture}
        \caption{The proper alternating odd bicycle in $F$ under the assumption that $c_F(H_1 H_2) = red$ and $s$ is odd.}
        \label{fig: alternating bicycle in F}
    \end{figure}

    \begin{figure}[h]
    \centering
        \begin{tikzpicture}[node distance={15mm}, main/.style = {draw, circle, thick, fill=black} ]
            

            \node[main, scale = 0.75] (H1) at (0, 0) {};
            \node[main, draw=none, fill=none] (u_name) at (-0.4,0) {$1$}; 

            \draw[red] (0,1) ellipse (1cm and 0.5cm);

            \draw[blue, very thick] (0,-1) ellipse (1cm and 0.5cm);
            
             \node[main, draw = none, fill = none] (H1_b1) at (-1.035, -0.9) {};
             \node[main, draw = none, fill = none] (H1_b2) at (1.035, -0.9) {};

             \draw[blue, very thick] (H1) -- (H1_b1); 
             \draw[blue, very thick] (H1) -- (H1_b2);
             
             \node[main, draw = none, fill = none] (H1_r1) at (-1.035, 0.9) {};
             \node[main, draw = none, fill = none] (H1_r2) at (1.035, 0.9) {};

             \draw[red] (H1) -- (H1_r1); 
             \draw[red] (H1) -- (H1_r2);
             
             
             \node[main, scale = 0.75] (H2) at (2.5, 0) {};
             \node[main, draw=none, fill=none] (u_name) at (2.1,0) {$2$}; 

             \draw[red] (2.5,1) ellipse (1cm and 0.5cm);

             \draw[blue, very thick] (2.5,-1) ellipse (1cm and 0.5cm);
            
              \node[main, draw = none, fill = none] (H2_b1) at (1.465, -0.9) {};
              \node[main, draw = none, fill = none] (H2_b2) at (3.535, -0.9) {};

              \draw[blue, very thick] (H2) -- (H2_b1); 
              \draw[blue, very thick] (H2) -- (H2_b2);
             
              \node[main, draw = none, fill = none] (H2_r1) at (1.465, 0.9) {};
              \node[main, draw = none, fill = none] (H2_r2) at (3.535, 0.9) {};

              \draw[red] (H2) -- (H2_r1); 
              \draw[red] (H2) -- (H2_r2);
              
             
             \node[main, scale = 0.75] (H3) at (5, 0) {};
             \node[main, draw=none, fill=none] (u_name) at (4.6,0) {$i$}; 

             \draw[red] (5,1) ellipse (1cm and 0.5cm);

             \draw[blue, very thick] (5,-1) ellipse (1cm and 0.5cm);
            
              \node[main, draw = none, fill = none] (H3_b1) at (3.965, -0.9) {};
              \node[main, draw = none, fill = none] (H3_b2) at (6.035, -0.9) {};

              \draw[blue, very thick] (H3) -- (H3_b1); 
              \draw[blue, very thick] (H3) -- (H3_b2);
             
              \node[main, draw = none, fill = none] (H3_r1) at (3.965, 0.9) {};
              \node[main, draw = none, fill = none] (H3_r2) at (6.035, 0.9) {};

              \draw[red] (H3) -- (H3_r1); 
              \draw[red] (H3) -- (H3_r2);

             
             \node[main, scale = 0.75] (H4) at (7.5, 0) {};
             \node[main, draw=none, fill=none] (u_name) at (7.1,0) {$j$}; 

             \draw[red] (7.5,1) ellipse (1cm and 0.5cm);

             \draw[blue, very thick] (7.5,-1) ellipse (1cm and 0.5cm);
            
              \node[main, draw = none, fill = none] (H4_b1) at (6.465, -0.9) {};
              \node[main, draw = none, fill = none] (H4_b2) at (8.535, -0.9) {};

              \draw[blue, very thick] (H4) -- (H4_b1); 
              \draw[blue, very thick] (H4) -- (H4_b2);
             
              \node[main, draw = none, fill = none] (H4_r1) at (6.465, 0.9) {};
              \node[main, draw = none, fill = none] (H4_r2) at (8.535, 0.9) {};

              \draw[red] (H4) -- (H4_r1); 
              \draw[red] (H4) -- (H4_r2);

             
             \node[main, scale = 0.75] (H5) at (10, 0) {};
             \node[main, draw=none, fill=none] (u_name) at (9.4,0) {$j+1$}; 

             \draw[red] (10,1) ellipse (1cm and 0.5cm);

             \draw[blue, very thick] (10,-1) ellipse (1cm and 0.5cm);
            
              \node[main, draw = none, fill = none] (H5_b1) at (8.965, -0.9) {};
              \node[main, draw = none, fill = none] (H5_b2) at (11.035, -0.9) {};

              \draw[blue, very thick] (H5) -- (H5_b1); 
              \draw[blue, very thick] (H5) -- (H5_b2);
             
              \node[main, draw = none, fill = none] (H5_r1) at (8.965, 0.9) {};
              \node[main, draw = none, fill = none] (H5_r2) at (11.035, 0.9) {};

              \draw[red] (H5) -- (H5_r1); 
              \draw[red] (H5) -- (H5_r2);
              
             
             \node[main, scale = 0.75] (H6) at (12.5, 0) {};
             \node[main, draw=none, fill=none] (u_name) at (12.1,0) {$s$}; 

             \draw[red] (12.5,1) ellipse (1cm and 0.5cm);

             \draw[blue, very thick] (12.5,-1) ellipse (1cm and 0.5cm);
            
              \node[main, draw = none, fill = none] (H6_b1) at (11.465, -0.9) {};
              \node[main, draw = none, fill = none] (H6_b2) at (13.535, -0.9) {};

              \draw[blue, very thick] (H6) -- (H6_b1); 
              \draw[blue, very thick] (H6) -- (H6_b2);
             
              \node[main, draw = none, fill = none] (H6_r1) at (11.465, 0.9) {};
              \node[main, draw = none, fill = none] (H6_r2) at (13.535, 0.9) {};

              \draw[red] (H6) -- (H6_r1); 
              \draw[red] (H6) -- (H6_r2);

              \node[main, draw=none, fill=none] (u_name) at (6.25,0) {$\cdots$}; 
              
              \node[main, draw=none, fill=none] (u_name) at (3.75,0) {$\cdots$}; 
              \node[main, draw=none, fill=none] (u_name) at (11.25,0) {$\cdots$}; 
              
              \node[main, scale = 0.25] (H1_r_1) at (0,1.3) {};
              \node[main, draw=none, fill=none, scale = 0.75] (H1_r_1_name) at (0.3,1.2) {$x^{red}_1$}; 
              \node[main, scale = 0.25] (H2_r_1) at (2.5, 1.3) {};
              \node[main, draw=none, fill=none, scale = 0.75] (H2_r_1_name) at (2.8,1.2) {$x^{red}_2$}; 
              
              \node[main, scale = 0.25] (H2_b_1) at (2.5,-1.3) {};
              \node[main, draw=none, fill=none,scale = 0.75] (H2_b_1_name) at (2.1,-1.2) {$x^{blue}_2$}; 
              \node[main, draw = none, fill = none, scale = 0.75] (H2_break) at (3.75, -1.8) {$\cdots$};
              \node[main, scale = 0.25] (H3_b_1) at (5, -1.3) {};
              \node[main, draw=none, fill=none, scale=0.75] (H3_b_1_name) at (4.6,-1.2) {$x^{blue}_i$}; 
              
              \node[main, scale = 0.25] (H3_r_1) at (5,1.3) {};
              \node[main, draw=none, fill=none, scale=0.75] (H3_r_1_name) at (5.3,1.2) {$x^{red}_i$}; 
              \node[main, draw=none, fill=none, scale = 0.75] (H3_break) at (6.25, 1.8) {$\cdots$};
            	
              \node[main, scale = 0.25] (H4_b_1) at (7.5, -1.3) {};
              \node[main, draw=none, fill=none, scale=0.75] (H4_b_1_name) at (7.1,-1.2) {$x^{blue}_j$}; 
              \node[main, draw=none, fill=none, scale = 0.75] (H4_break) at (6.25, -1.8) {$\cdots$};
              
              \node[main, scale = 0.25] (H4_r_1) at (7.5,1.3) {};
              \node[main, draw=none, fill=none, scale=0.75] (H4_r_1_name) at (7.8,1.2) {$x^{red}_j$}; 
              \node[main, draw=none, fill=none, scale = 0.75] (H6_break) at (11.25, -1.8) {$\cdots$};
              \node[main, scale=0.25] (H5_r_1) at (10,1.3) {};
              \node[main, draw=none, fill=none, scale=0.75] (H5_r_1_name) at (10.35,1.2) {$x^{red}_{j+1}$};
              \node[main, scale=0.25] (H5_b_1) at (10, -1.3) {};
              \node[main, draw=none, fill=none, scale=0.75] (H5_b_1_name) at (9.6,-1.2) {$x^{blue}_{j+1}$};
              \node[main, scale=0.25] (H6_b_1) at (12.5, -1.3) {};
              \node[main, draw=none, fill=none, scale=0.75] (H6_b_1_name) at (12.1,-1.2) {$x^{blue}_s$};
              
              \node[main, scale = 0.25] (H1_b_1) at (0,-1.3) {};
              \node[main, draw=none, fill=none, scale=0.75] (H1_b_1_name) at (-0.4,-1.2) {$x^{blue}_1$};
              
              \node[main, scale = 0.25] (H3_b_2) at (5.65, -1.1) {};
              \node[main, draw=none, fill=none, scale=0.75] (H3_b_2_name) at (5.4,-1.1) {$z$};
              
              \node[main, scale = 0.25] (H6_r_1) at (12.5, 1.3) {};
              \node[main, draw=none, fill=none, scale=0.75] (H6_r_1_name) at (12.8,1.2) {$x^{red}_s$};
              \node[main, scale = 0.25] (H4_r_2) at (6.85, 1.1) {};
              \node[main, draw=none, fill=none, scale=0.75] (H1_b_1_name) at (7.05,1.1) {$w$};
              
              \draw[blue, very thick] (H1_r_1) to[out=45,in=135] (H2_r_1);
              \draw[blue, very thick] (H2_b_1) to[out=-45,in=180] (H2_break);
              \draw[blue, very thick] (H2_break) to[out=0,in=-135] (H3_b_1);
              \draw[red] (H3_r_1) to[out=45,in=180] (H3_break);
              \draw[blue, very thick] (H4_break) to[out=0,in=-135] (H4_b_1);
              \draw[red] (H4_r_1) to[out=45, in=135] (H5_r_1);
 	     \draw[blue, very thick] (H5_b_1) to[out=-45, in=180] (H6_break);
         \draw[blue, very thick] (H6_break) to[out=0, in=-135] (H6_b_1);
	     
	     \draw[red] (H1_b_1) to[out=-45, in=-135] (H3_b_2);
	     \draw[blue, very thick] (H4_r_2) to [out=45, in=135] (H6_r_1);

	     \node[main, draw=none, fill=none, scale=0.5,rotate=90] (H1_r_temp) at (0,0.8) {$\cdots$};
	     \draw[red] (H1) -- (H1_r_temp);
	     \draw[red] (H1_r_temp) -- (H1_r_1);
	     
	     \node[main, draw=none, fill=none, scale=0.5,rotate=90] (H2_r_temp) at (2.5,0.8) {$\cdots$};
	     \draw[red] (H2) -- (H2_r_temp);
	     \draw[red] (H2_r_temp) -- (H2_r_1);
	     
	     \node[main, draw=none, fill=none, scale=0.5,rotate=90] (H1_b_temp) at (0,-0.8) {$\cdots$};
	     \draw[blue, very thick] (H1) -- (H1_b_temp);
	     \draw[blue, very thick] (H1_b_temp) -- (H1_b_1);
	     
	     \node[main, draw=none, fill=none, scale=0.5,rotate=90] (H2_b_temp) at (2.5,-0.8) {$\cdots$};
	     \draw[blue, very thick] (H2) -- (H2_b_temp);
	     \draw[red] (H2_b_temp) -- (H2_b_1);
	     
	     \node[main, draw=none, fill=none, scale=0.5,rotate=90] (H3_r_temp) at (5,0.8) {$\cdots$};
	     \draw[red] (H3) -- (H3_r_temp);
	     \draw[blue, very thick] (H3_r_temp) -- (H3_r_1);
	     
	     \node[main, draw=none, fill=none, scale=0.5,rotate=90] (H3_b_temp_1) at (5,-0.8) {$\cdots$};
	     \draw[blue, very thick] (H3) -- (H3_b_temp_1);
	     \draw[red] (H3_b_temp_1) -- (H3_b_1);
	     
	     \node[main, draw=none, fill=none, scale=0.5,rotate=-57] (H3_b_temp_2) at (5.45,-0.8) {$\cdots$};
	     \draw[blue, very thick] (H3) -- (H3_b_temp_2);
	     \draw[blue, very thick] (H3_b_temp_2) -- (H3_b_2);
	     
	     \node[main, draw=none, fill=none, scale=0.5,rotate=90] (H4_r_temp) at (7.5,0.8) {$\cdots$};
	     \draw[red] (H4) -- (H4_r_temp);
	     \draw[blue, very thick] (H4_r_temp) -- (H4_r_1);
	     
	     \node[main, draw=none, fill=none, scale=0.5,rotate=90] (H4_b_temp) at (7.5,-0.8) {$\cdots$};
	     \draw[blue, very thick] (H4) -- (H4_b_temp);
	     \draw[red] (H4_b_temp) -- (H4_b_1);
	     
	     \node[main, draw=none, fill=none, scale=0.5,rotate=123] (H4_r_temp_2) at (7.05,0.8) {$\cdots$};
	     \draw[red] (H4) -- (H4_r_temp_2);
	     \draw[red] (H4_r_temp_2) -- (H4_r_2);
	     
	     \node[main, draw=none, fill=none, scale=0.5,rotate=90] (H5_r_temp) at (10,0.8) {$\cdots$};
	     \draw[red] (H5) -- (H5_r_temp);
	     \draw[blue, very thick] (H5_r_temp) -- (H5_r_1);
	     
	     \node[main, draw=none, fill=none, scale=0.5,rotate=90] (H5_b_temp) at (10,-0.8) {$\cdots$};
	     \draw[blue, very thick] (H5) -- (H5_b_temp);
	     \draw[red] (H5_b_temp) -- (H5_b_1);
	     
	     \node[main, draw=none, fill=none, scale=0.5,rotate=90] (H6_r_temp) at (12.5,0.8) {$\cdots$};
	     \draw[red] (H6) -- (H6_r_temp);
	     \draw[red] (H6_r_temp) -- (H6_r_1);
	     
	     \node[main, draw=none, fill=none, scale=0.5,rotate=90] (H6_b_temp) at (12.5,-0.8) {$\cdots$};
	     \draw[blue, very thick] (H6) -- (H6_b_temp);
	     \draw[red] (H6_b_temp) -- (H6_b_1);
                
        \end{tikzpicture}
        \caption{A depiction of the structure of the hourglasses in $G$ and the ``sprinkled" edges from the proper alternating odd bicycle in $F$ given in 
        Figure~\ref{fig: alternating bicycle in F}. }
        \label{Hourglass: bicycle}
    \end{figure}

    Without loss of generality, assume the edge $H_1H_2$ is $red$. 
    By construction of $F$, there exists edges \\
    $x^{red}_1 x^{red}_2, x^{blue}_2 x^{blue}_3, \dots, x^{blue}_{i-1} x^{blue}_i$ among the ``sprinkled" edges which form the subpath $H_1, \dots, H_i$ of $P$ in $F$. That is, for $col \in \{red, blue\}$ $x^{col}_l$ is in the component of $H_l$ corresponding to the color $col$. Similarly, there exists a ``sprinkled" edge $x^{blue}_1 z$ for some $z$ in the $blue$ component $B_i$ of $H_i$. These edges are illustrated in Figure~\ref{Hourglass: bicycle}.

    For each $l = 1, \dots, i - 1$, the structure of the hourglass $H_l$, induces a natural alternating 
    path in $H_l$ from $x^{blue}_l$ to $x^{red}_l$ going through the centre vertex $l$, 
    call this path $P_l$. Similarly there exists an alternating path $P_i$ from $x^{blue}_i$ to $i$ in the $blue$ component $B_i$ of $H_i$. 
    It's not hard to see that the paths $P_l$, the path $P_i$, and the 
    ``sprinkled" edges form an alternating path from $x_1^{blue}$ to $i$.

    Moreover, there exists an alternating path $P_z$ from $z$ to $i$ in the $blue$ component $B_i$ of $H_i$. 
    Let $z' \in P_i \cap P_z$ be farthest from $i$, as seen in Figure~\ref{Hourglass: i vertex}. The vertex $z'$ will be one of the degree three vertices in the alternating bicycle in $G$.

    \begin{figure}[h]
    \centering
        \begin{tikzpicture}[node distance={15mm}, main/.style = {draw, circle, thick, fill=black} ]
             \node[main, scale = 0.75] (i) at (0, 0) {};
             \node[main, draw=none, fill=none] (i_name) at (-0.6,0) {$i$}; 

             \draw[red] (0,2) ellipse (2cm and 1cm);

             \draw[blue, very thick] (0,-2) ellipse (2cm and 1cm);
            
              \node[main, draw = none, fill = none] (i_b1) at (1.85, -1.6) {};
              \node[main, draw = none, fill = none] (i_b2) at (-1.85, -1.6) {};

              \draw[blue, very thick] (i) -- (i_b1); 
              \draw[blue, very thick] (i) -- (i_b2);
             
              \node[main, draw = none, fill = none] (i_r1) at (1.85, 1.6) {};
              \node[main, draw = none, fill = none] (i_r2) at (-1.85, 1.6) {};

              \draw[red] (i) -- (i_r1); 
              \draw[red] (i) -- (i_r2);
              
              \node[main, scale = 0.25] (i_b_1) at (0, -2.8) {};
              \node[main, draw=none, fill=none, scale=1] (i_b_1_name) at (-0.4,-2.7) {$x^{blue}_i$}; 
              
              \node[main, scale = 0.25] (i_r_1) at (0,2.8) {};
              \node[main, draw=none, fill=none, scale=1] (i_r_1_name) at (0.4,2.65) {$x^{red}_i$}; 
              \node[main, draw=none, fill=none, scale = 1] (i_break) at (2, 3.5) {$\cdots$};

              \node[main, scale = 0.25] (i_b_2) at (1.3, -2.4) {};
              \node[main, draw=none, fill=none, scale=1] (i_b_2_name) at (1,-2.4) {$z$};
              
              \draw[red] (i_r_1) to[out=45,in=180] (i_break);

              \node[main, draw=none, fill=none, scale = 0.75] (break_1) at (-2, -3.8) {};

              \node[main, draw = none, fill = none, scale = 1] (i_break_2) at (-2, -3.5) {$\cdots$};
              \draw[blue, very thick] (i_break_2) to[out=0,in=-135] (i_b_1);
	     
	     \draw[red] (break_1) to[out=0, in=-135] (i_b_2);

	     \node[main, draw=none, fill=none, scale=0.75,rotate=90] (i_r_temp) at (0,1.6) {$\cdots$};
	     \draw[red] (i) -- (i_r_temp);
	     \draw[blue, very thick] (i_r_temp) -- (i_r_1);

         \node[main, scale = 0.25] (z_prime) at (0, -1.4) {};
         \node[main, draw=none, fill=none, scale=1] (z_prime_name) at (-0.3,-1.4) {$z'$};
	     
	     \node[main, draw=none, fill=none, scale=0.75,rotate=90] (i_b_temp_1) at (0,-0.6) {$\cdots$};
	     \draw[blue, very thick] (i) -- (i_b_temp_1);
	     \draw[blue, very thick] (i_b_temp_1) -- (z_prime);
         \node[main, draw=none, fill=none, scale=0.75,rotate=90] (z_prime_temp_1) at (0,-2.1) {$\cdots$};
         \draw[red] (z_prime) -- (z_prime_temp_1);
         \draw[red] (z_prime_temp_1) -- (i_b_1);
	     
         \node[main, scale = 0.25] (y_z) at (0.4, -1.7) {};
         \node[main, draw=none, fill=none, scale=1] (y_z_name) at (0.6,-1.55) {$y_z$};
	     \node[main, draw=none, fill=none, scale=0.75,rotate=-40] (z_prime_temp_2) at (0.9,-2.1) {$\cdots$};
	     \draw[red] (z_prime) -- (y_z);
         \draw[blue, very thick] (y_z) -- (z_prime_temp_2);
	     \draw[blue, very thick] (z_prime_temp_2) -- (i_b_2);

        \end{tikzpicture}
        \caption{A blow up of the hourglass $H_i$ in $G$, as well as the definitions of $z'$ and $y_z$.}
        \label{Hourglass: i vertex}
    \end{figure}
    
    As $i \neq j$, it follows that $H_i \neq H_j$ and thus by symmetry the same structure holds with respect to the cycle $H_j, \dots, H_s, H_j$. That is, one can similarly define alternating paths $P_l$ for $l = j+1, \dots, s$ as well as vertices $w'$ and $y_w$ in $H_j$ corresponding to $z'$ and $y_z$ in $H_i$. 
    
    It's not hard to see that going through the intermediate hourglasses $H_{i+1}, \dots, H_{j-1}$ in a similar manner we obtain an alternating path from $i$ to $j$ and thus an alternating path from $y_z$ to $y_w$. Adding the edges 
    $y_z z'$ and $y_w w'$ gives a proper alternating odd bicycle as required.
\end{proof}

\section{Connectivity of the Solution Space} \label{sec: connectivity of sol space}

In this section, we consider a nice application of the technique used in the proof of 
Lemma~\ref{lma: improper alternating bicycle}. In particular, we consider $\mathcal{G}(n,p,2)$ in the 
subcritical regime, that is $p = \frac{2 - \epsilon}{n}$ for some constant $\epsilon > 0$.
Given $(G,c)$ sampled from $\mathcal{G}(n,p = (2-\epsilon)/n,2)$ we shall show that for any two vertex 
colorings $\rho$ and $\tau$ adapted to $c$, we can construct a sequence of colorings adapted to 
$c$ which transform $\rho$ into $\tau$ such that for any two adjacent colorings in the sequence, they differ on $\mathcal{O}(\log{n})$ vertices. Particularly, we prove the following Theorem,

\newpage

\begin{theorem*}[\ref{theorem: connected sol space}]
	Let $\epsilon > 0$ be a constant. For $(G,c)$ sampled from $\mathcal{G}(n,p = \frac{2-\epsilon}{n},2)$ 
    w.h.p., for any two vertex colorings $\rho$ and $\tau$ adapted to $c$ there exists a sequence of colorings adapted to $c$, $\rho = \rho_1, \rho_2, \rho_3, \cdots, \rho_l = \tau$ such that for all $i < l$: 
	\begin{equation*}
		\left| \{ v\in V \mid \rho_i(v) \neq \rho_{i+1}(v) \} \right| = \mathcal{O}(\log{n})
	\end{equation*}
\end{theorem*}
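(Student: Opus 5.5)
The plan is to move from $\rho$ toward $\tau$ by repeatedly fixing one disagreement vertex and then propagating the forced changes. We keep a current adapted coloring $\rho_i$; while $\rho_i \neq \tau$, we pick a vertex $x$ with $\rho_i(x) \neq \tau(x)$ and flip it. Any edge this violates is then repaired by the swap cascade from Phase~1 of Lemma~\ref{lma: improper alternating bicycle}. After the flip, every violated edge $xy$ satisfies $c(xy) = \rho_{i}'(x) = \rho_{i}'(y)$, so we flip $y$, and iterate layer by layer, exactly as in Process~\ref{proc: bicycle search phase 1}. The one change is that we never flip a vertex twice.

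The cascade started at $x$ explores only vertices reachable from $x$ by an alternating path whose first edge has color $\tau(x)$. Along such a path, every flipped vertex $y$ moves to the color opposite its old one, and the path's edge colors alternate. The first step is to show that each flip sets the vertex to its $\tau$-color. A vertex $y$ is forced to flip only when it currently has color $c(e)$ and its predecessor was just given color $c(e)$. Since $\tau$ is adapted, $\tau(y) \neq c(e)$ whenever the predecessor agrees with $\tau$. So, by induction along the cascade, every flipped vertex lands on its $\tau$-value.

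This gives two consequences. Vertices already agreeing with $\tau$ are never forced to flip, because a forced flip would take them away from $\tau$, which contradicts $\tau$ being adapted on the relevant edge. Consequently, the number of disagreements strictly decreases each round, and the cascade never needs to revisit a vertex. When the cascade stops, no violated edges remain, so $\rho_{i+1}$ is adapted. The sequence terminates at $\tau$ after at most $n$ rounds.

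The remaining task is to bound the size of each cascade. The set flipped in a round is contained in the set of vertices reachable from $x$ by alternating paths starting with color $\tau(x)$. This set is exactly what \hyperref[process: construct G tilde]{\textit{Search\_Neighborhood}}$(x,\tau(x))$ reveals. By Lemma~\ref{lma: G tilde distribution}, its size is distributed as the component of $x$ in $\mathcal{G}(n,p/2)$ with $p/2 = (1-\epsilon/2)/n$, which is subcritical. The standard subcritical estimate (Chapter 11 of \cite{TheProbabilisticMethod}) gives $\mathbb{P}[|C(x)| > K\log n] = o(n^{-1})$ for a suitable constant $K = K(\epsilon)$. A union bound over the $2n$ choices of $(x,\gamma)$ then shows that, w.h.p., every alternating reachability set $\widetilde{G}_\gamma(x)$ has $\mathcal{O}(\log n)$ vertices. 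This holds simultaneously for all start pairs and is a property of $(G,c)$ alone, independent of $\rho$ and $\tau$, so it covers all pairs of adapted colorings at once.

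The main obstacle is this last step. Lemma~\ref{lma: G tilde distribution} concerns a single run of the search, whereas we need a uniform statement over all $(x,\gamma)$ for a fixed realization of $(G,c)$. I would handle this by noting that, for each fixed $(x,\gamma)$, the event ``alternating reachability set larger than $K\log n$'' is determined by $(G,c)$, and its probability equals the corresponding component-size tail probability. The union bound therefore applies directly.

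A secondary check is that the ``never flip twice'' claim is consistent. It follows from the induction above, since a vertex already at its $\tau$-value can never be forced to flip.
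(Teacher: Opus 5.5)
The gap is the sentence ``This set is exactly what \textit{Search\_Neighborhood}$(x,\tau(x))$ reveals.'' Your single-round analysis before it is correct: you get ``every flipped vertex lands on its $\tau$-value'' and ``no vertex flips twice'' directly from adaptedness of $\tau$. That is cleaner than the paper's route to Lemma~\ref{lma: Disjoint S}, which goes through non-frozenness and the only-sketched Corollary~\ref{cor: frozen char}. Your union bound over the $2n$ pairs $(x,\gamma)$ is also fine, and is more explicit about uniformity over $\rho$, $\tau$ and the rounds than the paper's per-vertex statement. So the uniformity issue you flag as the main obstacle is not where the difficulty lies.

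The identification is false in general. \textit{Search\_Neighborhood} fixes $\psi_x(w)$ when $w$ is first discovered and afterwards explores only edges of color $\psi_x(w)$ at $w$. A vertex reachable only by an alternating path that arrives at $w$ with the opposite parity, and leaves along an edge of the other color, is never found. Figure~\ref{fig: issue with uncolored process} is such an example: the vertex $z$ is reached by the cascade but not by the BFS. Lemma~\ref{lma: G tilde distribution} describes only the BFS vertex set, so as written you have no component-size tail for a set that provably contains the cascade.

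Bridging this is the real content of the paper's proof of Lemma~\ref{lma: sub linear change}. There, Lemmas~\ref{lma: uncolored vs colored process 1} and \ref{lma: uncolored vs colored process 2} show $S_i\subseteq\bigcup_{j\le i}S'_j$. They use that the start vertex is non-frozen, which is automatic when $\rho(v)\neq\tau(v)$, together with the odd-unicycle characterization of frozen vertices.

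In your framework there is a short fix using $\tau$, in two inductions:
\begin{enumerate}
\item \textbf{BFS parity matches $\tau$.} By induction on discovery order, every $w$ found by \textit{Search\_Neighborhood}$(x,\tau(x))$ has $\psi_x(w)=\tau(w)$. If $w$ is discovered from $v$ along an edge of color $\psi_x(v)=\tau(v)$, adaptedness forces $\tau(w)\neq\tau(v)$, i.e.\ $\tau(w)=\psi_x(w)$.
\item \textbf{Cascade stays in the BFS set.} By induction on cascade layers, suppose $y$ is forced by $z$ with $c(zy)=\tau(z)=\psi_x(z)$, and $z$ lies in the BFS set. Then $y$ lies in it too: either $y$ was already discovered, or the BFS queried $X^{\psi_x(z)}_{zy}$ while processing $z$ and found it equal to $1$.
\end{enumerate}
You need this argument, or an equivalent one, before the union bound finishes the proof.
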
 

We define a process \hyperref[cluster process]{\textit{Intermediate\_Coloring}} which takes in a 
coloring $\rho$ adapted to $c$ and a vertex $v$ and produces
a coloring $\rho'$ adapted to $c$ such that $\rho'(v) \neq \rho(v)$. 
In particular, when choosing $v$ such that $\rho(v) \neq \tau(v)$ we will show that the coloring $\rho'$ 
agrees with $\rho$ on all but $\mathcal{O}(\log{n})$ vertices and the distance from 
$\rho'$ to $\tau$ is strictly smaller than the distance from $\rho$ to $\tau$. Note that by distance, 
we mean the number of vertices on which they disagree. Formally, $\rho'$ will satisfy the following two properties: 
\begin{equation} \label{eq: inter agrees on}
	\left| \{ v\in V \mid \rho(v) \neq \rho'(v) \} \right| = \mathcal{O}(\log{n}) \,,
\end{equation}
and 
\begin{equation} \label{eq: converge cond}
	\left| \{ v\in V \mid \rho'(v) \neq \tau(v) \} \right| < \left| \{ v \in V \mid \rho(v) \neq \tau(v) \} \right| \,.
\end{equation}

Therefore, as one may guess, we can form our sequence of colorings adapted to $c$ by repeatedly 
applying \hyperref[cluster process]{\textit{Intermediate\_Coloring}} as follows: 
\begin{itemize}
	\item $\rho_0 := p$,
	\item While there exists $v$ such that $\rho_{i-1}(v) \neq \tau(v)$: $\rho_{i} := \hyperref[cluster process]{\textit{Intermediate\_Coloring}}(\rho_{i-1}, v)$.
\end{itemize}

Properties~(\ref{eq: inter agrees on}) and (\ref{eq: converge cond}) ensure this sequence converges to $\tau$ and sequential colorings in the sequence differ on at most $\mathcal{O}(\log{n})$ vertices as required.

Informally, \hyperref[cluster process]{\textit{Intermediate\_Coloring}}$(\rho, v)$ will 
try to swap the color of $v$. 
Naturally, this may cause violations.
Similarly to the process described in the proof of Lemma~\ref{lma: improper alternating bicycle}, 
we continue to swap the color of vertices to correct these violations, until no more violated edges exist. 

We now formally define \hyperref[cluster process]{\textit{Intermediate\_Coloring}}. 
  
\begin{process}{\textit{Intermediate\_Coloring}$(\rho, v)$:} \label{cluster process}
    \begin{enumerate}
        \item Initialize: $S_0 := \{v\}$.
        \item Initialize: $\rho'_0 := \rho$
        \item While $S_i \neq \emptyset:$ 
            \begin{enumerate}[label=\alph*., ref=\theenumi\alph*]
                \item $\rho_i' = \rho_{i-1}'$ everywhere except for on $S_{i-1}$. For all $x \in S_{i-1}$, $\rho_i'(x) \neq \rho_{i-1}'(x)$, that is swap the color assigned to each vertex in $S_{i-1}$.
                \item $S_i \coloneqq \cup_{x \in S_{i-1}} N_{\rho_{i}'(x)}^{\rho_{i}'}(x)$, that is $S_i$ is the endpoints of the violated edges induced by swapping the colors of $S_{i-1}$.
            \end{enumerate}
        \item Return: $\rho'_i$
    \end{enumerate}
\end{process}

We prove the following three lemmas regarding 
\hyperref[cluster process]{\textit{Intermediate\_Coloring}}($\rho, v$) when $v$ is not frozen, 
which will imply Lemma~\ref{theorem: connected sol space}. 
The first lemma states that, we never ``revisit" a 
vertex, which implies that we only change the color of each vertex at most once.

\begin{lemma} \label{lma: Disjoint S}
    If $v$ is non-frozen then, $S_i \cap S_j = \emptyset$ for all $i \neq j$.
\end{lemma}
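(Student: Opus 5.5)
We argue by contradiction, reusing the Phase 1 analysis from the proof of Lemma~\ref{lma: improper alternating bicycle}. Suppose some vertex lies in two layers, and let $t$ be the first iteration at which a vertex $y\in S_t$ also lies in $\bigcup_{j<t}S_j$. Our goal is to show that $v$ is then the start of the handle of an alternating odd unicycle, i.e.\ $v$ is frozen by Corollary~\ref{cor: frozen char}, contradicting the hypothesis.

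\textbf{Step 1: paths to each vertex.} Exactly as in Phase 1, each vertex $x$ first revealed in $S_i$ with $i\le t-1$ receives an alternating path $P_x=v\,x_1\cdots x_i$ with $x_j\in S_j$. The coloring $\rho'_{i}$ is constant on each layer $S_{i-1}$. Tracking the swaps shows that the colors along $P_x$ alternate, starting with the color $\overline{\rho(v)}$. Since every layer before $t$ is new, these layers are pairwise disjoint, so the BFS bookkeeping is identical to Phase 1.

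\textbf{Step 2: the revisit produces an odd unicycle.} Let $y\in S_t$ be revisited via the edge $f=xy$ with $x\in S_{t-1}$. Take $a$ to be the last common vertex of $P_x$ and $P_y$. Then the cycle $C=P_{a,x}\cup P_{a,y}\cup\{f\}$ together with the handle $P_{v,a}$ is an alternating odd unicycle, by the same case analysis as in Case $y\neq v$ of Phase 1.

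The key point is that $f$ has the color needed to extend $P_x$. Moreover, $y$ was recolored when it first entered a layer, and its current color equals $c(f)$. This forces the two edges of $C$ at $a$ to share a color, which is precisely the odd-cycle condition. The case $y=v$ is handled similarly to Case $y=v$ of Phase 1. There is no edge $e=uv$ here, so the cycle is simply $P_x\cup\{xv\}$, and alternation fails only at $v$; this makes $v$ lie on the cycle, which serves as the degenerate handle.

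\textbf{Step 3: conclude.} By Corollary~\ref{cor: frozen char}, more precisely its easy direction, which comes from the proof of Lemma~\ref{lma: non-adapt 2-col condition}, every vertex on the handle of an alternating odd unicycle is forced. Since $\rho$ is adapted, $v$ is frozen, a contradiction.

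\textbf{Main obstacle.} The hardest part will be the parity and color bookkeeping in Step 2. We must confirm that a revisit genuinely yields an odd unicycle rather than an even cycle. This means checking that the color $\rho'_t(y)$ at the revisit time is consistent with $\psi$-type alternation from both sides. We must also confirm that the handle ends at $v$ with the right starting color, so that $v$ is forced to its original color $\rho(v)$ rather than merely constrained.

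We will also double-check the degenerate cases $a=v$ and $x\in S_t$ (the $f_1^*$ situation in Figure~\ref{fig: 2-adapt-col process pic}). In the latter case both endpoints are revisited, and we may need to take $x$'s first occurrence instead of its current layer.
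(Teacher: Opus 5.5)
Your proposal is correct and is exactly the paper's argument. The paper takes the first revisit, rebuilds the Phase~1 alternating odd unicycle with $v$ on its handle, and contradicts non-frozenness via Corollary~\ref{cor: frozen char}. The parity check you flag is immediate: write $\gamma_k$ for the colour of the edges from $S_{k-1}$ to $S_k$; then the condition $c(xy)=\rho'_t(x)=\rho'_t(y)$, with $y$ first in $S_j$, gives $\gamma_t=\gamma_{j+1}$, so $t-j$ is odd and the cycle, of length $t+j-2d$ for $a\in S_d$, is odd. Revisiting $v$ itself is just the subcase $a=v$, not an analogue of Phase~1's $y=v$ case, which came from the violated edge and has no counterpart here since $\rho$ is adapted.
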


\begin{proof}
	Assume for the sake of a contradiction, there exists $i > j$ such that $S_i \cap S_j \neq \emptyset$; 
    furthermore, assume $i$ is minimal, that is $i$ is the first iteration in which a vertex in $S_i$ is 
    also in a previously seen $S_j$. Following a similar argument as in the proof of 
    Lemma~\ref{lma: improper alternating bicycle}, $v$ is in the handle of an alternating odd 
    unicycle. Therefore, by Corollary~\ref{cor: frozen char}, $v$ is frozen contradicting our assumption that $v$ is not frozen. 
\end{proof}

In particular, Lemma~\ref{lma: Disjoint S} directly implies the following:

\begin{corollary} \label{cor: no revisit}
	The vertices whose colors are changed by \hyperref[cluster process]{\textit{Intermediate\_Coloring}}($\rho, v$) for non-frozen $v$
    are precisely the vertices in $\bigcup_i S_i$. Furthermore, let $\rho'$ be the coloring returned by 
    \hyperref[cluster process]{\textit{Intermediate\_Coloring}}($\rho, v$); if $u \in S_i$ then
    $\rho'(u) = \rho_{i+1}'(u)$. 
\end{corollary}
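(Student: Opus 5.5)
The statement to prove is Corollary~\ref{cor: no revisit}, and the plan is to read both claims directly off Lemma~\ref{lma: Disjoint S}, using the mechanics of \hyperref[cluster process]{\textit{Intermediate\_Coloring}}. The process only ever changes colors in step (a) of an iteration. At iteration $i$ it flips exactly the vertices of $S_{i-1}$, so the set of vertices touched at some iteration is exactly $\bigcup_i S_i$. This already gives the inclusion ``changed $\subseteq \bigcup_i S_i$''.

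For the reverse inclusion, which concerns the net effect of the process, I will use Lemma~\ref{lma: Disjoint S}. Since $v$ is non-frozen, the sets $S_i$ are pairwise disjoint, so each vertex $u\in\bigcup_i S_i$ lies in exactly one $S_i$. Hence $u$ is flipped exactly once, at iteration $i+1$, and is never flipped back. Thus $\rho'(u)\neq\rho(u)$ for every $u\in\bigcup_i S_i$, which gives equality with the set of vertices whose colors change.

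The same disjointness gives the second claim. After iteration $i+1$, the vertex $u$ belongs to no later $S_k$ with $k>i$, so no later step (a) alters its color. Therefore $\rho'_k(u)=\rho'_{i+1}(u)$ for all $k\ge i+1$, and in particular for the returned coloring $\rho'$.

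One technical point needs care: the process must terminate, so that $\rho'$ is well defined. By disjointness each nonempty $S_i$ contributes at least one new vertex, so at most $n$ sets are nonempty and the while loop halts. With termination in hand, there is no real obstacle; the work is entirely in Lemma~\ref{lma: Disjoint S}, and the remaining argument is bookkeeping on which iteration flips which vertex.
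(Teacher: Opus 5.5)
Your proposal is correct and follows the paper's route. The paper states this corollary as a direct consequence of Lemma~\ref{lma: Disjoint S} with no further argument, and your bookkeeping is exactly the reasoning it leaves implicit: each vertex of $\bigcup_i S_i$ is flipped once, at iteration $i+1$, and never again. Your termination remark, that there are at most $n$ pairwise disjoint nonempty $S_i$, is a sound detail the paper does not state.
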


The second lemma states that if we choose $v$ such that $\rho(v) \neq \tau(v)$ then, the vertex coloring returned by 
\hyperref[cluster process]{\textit{Intermediate\_Coloring}}($\rho, v$) 
is closer to the target coloring than the initial is, as stated in our 
desired property~\ref{eq: converge cond}.
In particular, we show that we only change the color of vertices which $\rho$ and $\tau$ 
disagree on, which along with Lemma~\ref{lma: Disjoint S} implies the coloring produced by 
\hyperref[cluster process]{\textit{Intermediate\_Coloring}}$(\rho, v)$ agrees with $\tau$ on more vertices than $\rho$ does. 

\begin{lemma} \label{lma: Intermediate coloring}
	Let $\rho' = $ \hyperref[cluster process]{\textit{Intermediate\_Coloring}}$(\rho, v)$ for $v$ such that $\rho(v) \neq \tau(v)$. 
    For all $u \in V$ such that $\rho'(u) \neq \rho(u)$ we have $\rho'(u) = \tau(u)$.
\end{lemma}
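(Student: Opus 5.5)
The plan is to prove, by induction on $i$, the stronger claim that every $x \in S_i$ satisfies $\rho(x) \neq \tau(x)$. Lemma~\ref{lma: Intermediate coloring} then follows from Corollary~\ref{cor: no revisit}. That corollary says the vertices whose colors change are exactly those in $\bigcup_i S_i$, and each changes color exactly once. In the two-coloring setting a vertex $u$ with $\rho(u)\neq\tau(u)$ whose color is flipped once therefore ends with $\rho'(u)=\tau(u)$.

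First I check that Lemma~\ref{lma: Disjoint S} and Corollary~\ref{cor: no revisit} apply, i.e.\ that $v$ is not frozen. This is immediate: $\rho$ and $\tau$ are both adapted to $c$ and $\rho(v)\neq\tau(v)$, so $v$ takes both colors across adapted colorings. Hence the layers $S_0,S_1,\dots$ are pairwise disjoint. The base case $S_0=\{v\}$ holds by the choice of $v$.

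For the inductive step, suppose every vertex of $S_0,\dots,S_{i-1}$ disagrees between $\rho$ and $\tau$. Take $y\in S_i$. By definition there is $x\in S_{i-1}$ with $xy\in E$, $c(xy)=\gamma$ and $\rho'_i(x)=\rho'_i(y)=\gamma$, where $\gamma=\rho'_i(x)$ is the color $x$ was just swapped to.
\begin{enumerate}[label=\arabic*.]
\item Since $x$ lies only in $S_{i-1}$ (disjointness), it has been flipped exactly once relative to $\rho$. By the inductive hypothesis this gives $\gamma=\rho'_i(x)=\tau(x)$.
\item Since $y\in S_i$ and the layers are disjoint, $y\notin S_0\cup\dots\cup S_{i-1}$. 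So $y$ has not yet been flipped, and $\rho(y)=\rho'_i(y)=\gamma$.
\item Since $\tau$ is adapted to $c$ and $\tau(x)=\gamma=c(xy)$, we must have $\tau(y)\neq\gamma=\rho(y)$.
\end{enumerate}
This completes the induction.

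The only delicate point is the bookkeeping in steps 1 and 2, namely that the current colors of $x$ and $y$ under $\rho'_i$ are determined by $\rho$ and the number of flips. This is exactly what the disjointness of the layers (Lemma~\ref{lma: Disjoint S}) provides, and it is where non-frozenness of $v$ is used. Everything else is a direct use of the adaptedness of $\tau$ on the single edge $xy$.
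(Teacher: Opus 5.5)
Your proof is correct and is essentially the paper's argument. The paper takes a counterexample $u\in S_i$ with $i$ minimal and derives a contradiction from the edge $zu$ to some $z\in S_{i-1}$ together with the adaptedness of $\tau$; this is your forward induction on the invariant $\rho(x)\neq\tau(x)$ for $x\in S_i$, using the same single-flip bookkeeping from Corollary~\ref{cor: no revisit}.

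Your explicit check that $v$ is non-frozen is a step the paper leaves implicit. In fact, if you add pairwise disjointness of $S_0,\dots,S_{i-1}$ to your inductive hypothesis, you can dispense with Lemma~\ref{lma: Disjoint S} and the sketched Corollary~\ref{cor: frozen char} entirely: an already-flipped $y\in S_i$ would satisfy $\rho'_i(y)=\tau(y)$, contradicting $\tau(y)\neq\gamma=\rho'_i(y)$ from your step 3.
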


\begin{proof}
	Assume for the sake of a contradiction that there exists $u \in V$ such that $\rho'(u) \neq \rho(u)$ 
    and $\rho'(u) \neq \tau(u)$. By Corollary~\ref{cor: no revisit}, there exists a unique $i$ such that 
    $u \in S_i$. Choose $u$ such that $i$ is minimal, that is for all $j < i$ and for all $v \in S_j$, the 
    statement holds for $v$. Note that clearly $i > 0$.
    
    Without loss of generality, assume that $\rho'(u) = blue$. By Corollary~\ref{cor: no revisit} 
    there exists a $z \in S_{i-1}$ 
    with $\rho'(z) = red$ and $c(zu) = red$. As $\rho'(u) \neq \tau(u)$ we have $\tau(u) = red$. 
    Furthermore, minimality in our choice of $u$ implies that $\tau(z) = \rho'(z) = red$. 
    However, as $\tau(u) = \tau(z) = red = c(zu)$ this contradicts $\tau$ being a vertex coloring 
    adapted to $c$ as required.
\end{proof}

Lastly, the third lemma states that if $v$ is non-frozen, \hyperref[cluster process]{\textit{Intermediate\_Coloring}}$(\rho, v)$ does not make too many changes. Precisely, 
at most a logarithmic number of vertices had their color changed by the process, 
as seen in the desired property~\ref{eq: inter agrees on}.

\begin{lemma} \label{lma: sub linear change}
	If $v$ is non-frozen then for $\rho' = $ \hyperref[cluster process]{\textit{Intermediate\_Coloring}}$(\rho, v)$, w.h.p. $|\{ v \in V \mid \rho'(v) \neq \rho(v) \}| = \mathcal{O}(\log{n})$
\end{lemma}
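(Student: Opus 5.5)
By Corollary~\ref{cor: no revisit}, the set of recolored vertices is exactly $\bigcup_i S_i$. The plan is to show that this set lies inside a structure whose size is controlled by component sizes in $\mathcal{G}(n,p/2)$, which is subcritical since $p/2 = (1-\epsilon/2)/n$. The key point is that every $x \in S_i$ is joined to some $y \in S_{i-1}$ by an edge of colour $\rho'_i(y)$. Moreover, by Lemma~\ref{lma: Disjoint S}, $y$ flips exactly once, so $\rho'_i(y)$ is the colour opposite to $\rho(y)$. The step from $y$ to $x$ is therefore along an edge whose colour is determined by $y$ alone, namely $\overline{\rho(y)}$. Hence $\bigcup_i S_i$ is contained in the set $T(v)$ of vertices reachable from $v$ by paths $v=y_0y_1\cdots y_k$ with $c(y_{l}y_{l+1}) = \overline{\rho(y_l)}$. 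In addition $\rho(y_{l+1}) = c(y_ly_{l+1})$, since the new violation requires $y_{l+1}$ to currently carry that colour, so consecutive edges alternate. Thus $T(v)$ is the set reachable from $v$ by alternating paths whose first colour is fixed to $\overline{\rho(v)}$, which is exactly what \hyperref[process: construct G tilde]{\textit{Search\_Neighborhood}}$(v,\overline{\rho(v)})$ explores.

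The first step is to record this containment carefully, using Corollary~\ref{cor: no revisit} and induction on $i$. This gives $|\{u : \rho'(u)\neq\rho(u)\}| \le |V(\widetilde{G}_{\gamma}(v))|$ with $\gamma = \overline{\rho(v)}$. The second step is to invoke Lemma~\ref{lma: G tilde distribution}, which says that the uncoloured projection of $\widetilde{G}_\gamma(v)$ is distributed as $C_{n,p/2}(v)$. The standard subcritical bound (Chapter 11 of \cite{TheProbabilisticMethod}) gives $\mathbb{P}[|C_{n,p/2}(v)| > K\log n] = o(n^{-1})$ for a suitable constant $K=K(\epsilon)$. The third step is to take a union bound over all $n$ vertices and both colours $\gamma$. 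This yields that w.h.p. \emph{every} alternating search set $V(\widetilde{G}_\gamma(x))$ has size $\mathcal{O}(\log n)$ simultaneously.

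The main obstacle is that $\rho$ is arbitrary and depends on $(G,c)$, so we cannot condition on it before sampling. This is exactly why the bound has to be deterministic given the graph: the containment in $T(v)$ holds for every adapted $\rho$, and $T(v)$ depends only on $(G,c)$, $v$ and the single colour $\gamma$. Taking the w.h.p. event over all $2n$ pairs $(x,\gamma)$ first, before any choice of $\rho$, removes the dependence. The step needing most care is checking that alternating reachability from $v$ with first colour $\gamma$ coincides with the vertex set produced by \hyperref[process: construct G tilde]{\textit{Search\_Neighborhood}}. The issue is that BFS assigns a single $\psi_u(x)$ per vertex, while a vertex might be reachable with both parities. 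Here non-frozenness helps: by Corollary~\ref{cor: frozen char} and Lemma~\ref{lma: Disjoint S}, the process never meets such a conflicting reach, so each recolored vertex is found by the BFS along the branch the process actually used. If this fails, the fallback is to bound $T(v)$ by the union over both starting colours, which still has size $\mathcal{O}(\log n)$.
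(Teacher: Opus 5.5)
Your first containment is correct. By Lemma~\ref{lma: Disjoint S}, each $y\in S_{i-1}$ is explored with colour $\overline{\rho(y)}$, and each new $x\in S_i$ has $\rho(x)=c(yx)$. So every recoloured vertex lies on an alternating path from $v$ starting with $\gamma=\overline{\rho(v)}$.

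The gap is the next identification. The set of vertices reachable by alternating paths with first colour $\gamma$ is not, in general, $V(\widetilde{G}_\gamma(v))$. \textit{Search\_Neighborhood} fixes a single $\psi(x)$ per vertex and never explores $x$ with the other colour. Meanwhile the set you actually control (reachability along edges of colour $\overline{\rho(y_l)}$) depends on $\rho$. You need a set that is both $\rho$-independent and distributed like a component of $\mathcal{G}(n,p/2)$, and bridging these is the whole content of the lemma. In the paper this is done by Lemmas~\ref{lma: uncolored vs colored process 1} and~\ref{lma: uncolored vs colored process 2}. Your sentence that, by Corollary~\ref{cor: frozen char} and Lemma~\ref{lma: Disjoint S}, ``the process never meets such a conflicting reach'' asserts this without proof. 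Lemma~\ref{lma: Disjoint S} cannot supply it: it only says the recolouring process does not revisit its own layers. The conflict is between the process's colours $\overline{\rho(x)}$ and the search's $\psi(x)$, which come from two different trees.

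Your fallback is also false. The parity conflict arises at intermediate vertices, not at $v$, so $V(\widetilde G_{red}(v))\cup V(\widetilde G_{blue}(v))$ need not contain the alternating-reachable set. In Figure~\ref{fig: issue with uncolored process}, $z$ is reached from $v$ by an alternating path starting with red (through $S_1,S_2,S_3$ to $y$, then to $z$). But $y$ receives $\psi(y)=blue$ directly from $v$, and $\widetilde G_{blue}(v)=\{v\}$, so neither search finds $z$. There $v$ is frozen, which is exactly why the non-frozen argument cannot be bypassed.

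The missing step is a parity lemma: if $v$ is non-frozen, every $x\in S_i\cap V(\widetilde G_\gamma(v))$ has $\psi(x)=\overline{\rho(x)}$.
\begin{itemize}
\item Take a counterexample with $i$ minimal. Let $P_r$ be the process path to $x$, whose last edge has colour $\rho(x)$. Let $P_b$ be the search-tree path, whose last edge has colour $\overline{\psi(x)}=\overline{\rho(x)}$.
\item Let $y\neq x$ be the vertex of $P_r\cap P_b$ closest to $x$ along $P_b$. By minimality both paths leave $y$ with colour $\psi(y)=\overline{\rho(y)}$, yet they enter $x$ with different colours.
\item Hence $P_r^{y,x}\cup P_b^{y,x}$ is an odd cycle with equal colours at $y$. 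Together with the handle $P_r^{v,y}$ it is an alternating odd unicycle with $v$ on the handle, contradicting Corollary~\ref{cor: frozen char}.
\end{itemize}
The containment $\bigcup_i S_i\subseteq V(\widetilde G_\gamma(v))$ then follows by induction on $i$ from Observation~\ref{ob: alt search process}(b).

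With that in place the rest of your plan works. Using \textit{Search\_Neighborhood} instead of \textit{Uncolored\_Process} is harmless, since Lemma~\ref{lma: G tilde distribution} gives the same distribution. Your union bound over all $2n$ pairs $(x,\gamma)$ is a real improvement: it makes the bound uniform over the adversarial $\rho$ and over the repeated applications in Theorem~\ref{theorem: connected sol space}, which the paper's proof leaves implicit.
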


Before proceeding let us introduce the Graph Branching Process (GBP) run on $\mathcal{G}(n,p/2)$ 
    (see. Chapter 11 in \cite{TheProbabilisticMethod} or \cite{ErdosRenyiRandGraphs}). This process is precisely
    a Breadth First Search (BFS) exploration of a connected component of $\mathcal{G}(n,p/2)$. We maintain a queue 
    of vertices, initialized to contain $v$. We let $Z_t$ denote the vertices added to the queue 
    at time $t$. Let $T_t$ denote the ``used" 
    vertices at time $t$ with $T_0 = \{v\}$. So $Z_t$ denotes the neighbors of the first vertex in the queue 
    within the ``unused" vertices $V \setminus T_{t-1}$ at time $t$.  

    \begin{process}{\textit{Graph\_Branching\_Process}$(v)$} \label{graph branching process}
        \begin{enumerate}
            \item Initialize: Queue $Q$ containing $v$,  $T_0 = \{v\}$, $t = 1$
            \item While $Q \neq \emptyset:$ 
                \begin{enumerate}[label=\alph*., ref=\theenumi\alph*]
                    \item $x := Pop(Q)$
                    \item $Z_t := N(x) \setminus T_{t-1}$
                    \item $T_t := T_{t-1} \cup Z_t$ 
                    \item $t := t + 1$ 
                \end{enumerate}
        \end{enumerate}
    \end{process}

    So,
    \begin{equation*}
        |Z_t| \sim \mathcal{B}in(n - |T_{t-1}|, p/2) \,.
    \end{equation*}

    For ease of notation, if $Q = \emptyset$ at iteration $t$ we let $T_t = T_{t-1}$. 
    Thus, $T_\infty := \lim_{t \rightarrow \infty} T_t$ denotes the set of vertices ``visited" by 
    \hyperref[graph branching process]{\textit{Graph\_Branching\_Process}}($v$). Particularly, its not hard to see
    that $T_\infty = V(C(v))$, where $C(v)$ denotes the connected component containin $v$.

    \begin{remark}(Chapter 11 in \cite{TheProbabilisticMethod} or \cite{ErdosRenyiRandGraphs}) \label{rmk: connected component size}
        If $p < 1/n$ then w.h.p. the largest component of $\mathcal{G}(n,p)$ has size $\mathcal{O}(\log{n})$. 
        In particular, when running \hyperref[graph branching process]{\textit{Graph\_Branching\_Process}}($v$)
        on $\mathcal{G}(n,p)$, w.h.p. $|T_\infty| = \mathcal{O}(\log{n})$.
    \end{remark}

    We wish to compare the number of vertices whose colors change during \\
    \hyperref[cluster process]{\textit{Intermediate\_Coloring}}($\rho, v$) 
    to the number of vertices seen by 
    \hyperref[graph branching process]{\textit{Graph\_Branching\_Process}}($v$). 
    \hyperref[cluster process]{\textit{Intermediate\_Coloring}} resembles a branching process, 
    so we would like to model it with \\ 
    \hyperref[graph branching process]{\textit{Graph\_Branching\_Process}}.
    However, the steps of \hyperref[cluster process]{\textit{Intermediate\_Coloring}} are determined in 
    part by the vertex coloring which is not random. This prevents us from directly modeling 
    \hyperref[cluster process]{\textit{Intermediate\_Coloring}}
    with \hyperref[graph branching process]{\textit{Graph\_Branching\_Process}}.

    To overcome this, we introduce a 
    closely related process called \hyperref[cluster process 2]{\textit{Uncolored\_Process}} 
    which acts like \hyperref[cluster process]{\textit{Intermediate\_Coloring}} but does not use the vertex coloring. 
    \hyperref[cluster process 2]{\textit{Uncolored\_Process}}($v, \rho(v)$)
    starts at a vertex $v$ and searches in a Breadth-First manner for vertices 
    reachable from $v$ by an alternating path beginning with the color 
    which is not $\rho(v)$. As usual, we only consider vertices which are not previously seen. 
    This is handled by maintaining a set 
    $T_i$ which stores the vertices seen from iteration $1$ to $i$.

    \begin{process}{\textit{Uncolored\_Process}$(v, \rho(v)):$} \label{cluster process 2}
        \begin{enumerate}
            \item Initialize: $S'_0 = \{v\}$
            \item Initialize: $T_0 = S'_0$
            \item Initialize: $\kappa_0 = \rho(v)$
            \item While $S'_i \neq \emptyset:$ 
                \begin{enumerate}[label=\alph*., ref=\theenumi\alph*]
                    \item $\kappa_i \neq \kappa_{i-1}$. That is, swap the color.
                    \item \label{Uncolored process next layer} $S'_i \coloneqq \left( \cup_{x \in S'_{i-1}} N_{\kappa_{i}}(x) \right) \setminus T_{i-1}$, that is $S'_i$ denotes the set of vertices incident to $S'_{i-1}$ by an edge of color $\kappa_{i}$
                    \item $T_i := T_{i-1} \cup S'_i$
                \end{enumerate}
        \end{enumerate}
    \end{process}

    Note the following equivalent definition of $\kappa_i$ assuming $\rho(v) = red$:
    \[\kappa_i = \begin{cases} 
      red & i \equiv 0 \mod{2} \\
      blue & i \equiv 1 \mod{2}
    \end{cases} \,.
    \] 

    As we will see in Figure~\ref{fig: issue with uncolored process} below, \hyperref[cluster process 2]{\textit{Uncolored\_Process}}($v, \rho(v)$) 
    does not reach all vertices 
    reachable from $v$ by an alternating path beginning with the color 
    which is not $\rho(v)$; however, this suffices for our analysis. 

    We think of \hyperref[cluster process 2]{\textit{Uncolored\_Process}}$(v, \rho(v))$ 
    being run in parallel with \hyperref[cluster process]{\textit{Intermediate\_Coloring}}($\rho, v$).
    For the remainder of this section, we let $S_i$ denote the sets formed by running \\
    \hyperref[cluster process]{\textit{Intermediate\_Coloring}}$(\rho, v)$ 
    and $S_i'$ denote the sets formed by running \hyperref[cluster process 2]{\textit{Uncolored\_Process}}$(v, \rho(v))$.

    Its not hard to see that the number of vertices visited by \hyperref[cluster process 2]{\textit{Uncolored\_Process}}
    follows the same distribution as the number of vertices visited by 
    \hyperref[graph branching process]{\textit{Graph\_Branching\_Process}}; 
    this is handled more formally in the following lemma.

    \begin{lemma} \label{lma: uncolored process vs connected component}
        $|\bigcup_i S'_i|$ is distributed as the size of the connected component in $\mathcal{G}(n,p/2)$ containing $v$.
    \end{lemma}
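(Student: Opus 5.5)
We prove the lemma by coupling, step for step, \textit{Uncolored\_Process}$(v,\rho(v))$ run on $\mathcal{G}(n,p,2)$ with \textit{Graph\_Branching\_Process}$(v)$ run on $\mathcal{G}(n,p/2)$. This mirrors the proof of Lemma~\ref{lma: G tilde distribution}, where \textit{Search\_Neighborhood} was coupled with \textit{Find\_Component}. The only randomness the process consults is the indicators $X^{\kappa_i}_{xy}$. The plan is to show that, conditional on the history, each one queried is a fresh Bernoulli$(p/2)$ variable. Then the two processes explore identically distributed structures.

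First, we fix the order of exploration. We process layer $S'_{i-1}$ vertex by vertex in a fixed order $\tau$. For the current vertex $x$ and each $y \in V\setminus T$, where $T$ is the current set of seen vertices, we reveal $X^{\kappa_i}_{xy}$ and add $y$ to $S'_i$ and $T$ if it equals $1$. This refinement of step~\ref{Uncolored process next layer} yields the same sets $S'_i$, since the union over $x\in S'_{i-1}$ is order-independent apart from which parent discovers $y$.

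Second, we prove the analogue of Observation~\ref{ob: long path vars considered}: for each unordered pair $\{x,y\}$, at most one of $X^{red}_{xy}, X^{blue}_{xy}$ is ever revealed, and at most once. A pair is examined only when one endpoint, say $x$, is being processed and the other endpoint $y$ is unseen. Immediately afterwards $y$ either joins $T$, or stays unseen while $x$ has been removed from the active layer for good. In both cases the pair is never queried again. Note that $x$ is processed exactly once, because the $S'_i$ are disjoint by construction, since $T_{i-1}$ is subtracted.

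Third, the distributional step. Because each pair is queried at most once, the queried indicator is independent of everything revealed before, and $\mathbb{P}[X^{\kappa}_{xy}=1]=p/2$ regardless of $\kappa$. We therefore couple it with the indicator of the edge $xy$ in $\mathcal{G}(n,p/2)$ that \textit{Graph\_Branching\_Process} queries at the same moment; that process queries each pair of the form ($x$ popped, $y$ unseen) exactly once as well. Under this coupling, the two procedures add the same vertices in the same order, so $\bigcup_i S'_i = T_\infty$. By the remark after \textit{Graph\_Branching\_Process}, $T_\infty = V(C(v))$ in $\mathcal{G}(n,p/2)$, which proves the claim. Alternatively, one can argue inductively that $|S'_i|$ given the history is $\mathcal{B}in(|S'_{i-1}|\text{-fold exposure of } n-|T_{i-1}|, p/2)$, which is exactly the BFS-layer law.

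The main obstacle is the second step. We must make sure that the color switch between layers never leads us to query the other color on a pair that was already examined; otherwise the conditional probability changes as in Lemma~\ref{lma: prob change}. The argument above handles this, because an examined pair always has a processed endpoint, and processed vertices never re-enter a frontier. It also uses that the two processes agree on queue discipline (layer-by-layer BFS with a fixed order), which we impose explicitly.
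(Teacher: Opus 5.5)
Your proposal is correct and follows essentially the same route as the paper. The paper likewise splits each layer step into single-vertex steps $Z_j = N_{\kappa_i}(v_j)\setminus T_i^{j-1}$ with $|Z_j|\sim\mathcal{B}in(n-|T_i^{j-1}|,p/2)$ and matches them with the steps of \textit{Graph\_Branching\_Process} on $\mathcal{G}(n,p/2)$. Your at-most-once-per-pair query argument makes explicit the independence that the paper only asserts. The one thing to fix is the garbled ``alternatively'' remark: its correct form is $|S'_i| \sim \mathcal{B}in\bigl(n-|T_{i-1}|,\,1-(1-p/2)^{|S'_{i-1}|}\bigr)$ given the history.
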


    \begin{proof}
        We couple the distribution of $\left|\bigcup_{i} S'_i\right|$ to the size of a connected component in $\mathcal{G}(n,p/2)$ by 
        showing that \hyperref[cluster process 2]{\textit{Uncolored\_Process}}$(v, \rho(v))$ follows precisely 
        the same distribution as \\ \hyperref[graph branching process]{\textit{Graph\_Branching\_Process}}($v$) run on $\mathcal{G}(n,p/2)$. 

        Consider the construction of $S'_i$ from $S'_{i-1}$ in Step~\ref{Uncolored process next layer} of 
        \hyperref[cluster process 2]{\textit{Uncolored\_Process}}$(v, \rho(v))$; That is,
        \begin{equation*}
            S'_i \coloneqq \left( \cup_{x \in S'_{i-1}} N_{\kappa_{i}}(x) \right) \setminus T_{i-1} \,.
        \end{equation*}

        Let $\gamma = |S'_{i-1}|$ and fix an ordering $\sigma$ on $S'_{i-1}$. Let $T_i^0 := T_{i-1}$. 
        For $j = 1, \dots, \gamma$ define recursively:
        \begin{equation*}
            Z_j := N_{\kappa_i}(v_j) \setminus T_{i}^{j-1} \qquad and \qquad T_i^{j} := T_{i}^{j-1} \cup Z_j \,.
        \end{equation*}
        It's not hard to see that,
        \begin{equation*}
            \bigcup_{j=1}^{\gamma} Z_j = S'_i \qquad and \qquad T_i = T_i^{\gamma} \,.
        \end{equation*}
        Furthermore, 
        \begin{equation*}
            |Z_j| \sim \mathcal{B}in(n - |T_{i}^{j-1}|, p/2) \,.
        \end{equation*}

        Thus, $|\bigcup_i S'_i|$ follows the same distribution as $|T_\infty|$ which is the size of the connected component 
        in $\mathcal{G}(n,p/2)$ containing $v$. 
    \end{proof}

    It remains to relate the original process \hyperref[cluster process]{\textit{Intermediate\_Coloring}}
    to \hyperref[cluster process 2]{\textit{Uncolored\_Process}}. 
    At a first glance, it seems natural that 
    \hyperref[cluster process]{\textit{Intermediate\_Coloring}} is contained within 
    \hyperref[cluster process 2]{\textit{Uncolored\_Process}} for any starting vertex $v$.
    However, this is not the case as we will see below. 
    Fortunately, it is true if $v$ is non-frozen; this is handled in 
    Lemma~\ref{lma: uncolored vs colored process 1} and Lemma~\ref{lma: uncolored vs colored process 2}.

    Consider running both 
    \hyperref[cluster process]{\textit{Intermediate\_Coloring}}($\rho, v$) and 
    \hyperref[cluster process 2]{\textit{Uncolored\_Process}}($v, \rho(v)$) on the graph in Figure~\ref{fig: issue with uncolored process}.
    It is not hard to see that $z$ is visited by \hyperref[cluster process]{\textit{Intermediate\_Coloring}}($\rho, v$), but not by 
    \hyperref[cluster process 2]{\textit{Uncolored\_Process}}($v, \rho(v)$). 
    This is because when searching from $y$, 
    \hyperref[cluster process]{\textit{Intermediate\_Coloring}}($\rho, v$)
    looks for $red$ edges, whereas \hyperref[cluster process 2]{\textit{Uncolored\_Process}}($v, \rho(v)$) looks
    for $blue$ edges. However, looking at 
    Figure~\ref{fig: issue with uncolored process} we notice that $v$ is in the handle of 
    an alternating odd unicycle and is thus frozen by Corollary~\ref{cor: frozen char}.

    \begin{figure}[h]
    \centering
        \begin{tikzpicture}[node distance={15mm}, main/.style = {draw, circle}] 
            \begin{scope}[every node/.style={circle, thick, draw, minimum size=3mm}]
                \node[main, pattern=north east lines, pattern color=blue] (u) at (0,0) {}; 
                \node[draw = none] at (u) [name=fake_u,outer sep=5pt,inner sep=5pt]{};
                \node[draw = none] at (1,0) {$S_0 = S'_0$};
                \node[main, draw=none, fill=none] (u_name) at (-0.7,0) {$v$};

                \node[main, fill=red] (s1_2) at (0,-2) {}; 
                \node[draw = none] at (s1_2) [name=fake_s1_2,outer sep=5pt,inner sep=5pt]{};
                \node[main, pattern=north east lines, pattern color=blue] (s1_1) at (-2,-2) {}; 
                \node[draw = none] at (s1_1) [name=fake_s1_1,outer sep=5pt,inner sep=5pt]{};
                \node[draw = none] at (-2, -1.3) {y};
                \node[main, fill=red] (s1_3) at (2,-2) {}; 
                \node[draw = none] at (s1_3) [name=fake_s1_3,outer sep=5pt,inner sep=5pt]{};
                \node[draw = none] at (s1_3) [name=fake_s1_3_2,outer sep=7pt,inner sep=7pt]{};
                \node[draw = none] at (3, -2) {$S_1$};

                \draw[red] (u) -- (s1_1);
                \draw[red] (u) -- (s1_2);
                \draw[red] (u) -- (s1_3);

                \node[main, pattern=north east lines, pattern color=blue] (s2_4) at (0.5,-4) {};   
                \node[draw = none] at (s2_4) [name=fake_s2_4,outer sep=5pt,inner sep=5pt]{};
                \node[main, pattern=north east lines, pattern color=blue] (s2_5) at (1.5,-4) {};    
                \node[draw = none] at (s2_5) [name=fake_s2_5,outer sep=5pt,inner sep=5pt]{};
                \node[main, pattern=north east lines, pattern color=blue] (s2_6) at (2.5,-4) {};    
                \node[draw = none] at (s2_6) [name=fake_s2_6,outer sep=5pt,inner sep=5pt]{};
                \node[draw = none] at (3.5, -4) {$S_2$};

                \draw[blue, very thick] (s1_2) -- (s2_4);
                \draw[blue, very thick] (s1_3) -- (s2_5);
                \draw[blue, very thick] (s1_3) -- (s2_6);

                \node[main, fill=red] (s3_1) at (0,-6) {};   
                \node[draw = none] at (s3_1) [name=fake_s3_1,outer sep=5pt,inner sep=5pt]{};
                \node[draw = none] at (1, -6) {$S_3$};

                \draw[red] (s2_4) -- (s3_1);

                \draw[blue, very thick] (s3_1) -- (s1_1);
                \node[draw = none] at (-3, -2) {$S_4$};

                \node[main] (s2_prime) at (-5,-4) {};   
                \node[draw = none] at (s2_prime) [name=fake_s2_prime,outer sep=5pt,inner sep=5pt]{};
                \node[draw = none] at (-6, -4) {$S_2'$};

                \node[main, fill=red] (s5_1) at (-3,-4) {};   
                \node[draw = none] at (s5_1) [name=fake_s5_1,outer sep=5pt,inner sep=5pt]{};
                \node[draw = none] at (-3, -3.4) {z};
                \node[draw = none] at (-2, -4) {$S_5$};

                \draw[blue, very thick] (s1_1) -- (s2_prime);
                
                \draw[red] (s1_1) -- (s5_1);

            \end{scope}

            \draw[blue,fill=my_grey,opacity=0.2](fake_u.west) 
                to[closed,curve through={
                (fake_u.north) ..
                (fake_u.east) ..
                }] (fake_u.west);

                \draw[blue,fill=my_grey,opacity=0.2](fake_s1_2.west) 
                to[closed,curve through={
                (fake_s1_2.north) ..
                (fake_s1_3.north) ..
                (fake_s1_3.east) ..
                (fake_s1_3.south) ..
                (fake_s1_2.south) ..
                }] (fake_s1_2.west);

                \draw[blue,fill=my_grey,opacity=0.2](fake_s2_4.west) 
                to[closed,curve through={
                (fake_s2_4.north) ..
                (fake_s2_6.north) ..
                (fake_s2_6.east) ..
                (fake_s2_6.south) ..
                (fake_s2_4.south) ..
                }] (fake_s2_4.west);

                \draw[blue,fill=my_grey,opacity=0.2](fake_s3_1.west) 
                to[closed,curve through={
                (fake_s3_1.north) ..
                (fake_s3_1.east) ..
                }] (fake_s3_1.west);

                \draw[blue,fill=my_grey,opacity=0.2](fake_s1_1.west) 
                to[closed,curve through={
                (fake_s1_1.north) ..
                (fake_s1_1.east) ..
                }] (fake_s1_1.west);

                \draw[blue,fill=my_grey,opacity=0.2](fake_s5_1.west) 
                to[closed,curve through={
                (fake_s5_1.north) ..
                (fake_s5_1.east) ..
                }] (fake_s5_1.west);
    
        \end{tikzpicture}
        \caption{An instance in which we run 
        \hyperref[cluster process]{\textit{Intermediate\_Coloring}}($\rho, v$) and 
        \hyperref[cluster process 2]{\textit{Uncolored\_Process}}($v, \rho(v)$) 
        Here the vertex $z$ is seen only by  
        \hyperref[cluster process]{\textit{Intermediate\_Coloring}}($\rho, v$). 
        Note the vertex coloring shown is $\rho$. 
        } 
        \label{fig: issue with uncolored process}
    \end{figure}

    Suppose $v$ is a non-frozen vertex. 
    The following lemma states: if a vertex $x$ is seen by both 
    \hyperref[cluster process]{\textit{Intermediate\_Coloring}}($\rho, v$) and 
    \hyperref[cluster process 2]{\textit{Uncolored\_Process}}($v, \rho(v)$) then when branching from $x$, both processes consider edges of the same color.

    \begin{lemma} \label{lma: uncolored vs colored process 1}
    	Let $v$ be non-frozen. If $x \in S_i \cap S'_j$, then $i \equiv j \mod{2}$; equivalently, 
        $\kappa_{j+1} = \rho_{i+1}'(x)$.
    \end{lemma}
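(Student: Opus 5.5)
The plan is to argue by contradiction and reduce to Corollary~\ref{cor: frozen char}, exactly as in the proof of Lemma~\ref{lma: Disjoint S}. So suppose $v$ is non-frozen, $x \in S_i \cap S'_j$, and $i \not\equiv j \pmod 2$. The aim is to build, from the two processes, an alternating odd unicycle with $v$ in its handle.

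\textbf{Step 1: read off two alternating paths into $x$.}
\begin{itemize}
\item From \textit{Intermediate\_Coloring}$(\rho,v)$, argue as for the paths $P_x$ in Phase 1 of Lemma~\ref{lma: improper alternating bicycle}. By Lemma~\ref{lma: Disjoint S}, each vertex lies in exactly one layer. Hence there is a genuine alternating path $P = v x_1 \cdots x_i$ with $x_i = x$ and $x_k \in S_k$. Its first edge has color $\rho'_1(v) \neq \rho(v)$. Each later edge has the color opposite to its predecessor, since $\rho'$ is constant on each layer and flips layer by layer.
\item From \textit{Uncolored\_Process}$(v,\rho(v))$, the layers $S'_k$ are disjoint because of the sets $T_k$. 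So we obtain an alternating path $Q = v y_1 \cdots y_j$ with $y_j = x$, whose $k$-th edge has color $\kappa_k$.
\item The first edges agree: $\kappa_1 \neq \kappa_0 = \rho(v)$, so $\kappa_1 = \rho'_1(v)$.
\item In both paths the color of the $k$-th edge depends only on the parity of $k$. Since $i \not\equiv j$, the last edges of $P$ and $Q$ entering $x$ have different colors.
\end{itemize}
This also gives the equivalent reformulation in the statement: for each process, the color used when branching from $x$ is the opposite of the color of the edge entering $x$. So the two branching colors agree exactly when $i \equiv j$, i.e.\ $\kappa_{j+1} = \rho'_{i+1}(x)$.

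\textbf{Step 2: extract the unicycle.}
\begin{itemize}
\item Walk backwards from $x$ along $P$ and let $a$ be the first vertex met that lies on $Q$ other than at $x$; if there is none, take $a = v$. Then $C = P[a,x] \cup Q[a,x]$ is a cycle, and $Q[v,a]$ is a path meeting $C$ only at $a$.
\item Let $a$ sit at position $k$ on $P$ and position $l$ on $Q$.
\item \emph{Case $k \equiv l$.} The edges of $P$ and $Q$ leaving $a$ toward $x$ have the same color. The edges entering $x$ have different colors, so $C$ is alternating everywhere except at $a$. Meanwhile the edge of $Q$ entering $a$ has the opposite color to both edges leaving $a$. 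This is precisely the configuration of Figure~\ref{fig: 2-adapt-col first phase}: an alternating odd unicycle with handle $Q[v,a]$ containing $v$. The oddness follows from the parity count on $C$, exactly as in Phase 1.
\item \emph{Case $k \not\equiv l$.} This case is subtler. I would handle it by instead taking the handle $P[v,a]$ and letting $Q[a,x]$ together with the reversed $P[a,x]$ play the roles of the two branches. Alternatively, I would re-run the Phase~1 revisit argument directly on the merged exploration, whose first revisit produces a defect vertex of the right type.
\end{itemize}

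\textbf{Step 3: conclude.} In either case $v$ lies in the handle of an alternating odd unicycle. By Corollary~\ref{cor: frozen char}, $v$ is then frozen, which is a contradiction.

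The main obstacle is Step 2: pinning down which meeting vertex and which handle give a unicycle satisfying Definition~\ref{def: alt unicycle} in the mixed-parity case. There one must also handle the degenerate cases $a = v$ and $a = x$, where the "handle" is trivial or the defect sits at $v$ itself. If the direct construction gets messy, I would fall back on the minimality-free exploration from Phase 1. That argument is already shown to yield an alternating odd unicycle rooted at its start vertex whenever two alternating paths from that vertex reach a common vertex with incompatible final colors.
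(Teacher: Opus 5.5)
Your Step 1, your case $k \equiv l$, and Step 3 match the paper's proof. The paper picks the meeting vertex closest to $x$ along the \textit{Uncolored\_Process} path and uses the \textit{Intermediate\_Coloring} path as the handle; your mirror-image choice works just as well. The gap is the case $k \not\equiv l$, which you leave open, and the repair you suggest cannot work.

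\textbf{Why your repair fails.} When $k \not\equiv l$, the two edges of $C$ at $a$ have different colors, and so do the two edges of $C$ at $x$. So $C$ alternates at every vertex, and its length is $(i-k)+(j-l) \equiv (i+j)-(k+l) \equiv 0 \pmod 2$. An even alternating cycle forces nothing. Swapping which path serves as handle and which as branch does not change the parity of $C$, so no alternating odd unicycle can be extracted from it.

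Your fallback to Phase~1 does not apply as stated either. In Phase~1, consistency at the meeting vertex is automatic because both paths live in the layers of a single process before its \emph{first} revisit. Here the two paths come from different processes, so nothing guarantees that their branching colors at $a$ agree.

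\textbf{The missing idea is minimality.} When $k \not\equiv l$, the vertex $a = x_k = y_l$ lies in $S_k \cap S'_l$ with $k \not\equiv l$. So $a$ is itself a counterexample to the lemma, with strictly smaller index $k < i$. The paper therefore fixes, from the start, a counterexample $x \in S_i \cap S'_j$ with $i$ minimal. Then every common vertex $a \neq x$ of the two paths satisfies $k \equiv l$, the bad case never occurs, and your case $k \equiv l$ completes the proof. Equivalently, you can induct on $i$: replace $x$ by $a$ and repeat. This terminates because $a = v$ always has $k = l = 0$.

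The degenerate cases you mention are harmless. The case $a = x$ is excluded by your definition of $a$. The case $a = v$ falls under $k \equiv l$ and gives an odd unicycle with trivial handle $\{v\}$.
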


    The equivalence between $i \equiv j \mod{2}$ and $\kappa_{j+1} = \rho'(x)$ 
    follows from Corollary~\ref{cor: no revisit}.
    Intuitively, if $i \not\equiv j \mod{2}$ then an odd cycle will be formed with $v$ in the handle 
    which by Corollary~\ref{cor: frozen char} implies $v$ is frozen. 

    \begin{proof}
        Assume for the sake of a contradiction that there exists $x \in S_i \cap S'_j$ for some $i, j$ 
        with $\kappa_{j+1} \neq \rho'(x)$; furthermore, choose $i$ to be minimal. 
        Without loss of generality, assume $\rho'(x) = blue$ and $\kappa_{j+1} = red$, 
        as seen in Figure~\ref{fig: two processes lma 1}. 
        
        \begin{figure}[h]
            \centering
            \begin{tikzpicture}[node distance={15mm}, main/.style = {draw, circle}] 
                \begin{scope}[every node/.style={circle, thick, draw, minimum size=3mm}]
                    \node[main, pattern=north east lines, pattern color=blue] (x) at (0,0) {}; 
                    \node[main, draw=none, fill=none] (x_name) at (0,0.3) {$x$}; 

                    \node[main, draw = none] (a) at (-1.5,1.5) {};
                    \node[main, draw = none] (b) at (1.5,1.5) {};
                    \node[main, draw = none] (c) at (-1.5,-1.5) {};
                    \node[main, draw = none] (d) at (1.5,-1.5) {};

                    \draw[red] (a) -- (x);
                    \draw[red] (x) -- (d);
                    \draw[blue, very thick] (b) -- (x);
                    \draw[blue, very thick] (x) -- (c);

                    \node[main, draw = none] (kappa_b) at (2,0.75) {$\kappa_{j} = blue$};
                    \node[main, draw = none] (kappa_a) at (2,-0.75) {$\kappa_{j+1} = red$};

                    \node[main, draw = none] (rho_x) at (-2,-0.75) {$\rho'(x) = blue$};
                \end{scope}

            \end{tikzpicture}
            \caption{The vertex $x \in S_i \cap S'_j$ with $\kappa_{j+1} \neq \rho'(x)$ under the assumption 
            that $\rho'(x) = blue$ and $\kappa_{j+1} = red$.}
            \label{fig: two processes lma 1}
        \end{figure}
        
        Let $P_r$ be an 
        alternating $v,x$-path ending in $red$ induced by the layers $S_0, S_1, S_2, \dots, S_i$ of 
        \hyperref[cluster process]{\textit{Intermediate\_Coloring}}$(\rho, v)$. Similarly, 
        let $P_b$ be an alternating $v,x$-path ending in $blue$ induced by the layers 
        $S'_0, S'_1, \dots, S'_j$ of 
        \hyperref[cluster process 2]{\textit{Uncolored\_Process}}$(v, \rho(v))$.
        
        Let $y \in V(P_r) \cap V(P_b)$ be closest to $x$ along the path $P_b$. 
        That is the $y,x$-subpath $P_b^{x,y} \subseteq P_b$ intersects $P_r$ only at $y$ and $x$. 
        Let $C$ denote the cycle formed by $P_b^{x,y}$ and the $y,x$-subpath $P_r^{x,y} \subseteq P_r$. 
        Furthermore, let $P^{v,y}_r$ be the $v,y$-subpath of $P_r$. By choice of $y$, $P^{v,y}_r$ intersects $C$ only at $y$. 
        So, $y \in S_{i'} \cap S'_{j'}$ for some $i' < i$ and $j' < j$. So, 
        by minimality of $i$ it follows that $\kappa_{j'+1} = \rho'(y)$ and thus, the two edges in $C$ 
        incident to $y$ are of the same color. Furthermore, by assumption the two edges incident to $x$ in $C$ 
        are of different colors. It follows by the alternating structure of $P_r$ and $P_b$ 
        that $P^{v,y}_r$ and $C$ form an alternating odd unicycle with $v$ on the handle. 
        Thus by Corollary~\ref{cor: frozen char}, $v$ is frozen contradicting our assumption that $v$ is non-frozen.
    \end{proof}

    \begin{lemma} \label{lma: uncolored vs colored process 2}
        If $v$ is non-frozen then for all $i$, $S_i \subseteq \cup_{j=0}^{i} S'_j$.
    \end{lemma}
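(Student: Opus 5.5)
We prove $S_i \subseteq \bigcup_{j=0}^{i} S'_j$ by induction on $i$. The base case is immediate, since $S_0 = S'_0 = \{v\}$.

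For the inductive step, suppose the claim holds for all indices up to $i-1$, and let $x \in S_i$. By the definition of \hyperref[cluster process]{\textit{Intermediate\_Coloring}}, there is a $z \in S_{i-1}$ with $x \in N^{\rho'_i}_{\rho'_i(z)}(z)$. That is, $c(zx) = \rho'_i(z)$, and this is the color $z$ was swapped to at step $i$. By the inductive hypothesis, $z \in S'_{j'}$ for some $j' \le i-1$. By Lemma~\ref{lma: uncolored vs colored process 1}, $j' \equiv i-1 \pmod 2$, and equivalently $\kappa_{j'+1} = \rho'_i(z)$. By Corollary~\ref{cor: no revisit}, $\rho'_i(z)$ is also the final color of $z$. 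Hence when \hyperref[cluster process 2]{\textit{Uncolored\_Process}} expands $z$ at step $j'+1$, it searches exactly along edges of color $c(zx)$.

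There are now two cases. If $x \in T_{j'}$, then $x \in S'_j$ for some $j \le j' \le i-1$, and we are done. Otherwise $x \in N_{\kappa_{j'+1}}(z) \setminus T_{j'}$, so $x \in S'_{j'+1}$ with $j'+1 \le i$. Either way $x \in \bigcup_{j \le i} S'_j$, which completes the induction.

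The main obstacle is making sure the parity lemma can be applied, since Lemma~\ref{lma: uncolored vs colored process 1} requires $z \in S_{i-1} \cap S'_{j'}$. The inductive hypothesis supplies exactly this membership, so no further argument is needed there. The only other point to check is that the index bound $j'+1 \le i$ holds, which follows from $j' \le i-1$. The non-frozen hypothesis enters only through Lemma~\ref{lma: uncolored vs colored process 1} and Corollary~\ref{cor: no revisit}, which guarantee that each vertex is swapped once and its swapped color is well defined.
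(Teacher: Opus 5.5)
Your proof is correct and follows essentially the same argument as the paper. The paper phrases it as a minimal counterexample $i$ instead of an induction, but it uses the same predecessor $z \in S_{i-1}$ with $c(zx)=\rho'(z)$ and the same appeal to Lemma~\ref{lma: uncolored vs colored process 1} to get $\kappa_{j'+1}=\rho'(z)$. It likewise concludes that $x$ either already lies in $T_{j'}$ or is added to $S'_{j'+1}$.
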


    \begin{proof}
        Assume for the sake of a contradiction that there exists $i$ such that 
        $S_i \not\subseteq \cup_{j=0}^{i} S'_j$; furthermore take $i$ to be minimal.
        So there exists $x \in S_i$ such that $x \notin \cup_{j=0}^{i} S'_j$. 
        Let $y \in S_{i-1}$ be incident to $x$ with $c(xy) = \rho'(y)$. 
        Minimality of $i$ implies that $y \in S'_j$ for some $j \leq i - 1$. 
        So, $y \in S_{i-1} \cap S'_j$ and thus by Lemma~\ref{lma: uncolored vs colored process 1}, $\kappa_{j+1} = \rho'(y)$.
        As $x \notin \cup_{l = 0}^{j} S'_l$ and $\kappa_{j+1} = \rho'(y)$ it follows that $x$ 
        would have been added to $S'_{j+1}$, contradiction.
    \end{proof}
    
    We now prove Lemma~\ref{lma: sub linear change}. 

    \begin{proof}[Proof of Lemma~\ref{lma: sub linear change}]
        By Lemma~\ref{lma: uncolored vs colored process 2}, 
        Lemma~\ref{lma: uncolored process vs connected component}, and Remark~\ref{rmk: connected component size},
        \begin{equation} \label{eq: uncolored bound}
            \left|\bigcup_{i} S_i\right| \leq \left|\bigcup_{i} S'_i\right| = \mathcal{O}(\log{n})\,,
        \end{equation}
        where the last equality holds w.h.p..
    \end{proof}

We are now ready to prove Theorem~\ref{theorem: connected sol space}.

\begin{proof}[Proof of Theorem~\ref{theorem: connected sol space}]
	We first note that it's not hard to see that the vertex coloring $\rho'$ returned from 
    \hyperref[cluster process]{\textit{Intermediate\_Coloring}} $(\rho, v)$ is a valid vertex 
    coloring adapted to $c$ for any vertex $v$. We therefore consider the following sequence of vertex colorings, all of 
    which are adapted to $c$: $\rho_0 = \rho$ and for all 
    $i \geq 1$ if there exists $v$ such that $\rho_{i-1}(v) \neq \tau(v)$ then 
    $\rho_i = \hyperref[cluster process]{\textit{Intermediate\_Coloring}}(\rho_{i-1}, v)$; otherwise, $\rho_i = \rho_{i-1}$. 
    Therefore, by Lemmas~\ref{lma: Disjoint S}, \ref{lma: Intermediate coloring}, and \ref{lma: sub linear change} 
    it follows that $\{\rho_i\}$ converges to $\tau$ and for each $i \geq 1$,
    \begin{equation*}
        \left| \{ v \in V \mid \rho_i(v) \neq \rho_{i-1}(v) \} \right| = \mathcal{O}(\log{n}) \,,
    \end{equation*}
    as required.
\end{proof}

\subsection*{Acknowledgements}

The author is grateful to Michael Molloy for many helpful discussions and comments as well as overall 
guidance and support. The author is also thankful to Lior Gishboliner for reviewing the paper and 
providing helpful comments. The author was supported by the Ontario Graduate Scholarship (OGS).

\bibliographystyle{alpha} 
\bibliography{refs}

\appendix

\section{Relationship between $\mathcal{G}(n,p,k)$ and $\mathcal{G}(n,m,k)$} \label{appendix: two models}

The purpose of this section is to prove an approximate equivalence of the two models 
$\mathcal{G}(n,p,k)$ and $\mathcal{G}(n,m,k)$, particularly with respect to events which are monotone under edge colorings.

We proceed by relating properties which are monotone under edge colorings to properties that are monotone in the classical sense. 
Particularly, if a property $A$ is monotone under edge colorings and and $\gamma > 0$,
define $Q_A^\gamma$ to be the following property: We say a graph $G$ has $Q_A^\gamma$ 
if and only if a uniformly random edge coloring of $G$ has $A$ with probability at least $1 - \gamma$.

We now claim that $Q_A^\gamma$ is monotone for any property $A$ that is monotone 
under edge colorings and $\gamma > 0$. 
Assume a graph $H$ has $Q_A^\gamma$ and $H \subseteq G$. We wish to show that $G$ 
also has the property $Q_A^\gamma$. 
Consider a uniformly random edge coloring $c_G$ of $G$. 
As $H \subseteq G$, let $c_H$ be the coloring of $H$ induced by $c_G$. 
Therefore, $c_H$ is a uniformly random edge coloring of $H$ and as $H$ has $Q_A^\gamma$, 
with probability at least $1 - \gamma$, $(H, c_H)$ has $A$. 
So, as $A$ is monotone under edge colorings, and $(H, c_H) \subseteq (G, c_G)$ it 
follows that $(G,c_G)$ has $A$ with probability at least $1 - \gamma$. 
Therefore, $G$ has $Q_A$ as required.

We will be particularly interested in the case when $\gamma \coloneqq f(n)$ for a decreasing function $f : \mathbb{R}_{\geq 0} \rightarrow \mathbb{R}_{\geq 0}$
such that $\lim_{x \rightarrow \infty} f(x) = 0$.

By Definition~\ref{rand graph prob model} and Definition~\ref{rand graph edge model} we have the following equivalence relations for properties $A$ which are monotone under edge colorings. In particular, there exists  $f$ satisfying the above properties such that the following equivalences hold,
\begin{equation*}
    \left(\mathcal{G}(n,p,k) \ has \ A \ w.h.p.  \right) \equiv  \left( \mathcal{G}(n,p) \ has \ Q_A^{f(n)} \ w.h.p. \right) \,,
\end{equation*}
\begin{equation*}
    \left( \mathcal{G}(n,m,k) \ has \ A \ w.h.p.  \right)  \equiv \left( \mathcal{G}(n,m) \ has \ Q_A^{f(n)} \ w.h.p. \right) \,.
\end{equation*}

The reasoning behind why such equivalences hold boils down to the following observation: for two events $A$ and $B$, if $A \wedge B$ is true w.h.p. then we must have that $A$ is true w.h.p. and $B$ is true w.h.p. In our setting, if $\mathbb{P}[A \wedge B] \geq 1 - f(n)$ for decreasing $f$ approaching zero then there must exist decreasing functions $f_1$ and $f_2$ both approaching zero such that $\mathbb{P}[A] \geq 1 - f_1(n)$ and $\mathbb{P}[B] \geq 1- f_2(n)$.

We consider the first equivalence, the second follows from the same argument. By Definition~\ref{rand graph prob model}, a graph sampled from $\mathcal{G}(n,p,k)$ has $A$ w.h.p.  is equivalent to saying that with probability $1 - o(1)$ a graph sampled from $\mathcal{G}(n,p)$ along with a uniformly random edge coloring has $A$. Therefore there must exist a function $f$ satisfying the properties above such that w.h.p. a graph sampled from $\mathcal{G}(n,p)$ has $Q_A^{f(n)}$. In other words, w.h.p. a graph $G$ sampled from $\mathcal{G}(n,p)$ has the property that for a uniformly random edge coloring $c$ on $G$, $(G, c)$ has $A$ w.h.p..

Considering the other direction, if we assume $\mathcal{G}(n,p)$ has $Q_A^{f(n)}$ w.h.p. then by Definition~\ref{rand graph prob model}, the probability that $\mathcal{G}(n,p,k)$ has $A$ is at least the probability that for $G$ sampled from $\mathcal{G}(n,p)$ uniformly at random has $Q^{f(n)}_A$ and for a uniformly random edge coloring c of $G$, $(G,c)$ has $A$. Thus, by conditional probability we have,
\begin{align*}
	\mathbb{P}[\mathcal{G}(n,p,k)& \ has \ A ] \\
    &\geq \mathbb{P}_{G \sim \mathcal{G}(n,p)}\left[ G \ has \ Q_A^{f(n)} \wedge (G,c) \ has \ A \ for \ uniformly  \ random \ edge \ coloring \ c \right] \\
	&\geq (1- f(n))(1- o(1)) \\
	&= 1 - o(1) \,.
\end{align*}

Therefore, when $p = \Theta(n^{-1})$ and 
$m = {n \choose 2} p$ we can utilize the standard equivalence 
between $\mathcal{G}(n,p)$ and $\mathcal{G}(n,m)$ for monotone properties. 
In particular, we can utilize the following Theorem from Łuczak \cite{Equiv-rand-graphs} (see Theorem 1.4 in \cite{IntroRandGraphsFrieze}). 

\begin{theorem}[Theorem 1.4 in \cite{IntroRandGraphsFrieze}]
	Let $0 \leq p_0 \leq 1$, $s(n) = n \sqrt{p(1-p)} \rightarrow \infty$, and $\omega(n) \rightarrow \infty$ arbitrarily slowly then the following holds,
	\begin{itemize}
		\item Suppose that $\mathcal{P}$ is a graph property such that $\lim_{n \rightarrow \infty} \mathbb{P}[\mathcal{G}(n,m) \ has \ \mathcal{P}]  = p_0$ for all 
		\begin{equation*}
			m \in \left[ {n \choose 2} p - \omega(n)s(n), {n \choose 2}p + \omega(n)s(n) \right] \,,
		\end{equation*}
		then, we have $\lim_{n \rightarrow \infty } \mathbb{P}[\mathcal{G}(n,p) \ has \ \mathcal{P}] = p_0$.
		\item Let $p_{-} = p - \omega(n)s(n)/n^2$ and $p_{+} = p + \omega(n)s(n)/n^2$. Suppose that $\mathcal{P}$ is a monotone graph property such that $\lim_{n \rightarrow \infty} \mathbb{P}[\mathcal{G}(n, p_{-}) \ has \ \mathcal{P}] = p_0 = \lim_{n \rightarrow \infty} \mathbb{P}[\mathcal{G}(n,p_{+}) \ has \ \mathcal{P}]$, then for $m = \lfloor {n \choose 2} p \rfloor$ we have $\lim_{n \rightarrow \infty} \mathbb{P}[\mathcal{G}(n,m) \ has \ \mathcal{P}] = p_0$.
	\end{itemize}
\end{theorem}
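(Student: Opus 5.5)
The approach is to condition on the number of edges. Let $N = {n \choose 2}$ and let $M$ be the number of edges of $\mathcal{G}(n,p)$, so $M \sim \mathcal{B}in(N,p)$. Conditionally on $M = m$, the graph $\mathcal{G}(n,p)$ is distributed exactly as $\mathcal{G}(n,m)$. Hence
\begin{equation*}
\mathbb{P}[\mathcal{G}(n,p) \ has \ \mathcal{P}] = \sum_{m} \mathbb{P}[M = m] \, \mathbb{P}[\mathcal{G}(n,m) \ has \ \mathcal{P}] \,.
\end{equation*}
The only probabilistic input is concentration of $M$. We have $\mathrm{Var}(M) = Np(1-p) \sim s(n)^2/2$, so by Chebyshev's inequality
\begin{equation*}
\mathbb{P}\left[|M - Np| > \tfrac{1}{2}\omega(n) s(n)\right] \leq \frac{2}{\omega(n)^2}(1+o(1)) = o(1) \,.
\end{equation*}
Thus $M$ lies in any window of half-width $\Theta(\omega(n) s(n))$ around its mean w.h.p.

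For the first bullet, I split the sum into the window $W = [Np - \omega s, Np + \omega s]$ and its complement; the complement contributes $o(1)$. Inside $W$ the sum is a convex combination, with total weight $1 - o(1)$, of the values $\mathbb{P}[\mathcal{G}(n,m) \ has \ \mathcal{P}]$ for $m \in W$. The one point I expect to need care is that the hypothesis is stated pointwise for each $m$ in the window, while we need it uniformly. I would handle this by choosing, for each $n$, indices $m_n^{+}, m_n^{-} \in W$ that attain the maximum and minimum of $\mathbb{P}[\mathcal{G}(n,m) \ has \ \mathcal{P}]$ over $W$. The hypothesis, read as applying to every sequence $m = m(n)$ lying in the window, gives that both extremal sequences have limit $p_0$. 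The sandwich then shows $\mathbb{P}[\mathcal{G}(n,p) \ has \ \mathcal{P}] \to p_0$.

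For the second bullet, I first record that $m' \mapsto \mathbb{P}[\mathcal{G}(n,m') \ has \ \mathcal{P}]$ is nondecreasing for monotone $\mathcal{P}$. This follows by coupling $\mathcal{G}(n,m')$ inside $\mathcal{G}(n,m'+1)$, adding one uniformly random new edge. Next I let $M_{-} \sim \mathcal{B}in(N,p_-)$. Its mean is
\begin{equation*}
Np - N\omega s/n^2 \sim Np - \tfrac{1}{2}\omega s \,,
\end{equation*}
while its standard deviation is $O(s)$. The same Chebyshev bound (replacing $\omega$ by a slightly smaller function still tending to infinity if needed) gives $M_- \leq m$ w.h.p. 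Using the conditioning identity together with monotonicity,
\begin{equation*}
\mathbb{P}[\mathcal{G}(n,p_-) \ has \ \mathcal{P}] \leq \mathbb{P}[\mathcal{G}(n,m) \ has \ \mathcal{P}] + o(1) \,.
\end{equation*}
Symmetrically, $M_+ \geq m$ w.h.p., which gives $\mathbb{P}[\mathcal{G}(n,p_+) \ has \ \mathcal{P}] \geq \mathbb{P}[\mathcal{G}(n,m) \ has \ \mathcal{P}] - o(1)$. Both outer quantities tend to $p_0$, so $\mathbb{P}[\mathcal{G}(n,m) \ has \ \mathcal{P}] \to p_0$.

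Two routine checks remain. The first is that $p_\pm \in [0,1]$ for large $n$, which follows from $\omega s/n^2 = o(p)$ when $\omega$ grows slowly enough. The second is the rounding in $m = \lfloor Np \rfloor$, which is negligible against $s \to \infty$.
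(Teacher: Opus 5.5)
The paper gives no proof of this statement: it is quoted from Frieze--Karo\'nski (following \L{}uczak) and used as a black box in Appendix~\ref{appendix: two models}. Your argument is correct and is essentially the standard proof from that source: condition on the edge count, apply Chebyshev, and sandwich between argmin/argmax sequences over the window for the first bullet, then use the monotone coupling in $m$ (left as an exercise there) for the second.

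One detail should be tightened. The conditions $p_+\le 1$ and your bound $\mathrm{Var}(M_-)=O(s^2)$ require $\omega s/n^2=o(1-p)$, not only $o(p)$. Both follow from taking $\omega=o(s)$, since $\omega s/(n^2p)=\omega(1-p)/s$ and $\omega s/(n^2(1-p))=\omega p/s$.
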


For more on the relationship between $\mathcal{G}(n,p)$ and $\mathcal{G}(n,m)$ we refer the reader to \cite{bolobas-rand-graph}, \cite{IntroRandGraphsFrieze}, and \cite{Equiv-rand-graphs}.

\section{Characterizing Adaptable 2-colorability Proofs} \label{appendix: adapt 2-col proofs}

In this section we prove Lemma~\ref{lma: non-adapt 2-col condition} which we restate below. 

\begin{lemma*}[\ref{lma: non-adapt 2-col condition}]
	\mbox{}
	\begin{enumerate}[label=\alph*.]
    	\item Every proper alternating odd bicycle is not adaptably 2-colorable. 
		\item Every improper alternating bicycle is adaptably 2-colorable.
	\end{enumerate}
\end{lemma*}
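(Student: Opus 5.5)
For part (a), I will show that the alternating structure forces colours along the path, and that an odd cycle forces its attachment vertex. Take the proper alternating odd bicycle with path $P = v_1\cdots v_s$, chords $v_1v_i$ and $v_jv_s$ with $i \le j$, and cycles $C_1 = v_1\cdots v_iv_1$ and $C_2 = v_j\cdots v_sv_j$, both odd.

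The propagation principle is the following. If $\sigma(x) = c(xy)$, then $\sigma(y) \neq c(xy)$, so $y$ receives the other colour. Along an alternating path, the other colour is exactly the colour of the next edge. Consequently, once a vertex $x$ on an alternating path has the colour of the edge leaving it in some direction, the colouring is forced all the way along that path. In particular, every subsequent vertex receives the colour of its outgoing edge.

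The key claim is that $\sigma(v_i)$ is forced to equal $c(v_iv_{i+1})$; symmetrically, $\sigma(v_j)$ is forced to equal $c(v_{j-1}v_j)$.

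To prove this, look at $v_1$, whose two edges in $C_1$ are $v_1v_2$ and $v_1v_i$. Their colours differ because $c(v_1v_i)\neq c(v_1v_2)$, so $\sigma(v_1)$ equals the colour of exactly one of them.

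In the first case, $\sigma(v_1)=c(v_1v_2)$. Propagating along $v_1v_2\cdots v_i$ gives $\sigma(v_i)=c(v_{i-1}v_i)$.

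In the second case, $\sigma(v_1)=c(v_1v_i)$. Then $\sigma(v_i)\neq c(v_1v_i)$.

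Now use parity. Since $C_1$ is odd, $i$ is odd, so $v_{i-1}v_i$ has the same colour as $v_1v_2$; hence $c(v_{i-1}v_i)\neq c(v_1v_i)$. Both cases therefore give $\sigma(v_i)=c(v_{i-1}v_i)$, i.e. $\sigma(v_i)$ is forced to be the colour opposite to $c(v_1v_i)$. Because $P$ alternates, this is $\neq c(v_iv_{i+1})$.

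This shows that $\sigma(v_i)$ is forced, but to $c(v_{i-1}v_i)$ rather than to $c(v_iv_{i+1})$. That is the wrong direction, so I need to redo this step carefully with the parity convention $i\equiv 1 \pmod 2$. The intended outcome is that $v_i$ is forced to the colour of the edge $v_iv_{i+1}$ heading into the middle of the bicycle, and likewise $v_j$ is forced to $c(v_{j-1}v_j)$.

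Once both forcings hold, I propagate from $v_i$ along $v_i\cdots v_j$. Each vertex $v_k$ is forced to $c(v_kv_{k+1})$, so $v_{j-1}$ gets colour $c(v_{j-1}v_j)$. Together with $\sigma(v_j)=c(v_{j-1}v_j)$, this makes the edge $v_{j-1}v_j$ violated. The degenerate case $i=j$ must be handled too: there the two forced colours at the shared vertex conflict directly.

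I expect the main obstacle to be getting this parity bookkeeping exactly right, in particular determining which orientation of forcing the odd cycle induces. If the direct case split fails, I would fall back on the observation that along an odd alternating cycle whose two edges at $v_1$ share a colour, the colouring is inconsistent unless $v_1$ avoids that colour. Equivalently, I would identify the handle-forcing already used in Corollary~\ref{cor: frozen char}.

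For part (b), I will construct an adapted colouring explicitly, following Figure~\ref{fig: alternating non-proper bicycle}. Here $i>j$. Colour $v_1$ with $c(v_1v_i)$'s opposite, i.e. with $c(v_1v_2)$, and propagate along $v_1\cdots v_j$, each vertex taking the colour of its forward edge. Symmetrically, colour $v_s$ with $c(v_{s-1}v_s)$ and propagate backward along $v_s\cdots v_i$.

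Because $j<i$, these two prescriptions cover disjoint ranges $\{1,\dots,j\}$ and $\{i,\dots,s\}$, except when they overlap on $[j,i]$. I will check that on the overlap both prescriptions are consistent, or else re-choose the start so they are; the figure suggests the middle segment $v_{i+1},\dots,v_{j-1}$ is free.

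Then I verify the remaining edges. Path edges are fine by propagation. For the chord $v_1v_i$: $v_1$ has colour $c(v_1v_2)\neq c(v_1v_i)$, so it is satisfied. The chord $v_jv_s$ is satisfied by the symmetric argument at $v_s$.
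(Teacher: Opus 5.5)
Both parts have genuine gaps.

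\textbf{Part (a).} The overall plan is sound: force $v_i$ to $c(v_iv_{i+1})$ and $v_j$ to $c(v_{j-1}v_j)$, then propagate from $v_i$ and collide at $v_j$. But the forcing claim, which is the whole content of (a), is never established. Your computation contains two errors, you notice its output is wrong, and you then simply assert the ``intended outcome''.

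First, propagating from $\sigma(v_1)=c(v_1v_2)$ along $v_1\cdots v_i$ gives $\sigma(v_{i-1})=c(v_{i-1}v_i)$. Hence $\sigma(v_i)\neq c(v_{i-1}v_i)$, not $\sigma(v_i)=c(v_{i-1}v_i)$.

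Second, the path edge $v_kv_{k+1}$ has the colour of $v_1v_2$ exactly when $k$ is odd. For odd $i$, the edge $v_{i-1}v_i$ therefore has the \emph{opposite} colour to $v_1v_2$, i.e.\ $c(v_{i-1}v_i)=c(v_1v_i)$. So the monochromatic corner of the odd cycle $C_1$ is $v_i$, not $v_1$ as your fallback says; at $v_1$ the two edges differ by definition.

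With both corrections, each case gives $\sigma(v_i)\neq c(v_{i-1}v_i)=c(v_1v_i)$, i.e.\ $\sigma(v_i)=c(v_iv_{i+1})$. The rest of your argument, including the case $i=j$, then goes through.

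This is essentially the paper's route. The paper shows $v_i$ cannot take the colour $c(v_1v_i)$, since that forces $v_1$ to $c(v_1v_2)$, which propagates to the other colour at $v_i$. It then propagates from $v_i$ all the way to $v_s$ and finds the chord $v_jv_s$ violated because $j\equiv s\pmod 2$. Your symmetric forcing at $v_j$ is an equally valid ending.

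\textbf{Part (b).} The proposed colouring fails for \emph{every} improper alternating bicycle. You set $\sigma(v_j)=c(v_jv_{j+1})$ by forward propagation from $v_1$, and $\sigma(v_i)=c(v_{i-1}v_i)$ by backward propagation from $v_s$. So both ends of the middle alternating segment $v_jv_{j+1}\cdots v_i$ carry the colour of their middle edge. By your own propagation principle, $\sigma(v_j)=c(v_jv_{j+1})$ forces $\sigma(v_{i-1})=c(v_{i-1}v_i)=\sigma(v_i)$, so $v_{i-1}v_i$ is violated. This is exactly the obstruction that drives (a), so the middle segment is not free.

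Concretely, take $s=4$ with chords $v_1v_3$ and $v_2v_4$, path colours red, blue, red, and both chords blue. Your rule colours $v_2$ and $v_3$ blue across the blue edge $v_2v_3$. Colouring $v_1,\dots,v_4$ blue, red, red, blue is adapted. In Figure~\ref{fig: alternating non-proper bicycle}, $v_1$ receives the colour of its chord, not the colour of $v_1v_2$.

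A direct construction can be repaired by starting $v_1$ and/or $v_s$ on their chord colours, with a case split on the parities of $i$ and $s-j$. The paper avoids this entirely. An improper bicycle is a theta graph: three internally disjoint $v_j$--$v_i$ paths, namely the middle segment, the route through $v_1$, and the route through $v_s$. A theta graph has either no odd cycle or exactly two, which share one of the three paths, so deleting one edge leaves it bipartite. Theorem~\ref{thm: adversary adpt 2 col} then gives an adapted colouring for every edge colouring, in particular the alternating one.
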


\begin{proof}
    We note that b) follows directly from Theorem~\ref{thm: adversary adpt 2 col}.
	
    We now consider a). Let $H$ be an alternating odd bicycle formed from an alternating path $P = v_1 v_2 v_3 \cdots v_s$ and two edges $v_1v_i$ and $v_sv_j$ with $i \leq j$, $i \equiv 1 \mod{2}$, and $j \equiv s \mod{2}$ such that $c(v_1 v_i) \neq c(v_1v_2)$ and $c(v_sv_j) \neq c(v_{s-1}v_s)$. Assume without loss of generality that $c(v_1 v_2)$ is red, which by the alternating structure of $H$ implies the colors of all edges in $H$.
    
    First, we claim that $v_i$ must be assigned red. Assume otherwise, that is assign $v_i$ the color blue. So as $c(v_1 v_i) \neq c(v_1v_2) = red$ we must have that $v_1$ gets assigned red and $v_2$ gets assigned blue. In fact, inductively by the alternating property of $P$, all vertices in $\{v_1, v_2, \dots, v_i\}$ of an odd index are colored $red$ and all vertices of an even index are colored $blue$. Thus, contradicting our assumption that $v_i$ is colored $blue$. So, $v_i$ must be assigned red. 
    
    As $c(v_1v_2) = red$, by the alternating structure of $P$ and as $i$ is odd, $c(v_iv_{i+1}) = red$. Hence as $v_i$ is assigned red, this forces the coloring of all the vertices in $\{v_{i+1}, v_{i+2}, \dots v_{s-1}, v_s\}$. Particularly, all vertices of an odd index are colored $red$ and all vertices of an even index are colored $blue$. Moreover, by the alternating structure of $P$, we have that if $s$ is odd, as $c(v_1v_2) = red$ we must have that $c(v_{s-1}v_s) = blue$; similarly, if $s$ is even we must have that $c(v_{s-1}v_s) = red$. Thus as $c(v_sv_j) \neq c(v_{s-1}v_s)$, and $j \equiv s \mod{2}$, we have that $v_s$ and $v_j$ must receive $c(v_sv_j)$. Hence, $H$ is not adaptably 2-colorable as required.
\end{proof}

\section{Long Alternating Paths} \label{appendix: long alternating paths}
In this section we prove Lemma~\ref{lma: alt path existence sub linear} which we restate.

\begin{lemma*}[\ref{lma: alt path existence sub linear}]
    Let $\epsilon = \epsilon(n)> 0$ be such that $\epsilon \gg n^{-1/3} \log^{1/3}n$, then w.h.p. $\mathcal{G}(n,p= 2(1 + \epsilon)/n, 2)$ contains an alternating path of length at least $\epsilon^2 n / 5$.
\end{lemma*}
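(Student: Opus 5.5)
We follow the depth-first search argument of Krivelevich and Sudakov, adapted to alternating paths. Fix an ordering of $V$ and maintain three sets: $S$ (fully explored vertices), $U$ (a stack whose contents, read bottom to top, always form an alternating path), and $T$ (unvisited vertices). Each $u \in U$ carries a label $\psi(u)$, the color an edge at $u$ must have to extend the path. When $U$ is empty, move the first vertex of $T$ to $U$ and give it an arbitrary label. Otherwise let $u$ be the top of $U$ and scan $x \in T$ in order, revealing only $X^{\psi(u)}_{ux}$. On the first $x$ with $X^{\psi(u)}_{ux}=1$, push $x$ and set $\psi(x)$ to the other color. If none exists, pop $u$ into $S$. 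This rule gives Observations~\ref{ob: long path vars considered} and \ref{ob: alt path prob change} directly: each pair $\{u,x\}$ is queried once, at the moment the first of its two vertices is the top of the stack and the other lies in $T$, and only in the color $\psi$ of that top vertex. Consequently the sequence of queries is a sequence of independent Bernoulli$(p/2)$ trials with $p/2=(1+\epsilon)/n$. We may assume the algorithm has an unlimited supply of such trials, $\bar X_1,\bar X_2,\dots$.

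The key invariant is this: every queried pair has one endpoint in $S$ and the other in $S\cup U$, and every positive answer moves a vertex from $T$ to $U$. The stack $U$ loses exactly one vertex at each pop. We inspect the process at the moment $t_0=\epsilon n^2/2$ queries have been made.

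First, w.h.p. $|S|<n/3$ at time $t_0$. Otherwise look at the first moment when $|S|=n/3$. At that moment $|U|\le \epsilon^2 n/5$ (otherwise we are done), so at most $n/3+\epsilon^2 n/5$ positive answers have occurred. But all pairs between $S$ and $T$ have been queried, and $|T|\ge n/3\cdot(1+o(1))$, so at least about $n^2/9$ queries have been made. By the Chernoff bound (Bound~\ref{Chernoff Bound}), $\sum_{i\le n^2/9}\bar X_i\approx (1+\epsilon)n/9$, which contradicts the bound on the number of positives.

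Second, the total number of positive answers at time $t_0$ is concentrated around $\epsilon(1+\epsilon)n/2$. The deviation is $\Theta(\sqrt{\epsilon n})=o(\epsilon^2 n)$ precisely when $\epsilon^3 n\to\infty$, and the hypothesis $\epsilon\gg n^{-1/3}\log^{1/3}n$ guarantees this with room to spare. The positives equal $|S|+|U|$ minus the number of times $U$ was empty, so $|S|+|U|\ge \epsilon n/2\,(1+o(1))$. If $|U|<\epsilon^2n/5$, then $|S|\ge \epsilon n/2-\epsilon^2 n/5$, which forces $|T|$ to be small. Every pair in $S\times T$ has already been queried, so
\[
t_0 \ge |S|\,|T| \ge |S|\,(n-|S|-\epsilon^2 n/5).
\]
Plugging in $|S|\in[\epsilon n/2-\epsilon^2 n/5,\ n/3]$, this product exceeds $\epsilon n^2/2$, a contradiction. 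This is the same arithmetic as in Krivelevich and Sudakov, now carried out with success probability $(1+\epsilon)/n$. Hence at time $t_0$ the stack $U$ holds an alternating path of length at least $\epsilon^2n/5$.

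The main obstacle is the first step: checking that the independence and coupling to i.i.d.\ Bernoulli$(p/2)$ trials really survives the color constraint. The point to verify is that revealing $X^{\psi(u)}_{ux}=0$ never causes a later query about the same pair, which is exactly Observation~\ref{ob: long path vars considered}. Once that holds, the remaining inequalities are routine Chernoff estimates. We also need to track the $o(1)$ errors carefully enough that the constant $1/5$ survives for $\epsilon\to0$.
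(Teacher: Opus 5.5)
Your approach is the paper's: the Krivelevich--Sudakov DFS in which only $X^{\psi(u)}_{ux}$ is ever queried, coupled to i.i.d.\ Bernoulli$(p/2)$ trials, and that part is fine. However, both decisive inequalities are wrong as written, so the argument does not close.

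\textbf{First step.} The comparison you draw is not a contradiction. About $(1+\epsilon)n/9$ positives among $n^2/9$ queries is perfectly compatible with at most $n/3+\epsilon^2 n/5$ positives. The contradiction has to come from the query count instead. The first moment with $|S|=n/3$ occurs no later than $t_0$, so at most $\epsilon n^2/2$ queries have been made by then. Yet all of $S\times T$ has already been queried, which is $|S||T|\ge \frac{n}{3}\bigl(\frac{2n}{3}-\frac{\epsilon^2 n}{5}\bigr)>\epsilon n^2/2$ queries. Given your ``otherwise we are done'', no Chernoff bound is needed here. Separately, your stated invariant is false: a negative query leaves its second endpoint in $T$, so it is not true that every queried pair has one endpoint in $S$ and the other in $S\cup U$. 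The fact you actually use, that every pair in $S\times T$ has been queried, is the correct one.

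\textbf{Second step.} This step fails at the crucial point. Writing $|S|+|U|\ge \frac{\epsilon n}{2}(1+o(1))$ discards the term $\epsilon^2 n/2$ in the mean $\epsilon(1+\epsilon)n/2$. With your resulting bound $|S|\ge \frac{\epsilon n}{2}-\frac{\epsilon^2 n}{5}$, one gets
\[
|S|\Bigl(n-|S|-\tfrac{\epsilon^2 n}{5}\Bigr)=\frac{\epsilon n^2}{2}-\frac{9}{20}\epsilon^2n^2+O(\epsilon^3n^2)<\frac{\epsilon n^2}{2},
\]
so there is no contradiction. Note also that $|T|$ is not small; it is about $n$.

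The whole argument lives at order $\epsilon^2 n^2$, because the first-order terms give exactly $t_0$. You must keep the second-order term. Using $|U|<\epsilon^2n/5$, you get $|S|\ge \frac{\epsilon n}{2}+\frac{3}{10}\epsilon^2 n-o(\epsilon^2 n)$. Since $s\mapsto s(n-s-\epsilon^2n/5)$ is increasing on $[0,n/3]$, it follows that $|S||T|\ge \frac{\epsilon n^2}{2}+\frac{\epsilon^2 n^2}{20}-O(\epsilon^3n^2)-o(\epsilon^2n^2)>\frac{\epsilon n^2}{2}$ for $\epsilon\to0$. This is exactly the paper's computation, and it is where the constant $1/5$ comes from; any constant $c<1/4$ would work.

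With these two repairs your proof coincides with the paper's. Your remark that a deviation of $\omega\sqrt{\epsilon n}$ with $\omega\to\infty$ slowly suffices is correct. It means $\epsilon^3 n\to\infty$ is enough, which is slightly sharper than the paper's choice $l=(\epsilon n\log n)^{1/2}$.
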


Let us now provide the DFS algorithm, which will produce an alternating path. 
As in \cite{longest-path-simple}, let $S$ denote the set of vertices which have been fully 
explored, let $T$ denote the set of unvisited vertices. 
We will use a stack $U$ to track the vertices of our current candidate for such a path. 
Furthermore, for $u \in U$ we will let $\psi(u)$ denote the edge color required to extend the alternating path at $u$.
Therefore, when searching for potential ways to augment the path, we check $T \cap N_{\psi(u)}(u)$. 

Let $\tau$ be an arbitrary ordering on $V$. Our process is,

\begin{process}[DFS]
    \mbox{}
    \begin{enumerate}
        \item Initialize: 
        \begin{enumerate}[label=\alph*., ref=\theenumi\alph*]
            \item $U \gets \emptyset$, $S \gets \emptyset$, $T \gets V$.
        \end{enumerate}
        \item If $U = \emptyset$: \label{DFS line U empty}
        \begin{enumerate}[label=\alph*., ref=\theenumi\alph*]
            \item Let $u \in T$ be minimal according to $\tau$.
            \item Push $u$ to $U$ and let $\psi(u) = red$.
            \item $T \gets T \setminus \{u\}$.
        \end{enumerate}
        \item If $U \neq \emptyset$: \label{DFS line U not empty}
        \begin{enumerate}[label=\alph*., ref=\theenumi\alph*]
            \item Let $u$ be the last vertex added to $U$. \label{DFS line: (u,i)}
            \item If $N_{\psi(u)}(u) \cap T = \emptyset$: \label{DFS line: empty}
            \begin{enumerate}[label=\roman*.]
                \item Pop $u$ off of $U$.
                \item $S \gets S \cup \{u\}$
            \end{enumerate}
            \item If $N_{\psi(u)}(u) \cap T \neq \emptyset$: \label{DFS line: not empty}
            \begin{enumerate}[label=\roman*.]
                \item Let $v \in N_{\psi(u)}(u) \cap T$ be minimal with respect to $\tau$.
                \item Push $v$ to $U$ and let $\psi(v) \in \{red, blue\} \setminus \{\psi(u)\}$.
                \item $T \gets T \setminus \{v\}$.
            \end{enumerate}
        \end{enumerate}
    \end{enumerate}
\end{process}
When referring to a \textit{round} we mean the complete execution of either step~\ref{DFS line U empty} 
or step~\ref{DFS line U not empty}. Note that this is precisely when a vertex is added to $U$ (step~\ref{DFS line U empty} or \ref{DFS line: not empty}) or removed from $U$ (step~\ref{DFS line: empty}).

As in \cite{longest-path-simple} we have the following observations regarding the process, 
\begin{itemize}
    \item During each round, either a vertex is moved from $T$ to $U$ or a vertex is moved from $U$ to $S$.
    \item At every point in time, the vertices in $U$ form an alternating path.
    \item At every point in time, $S, T$, and $U$ are vertex-disjoint.
\end{itemize}

We now consider running the above process on $\mathcal{G}(n,p,2)$. 
Step~\ref{DFS line U not empty} can be implemented as follows: 
let $u$ be the last vertex added to $U$, 
then query the following random variables in order according to $\tau$ on $v \in T$,
\[ X_{uv}^{\psi(u)} = \begin{cases} 
      1 & uv \in E \ \wedge \ c(uv) = \psi(u) \\
      0 & otherwise 
   \end{cases} \,,
\]
stopping once we find a $v \in T$ such that $X_{uv}^{\psi(u)} = 1$. Thus, if we check every vertex in $T$, and all the variables are $0$, it follows that $N_{\psi(u)}(u) \cap T = \emptyset$; 
otherwise, we have found $v \in N_{\psi(u)}(u) \cap T$ minimal with respect to $\tau$. 
Furthermore, although not vital, if we find $X_{uv}^{\psi(u)} = 1$ then we can keep track of 
where $v$ is according to $\tau$, and thus if we consider $u$ again, that is the path died out, we 
can start querying where we left off according to $\tau$. This ensures we don't query the same 
variable multiple times. 

First, we require the following property.

\begin{observation}[\ref{ob: long path vars considered}]
    For each pair of vertices $u, v \in V$ at most one of $X_{uv}^{red}$ and $X_{uv}^{blue}$ is ever revealed. 
\end{observation}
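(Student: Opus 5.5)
The statement to prove is Observation~\ref{ob: long path vars considered}: during the DFS, for each pair $u,v$ at most one of $X_{uv}^{red}$ and $X_{uv}^{blue}$ is ever revealed. The plan is to characterise exactly when the process queries a variable, and then show that each unordered pair can be queried at most once.

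\textbf{Step 1: when queries occur.} By the implementation of step~\ref{DFS line U not empty}, a variable is revealed only in one situation: $u$ is the last vertex of $U$, $v \in T$ at that moment, and the process queries $X_{uv}^{\psi(u)}$. In particular, the queried color is determined by $\psi(u)$, and $\psi(u)$ is assigned once, when $u$ is pushed, and never changes afterwards.

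\textbf{Step 2: both queries go in the direction $u \to v$.} Suppose the pair $\{u,v\}$ is queried twice, and consider the first query, say $X_{uv}^{\psi(u)}$ with $u$ at the top of $U$ and $v \in T$.
\begin{itemize}
\item If the query returns $1$, then $v$ leaves $T$ immediately. Since $u$ was already in $U$, it is no longer in $T$ either. Vertices never return to $T$, so neither endpoint can later be in $T$, and the pair is never queried again.
\item If the query returns $0$, then $u$ remains outside $T$ forever. Any later query of this pair must therefore have $v$ at the top of $U$ and $u \in T$, which is impossible. So a later query must again be from $u$ toward $v$, and it would be the same variable $X_{uv}^{\psi(u)}$.
\end{itemize}

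\textbf{Step 3: no repeated queries from $u$.} It remains to rule out re-querying $X_{uv}^{\psi(u)}$ from $u$. The resumption convention handles this: when $u$ becomes the top of $U$ again, its scan continues from where it stopped in the order $\tau$, so each $v$ is queried from $u$ at most once.

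Even without this convention, a repeated query would reveal the same variable, not the other color. The observation therefore holds either way.

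\textbf{Main subtlety.} The only delicate point is the case where $u$ is popped and later resumed. This is settled by the fact, from Step~2, that $u \notin T$ from the first query onward, so the pair can never be queried from $v$'s side.
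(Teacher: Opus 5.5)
Your proof is correct and follows the paper's reasoning: queries only go from the top of $U$ to a vertex of $T$, and $\psi(u)$ is fixed because each vertex enters $U$ at most once. Your Step~2 also makes explicit a point the paper's one-line justification leaves implicit. Once the pair $\{u,v\}$ is first queried from $u$, the vertex $u$ has permanently left $T$, so the pair can never be queried from $v$'s side with the other color $\psi(v)$.
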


This follows from step~\ref{DFS line U not empty}, as we only consider pairs between $U$ 
and $T$ and a vertex can only be added to $U$ once and thus $\psi(u)$ is set at most one time, 
i.e. $\psi(u)$ does not change at a later time. That is for a vertex $u$ in $U$ we only ever consider $X_{uv}^{\psi(u)}$ for $v \in T$. 

The remaining of the proof follows directly from the work of Krivelevich and Sudakov in \cite{longest-path-simple}; however we state it here for completeness.

We make the following observation regarding the process. 

\begin{observation} \label{ob: num of rand var considered}
    At every point in time, for the current sets $S$ and $T$, we have that for each 
    $s \in S$ and $t \in T$ we have revealed $X_{st}^j$ for some $j \in \{red, blue\}$ and such 
    $X_{st}^j = 0$.
\end{observation}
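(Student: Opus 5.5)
We prove the observation by induction on the rounds of the DFS process, tracking how vertices enter $S$. The invariant is exactly the statement: for every $s \in S$ and $t \in T$, some $X^{j}_{st}$ has been revealed and equals $0$. Initially $S = \emptyset$, so the invariant holds trivially. In each round, either a vertex moves from $T$ to $U$ or a vertex moves from $U$ to $S$. We check that neither move breaks the invariant.

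\emph{A vertex moves from $T$ to $U$.} This happens in step~\ref{DFS line U empty} or step~\ref{DFS line: not empty}. Here $S$ is unchanged and $T$ only shrinks. No revealed variable is ever un-revealed or changed, so every pair $(s,t)$ with $s \in S$ and the new $t \in T$ was already a pair under the old sets. The invariant therefore persists.

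\emph{A vertex $u$ moves from $U$ to $S$.} This happens in step~\ref{DFS line: empty}, and only when $N_{\psi(u)}(u) \cap T = \emptyset$. The pairs $(s,t)$ with $s$ already in $S$ are covered by the induction hypothesis, since $T$ is unchanged in this round. It remains to handle the new pairs $(u,t)$ with $t \in T$, and we claim that $X^{\psi(u)}_{ut}$ has been revealed and equals $0$ for each of them.

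This claim is the main point, and it rests on how step~\ref{DFS line U not empty} is implemented. Each time $u$ is the top of the stack, we query $X^{\psi(u)}_{uv}$ for $v \in T$ in $\tau$-order, resuming where we previously left off. Over the lifetime of $u$ in $U$, every $v \in V$ with $v$ in $T$ at its query time therefore falls into one of two cases. Either its variable was queried and found to be $0$, or it was found to be $1$, in which case $v$ was pushed to $U$ and left $T$ permanently, since vertices never return to $T$.

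The subtle point is a vertex $t$ that lies $\tau$-before the resume pointer but was still in $T$ when $u$ is popped. Such a $t$ was in $T$ at the earlier query time too, because $T$ only shrinks, so it was queried then. Its value was not $1$, or else it would have left $T$. Hence it is $0$.

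The vertices in $T$ that come after the pointer are all queried in the final round before $u$ is popped, because $u$ is popped only after the full scan of $T$ finds every variable equal to $0$. So every $t \in T$ at the moment of popping has $X^{\psi(u)}_{ut}$ revealed and equal to $0$. This completes the induction.

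The main obstacle is making the pointer bookkeeping precise, so that every $t \in T$ really was queried once from $u$. We handle it by relying on the monotonicity of $T$, which only ever shrinks. As a last check, the observation claims that only one colour is revealed per pair, which is consistent with Observation~\ref{ob: long path vars considered}: the colour $j = \psi(s)$ is fixed at the moment $s$ enters $U$.
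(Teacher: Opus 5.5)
Your proposal is correct and is essentially the paper's argument: when $s$ is popped into $S$, every $t$ in the then-current $T$ has $X^{\psi(s)}_{st}=0$ revealed, and since $T$ only shrinks afterwards, this covers all later pairs. Your induction and your resume-pointer bookkeeping make explicit a step the paper states in one line. Your closing remark misreads the statement slightly, since it asserts that some colour is revealed, not that only one is, but this does not affect the proof.
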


When referring to $X_{st}^j$ for some color $j$, if the color has no effect we 
write $X_{st}$ as opposed to $X_{st}^j$ for ease of notation. Particularly, we have revealed
$|S||T|$ random variables all of which returned $0$, or false. To see this, consider the point 
in time in which $s$ was added to $S$ and let $S'$ and $T'$ be the corresponding sets at this 
time. 
Therefore, $s$ was added to $S'$ because $X_{st'}^{\psi(s)} = 0$ for each $t' \in T'$. Thus, the result 
follows as $T \subseteq T'$.

Let $X_\gamma$ denote the $\gamma^{th}$ random variable revealed. Therefore, after $t$ random variables have been revealed, if $T \neq \emptyset$ we have,
\begin{equation} \label{eq: long path 1}
    |S \cup U| \geq \sum_{\gamma=1}^t X_\gamma \,,
\end{equation}
This follows, because each positive $X_\gamma$ results in a vertex being moved from $T$ to $U$, and as vertices flow from $T \xrightarrow[]{} U \xrightarrow[]{} S$. 
Also, since vertices are moved from $T$ to $U$ only due to the occurrence of a positive query, except for the first vertex in each connected component, we have that
\begin{equation} \label{eq: long path 2}
    |U| \leq 1 + \sum_{\gamma = 1}^t X_\gamma \,.
\end{equation}

As we are considering a sequence $(X_\gamma)_{\gamma \in \mathbb{N}}$ of independent random variables each of which are $1$ with probability $p/2 = (1 + \epsilon)/n$ and $0$ otherwise, we can use the following concentration result.

\begin{lemma} \label{lma: concentration for long path}
	Let $\epsilon = \epsilon(n) \xrightarrow[]{} 0$ be such that $\epsilon \gg n^{-1/3} \log^{1/3}n$ and $l \geq (\epsilon n \log{n})^{1/2}$ and let $(X_i)_{i = 1}^N$ be a sequence of i.i.d. Bernoulli random variables with parameter $p = \frac{1 + \epsilon}{n}$. For $N_0 = \frac{\epsilon n^2}{2}$,
	\begin{equation*}
		\mathbb{P}\left[\left|\sum_{i=1}^{N_0} X_i - \frac{\epsilon (1 + \epsilon)n}{2}\right| \geq  l\right] = o(1) \,.
	\end{equation*}
\end{lemma}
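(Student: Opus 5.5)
The sum $\Sigma \coloneqq \sum_{i=1}^{N_0} X_i$ is exactly $\mathcal{B}in(N_0, p)$ with $N_0 = \epsilon n^2/2$ and $p = (1+\epsilon)/n$. Its mean is therefore
\begin{equation*}
\mathbb{E}[\Sigma] = N_0 p = \frac{\epsilon n^2}{2}\cdot\frac{1+\epsilon}{n} = \frac{\epsilon(1+\epsilon)n}{2},
\end{equation*}
which is precisely the centering in the statement. The plan is to apply the \hyperref[Chernoff Bound]{Binomial Chernoff Bound} with $t = l$. It gives
\begin{equation*}
\mathbb{P}\left[\left|\Sigma - \tfrac{\epsilon(1+\epsilon)n}{2}\right| > l\right] < 2\exp\left(-\frac{l^2}{3N_0p}\right) = 2\exp\left(-\frac{2l^2}{3\epsilon(1+\epsilon)n}\right).
\end{equation*}
To handle the non-strict inequality $\geq l$, I will apply the bound with $t = l - 1$ (or $t = l/2$) instead; this changes only constants.

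Next I plug in $l \geq (\epsilon n\log n)^{1/2}$, so that $l^2 \geq \epsilon n \log n$. The exponent then satisfies
\begin{equation*}
\frac{2l^2}{3\epsilon(1+\epsilon)n} \geq \frac{2\log n}{3(1+\epsilon)} = \left(\tfrac{2}{3}-o(1)\right)\log n,
\end{equation*}
using $\epsilon \to 0$. Hence the probability is at most $2n^{-2/3+o(1)} = o(1)$.

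The Chernoff bound as stated needs $t \leq np$, here $l \leq N_0 p \sim \epsilon n/2$. If $l$ is larger than this, I will apply the bound with $t = N_0p$ (or $t$ slightly below it) and use monotonicity of the tail. The resulting exponent is of order $\epsilon n$. The hypothesis $\epsilon \gg n^{-1/3}\log^{1/3} n$ gives $\epsilon n \gg n^{2/3}\log^{1/3}n$, and in particular $\epsilon n \gg \log n$, so this case is also $o(1)$. For the lower tail with $l$ larger than the mean, the event is empty since $\Sigma\geq 0$.

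The main point to check is this applicability condition and that $\epsilon n \gg \log n$ so the regime $l \le N_0 p$ is nonempty. I also need to confirm that $l = (\epsilon n \log n)^{1/2}$ is indeed $\le \epsilon n/2$: this is equivalent to $\epsilon n \geq 4\log n$, which follows from the hypothesis on $\epsilon$. Beyond these checks, the argument is a direct one-line Chernoff estimate.
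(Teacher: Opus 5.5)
Your proposal is correct and follows essentially the same route as the paper: apply the Binomial Chernoff Bound to $\mathcal{B}in(N_0,p)$, whose mean is exactly $\epsilon(1+\epsilon)n/2$, to get a bound of $2n^{-2/(3(1+\epsilon))}=o(1)$. The only difference is bookkeeping. The paper first notes that the probability is non-increasing in $l$ and reduces to $l=(\epsilon n\log n)^{1/2}$, which takes care of the condition $t\le N_0p$ in one step, whereas you split on whether $l\le N_0p$. Your handling of $\geq$ versus $>$ addresses a detail the paper passes over.
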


\begin{proof}
    Clearly, it suffices to prove the statement for $l = (\epsilon n \log{n})^{1/2}$. 
    As $\sum_{i=1}^{N_0} X_i \sim \mathcal{BIN}(N_0, p)$ with expectation $N_0p = \frac{\epsilon (1+\epsilon)n}{2}$ we have by the \hyperref[Chernoff Bound]{Binomial Chernoff Bound} that,
    \begin{equation*}
        \mathbb{P}\left[\left|\sum_{i=1}^{N_0} X_i - \frac{\epsilon (1 + \epsilon)n}{2}\right| \geq  (\epsilon n \log{n})^{1/2}\right] 
        \leq 2 n^{-\frac{2}{3 (1 + \epsilon)}} = o(1) \,.
    \end{equation*}
\end{proof}

We are now ready to prove Lemma~\ref{lma: alt path existence sub linear}. 
Note that by Observation~\ref{ob: long path vars considered}, each random variable $X_\gamma = 1$ with probability $p/2 = (1 + \epsilon)/n$. 
Using the concentration bound in Lemma~\ref{lma: concentration for long path} the proof of Lemma~\ref{lma: alt path existence sub linear} follows the proof of Theorem 1 in \cite{longest-path-simple}, 
which, we provide for completeness. 

\begin{proof}[Proof of Lemma~\ref{lma: alt path existence sub linear}]
    Let $l = (\epsilon n \log{n})^{1/2}$. We claim that the moment after the 
    first $N_0 = \frac{\epsilon n^2}{2}$ random variables are revealed, the set $U$ contains at 
    least $\frac{\epsilon^2 n}{5}$ vertices; call this point in time $\sigma_1$. 
    
    First, we will show that w.h.p. $|S| < n/3$ at time $\sigma_1$. 
    Assume for a contradiction $|S| \geq n/3$. Thus, consider
    a time $\sigma_2 \leq \sigma_1$ at which $|S| = n/3$, which must exist as $S$ is non-decreasing and increases by at 
    most 1 during each round. As the number of random variables revealed at time $\sigma_2$ is upper bounded by $N_0$ it follows by (\ref{eq: long path 2}) and 
    Lemma~\ref{lma: concentration for long path} that w.h.p., 
    $|U| \leq 1 + \sum_{i=1}^{N_0} X_i \leq 1 + \frac{\epsilon(1 + \epsilon)n}{2} + l < n/3$, where 
    the last inequality holds asymptotically by choice of $l$ and small enough $\epsilon$.
    Thus, $|T| = n - |S| - |U| \geq n/3$. By Observation~\ref{ob: num of rand var considered}, all $|S||T| \geq n^2/9 > N_0$ random 
    variables $X_{st}$ with $s \in S$, $t \in T$ have been revealed, 
    contradicting $\sigma_2 \leq \sigma_1$. 

    Thus, we may assume that at time $\sigma_1$, $|S| < n/3$ and $|U| < \epsilon^2 n/5$ (as otherwise we are done). Thus $|T| = n - |S| - |U| > 0$. So by Lemma~\ref{lma: concentration for long path} and (\ref{eq: long path 1}) we have $|S \cup U| \geq \sum_{i=1}^{N_0}X_i \geq \frac{\epsilon (1 + \epsilon)n}{2} - l$. 
    Thus, as $|U| < \epsilon^2n/5$ and $|S \cup U| = |S| + |U|$, $|S| \geq \frac{\epsilon n}{2} + \frac{3 \epsilon^2 n}{10} - l$. 
    By Observation~\ref{ob: num of rand var considered}, all $|S||T| = |S|(n - |S| - |U|) \geq |S|\left(n - |S| - \frac{\epsilon^2 n}{5}\right) $ random variables $X_{st}$ for $s \in S, t\in T$ have been revealed. Thus,
    \begin{align*}
        \frac{\epsilon n^2}{2} &= N_0 \geq |S||T| \geq \left( \frac{\epsilon n}{2} + \frac{3 \epsilon^2 n}{10} - l \right)\left(n - \frac{\epsilon n}{2} - \frac{\epsilon^2 n}{2} + l\right) \\
        &= \frac{\epsilon n^2}{2} + \frac{\epsilon^2 n^2}{20} - \mathcal{O}(\epsilon^3 n^2) - \mathcal{O}(nl) \\
        &> \frac{\epsilon n^2}{2} \,,
    \end{align*}
    where the third inequality holds as for $|S| < \frac{n}{3}$ we have from elementary calculus that \\
    $|S|\left(n - |S| - \frac{\epsilon^2 n}{5}\right)$ is increasing in $|S|$ and therefore minimized at the lower bound of $\frac{\epsilon n}{2} + \frac{3 \epsilon^2 n}{10} - l$ on $|S|$. Moreover, the last inequality follows as $\epsilon \gg n^{-1/3} \log^{1/3}n$, which implies that $\epsilon^2n^2 \gtrsim n (\epsilon n \log{n})^{1/2} = nl$, that is the $\Theta(\epsilon^2 n^2)$ contributes strictly more than the $\mathcal{O}(nl)$ term takes away, for large enough $n$. Thus, we have a contradiction as required.
\end{proof}

\begin{corollary} \label{cor: alt path in edge model}
    Let $\epsilon = \epsilon(n)> 0$ be such that $\epsilon \gg n^{-1/3} \log^{1/3}n$, then w.h.p. $\mathcal{G}(n,m = (1 + \epsilon)n, 2)$ contains an alternating path of length at least $\epsilon^2 n / 5$.
\end{corollary}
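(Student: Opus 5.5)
We derive Corollary~\ref{cor: alt path in edge model} from Lemma~\ref{lma: alt path existence sub linear} by transferring between the two random graph models, following Appendix~\ref{appendix: two models}. In $\mathcal{G}(n,p,2)$ the expected number of edges is $\binom{n}{2}p \approx (1+\epsilon)n$ when $p = 2(1+\epsilon)/n$. So $m=(1+\epsilon)n$ corresponds to $p$ of exactly this form, up to a lower-order correction.

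\textbf{Step 1: monotonicity.} Let $A$ be the property ``contains an alternating path of length at least $\epsilon^2 n/5$''. This property is monotone under edge colorings: if $(H,c_H)\subseteq(G,c_G)$, the path in $H$ keeps its colors in $G$, so it is still alternating there. By Appendix~\ref{appendix: two models}, $A$ therefore induces a classical monotone property $Q_A^{f(n)}$, with $f(n)\to 0$. Moreover, $\mathcal{G}(n,p,2)$ has $A$ w.h.p.\ if and only if $\mathcal{G}(n,p)$ has $Q_A^{f(n)}$ w.h.p., and the same equivalence holds for $\mathcal{G}(n,m,2)$ and $\mathcal{G}(n,m)$.

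\textbf{Step 2: apply the lemma just below $p$.} Set $p_- = p - \omega(n)s(n)/n^2$, where $s(n)=n\sqrt{p(1-p)}=\Theta(\sqrt n)$. Then $p_-=2(1+\epsilon')/n$ with $\epsilon'=\epsilon-O(\omega(n) n^{-1/2})$. Choosing $\omega(n)\to\infty$ slowly enough, we keep $\epsilon'\sim\epsilon$ and still have $\epsilon'\gg n^{-1/3}\log^{1/3}n$, since $n^{-1/2}\ll n^{-1/3}$. Lemma~\ref{lma: alt path existence sub linear} then gives an alternating path of length at least $\epsilon'^2n/5$ w.h.p.\ in $\mathcal{G}(n,p_-,2)$. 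By monotonicity the same holds at $p_+\ge p_-$.

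\textbf{Step 3: transfer to $\mathcal{G}(n,m)$.} Since $Q_A^{f(n)}$ holds w.h.p.\ in both $\mathcal{G}(n,p_-)$ and $\mathcal{G}(n,p_+)$, the second part of the quoted theorem of Łuczak gives that it holds w.h.p.\ in $\mathcal{G}(n,m)$ for $m=\lfloor\binom n2 p\rfloor$. Translating back through Step 1, $\mathcal{G}(n,m,2)$ has $A$ w.h.p.

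\textbf{Main obstacle: the constant.} The threshold in $A$ must be fixed before applying the transfer, and Steps 2--3 only produce a path of length $\epsilon'^2n/5=(1-o(1))\epsilon^2n/5$, which falls just short of $\epsilon^2 n/5$. There are two ways to close this gap.
\begin{itemize}
\item \emph{Slack in the lemma.} The proof of Lemma~\ref{lma: alt path existence sub linear} compares against a gain of $\epsilon^2n^2/20$, so it works equally with $1/5$ replaced by a slightly larger constant. Running it with that constant gives room to absorb the $1-o(1)$ factor.
\item \emph{Shifted parameter.} Alternatively, apply the lemma at a parameter $2(1+\epsilon(1+\delta_n))/n$, with $\delta_n\to0$ slowly, so that after subtracting the window correction one still has $\epsilon'\ge\epsilon$. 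Then compare $\mathcal{G}(n,m,2)$ with $m=(1+\epsilon)n$ directly to the correspondingly larger $m'$ via the monotone coupling of $\mathcal{G}(n,m)$ in $m$.
\end{itemize}
Either route finishes the proof. Beyond this, one only needs to check that $m=(1+\epsilon)n$ and $\binom n2p$ differ by a lower-order amount, which is routine.
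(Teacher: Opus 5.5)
Your Steps~1--3 are correct, and they are the argument the paper intends. The paper gives no separate proof of this corollary; it relies on the transfer in Appendix~\ref{appendix: two models}, i.e.\ monotonicity under edge colorings, the induced property $Q_A^{f(n)}$, and the second part of \L{}uczak's theorem.

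You are right that applying Lemma~\ref{lma: alt path existence sub linear} at $p_-$ only gives length $(1-o(1))\epsilon^2 n/5$. The paper glosses over this point.

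Your first fix works. In the lemma's proof, replacing $1/5$ by any constant $c<1/4$ gives $|S| \ge \frac{\epsilon n}{2} + (\frac12 - c)\epsilon^2 n - l$. The final comparison then has margin $(\frac14 - c)\epsilon^2 n^2$ instead of $\epsilon^2 n^2/20$. So you can run it with, say, $c = 2/9$ at $\epsilon' \sim \epsilon$. Since $\epsilon'^2/\epsilon^2 \to 1$, this gives length at least $\frac{2}{9}\epsilon'^2 n \ge \epsilon^2 n/5$ for large $n$.

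Your second fix does not work, so the claim that ``either route finishes the proof'' is false. The monotone coupling $\mathcal{G}(n,m,2) \subseteq \mathcal{G}(n,m',2)$ for $m \le m'$ transfers an increasing property from $m$ up to $m'$, never down. Knowing that $\mathcal{G}(n,m',2)$ has the long path for $m' = (1+\epsilon(1+\delta_n))n > m$ therefore says nothing about $\mathcal{G}(n,m,2)$. The standard transfer of an increasing property from $\mathcal{G}(n,p)$ to $\mathcal{G}(n,m)$ needs $\binom{n}{2}p$ to sit below $m$ by more than the standard deviation of the edge count. So you cannot get around giving up a little in the lemma's parameter, and slack in the lemma's constant is what pays for it. Keep the first route and drop the second.
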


Before proceeding, we emphasize the information that was revealed during this process, 
particularly regarding vertices in the alternating path. Consider the stack $U$, i.e. the alternating path.
Let $x$ and $y$ be two non-adjacent vertices in $U$, with $x$ below $y$ in the stack. 
It follows by Observation~\ref{ob: num of rand var considered}, that for $col \neq \psi(x)$ the random variable
$X_{xy}^{col}$ was not revealed. It remains to consider whether $X_{xy}^{\psi(x)}$ was revealed. 

Assume $X_{xy}^{\psi(x)}$ was revealed and equates to $1$ and consider the moment which this occurs. That is,
$x$ is the vertex at the end of the stack and $y \in T$. However, this means that $y$ would be pushed onto $U$ and would 
thus be adjacent to $x$, which is a contradiction. This leads us to the following observation.

\begin{observation*}[\ref{ob: alt path prob change}]
    For two non-adjacent vertices $x$ and $y$ in $U$, with $x$ before $y$, the following hold: 
    \begin{enumerate}[label=\alph*.]
        \item If $X_{xy}^{\psi(x)}$ was revealed then, it must equate to $0$. 
        \item For $col \neq \psi(x)$, $X^{col}_{xy}$ was not revealed. 
    \end{enumerate}
\end{observation*}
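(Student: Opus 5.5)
The argument is essentially already assembled just above the statement, so I would make it formal in two short steps. Fix two non-adjacent vertices $x,y$ of the stack $U$, with $x$ pushed before $y$, and note first how the process reveals variables. In step~\ref{DFS line U not empty} the only variables ever queried are $X^{\psi(w)}_{wt}$, where $w$ is the top of the stack and $t\in T$. By Observation~\ref{ob: long path vars considered}, at most one of $X^{red}_{xy}$, $X^{blue}_{xy}$ is ever revealed. So it suffices to work out which vertex played the role of $w$ when the pair $\{x,y\}$ was examined.

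For part (b), suppose the pair $\{x,y\}$ was queried. At that moment one endpoint was the top of $U$ and the other lay in $T$. Vertices move only $T\to U\to S$, and $x$ entered $U$ before $y$. Hence, while both are in $U$, $y$ was still in $T$ whenever $x$ was the top of the stack. Could $y$ have been the top with $x\in T$? No: when $y$ is on the stack, $x$ is already in $U$ beneath it, and a vertex never returns to $T$. Therefore any revealed variable on this pair has the form $X^{\psi(x)}_{xy}$, revealed while $x$ was the top and $y\in T$. It follows that $X^{col}_{xy}$ for $col\neq\psi(x)$ is never revealed. This is (b).

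For part (a), suppose $X^{\psi(x)}_{xy}$ was revealed and equal to $1$. Step~\ref{DFS line: not empty} queries the variables in $\tau$-order and stops at the first success. The process would therefore push a vertex of $T$ onto $U$ directly on top of $x$; it is either $y$ or an earlier success. In the first case $y$ is adjacent to $x$ on the stack path, contradicting non-adjacency. In the second case we use the bookkeeping that resumes queries from $x$ where they left off, so no variable is queried twice. The query of the pair $(x,y)$ would then occur later, when $x$ is the top again, and again a success pushes $y$ directly above $x$. Either way $y$ sits immediately above $x$. That placement is permanent: $y$ leaves $U$ only to go to $S$, and while $y$ stays in $U$ nothing is inserted between it and $x$. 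So $x,y$ are adjacent in $U$, a contradiction, and the revealed value must be $0$.

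The only delicate point is the monotone flow $T\to U\to S$ together with the no-requery implementation. These ensure that the vertex pushed right after a success stays adjacent to $x$ for as long as both remain in $U$. I expect this to be the one place needing care, and the fix is simply to cite the stated implementation of step~\ref{DFS line U not empty}.
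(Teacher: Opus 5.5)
Your proposal is correct and follows the paper's argument. For (b), the only queries are $X^{\psi(w)}_{wt}$ with $w$ the top of the stack and $t\in T$, so the pair $\{x,y\}$ can only be examined as $X^{\psi(x)}_{xy}$ while $x$ is on top and $y\in T$. For (a), a positive answer would push $y$ directly onto $x$, making them adjacent.

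Your version is slightly more careful than the paper's. The paper cites Observation~\ref{ob: num of rand var considered} for (b), whereas you justify it directly from the query structure, and the paper does not spell out why the adjacency persists. The ``earlier success'' case in (a) is unnecessary: a revealed value of $1$ is by definition the success that ends that round.
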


\section{Initial Bounds on the Scaling Window} \label{appendix: Critical Window Bounds}

\subsection{Lower Bound of the Scaling Window}

In this section, we prove Theorem~\ref{thm: scaling window lower bound}. 

\begin{theorem*} [\ref{thm: scaling window lower bound}]
    For $\lambda_n \xrightarrow[]{} \infty$ arbitrarily slowly, $\mathcal{G}(n, p = 2(1 - \lambda_n n^{-1/3})/n, 2)$ is w.h.p. adaptably 2-colorable.
\end{theorem*}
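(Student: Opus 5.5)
We follow the first moment argument used for Theorem~\ref{thm: a sharp threshold}(a), now with $\epsilon = \lambda_n n^{-1/3}$, so $p = 2(1-\epsilon)/n$ and $\epsilon \to 0$. By Lemma~\ref{lma: improper alternating bicycle}, every connected component of $(G,c)$ that is not adaptably $2$-colorable contains an alternating bicycle. It therefore suffices to show that w.h.p.\ $\mathcal{G}(n,p,2)$ contains no alternating bicycle. Let $X$ count alternating bicycles.

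For a fixed sequence of $s$ distinct vertices, the colouring of an alternating bicycle is determined up to two choices, and there are at most $s^2$ choices for the chords $v_1v_i$ and $v_jv_s$. The structure has $s+1$ edges, each present with the required colour with probability $p/2$. Using $(n)_s \le n^s$, this gives
\begin{equation*}
\mathbb{E}[X] \le \sum_{s\ge 1} n^s \cdot 2 s^2 \left(\frac{p}{2}\right)^{s+1} = \frac{p}{n}\cdot\frac{2}{2}\sum_{s \ge 1} s^2 (1-\epsilon)^{s} \cdot \frac{n}{1}\cdot\frac{1}{n}\, ,
\end{equation*}
which simplifies to $\mathbb{E}[X] \le \frac{2(1-\epsilon)}{n}\sum_{s\ge1} s^2(1-\epsilon)^s$. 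The key estimate is $\sum_{s\ge 1} s^2 x^s = \frac{x(1+x)}{(1-x)^3}$, which with $x = 1-\epsilon$ is $\Theta(\epsilon^{-3})$. Hence
\begin{equation*}
\mathbb{E}[X] = \mathcal{O}\!\left(\frac{1}{n\epsilon^{3}}\right) = \mathcal{O}\!\left(\frac{1}{n \cdot \lambda_n^3 n^{-1}}\right) = \mathcal{O}(\lambda_n^{-3}) = o(1),
\end{equation*}
since $\lambda_n \to \infty$. Markov's inequality then gives that w.h.p.\ there is no alternating bicycle. Applying Lemma~\ref{lma: improper alternating bicycle} to each connected component, adaptable $2$-colorability follows, because colourings of distinct components can be chosen independently.

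The main point to check is the counting. In particular, the case $s$ small or $i,j$ degenerate (for example $i=2$ or $j=s-1$) must not create multi-edges; those cases either are not simple graphs or are still covered by the $s^2$ bound. The edges of the bicycle must also be distinct so that the probabilities multiply; when $i = j$ or the chords coincide with path edges, one simply discards those non-simple configurations. Otherwise the argument is routine. Note that the bound $\mathcal{O}(\lambda_n^{-3})$ is exactly why the $n^{-1/3}$ scaling is the natural limit of this method: the sum is dominated by bicycles of length $s \approx \epsilon^{-1} = n^{1/3}/\lambda_n$.
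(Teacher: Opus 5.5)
Your proposal is correct and follows the paper's own proof: the same first-moment bound $\mathbb{E}[X]\le \frac{2(1-\epsilon)}{n}\sum_{s} s^2(1-\epsilon)^s$ over alternating bicycles, Markov's inequality, and Lemma~\ref{lma: improper alternating bicycle}. The only difference is that you evaluate the series exactly via $\sum_{s\ge1}s^2x^s = x(1+x)/(1-x)^3$, giving the bound $(4+o(1))\lambda_n^{-3}$ directly, whereas the paper reaches the same bound through its unimodal-sum lemma and a Gamma-integral approximation.

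Two small slips to fix. First, the middle expression in your first display is off by a factor of $n$; it should be $p\sum_s s^2(1-\epsilon)^s$, although the simplified bound you state next is correct. Second, configurations with $i=j$ are simple graphs and must be kept; they are already covered by your $s^2$ count. The degenerate case that has to be discarded, because it has only $s$ edges, is $i=s$, $j=1$, where the two chords coincide.
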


First, we require an analytic result regarding sums of unimodal functions (see. Section 4.2 in \cite{lect-notes-unimodal-sums}). Particularly, we call a twice differentiable function $f: \mathbb{R} \xrightarrow[]{} \mathbb{R}$ over a closed interval $I \subseteq \mathbb{R}$ \textit{unimodal} if $f'$ has a unique zero $c \in I$ and $f''(c) < 0$.

\begin{lemma}[\cite{lect-notes-unimodal-sums}] \label{lma: unimodal bound}
    If $f : \mathbb{R} \xrightarrow[]{} \mathbb{R}_{\geq 0}$ is unimodal over $I = [1, \infty)$ then there exists an absolute constant $c > 0$ such that for all $n \geq 1$:
    \begin{equation*}
        \sum_{i= 1}^n f(i) \sim \int_{1}^n f(x) \,d x + c \cdot \max_{x \in [1,n]} f(x) \,.
    \end{equation*}
\end{lemma}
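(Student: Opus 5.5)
The plan is to prove the two-sided estimate
\begin{equation*}
    \int_{1}^{n} f(x)\,dx - \max_{x\in[1,n]} f(x) \;\leq\; \sum_{i=1}^{n} f(i) \;\leq\; \int_{1}^{n} f(x)\,dx + 2\max_{x\in[1,n]} f(x)\,.
\end{equation*}
This is the precise content behind Lemma~\ref{lma: unimodal bound}: the sum equals the integral up to an additive error of order $\max f$, and this is the form in which the lemma is applied to the sums arising from Lemma~\ref{lma: R(k) expression}. The argument is elementary. It compares each term $f(i)$ with the integral of $f$ over an adjacent unit interval, using monotonicity of $f$ on either side of its mode.

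First, unimodality gives the shape of $f$. Since $f'$ has a unique zero $c$ in $I$ and $f''(c)<0$, the derivative changes sign exactly once, from positive to negative. Hence $f$ is nondecreasing on $[1,c]$ and nonincreasing on $[c,\infty)$. If $c \notin [1,n]$, then $f$ is monotone on $[1,n]$ and the argument below simplifies. Let $M=\max_{[1,n]}f$, which is attained at $c$ or at an endpoint. Set $a=\lfloor c\rfloor$ and $b=\lceil c\rceil$.

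On the increasing part, for integers $1\le i<a$ we have $f(i)\le\int_i^{i+1}f$. For integers $2\le i\le a$ we have $f(i)\ge\int_{i-1}^{i}f$. On the decreasing part, the mirror statements hold: for $b<i\le n$, $f(i)\le\int_{i-1}^{i}f$, and for $b\le i\le n-1$, $f(i)\ge\int_i^{i+1}f$.

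For the upper bound, sum these inequalities. The interval pieces used are pairwise disjoint subintervals of $[1,n]$. The terms $f(a)$ and $f(b)$ are not covered, and each is at most $M$. Since $f\ge 0$, this gives $\sum f(i)\le\int_1^n f+2M$. For the lower bound, the lower inequalities cover $[1,a]\cup[b,n]$. The only uncovered stretch is $[a,b]$, of length at most $1$, where $\int_a^b f\le M$. Also $f(1)\ge 0$ is simply dropped. This gives $\sum f(i)\ge\int_1^n f-M$.

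The main obstacle is bookkeeping rather than analysis. The delicate points are the boundary cases: $c$ an integer, where $a=b$ and the index $a$ must be counted once; $c$ lying outside $[1,n]$; and $n<c$. All three are handled by the same inequalities with fewer terms. Finally, I would note how to read the ``$\sim$'' with an absolute constant $c$: the sum equals $\int_1^n f+\theta M$ with $\theta\in[-1,2]$. This suffices for the application, where $\int f$ and $M$ are of comparable or separated orders and only the order of magnitude of the error matters.
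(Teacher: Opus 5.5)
Your proof is correct. The paper gives no argument of its own here; it only cites \cite{lect-notes-unimodal-sums}. Your monotone comparison therefore supplies a self-contained proof of what the citation is meant to provide. The comparison bounds each $f(i)$ by the integral over the adjacent unit interval on the appropriate side of the mode, and charges the at most two terms $f(\lfloor c\rfloor), f(\lceil c\rceil)$ and the gap $[\lfloor c\rfloor,\lceil c\rceil]$ to $M=\max_{[1,n]}f$.

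Your handling of the degenerate cases is fine. When $c$ is an integer you use only one of the two lower inequalities at $i=a=b$, which leaves an uncovered unit interval with integral at most $M$. The endpoint term $f(n)\ge 0$ is also silently dropped in the lower bound, which is harmless. If you want the symmetric bound $|\sum_{i=1}^n f(i)-\int_1^n f|\le M$, note that $f\ge\min(f(a),f(b))$ on $[a,b]$. Hence $f(a)+f(b)\le M+\int_a^b f$, which improves your $2M$ to $M$.

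Your reinterpretation of ``$\sim$'' is a necessary correction, not a weakening. Read literally, with $\sim$ meaning the ratio tends to $1$ and $c$ absolute, the statement fails. For a fixed integrable unimodal $f$ and $n\to\infty$ it would force $\sum_{i\ge1}f(i)-\int_1^\infty f=c\max f$. But for $f(x)=e^{-t(x-1)^2}$ the quantity $\bigl(\sum_{i\ge1}f(i)-\int_1^\infty f\bigr)/\max f$ tends to $1$ as $t\to\infty$ and to $1/2$ as $t\to0$.

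The form $\sum=\int+\mathcal{O}(\max f)$ is exactly how the lemma is used. It is invoked in the proof of Theorem~\ref{thm: scaling window lower bound}, for the first-moment sum with $f(x)=x^2e^{-x\lambda_n n^{-1/3}}$ and $\max f=\Theta(n^{2/3}/\lambda_n^2)$. It is not applied to the sums arising from Lemma~\ref{lma: R(k) expression}, as your proposal says. Because your constants are absolute, the bound applies to this $n$-dependent $f$. Only the upper bound is needed there, so your inequality $\sum_{i=1}^n f(i)\le\int_1^n f+2M$ already suffices.
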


\begin{proof}[Proof of Theorem~\ref{thm: scaling window lower bound}]
    Let $p = 2(1 - \lambda_nn^{-1/3})/n$ for $\lambda_n \xrightarrow[]{} \infty$ with $\lambda_n \ll n^{1/3}$. As per the proof of a) in Theorem~\ref{thm: a sharp threshold}, let $X$ denote the number of improper alternating bicycles and thus,
    \begin{align} \label{eq: scaling window lower bound 1}
        \begin{split}
        \mathbb{E}[X] &\leq \sum_{s = 1}^n n^s s^2 p^{s+1} 2^{-s} = \frac{2(1 - \lambda_n n^{-1/3})}{n} \sum_{s=1}^n s^2 \left( 1 - \lambda_n n^{-1/3} \right)^s \\
        &\sim \frac{2(1 - \lambda_n n^{-1/3})}{n} \sum_{s=1}^n s^2 e^{-s \lambda_n n^{-1/3}} \,.
        \end{split}
    \end{align}
    Elementary calculus shows that $\frac{d}{dx} x^2 e^{-x \lambda_n n^{-1/3}} = x e^{-x \lambda_n n^{-1/3}} \left( 2 - x \lambda_n n^{-1/3} \right)$ and hence, it's not hard to see that $x^2 e^{-x \lambda_n n^{-1/3}}$ is unimodal on $[1, \infty)$ and maximized at $x = 2n^{1/3} / \lambda_n$. Thus by Lemma~\ref{lma: unimodal bound} we have that (\ref{eq: scaling window lower bound 1}) is,
    \begin{align*}
         &= \frac{2(1 - \lambda_n n^{-1/3})}{n} \left( \int_{1}^n x^2 e^{-x \lambda_n n^{-1/3}} \,d x  + \mathcal{O}\left( \frac{n^{2/3}}{\lambda_n^2} \right) \right) \\
         &= \frac{2(1 - \lambda_n n^{-1/3})}{n} \left( \int_{\lambda_n n^{-1/3}}^{\lambda_n n^{2/3}} \left( \frac{u n^{1/3}}{\lambda_n} \right)^2 e^{-u} \frac{n^{1/3}}{\lambda_n} \,d u  + \mathcal{O}\left( \frac{n^{2/3}}{\lambda_n^2} \right) \right) \\
         &= \frac{2(1 - \lambda_n n^{-1/3})}{\lambda_n^3} \left( \int_{\lambda_n n^{-1/3}}^{\lambda_n n^{2/3}} u^2 e^{-u} \,d u  + \mathcal{O}\left( \frac{\lambda_n}{n^{1/3}} \right) \right) \\
         &\sim \frac{2(1 - \lambda_n n^{-1/3})}{\lambda_n^3} \left( \Gamma(3) + o(1)\right) \\
         &= o(1) \,,
    \end{align*}
    where the second equality follows from $u$-substitution with $u = \lambda_n x n^{-1/3}$ and the asymptotic equality follows as $n \xrightarrow[]{} \infty$, $\int_{\lambda_n n^{-1/3}}^{\lambda_n n^{2/3}} u^2 e^{-u} \,d u$ approaches the Gamma function $\Gamma(3)$ and as $\lambda_n \ll n^{1/3}$ we have $\mathcal{O}\left( \frac{\lambda_n}{n^{1/3}} \right) = o(1)$. The final equality follows as $\lambda_n \xrightarrow[]{} \infty$ and $\lambda_n \ll n^{1/3}$.
\end{proof}

We note that similar calculations appear in \cite{2-sat-critical-window}.

\subsection{Upper Bound on the Scaling Window}

The purpose of this section is to prove Theorem~\ref{thm: scaling window upper bound}. 

\begin{theorem*} [\ref{thm: scaling window upper bound}]
    For $\lambda_n \xrightarrow[]{} \infty$ arbitrarily slowly, $\mathcal{G}(n, p = 2(1 + \lambda_n n^{-1/5})/n, 2)$ is w.h.p. adaptably 2-colorable.
\end{theorem*}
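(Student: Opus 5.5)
The natural route to the statement as worded, that $\mathcal{G}(n, p = 2(1 + \lambda_n n^{-1/5})/n, 2)$ is w.h.p. adaptably 2-colorable, is the one used for Theorem~\ref{thm: scaling window lower bound}. By the contrapositive of Lemma~\ref{lma: improper alternating bicycle}, applied to each connected component, it suffices to show that w.h.p. $(G,c)$ contains no alternating bicycle. So I would let $X$ be the number of alternating bicycles and reuse the count from the proof of Theorem~\ref{thm: a sharp threshold}(a): a fixed vertex sequence of length $s$ carries at most $2(s-2)^2$ colored bicycle patterns, each present with probability $(p/2)^{s+1}$. This gives
\begin{equation*}
\mathbb{E}[X] \le \frac{2(1+\delta)}{n}\sum_{s=1}^{n} s^2 (1+\delta)^s, \qquad \delta = \lambda_n n^{-1/5}.
\end{equation*}
I would then estimate the sum via Lemma~\ref{lma: unimodal bound} and aim to show it is $o(n)$, and finish with Markov's Inequality.

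The main obstacle is that, with $\delta>0$, the summand $s^2(1+\delta)^s$ is increasing rather than unimodal with an interior maximum. The sum is therefore dominated by $s$ near $n$ and is of order $n^2 e^{\delta n}$, which is far larger than $n$. The first moment bound thus fails outright, and I do not see a way to rescue the plan by restricting $s$. A bicycle of length $s$ with $s \gg \delta^{-1}$ is exactly what the long alternating path of Lemma~\ref{lma: alt path existence sub linear} produces: here $\delta \gg n^{-1/3}\log^{1/3} n$, so that lemma gives an alternating path of length $\delta^2 n/5 = \lambda_n^2 n^{3/5}/5$.

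Moreover, the proof of Theorem~\ref{thm: a sharp threshold}(b), carried out with $\epsilon = \delta$, yields a proper alternating odd bicycle along that path after sprinkling. I expect this to succeed exactly at the scale $n^{-1/5}$, because the expected number of correctly colored sprinkled chords inside each half of the path is of order $\delta\,(\delta^2 n)^2/n = \delta^5 n = \lambda_n^5 \to \infty$. By Lemma~\ref{lma: non-adapt 2-col condition}(a), that bicycle makes $(G,c)$ not adaptably 2-colorable. The same conclusion also follows directly from Theorem~\ref{thm: scaling window upper bound 2}, since $\lambda_n n^{-1/5} = (\lambda_n n^{2/15})\, n^{-1/3}$ and $\lambda_n n^{2/15} \to \infty$ with $\lambda_n n^{2/15} \ll n^{1/3}$ for slowly growing $\lambda_n$.

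My plan is therefore to attempt the first moment argument, stop at this step, and report that the statement as literally worded cannot be proved by this or any route: the paper's own results (Theorem~\ref{thm: scaling window upper bound 2}, or Theorem~\ref{thm: a sharp threshold}(b) run with $\epsilon=\delta$) establish its negation. The only consistent reading is the one given in the Results section, namely ``w.h.p. not adaptably 2-colorable.'' The word ``adaptably'' in the appendix restatement should presumably read ``not adaptably,'' matching the original statement of Theorem~\ref{thm: scaling window upper bound}.
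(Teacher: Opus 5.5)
Your diagnosis is correct: the appendix restatement drops the word ``not'' (Theorem~\ref{thm: scaling window upper bound} in the Results section and in Section~\ref{sec: a sharp threshold} both assert non-adaptable 2-colorability, and the appendix proof concludes that an alternating odd bicycle forms). The paper proves that version exactly as you sketch: it takes an alternating path of length $\lambda_n^2 n^{3/5}/20$ from Lemma~\ref{lma: alt path existence sub linear} at $p = 2(1+\lambda_n n^{-1/5}/2)/n$, sprinkles with probability $\sim \lambda_n n^{-6/5}$ after discounting the $O(\lambda_n^4 n^{1/5})$ pre-existing edges among path vertices, and obtains a blue chord in the first half and a red chord in the second half, each failing with probability $e^{-\Theta(\lambda_n^5)}$. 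Your alternative derivation from Theorem~\ref{thm: scaling window upper bound 2} also works, and is cleanest via monotonicity under sprinkling rather than by substituting $\lambda_n n^{2/15}$ for $\lambda_n$.
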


\begin{proof}[Proof of Theorem~\ref{thm: scaling window upper bound}]
    We follow the proof of Theorem~\ref{thm: a sharp threshold} (b). \\
    We first consider $\mathcal{G}(n, p = \frac{2(1 + \lambda_n n^{-1/5} / 2)}{n})$. 
    By Lemma~\ref{lma: alt path existence sub linear} there exists w.h.p. an alternating path $P$
    of length $\frac{\lambda_n^2}{20}n^{3/5}$.

    We utilize a ``sprinkling" argument. We will increase $p$ to $2(1 + \lambda_n n^{-1/5})/n$; that is,
    we sprinkle edges with probability $\sim \lambda_n / n^{6/5}$. 
    We will show w.h.p. these sprinkled edges form an alternating odd bicycle along $P$.

    To simplify calculations significantly, we only consider the odd indexed vertices of $P$. Noting that 
    asymptotically this only results in a constant factor loss. 
    We partition the set of odd indices into two sets $I_1$ and $I_2$ of roughly equal size,
    \begin{equation*}
        I_1 = \left\{i \mid i \ odd \wedge i \leq \frac{\lambda_n^2}{40}n^{3/5}\right\} \qquad and \qquad I_2 = \left\{i \mid i \ odd \wedge i > \frac{\lambda_n^2}{40}n^{3/5}\right\} \,.
    \end{equation*}
    So, $|I_1| \sim |I_2| \sim \frac{\lambda_n^2}{80}n^{3/5}$.

    Assume without loss of generality that the first edge in $P$ is colored $red$. Therefore, 
    if upon increasing $p$ we introduce a $blue$ edge between vertices in $I_1$ and a $red$ edge 
    between vertices in $I_2$ then its not hard to see an odd alternating bicycle is formed. 

    Before we consider increasing $p$, let us consider the number of edges already existing between vertices 
    in $I_1$ and $I_2$. We first note that by Observation~\ref{ob: alt path prob change} and Lemma~\ref{lma: prob change}, the information
    revealed during the construction of $P$ would not increase the probability of an edge existing between two vertices in $I_1$ or $I_2$. 
    Therefore, if $B_1$ denotes the number of existing edges between vertices in $I_1$ and $B_2$ denotes the number of existing edges between vertices in $I_2$, 
    it follows that 
    $B_1$ and $B_2$ are stochastically dominated by a binomial distribution with ${\frac{\lambda_n^2}{80}n^{3/5} \choose 2}$ 
    trials and 
    probability $\frac{2(1+\lambda_n n^{-1/5})}{n}$. Therefore for $i \in \{1,2\}$, 
    $\mathbb{E}[B_i] \leq {\frac{\lambda_n^2}{80}n^{3/5} \choose 2} \frac{2(1+\lambda_n n^{-1/5})}{n} \sim \frac{\lambda_n^4 n^{1/5}}{80^2} (1 + o(1))$.
    So by the \hyperref[Chernoff Bound]{Binomial Chernoff Bound},
    \begin{equation*}
        \mathbb{P}[B_i > \mathbb{E}[B_i] + n^{1/5}] \leq 2e^{-\Theta(n^{1/5} / \lambda_n^4)} = o(1) \,.
    \end{equation*}
    Particularly, w.h.p. $B_i = \mathcal{O}(n^{1/5} \lambda_n^4)$. 
    Let $\widetilde{I}_1$ denote the pairs of vertices in $I_1$ for which there does not already exist an edge. 
    Similarly, let $\widetilde{I}_2$ denote the pairs of vertices in $I_2$ for which there does not already exist an edge
    Therefore, w.h.p. for $i \in \{1,2\}$:
    \begin{equation*}
        |\widetilde{I}_i| = {|I_i| \choose 2} - B = {|I_i| \choose 2} (1 - o(1)) \,.
    \end{equation*}
    We now consider sprinkling edges with probability $\sim \frac{\lambda_n}{n^{6/5}}$. 
    Let $X_1$ denote the number of $blue$ edges between pairs of vertices in $\widetilde{I}_1$ which are introduced among the sprinkled edges. Similarly, 
    let $X_2$ denote the number of $red$ edges between pairs of vertices in $\widetilde{I}_2$ which are introduced among the sprinkled edges. 
    Therefore, $X_i$ follows a binomial distribution with $|\widetilde{I}_i|$ trials and probability 
    $\sim \frac{\lambda_n}{2n^{6/5}}$. It suffices to show that w.h.p. $X_1 \geq 1$ and $X_2 \geq 1$,
    \begin{equation*}
        \mathbb{P}[X_i = 0] \sim 
        \left(1 - \frac{\lambda_n}{2n^{6/5}}\right)^{{|I_i| \choose 2} (1 - o(1))} \\
        \sim e^{-\frac{\lambda_n}{2n^{6/5}}\frac{\lambda_n^4 n^{6/5}}{2 \cdot 80^2}(1-o(1))} \\
        = \Theta(e^{-\lambda_n^5}) = o(1) \,.
    \end{equation*}
    So, w.h.p. $X_1 \geq 1$ and $X_2 \geq 1$ and thus an alternating odd bicycle is formed.
\end{proof}

\section{An Expression for $R_{n,p}(k)$} \label{appendix: R(k)}

Here we prove Lemma~\ref{lma: R(k) expression}.

\begin{lemma*}[\ref{lma: R(k) expression}]
    If $\lambda_n \xrightarrow[]{} \infty$, $\lambda_n \ll n^{1/3}$, and $k/n^{2/3} \xrightarrow[]{} 0$ then for $p = \frac{2(1 - \lambda_n n^{-1/3})}{n}$:
    \begin{equation*}
        R_{n,p}(k) = \frac{2}{pn\sqrt{2\pi}k^{3/2}} e^{-\frac{k}{2}(\lambda_n n^{-1/3})^2\left(1 \pm o(1)\right)}
    \end{equation*}
\end{lemma*}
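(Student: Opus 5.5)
By Lemma~\ref{lma: G tilde distribution}, the event that $\widetilde{G}_{\gamma,n,p}(x)$ is a tree on $k$ vertices is the event that the component of $x$ in $\mathcal{G}(n,q)$, with $q := p/2 = (1-\lambda_n n^{-1/3})/n$, is a tree on $k$ vertices. So $R_{n,p}(k)$ can be computed directly in $\mathcal{G}(n,q)$. We choose the remaining $k-1$ vertices, a spanning tree on the $k$ vertices (Cayley gives $k^{k-2}$), require its $k-1$ edges to be present and the other $\binom{k}{2}-(k-1)$ pairs inside to be absent, and require no edges to the outside. Thus
\begin{equation*}
R_{n,p}(k) = \binom{n-1}{k-1} k^{k-2} q^{k-1} (1-q)^{\binom{k}{2}-k+1+k(n-k)} .
\end{equation*}
The plan is to estimate each factor asymptotically in the regime $\lambda_n\to\infty$, $\lambda_n \ll n^{1/3}$, $k = o(n^{2/3})$, keeping every term in the exponent of order $k\lambda_n^2 n^{-2/3}$ and discarding only those of smaller order.

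First, write $\binom{n-1}{k-1} = \frac{(n-1)_{k-1}}{(k-1)!}$ and use $(n-1)_{k-1} = n^{k-1}\exp\!\big(-\tfrac{k^2}{2n} + O(k/n) + O(k^3/n^2)\big)$. Stirling gives $(k-1)! = k!/k \sim \sqrt{2\pi k}\,k^{k-1}e^{-k}$, so
\begin{equation*}
\binom{n-1}{k-1}k^{k-2} \sim \frac{n^{k-1}e^{k}}{\sqrt{2\pi}\,k^{3/2}}\, e^{-k^2/(2n)+O(k^3/n^2)} .
\end{equation*}
Next, with $\epsilon := \lambda_n n^{-1/3}$, we have $(nq)^{k-1} = (1-\epsilon)^{k-1} = \exp\!\big(-(k-1)\epsilon - (k-1)\epsilon^2/2 - O(k\epsilon^3)\big)$. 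Finally, $(1-q)^{kn - k^2/2 + O(k)} = \exp\!\big(-k(1-\epsilon) + \tfrac{k^2}{2n} + O(k/n) + O(kq^2 n)\big)$. Multiplying, the $e^{\pm k}$ terms cancel, the $\pm k\epsilon$ terms cancel, and the $\pm k^2/(2n)$ terms cancel. The prefactor $n^{k-1}q^{k-1}$ has already been absorbed, leaving a leftover factor of $1/(1-\epsilon)$ from using $k-1$ rather than $k$; this matches the stated prefactor $\frac{2}{pn} = \frac{1}{1-\epsilon}$. What remains in the exponent is $-\frac{k}{2}\epsilon^2$ plus error terms.

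The main obstacle is bookkeeping the error terms so that all of them are $o(k\epsilon^2)$, or $o(1)$ where they multiply the prefactor. The candidates are $O(k^3/n^2)$, $O(k\epsilon^3)$, $O(k/n)$ and $O(k/n)$ coming from $q^2 n$. Since $\epsilon\to 0$, $O(k\epsilon^3) = o(k\epsilon^2)$. The term $k^3/n^2$ is $o(k\epsilon^2)$ exactly when $k^2 \ll \lambda_n^2 n^{4/3}$, which follows from $k = o(n^{2/3})$ and $\lambda_n\to\infty$. The terms $k/n$ are $o(k\epsilon^2)$ because $\epsilon^2 n = \lambda_n^2 n^{1/3}\to\infty$. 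The cross terms also need care: the $k^2/(2n)$ coming from $(n-1)_{k-1}$ against the $(1-q)$ term with exponent $-k^2/2$, and the $k^2 q/2$ versus $k^2/(2n)$ comparison, where the discrepancy is $k^2\epsilon/n = o(k\epsilon^2)$ iff $k \ll \epsilon n$, which holds. Collecting these gives $R_{n,p}(k) = \frac{2}{pn\sqrt{2\pi}k^{3/2}}e^{-\frac{k}{2}\epsilon^2(1\pm o(1))}$, as required.
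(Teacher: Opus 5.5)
Your proposal is correct and is essentially the paper's argument: the Cayley count for $R_{n,p}(k)$, then Stirling and logarithmic expansions, then cancellation of the $e^{\pm k}$, $e^{\pm k\epsilon}$ and $e^{\pm k^2/(2n)}$ terms with $(1-\epsilon)^{-1}=2/(pn)$ left over. Your error bookkeeping is in fact more explicit than the paper's, and you use the correct sign of $\epsilon$, whereas the paper's auxiliary computation is written for $p=2(1+\lambda_n n^{-1/3})/n$. One caveat, which the paper shares: your $\sim$ in Stirling's formula hides a factor $e^{-\Theta(1/k)}$. That factor is only negligible when $k\to\infty$, and it is $o(k\epsilon^2)$ in the exponent only when $k\epsilon\to\infty$; indeed the statement as written fails for bounded $k$. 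This is harmless in the only application, where $k=\Theta(n^{2/3}/\lambda_n^2)$.
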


First, we derive the following expression for $R_{n,p}(k)$.

\begin{lemma} \label{lma: R(k) equals}
    For $x \in V$ and $\gamma \in \{red, blue\}$:
    \begin{equation*}
        R_{n,p}(k) \coloneqq R_{n,p}(x, k) = \frac{1}{n} {n \choose k} \left(\frac{kp}{2}\right)^{k-1} \left(1 - \frac{p}{2}\right)^{{k \choose 2} - k + 1 + k(n - k)} 
    \end{equation*}
\end{lemma}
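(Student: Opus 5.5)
The plan is to reduce the statement to the classical count of tree components in $\mathcal{G}(n,p/2)$, using Lemma~\ref{lma: G tilde distribution}. That lemma says the uncolored projection of $\widetilde{G}_{\gamma,n,p}(x)$ has the same law as $C_{n,p/2}(x)$, the component of $x$ in $\mathcal{G}(n,p/2)$. Hence $R_{n,p}(x,k)$ equals the probability that $C_{n,p/2}(x)$ has exactly $k$ vertices and is a tree. This quantity depends neither on $x$ nor on $\gamma$, which justifies the notation $R_{n,p}(k)$.

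Write $q = p/2$. I would decompose the event according to the vertex set $W \ni x$ of the component and the spanning tree $T$ on $W$.
\begin{itemize}
\item The number of choices of $W$ with $|W| = k$ and $x \in W$ is $\binom{n-1}{k-1} = \frac{k}{n}\binom{n}{k}$.
\item By Cayley's formula, the number of trees $T$ on $W$ is $k^{k-2}$.
\end{itemize}
For a fixed pair $(W,T)$, the event is that the $k-1$ edges of $T$ are present, while the remaining $\binom{k}{2}-(k-1)$ pairs inside $W$ and all $k(n-k)$ pairs between $W$ and $V\setminus W$ are absent. By independence of edges in $\mathcal{G}(n,q)$, this has probability
\[
q^{k-1}(1-q)^{\binom{k}{2}-k+1+k(n-k)} .
\]
The events for distinct pairs $(W,T)$ are disjoint, because the component and its edge set are determined by the graph. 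Summing over all pairs gives
\[
\frac{k}{n}\binom{n}{k}\, k^{k-2}\, q^{k-1}(1-q)^{\binom{k}{2}-k+1+k(n-k)} = \frac{1}{n}\binom{n}{k}(kq)^{k-1}(1-q)^{\binom{k}{2}-k+1+k(n-k)},
\]
using $k\cdot k^{k-2}=k^{k-1}$. Substituting $q=p/2$ yields exactly the claimed formula.

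The only step requiring real care is the first one: transferring from the coloured search process to $C_{n,p/2}(x)$. In particular, I need to confirm that ``$\widetilde{G}$ is a tree'' corresponds exactly to the component being a tree. This holds because the backtracking step of \textit{Search\_Neighborhood} queries every pair inside $\widetilde{V}$, so the edge set of $\widetilde{G}$ matches the edge set of the component, not merely a spanning subgraph of it. Once Lemma~\ref{lma: G tilde distribution} is taken at face value (together with Observation~\ref{ob: alt search process}(c)), the rest is the routine counting above.
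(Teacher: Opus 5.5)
Your proposal is correct and follows the same route as the paper: choose the vertex set containing $x$ in $\binom{n-1}{k-1}$ ways, count its spanning trees by Cayley's formula, and multiply $(p/2)^{k-1}$ by $(1-p/2)$ raised to the number of absent internal and boundary pairs. Your sum over disjoint events indexed by $(W,T)$ is in fact a cleaner justification than the paper's, which writes $\mathbb{P}[V(\widetilde{G}_i(x))=X]=(1-p/2)^{k(n-k)}$ and asserts that the events ``size $k$'' and ``is a tree'' are independent---neither holds literally, although the product the paper arrives at is the one you derive.
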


\begin{proof}
     As per \cite{2-sat-critical-window} and Cayley's Theorem, the number of trees on $k$ vertices is $k^{k-2}$. Thus, 
    \begin{equation*}
        \mathbb{P}[\mathcal{G}(k,p/2) \text{ is a tree}] = k^{k-2}\left(\frac{p}{2}\right)^{k-1} \left(1 - \frac{p}{2}\right)^{{k \choose 2} - k + 1} \,.
    \end{equation*}
    So, for $X \in {V \choose k}$ with $x \in X$, 
    \begin{equation*}
        \mathbb{P}[V(\widetilde{G}_i(x)) = X] = \left(1 - \frac{p}{2}\right)^{k(n-k)} \,.
    \end{equation*}
    As there are $k(n-k)$ potential edges between $X$ and $V \setminus X$, and as each possible edge $uv$ with $u \in X$ and $v \notin X$ has precisely one color, namely $\psi(u)$, which if it were assigned, would extend $\widetilde{G}_i(x)$. This also implies that the events $|V(\widetilde{G}_i(x))| = k$ and $\widetilde{G}_i(x)$ is a tree are independent. Thus,
    \begin{align*}
        \mathbb{P}\left[|V(\widetilde{G}_i(x))| = k \ \bigwedge \widetilde{G}_i(x) \text{ is a tree}\right] &= \mathbb{P}[|V(\widetilde{G}_i(x))| = k] \cdot \mathbb{P}[\widetilde{G}_i(x) \text{ is a tree}] \\
        &= \sum_{X \in {V \choose k}, x \in X} \mathbb{P}[V(\widetilde{G}_i(x)) = X ] \cdot \mathbb{P}[\widetilde{G}_i(x) \text{ is a tree}] \\
        &= {n - 1 \choose k - 1} \left(1 - \frac{p}{2}\right)^{k(n-k)} k^{k-2}\left(\frac{p}{2}\right)^{k-1} \left(1 - \frac{p}{2}\right)^{{k \choose 2} - k + 1} \\
        &= \frac{1}{k} {n - 1 \choose k - 1} \left(\frac{kp}{2}\right)^{k-1} \left(1 - \frac{p}{2}\right)^{{k \choose 2} - k + 1 + k(n - k)} \\
        &= \frac{1}{n} {n \choose k} \left(\frac{kp}{2}\right)^{k-1} \left(1 - \frac{p}{2}\right)^{{k \choose 2} - k + 1 + k(n - k)} 
    \end{align*}
\end{proof}

We now consider the following lemma which provides Lemma~\ref{lma: R(k) expression} as a direct corollary.

\begin{lemma} \label{lma: R(k) expression 2}
    For $p = \frac{2(1 + \lambda_n n^{-1/3})}{n}$ we have,
    \begin{equation*}
        R_{n,p}(k) = \frac{2}{pn\sqrt{2\pi}k^{3/2}} e^{-\frac{k}{2}(\lambda_n n^{-1/3})^2 +\frac{k^2}{2n}\lambda_n n^{-1/3} + \mathcal{O}\left(\frac{k}{n}\right) - \frac{k^3}{6n^2} + k\Theta\left(\frac{\lambda_n^3}{n}\right) - \Theta\left(\frac{k^4}{n^3}\right) - \Theta\left(\frac{1}{k}\right)} 
    \end{equation*}
\end{lemma}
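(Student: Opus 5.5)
The plan is to start from the exact formula of Lemma~\ref{lma: R(k) equals} and expand every factor with Stirling's formula and Taylor series for logarithms. Throughout, set $\varepsilon \coloneqq \lambda_n n^{-1/3}$, so that $p/2 = (1+\varepsilon)/n$. The subcritical version needed for Lemma~\ref{lma: R(k) expression} is then obtained by replacing $\varepsilon$ with $-\varepsilon$.

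First rewrite the exponent of $(1-p/2)$ as
\[
E \coloneqq {k\choose 2}-k+1+k(n-k) = kn-\tfrac{k^2}{2}-\tfrac{3k}{2}+1.
\]
Also split $\left(\tfrac{kp}{2}\right)^{k-1} = \tfrac{2}{p}\,k^{k-1}\left(\tfrac{1+\varepsilon}{n}\right)^{k}$, so that
\[
R_{n,p}(k) = \frac{2}{pn}\cdot\frac{n(n-1)\cdots(n-k+1)}{n^k}\cdot\frac{k^{k-1}}{k!}\,(1+\varepsilon)^k\left(1-\frac{p}{2}\right)^{E}.
\]
This already isolates the prefactor $\frac{2}{pn}$.

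Next handle the three remaining pieces separately.
\begin{itemize}
\item By Stirling, $k! = \sqrt{2\pi k}\,k^k e^{-k} e^{\Theta(1/k)}$. Hence $k^{k-1}/k! = \frac{e^{k}}{\sqrt{2\pi}\,k^{3/2}}e^{-\Theta(1/k)}$, which produces the $\sqrt{2\pi}k^{3/2}$ denominator and the $-\Theta(1/k)$ term.
\item Writing $\sum_{i<k}\log(1-i/n)$ and expanding gives
\[
\frac{(n)_k}{n^k} = \exp\Bigl(-\tfrac{k^2}{2n}+\mathcal{O}(\tfrac{k}{n})-\tfrac{k^3}{6n^2}-\Theta(\tfrac{k^4}{n^3})\Bigr).
\]
\item We have $k\log(1+\varepsilon) = k\varepsilon-\tfrac{k\varepsilon^2}{2}+k\,\Theta(\varepsilon^3)$, and $k\varepsilon^3 = k\Theta(\lambda_n^3/n)$.
\item Finally, $E\log(1-p/2) = -E\bigl(\tfrac{1+\varepsilon}{n}+\mathcal{O}(n^{-2})\bigr) = -k(1+\varepsilon)+\tfrac{k^2(1+\varepsilon)}{2n}+\mathcal{O}(\tfrac{k}{n})$.
\end{itemize}

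Summing the exponents gives the cancellations that drive the result. The $e^{k}$ from Stirling cancels the $-k$, and $k\varepsilon$ cancels $-k\varepsilon$. The terms $\pm\tfrac{k^2}{2n}$ cancel, leaving $\tfrac{k^2}{2n}\varepsilon$. What survives is exactly
\[
-\tfrac{k}{2}\varepsilon^2+\tfrac{k^2}{2n}\varepsilon+\mathcal{O}(\tfrac{k}{n})-\tfrac{k^3}{6n^2}+k\Theta(\tfrac{\lambda_n^3}{n})-\Theta(\tfrac{k^4}{n^3})-\Theta(\tfrac1k),
\]
which is the exponent in Lemma~\ref{lma: R(k) expression 2}.

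To deduce Lemma~\ref{lma: R(k) expression}, check that under $k\ll n^{2/3}$ and $1\ll\lambda_n\ll n^{1/3}$ every correction is either $o(k\varepsilon^2)$ or $o(1)$. The latter can be absorbed into the prefactor as a $1+o(1)$ factor, or simply into the $(1\pm o(1))$ when $k\varepsilon^2$ is bounded below.
\begin{itemize}
\item The ratio of $\tfrac{k^2\varepsilon}{n}$ to $k\varepsilon^2$ is $\tfrac{k}{\lambda_n n^{2/3}}\to 0$.
\item The ratio of $k\varepsilon^3$ to $k\varepsilon^2$ is $\varepsilon\to 0$.
\item The terms $k/n$, $k^3/n^2$ and $k^4/n^3$ are all $o(1)$.
\end{itemize}

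The main obstacle I expect is this last bookkeeping step rather than the expansion. The terms $\Theta(1/k)$ and $k^3/n^2$ are not relatively small compared with $k\varepsilon^2$ when $k$ is small. I would therefore state the conclusion as a $(1+o(1))$ multiplicative factor on the prefactor times $e^{-\frac k2\varepsilon^2(1\pm o(1))}$, and note that in the application (Corollary~\ref{cor: G tilde tree}) $k=\Theta(n^{2/3}/\lambda_n^2)$. There both $1/k$ and $k^3/n^2$ are $o(1)$ while $k\varepsilon^2=\Theta(1)$, so the stated form holds.
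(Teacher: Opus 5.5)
Your proposal is correct and is essentially the paper's proof: both start from the exact formula of Lemma~\ref{lma: R(k) equals}, write $(n)_k/n^k=\prod_{i<k}(1-i/n)$, apply Stirling, Taylor-expand $\log(1+\varepsilon)$, $\log(1-i/n)$ and $\log(1-p/2)$, and read off the cancellations of $k$, $k\varepsilon$ and $k^2/(2n)$. (Your exponent $kn-\tfrac{k^2}{2}-\tfrac{3k}{2}+1$ is slightly more accurate than the paper's $kn-\tfrac{k^2}{2}-k+1$; the difference is absorbed into $\mathcal{O}(k/n)$.) One correction to your closing remark: for $k\ll n^{2/3}$ the term $k^3/n^2$ is in fact $o(k\varepsilon^2)$, like $k/n$ and $k^4/n^3$, since the ratio is $k^2/(\lambda_n^2 n^{4/3})\to 0$. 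So $\Theta(1/k)$ is the only term that forces your $(1+o(1))$ prefactor, and that is harmless in Corollary~\ref{cor: G tilde tree}, where $k=\Theta(n^{2/3}/\lambda_n^2)$.
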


\begin{proof}
    We note that the proof of this Lemma is a direct adaptation of the computations done in Section 6 of \cite{2-sat-critical-window}. By Lemma~\ref{lma: R(k) equals}, we have
    \begin{equation*}
        R_{n,p}(k) = \frac{1}{n} {n \choose k} \left(\frac{kp}{2}\right)^{k-1} \left(1 - \frac{p}{2}\right)^{{k \choose 2} - k + 1 + k(n - k)} 
    \end{equation*}
    Now observing that $\frac{n!}{(n-k)!} = n^k \prod_{i = 1}^{k-1} (1 - \frac{i}{n})$ we have,
    \begin{equation} \label{eq: R(k) equation 1}
        \frac{1}{n} {n \choose k} \left(\frac{kp}{2}\right)^{k-1} =  \frac{2}{pnk} \frac{(k/e)^k}{k!} \left(\frac{pne}{2}\right)^k \prod_{i = 0}^{k-1} \left(1 - \frac{i}{n}\right) 
    \end{equation}
    Moreover, by Stirling's Approximation, we have
    \begin{equation*}
        \frac{1}{k} \frac{(k/e)^k}{k!} = \frac{1}{k^{3/2} \left(1 + \frac{1}{12k} + \mathcal{O}\left(\frac{1}{k^2}\right)\right)} = \frac{e^{-1/(12k + \delta_k)}}{k^{3/2}}
    \end{equation*}
    for some $0\leq \delta_k \leq 1$ for each $k$. Thus, \ref{eq: R(k) equation 1} is precisely,
    \begin{equation} \label{eq: R(k) equation 2}
        \frac{2e^{-1/(12k + \delta_k)}}{pn\sqrt{2\pi}k^{3/2}} \left(\frac{pne}{2}\right)^k \prod_{i = 0}^{k-1} \left(1 - \frac{i}{n}\right) \,.
    \end{equation}
    noting that $\frac{pn}{2} = 1 + \lambda_n n^{-1/3}$, $\log{(1 + \lambda_n n^{-1/3})} = \lambda_n n^{-1/3} - \frac{1}{2}(\lambda_n n^{-1/3})^2 + \Theta((\lambda_n n^{-1/3})^3)$, and $\log{(1 - \frac{i}{n})} = -\frac{i}{n} - \frac{i^2}{2n^2} - \Theta\left(\frac{i^3}{n^3}\right)$ we get that \ref{eq: R(k) equation 2} equates to,
    \begin{equation} \label{eq: R(k) equation 3}
        \frac{2}{pn\sqrt{2\pi}k^{3/2}} e^{-\Theta(1/k)} e^{k(1 + \lambda_n n^{-1/3}) - k\frac{1}{2}(\lambda_n n^{-1/3})^2 + k\Theta(\frac{\lambda_n^3}{n})} e^{-\frac{k^2}{2n} + \Theta(\frac{k}{n}) - \frac{k^3}{6n^2} + \Theta(\frac{k^2}{n^2}) - \Theta(\frac{k^4}{n^3})}
    \end{equation}
    Now, we consider 
    \begin{equation} \label{eq: R(k) equation 4}
        \left(1 - \frac{p}{2}\right)^{{k \choose 2} - k + 1 + k(n - k)} \sim e^{-\frac{p}{2} \left(kn - \frac{k^2}{2} - k + 1\right)} = e^{-k(1 + \lambda_n n^{-1/3}) + \frac{k^2}{2n}(1 + \lambda_n n^{-1/3}) + \frac{k}{n}(1 + \lambda_n n^{-1/3}) + \mathcal{O}(\frac{1}{n})} 
    \end{equation}
    Thus, combining \ref{eq: R(k) equation 3} and \ref{eq: R(k) equation 4} we get,
    \begin{align*}
        R_{n,p}(k) &= \frac{2}{pn\sqrt{2\pi}k^{3/2}} e^{-\frac{k}{2}(\lambda_n n^{-1/3})^2 +\frac{k^2}{2n}\lambda_n n^{-1/3} + \mathcal{O}\left(\frac{k}{n}\right) - \frac{k^3}{6n^2} + k\Theta\left(\frac{\lambda_n^3}{n}\right) - \Theta\left(\frac{k^4}{n^3}\right) - \Theta\left(\frac{1}{k}\right)} \\
        &= \frac{2}{pn\sqrt{2\pi}k^{3/2}} e^{-\frac{k}{2}\epsilon^2 +\frac{k^2}{2n}\epsilon + \mathcal{O}\left(\frac{k}{n}\right) - \frac{k^3}{6n^2} + k\Theta\left(\epsilon^3\right) - \Theta\left(\frac{k^4}{n^3}\right) - \Theta\left(\frac{1}{k}\right)}
    \end{align*}
    for $\epsilon = \lambda_n n^{-1/3}$.
\end{proof}

\end{document}